\setlist{leftmargin=9mm}
\numberwithin{equation}{section}
\newcommand{\N}{\mathbb{N}}
\newcommand{\R}{\mathbb{R}}
\newcommand{\pnorm}[2]{\lVert #1\rVert_{#2}}
\newcommand{\bigpnorm}[2]{\big\lVert#1\big\rVert_{#2}}
\newcommand{\biggpnorm}[2]{\bigg\lVert#1\bigg\rVert_{#2}}
\newcommand{\abs}[1]{\lvert#1\rvert}
\newcommand{\bigabs}[1]{\big\lvert#1\big\rvert}
\newcommand{\biggabs}[1]{\bigg\lvert#1\bigg\rvert}
\newcommand{\iprod}[2]{\left\langle#1,#2\right\rangle}
\renewcommand{\epsilon}{\varepsilon}
\renewcommand{\d}[1]{\mathrm{d}#1}
\newcommand{\floor}[1]{\left\lfloor #1 \right\rfloor}
\newcommand{\ceil}[1]{\left\lceil #1 \right\rceil}
\newcommand{\smallop}{\mathfrak{o}_{\mathbf{P}}}
\newcommand{\smallopx}{\mathfrak{o}_{\mathbf{P}|\xi}}
\newcommand{\bigop}{\mathcal{O}_{\mathbf{P}}}
\newcommand{\bigopx}{\mathcal{O}_{\mathbf{P}|\xi }}
\newcommand{\smallo}{\mathfrak{o}}
\newcommand{\bigo}{\mathcal{O}}
\newcommand{\equald}{\stackrel{d}{=}}
\renewcommand{\hat}{\widehat}
\renewcommand{\tilde}{\widetilde}
\DeclareMathOperator{\E}{\mathbb{E}}
\DeclareMathOperator{\Prob}{\mathbb{P}}
\DeclareMathOperator{\dv}{div}
\DeclareMathOperator{\sign}{\mathsf{sgn}}
\DeclareMathOperator{\J}{\mathsf{J}}
\DeclareMathOperator{\tr}{tr}
\DeclareMathOperator{\var}{Var}
\DeclareMathOperator{\op}{op}
\DeclareMathOperator{\dis}{dist}
\DeclareMathOperator{\lrt}{\mathsf{lrt}}
\DeclareMathOperator{\dof}{\mathsf{dof}}
\DeclareMathOperator{\err}{\mathsf{err}}
\DeclareMathOperator{\seq}{\mathsf{seq}}
\DeclareMathOperator{\plasso}{\mathsf{pL}}
\DeclareMathOperator{\pequiv}{\,\stackrel{\mathrm{p}}{\simeq}\,}
\DeclareMathOperator{\pequivx}{\,\stackrel{\mathrm{p}|\xi}{\simeq}\,}
\let\liminf\relax
\DeclareMathOperator*\liminf{\underline{lim}}
\let\limsup\relax
\DeclareMathOperator*\limsup{\overline{lim}}
\DeclareMathOperator*{\argmax}{arg\,max\,}
\DeclareMathOperator*{\argmin}{arg\,min\,}
\newcommand{\beq}{\begin{equation}}
\newcommand{\eeq}{\end{equation}}
\newcommand{\beqa}{\begin{equation} \begin{aligned}}
\newcommand{\eeqa}{\end{aligned} \end{equation}}
\newcommand{\beqas}{\begin{equation*} \begin{aligned}}
\newcommand{\eeqas}{\end{aligned} \end{equation*}}
\newcommand{\bit}{\begin{itemize}}
	\newcommand{\eit}{\end{itemize}}
\newcommand{\bmat}{\begin{bmatrix}}
	\newcommand{\emat}{\end{bmatrix}}
\theoremstyle{definition}\newtheorem{problem}{Problem}[section]
\theoremstyle{definition}\newtheorem{definition}[problem]{Definition}
\theoremstyle{remark}\newtheorem{assumption}{Assumption}
\theoremstyle{remark}\newtheorem{remark}[problem]{Remark}
\theoremstyle{definition}\newtheorem{example}[problem]{Example}
\theoremstyle{plain}\newtheorem{theorem}[problem]{Theorem}
\theoremstyle{plain}
\theoremstyle{plain}\newtheorem{lemma}[problem]{Lemma}
\theoremstyle{plain}\newtheorem{proposition}[problem]{Proposition}
\theoremstyle{plain}\newtheorem{corollary}[problem]{Corollary}
\theoremstyle{plain}
	\def\MR#1{}
\begin{document}

\title[Risk asymptotics]{Noisy linear inverse problems under convex constraints: exact risk asymptotics in high dimensions}
\thanks{The research of Q. Han is partially supported by NSF grant DMS-1916221.}

\author[Q. Han]{Qiyang Han}

\address[Q. Han]{
Department of Statistics, Rutgers University, Piscataway, NJ 08854, USA.
}
\email{qh85@stat.rutgers.edu}

\date{\today}

\keywords{fixed point equation, Gaussian sequence model, high dimensional asymptotics, linear inverse problem}
\subjclass[2000]{60F17, 62E17}

\begin{abstract}
In the standard Gaussian linear measurement model $Y=X\mu_0+\xi \in \R^m$ with a fixed noise level $\sigma>0$, we consider the problem of estimating the unknown signal $\mu_0$ under a convex constraint $\mu_0 \in K$, where $K$ is a closed convex set in $\R^n$. We show that the risk of the natural convex constrained least squares estimator (LSE) $\hat{\mu}(\sigma)$ can be characterized exactly in high dimensional limits, by that of the convex constrained LSE $\hat{\mu}_K^{\seq}$ in the corresponding Gaussian sequence model at a different noise level. Formally, we show that 
\begin{align*}
\pnorm{\hat{\mu}(\sigma)-\mu_0}{}^2 \big/ (nr_n^2) \to 1 \hbox{ in probability},
\end{align*}
where $r_n^2>0$ solves the fixed point equation
\begin{align*}
 \E \bigpnorm{ \hat{\mu}_K^{\seq}\Big(\sqrt{ \big(r_n^2+\sigma^2\big)\big/\big(m/n\big) }\Big) -\mu_0 }{}^2 = nr_n^2.
\end{align*}
This characterization holds (uniformly) for risks $r_n^2$ in the maximal regime that ranges from constant order all the way down to essentially the parametric rate, as long as certain necessary non-degeneracy condition is satisfied for $\hat{\mu}(\sigma)$.  

The precise risk characterization reveals a fundamental difference between noiseless (or low noise limit) and noisy linear inverse problems in terms of the sample complexity for signal recovery. A concrete example is given by the isotonic regression problem: While exact recovery of a general monotone signal requires $m\gg n^{1/3}$ samples in the noiseless setting, consistent signal recovery in the noisy setting requires as few as $m\gg \log n$ samples. Such a discrepancy occurs when the low and high noise risk behavior of $\hat{\mu}_K^{\seq}$ differ significantly. In statistical languages, this occurs when $\hat{\mu}_K^{\seq}$ estimates $0$ at a faster `adaptation rate' than the slower `worst-case rate' for general signals. Several other examples, including non-negative least squares and generalized Lasso (in constrained forms), are also worked out to demonstrate the concrete applicability of the theory in problems of different types.

The proof relies on a collection of new analytic and probabilistic results concerning estimation error, log likelihood ratio test statistics, and degree-of-freedom associated with $\hat{\mu}_K^{\seq}$, regarded as stochastic processes indexed by the noise level. These results are of independent interest in and of themselves.
\end{abstract}

\maketitle

\setcounter{tocdepth}{1}
\tableofcontents


\sloppy

\section{Introduction}

\subsection{Overview}

Consider the standard Gaussian linear measurement model
\begin{align}\label{model:linear_inverse}
Y = X \mu_0+ \xi,
\end{align}
where $X \in \R^{m\times n}$ is a design/measurement matrix with Gaussian ensembles $\mathcal{N}(0,1/n)$, $\mu_0 \in \R^n$ is the signal of interest, and $\xi \in \R^m$ is an error vector whose coordinates are i.i.d. random variables with mean $0$ and variance $\sigma^2$. Here $n,m$ stand for the signal dimension and the sample size respectively. We are interested in estimating/recovering the signal vector $\mu_0\in \R^n$ based on the observation $Y\in \R^m$. In a variety of applications, structure information on $\mu_0$ can be described by a convex constraint $\mu_0 \in K$, where $K$ is a closed convex set in $\R^n$. A canonical estimator in this setting is the convex constrained least squares estimator (LSE)
\begin{align}\label{def:generic_est}
\hat{\mu}(\sigma) \in \argmin_{\mu \in K} \pnorm{Y-X\mu}{}^2,
\end{align}
which is also the maximum likelihood estimator of $\mu_0$ when the error vector $\xi$ is further assumed to be a standard Gaussian vector.  As (\ref{def:generic_est}) is a convex program, a (near) minimizer can in principle be computed efficiently. In addition to problem specific computational techniques, general iterative methods such as approximate message passing (AMP) algorithms may also be used to facilitate efficient computation for (\ref{def:generic_est}), cf. \cite[Section 7.2]{berthier2020state}.

In this paper we will be interested in the precise risk behavior of the constrained LSE $\hat{\mu}(\sigma)$ in (\ref{def:generic_est}). This problem, in its equivalent or generalized form, has received considerable attention in the literature; we only refer the reader to the more recent papers  \cite{chandrasekaran2012convex,oymak2013squared,stojnic2013framework,amelunxen2014living,thrampoulidis2014simple,thrampoulidis2015regularized,tropp2015tropp,oymak2016sharp,thrampoulidis2018precise}; more references can be found therein. From these cited works, the (risk) behavior of $\hat{\mu}(\sigma)$ is now well understood in the \emph{noiseless} setting $\sigma=0$ and in the \emph{low noise limit} $\sigma \downarrow 0$ setting. In the noiseless setting, as $\hat{\mu}(0)=\mu_0$ is clearly a feasible solution, the problem is to determine whether $\hat{\mu}(0)=\mu_0$ is the unique solution for a given sample size $m$. The work \cite{amelunxen2014living} discovers a precise phase transition mechanism that can be described solely by the conic geometry of $K$ near $\mu_0$. Formally, let $T_K(\mu_0)$ be the `tangent cone' of $K$ at $\mu_0$ (precise meaning see Definition \ref{def:tangent_cone}), and $\delta_{T_K(\mu_0)}$ be the `statistical dimension' of the closed cone $T_K(\mu_0)$ (precise meaning see Definition \ref{def:delta_K}). At this point, the reader may be content with the rough idea that more `structures' within $\mu_0 \in K$ lead to a smaller `dimension' $\delta_{T_K(\mu_0)}$. Using this quantity,  \cite{amelunxen2014living} shows that:
\begin{itemize}
	\item If $m\geq (1+\epsilon) \cdot \delta_{T_K(\mu_0)}$, then with high probability the convex program (\ref{def:generic_est}) has a unique solution $\hat{\mu}(0)=\mu_0$; in this sense (\ref{def:generic_est}) achieves exact recovery.
	\item If $m\leq (1-\epsilon)\cdot \delta_{T_K(\mu_0)}$, then with high probability the solution to the convex program (\ref{def:generic_est}) is not unique; in this sense (\ref{def:generic_est}) fails. 
\end{itemize}
In fact \cite{amelunxen2014living} proves a stronger sub-Gaussian tail for the recovery/failure probability with respect to $\epsilon$ at a proper scale, whereas \cite{goldstein2017gaussian} further shows that the shape of this tail is exactly Gaussian in suitable high dimensional limits.  

The quantity $\delta_{T_K(\mu_0)}$ continues to play an important role in determining the risk behavior of $\hat{\mu}(\sigma)$ in the low noise limit $\sigma \downarrow 0$ setting. For instance, \cite[Theorem 3.1]{oymak2013squared} shows that when $m\geq (1+\epsilon)\cdot \delta_{T_K(\mu_0)}$, 
\begin{align}\label{intro:risk_low_noise_limit}
\lim_{\sigma \downarrow 0} \frac{ \pnorm{\hat{\mu}(\sigma)-\mu_0}{}^2}{n\sigma^2} = \frac{ \delta_{T_K(\mu_0)}}{m-\delta_{T_K(\mu_0)}}
\end{align}
holds in a suitable probabilistic sense. Consequently, the behavior of $\hat{\mu}(\sigma)$ in both the noiseless setting and the low noise limit setting can be completely described by the quantity $\delta_{T_K(\mu_0)}$  alone, and the sample size $m$ need to (substantially) exceed $\delta_{T_K(\mu_0)}$ to guarantee exact recovery in the noiseless setting and consistent recovery in the low noise limit setting. 

The major goal of this paper is to gain a precise understanding for the behavior of the risk $\pnorm{\hat{\mu}(\sigma)-\mu_0}{}^2$ in the statistically more common noisy setting, where asymptotics take place in the high dimensional limit $n\to \infty$ of problem instances $(n,m,\mu_0,K)$, keeping the noise level $\sigma^2>0$ fixed. Throughout this manuscript, explicit dependence of $(m,\mu_0,K)$ and related quantities on the signal dimension $n$ will be suppressed for ease of notation. As we will see, in such a high dimensional limiting setting, the precise behavior of $\pnorm{\hat{\mu}(\sigma)-\mu_0}{}^2$ can no longer be described by the simple quantity $\delta_{T_K(\mu_0)}$ alone, and the right hand side of (\ref{intro:risk_low_noise_limit}) can be far from accurate even in order. As a consequence, the sample size $m$ needed for consistent recovery of the signal $\mu_0$ in high dimensions need not apriori exceed the threshold $\delta_{T_K(\mu_0)}$---in fact $m$ can be much smaller in order than $\delta_{T_K(\mu_0)}$ for the convex constrained LSE $\hat{\mu}(\sigma)$ to consistently recover certain highly structured signals $\mu_0$. 

\subsection{Risk asymptotics}

Define the Gaussian sequence model 
\begin{align}\label{model:seq}
y = \mu_0+\sigma_h \cdot h,
\end{align}
where $h \sim \mathcal{N}(0, I_n)$ and $\sigma_h>0$.\footnote{We will always use $y \in \R^n$ (resp. $Y \in \R^m$) for the response vector in the Gaussian sequence model (\ref{model:seq}) (resp. the Gaussian linear measurement model (\ref{model:linear_inverse})).} We will characterize the exact risk of $\hat{\mu}(\sigma)$ by relating it to the convex constrained least squares estimator (LSE) in the Gaussian sequence model (\ref{model:seq}), defined by
\begin{align}\label{def:seq_LSE}
\hat{\mu}_K^{\seq}(\sigma_h)\equiv \argmin_{\mu \in K} \pnorm{y-\mu}{}^2. 
\end{align}
The risk behavior of $\hat{\mu}_K^{\seq}$ and of more general empirical risk minimizers (ERM) is a well-studied topic in statistical theory; see e.g. the monographs \cite{van1996weak,van2000empirical,massart2007concentration,koltchinskii2008oracle,gine2015mathematical} for an in-depth treatment of how the size of expected suprema of localized Gaussian/empirical processes can be inverted to upper bounds for the risk of $\hat{\mu}_K^{\seq}$ and more general ERMs. For $\hat{\mu}_K^{\seq}$ in (\ref{def:seq_LSE}), the work \cite{chatterjee2014new} shows that its risk is completely characterized by the location of maximum of certain quadratically drifted Gaussian process. In essence, a large number of existing tools can be directly employed to compute (bounds for) the risk of $\hat{\mu}_K^{\seq}$. 

The main abstract result of this paper, Theorem \ref{thm:risk_asymp}, shows that the risk of $\hat{\mu}(\sigma)$ can be computed exactly in high dimensional limits, by looking at the risk of $\hat{\mu}_K^{\seq}(\cdot)$ with a different noise level. Let $r_n>0$ be the solution to the fixed point equation
\begin{align}\label{intro:fixed_point_equation}
\frac{1}{n}\cdot \E \biggpnorm{ \hat{\mu}_K^{\seq}\bigg(\sqrt{  \frac{r_n^2+\sigma^2}{m/n}  }\bigg) -\mu_0 }{}^2 = r_n^2,
\end{align}
which exists uniquely if and only if $m>\delta_K$ ($\delta_K$ is a `generalized statistical dimension' defined formally in (\ref{def:delta_K})). Then under certain necessary `non-degeneracy' condition on the residual of the convex program (\ref{def:generic_est}) (= condition (R2) in Theorem \ref{thm:risk_asymp}), the risk asymptotics
\begin{align}\label{intro:risk_asymp}
\frac{ \pnorm{\hat{\mu}(\sigma)-\mu_0}{}^2}{nr_n^2} \to 1 \hbox{ in probability}
\end{align}
hold (uniformly) for $r_n^2$ ranging from the constant order $r_n^2 = \bigo(1)$, all the way down to the parametric rate $\mathcal{O}(1/m)$ (up to a multiplicative logarithmic factor). As the left hand side of (\ref{intro:risk_asymp}) may possibly be a non-degenerate random variable when the risk $r_n^2$ is of a parametric order, the prescribed regime in which the risk asymptotics (\ref{intro:risk_asymp}) hold cannot be further expanded at the current level of generality. 

The fixed point equation equation (\ref{intro:fixed_point_equation}) is in general highly non-linear and therefore does not admit a closed form solution for $r_n$, except for extremely simple instances of $(K,\mu_0)$. Nonetheless,  (\ref{intro:fixed_point_equation}) is indeed compatible with the low noise limit risk in (\ref{intro:risk_low_noise_limit}) in that the solution $r_n^2$ recovers the right hand side of (\ref{intro:risk_low_noise_limit})  as $\sigma \downarrow 0$ whenever $m>\delta_{T_K(\mu_0)}$ (cf. Proposition \ref{prop:exist_unique_fixed_pt}). Such a coincidence is intrinsically due to the \emph{low noise} risk behavior of $\hat{\mu}_K^{\seq}$  that can actually be characterized by $\delta_{T_K(\mu_0)}$ alone: $\lim_{\sigma \downarrow 0} \E \pnorm{\hat{\mu}_K^{\seq}(\sigma)-\mu_0}{}^2/\sigma^2= \delta_{T_K(\mu_0)}$ (cf. Proposition \ref{prop:est_err_variance}).  On the other hand, as one may expect from the fixed point equation (\ref{intro:fixed_point_equation}), in the most interesting high dimensional limiting regime $m\ll n$, the behavior of $r_n$ should also critically depend on the \emph{high noise} risk behavior of the $\hat{\mu}_K^{\seq}$: $\lim_{\sigma \uparrow \infty} \E \pnorm{\hat{\mu}_K^{\seq}(\sigma)-\mu_0}{}^2/\sigma^2= \delta_{K}\leq \delta_{T_K(\mu_0)}$ (cf. Proposition \ref{prop:est_err_variance}). In fact, a closer investigation reveals that the convex constrained LSE $\hat{\mu}(\sigma)$ in (\ref{def:generic_est}) and its risk exhibit different behavior, in accordance to the magnitude of $m$ with respect to the three regimes determined by the low and high noise risk behavior of $\hat{\mu}_K^{\seq}$:

\begin{figure}
	\centering
	\includegraphics[width=13cm]{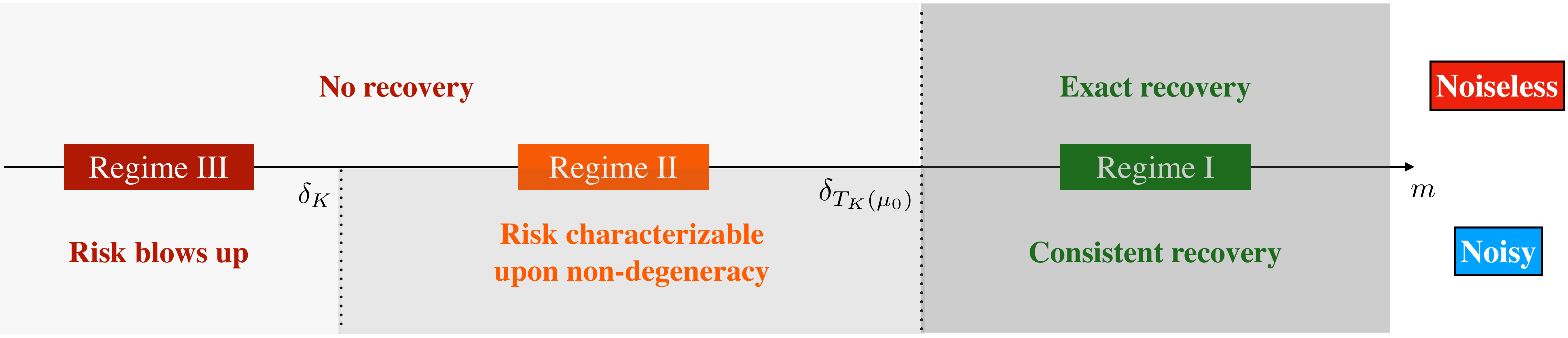}
	\caption{Three regimes of $m$ for noiseless and noisy Gaussian linear inverse problems. $\delta_K=\lim_{\sigma \uparrow \infty} \E \err(\sigma)/\sigma^2$ (resp. $\delta_{T_K(\mu_0)}=\lim_{\sigma \downarrow 0} \E \err(\sigma)/\sigma^2$) corresponds to high noise (resp. low noise) limit of the normalized risk of $\hat{\mu}_K^{\seq}$. }
	\label{fig:m_regime}
\end{figure}

\begin{itemize}
	\item (\emph{Regime I}) If $m$ exceeds the low noise risk limit of $\hat{\mu}_K^{\seq}$ in that $\liminf_n (m/\delta_{T_K(\mu_0)})>1$, degeneracy almost never occurs for the residual of the convex program (\ref{def:generic_est}), and the risk of all its near minimizers can be characterized via (\ref{intro:fixed_point_equation})-(\ref{intro:risk_asymp}) in high dimensional limits. 
	\item (\emph{Regime II}) If $m$ falls in between the low and high noise risk limit of $\hat{\mu}_K^{\seq}$ in that $\limsup_n (m/\delta_{T_K(\mu_0)})<1$ while $\liminf_n (m/\delta_K)>1$, the risk of $\hat{\mu}(\sigma)$ is characterizable only if the residual of the convex program (\ref{def:generic_est}) is non-degenerate. Degeneracy may occur in this regime that results in multiple distinct minimizers of (\ref{def:generic_est}) that are too far way from each other for a well-defined limiting risk characterization as (\ref{intro:risk_asymp}) to exist.
	\item (\emph{Regime III}) If $m$ falls below the high noise risk limit of $\hat{\mu}_K^{\seq}$ in that $\limsup_n (m/\delta_K)<1$, with high probability the convex program (\ref{def:generic_est}) admits a minimizer whose risk is arbitrarily large for each and every possible underlying signal $\mu_0 \in K$, at least when $K$ is a closed convex cone. 
\end{itemize}

As exact recovery in the noiseless Gaussian linear measurement model (\ref{model:linear_inverse}) is possible only in Regime I, while risk characterization of $\hat{\mu}(\sigma)$ in the noisy setting is possible in both Regimes I and II, consistent recovery of the signal $\mu_0$ via $\hat{\mu}(\sigma)$ in the noisy linear inverse problems may require (much) fewer samples $m$ than those required for exact recovery in the noiseless setting. Such a phenomenon occurs in Regime II when the low and high noise risk behavior of $\hat{\mu}_K^{\seq}$ differ significantly in the sense that $\delta_K\ll \delta_{T_K(\mu_0)}$.

\subsection{Examples}

Several examples of the risk asymptotics (\ref{intro:fixed_point_equation})-(\ref{intro:risk_asymp})  are worked out in Section \ref{section:examples}, including (1) non-negative least squares, (2) shape constrained regression problems, and (3) generalized Lasso problems (in constrained forms). These examples not only serve as an illustration of the wide applicability of the theory (\ref{intro:fixed_point_equation})-(\ref{intro:risk_asymp}) in concrete problems, some of the examples above also give a clear demonstration of the possibility of consistent recovery in Regime II for noisy linear inverse problems. In fact, for example (2), although it is not feasible to give an exact computation of the risk via the fixed point equation (\ref{intro:fixed_point_equation}) for general shape constrained regression problems, an asymptotically `sharp oracle inequality' is established for $\hat{\mu}(\sigma)$, showing that consistent recovery of $\mu_0$ is possible for `good enough' shape constrained signals, as soon as $m$ exceeds $\delta_K$ which is typically far smaller in order compared to $\delta_{T_K(\mu_0)}$. 

For instance, in the canonical example of isotonic regression $K=K_{\uparrow}$ (formally defined in Section \ref{section:shape}), consistent recovery for general smooth monotone signals $\mu_0$ of bounded variation requires as few as $m\gg \log n$ samples in the noisy Gaussian linear measurement model (\ref{model:linear_inverse}), while exact recovery of such $\mu_0$'s in the noiseless setting requires at least $m\gg n^{1/3}$ many samples. Such a discrepancy is intimately due to the inhomogeneity of the high and low noise risk behavior of $\hat{\mu}_{K_{\uparrow}}^{\seq}$ in that $\log n\simeq \delta_{K_{\uparrow}} \ll \delta_{T_{K_{\uparrow}}(\mu_0)}\asymp n^{1/3}$ for the prescribed $\mu_0$'s. Equivalently, this gap occurs due to the fact that $\hat{\mu}_{K_{\uparrow}}^{\seq}$ estimates $0$ at a much faster `adaptation rate' $\log n$, compared to the slower `worst-case rate' $n^{1/3}$ for general monotone signals $\mu_0$. It is now well understood that such rate adaptation at $0$ occurs in a variety of shape constrained problems corresponding to different choices of $K$, cf. \cite{meyer2000degrees,zhang2002risk,chatterjee2015risk,han2016multivariate,chatterjee2018matrix,bellec2018sharp,han2019isotonic,kur2020convex,fang2021multivariate}; see also the review article \cite{guntuboyina2017nonparametric}. As such, in all these problems where `adaptation' occurs, there (in principle) persists a large gap between the sample complexity for consistent recovery in the noisy linear inverse problems and that for exact recovery in the noiseless setting.  

\subsection{Proof techniques}

The basic approach for the proof of the risk asymptotics in (\ref{intro:fixed_point_equation})-(\ref{intro:risk_asymp}) is to reduce the optimization problem (\ref{def:generic_est}) to another simpler, but probabilistically almost `equivalent' optimization problem via the Gaussian min-max theorem, initially proved by Gordon \cite{gordan1985some,gordon1988milman}. This basic reduction approach \cite{thrampoulidis2018precise}, together with the approach of explicitly constructing an AMP algorithm \cite{bayati2011dynamics} that approximates the estimator under study, has gained prominence in recent years in the risk analysis for a number of high dimensional problems in the so-called `proportional high dimensional regime' $m/n\to \tau \in (0,\infty)$. There the goal is to pin down the precise value of the risk when it is of constant order; see e.g. \cite{bayati2012lasso,thrampoulidis2015regularized,donoho2016high,elkaroui2018impact,thrampoulidis2018precise,sur2019modern,miolane2021distribution,bellec2021debias} and many references therein for this line of research.

In our problem, both the regime with constant order risk and the more `classical' regime with vanishing risk are of significant interest. In fact, the `effective dimension' of the problem is implicitly determined by $(K,\mu_0)$, which in many cases necessarily fails to be proportional to the sample size. As such, the major challenge in proving the risk characterization (\ref{intro:fixed_point_equation})-(\ref{intro:risk_asymp}) lies in establishing its validity in the maximal regime all the way down to the parametric rate. This is achieved by a carefully designed proof architecture of conditional localization/de-stochastization and gap analysis for the reduced, `equivalent' optimization problem; see Section \ref{section:proof_sketch} for a sketch and Section \ref{section:proof_main_result} for details. 

The prescribed method of analysis relies crucially on a collection of newly developed analytic and probabilistic results for three interrelated stochastic processes: the estimation error, log likelihood ratio test statistics, and degree-of-freedom associated with $\hat{\mu}_K^{\seq}$, viewed as processes indexed by the noise level in the Gaussian sequence model (\ref{model:seq}). Of particular importance are several qualitative monotonicity properties for these processes and their normalized versions, quantitative uniform concentration inequalities whose variance components can be directly related to the fixed point equation (\ref{intro:fixed_point_equation}), and variational characterizations that facilitate tight upper and lower bounds relating the three processes. These results, to be detailed in Section \ref{section:est_err_lrt_process}, are proved using a suite of Gaussian and convex analysis techniques, and are of significant independent interest in and of themselves.

\subsection{Organization}

The rest of the paper is organized as follows. Section \ref{section:risk_asymp} presents the abstract theory of risk asymptotics via (\ref{intro:fixed_point_equation})-(\ref{intro:risk_asymp}). Section \ref{section:examples} gives a detailed treatment of the abstract theory in the examples mentioned above. Section \ref{section:est_err_lrt_process} develops a collection of analytic and probabilistic results for the estimation error, log likelihood ratio test statistics, and degree-of-freedom associated with $\hat{\mu}_K^{\seq}$. An outline of the proof for the main theory is provided in Section \ref{section:proof_sketch}, with most proof details presented in Sections \ref{section:proof_main_result}-\ref{section:proof_est_lrt_dof} and the appendices.

\subsection{Notation}\label{section:notation}

For any positive integer $n$, let $[1:n]$ denote the set $\{1,\ldots,n\}$. For $a,b \in \R$, $a\vee b\equiv \max\{a,b\}$ and $a\wedge b\equiv\min\{a,b\}$. For $a \in \R$, let $a_\pm \equiv (\pm a)\vee 0$. For $a>0$, let $\log_+(a)\equiv 1\vee \log(a)$. For $x \in \R^n$, let $\pnorm{x}{p}$ denote its $p$-norm $(0\leq p\leq \infty)$. We simply write $\pnorm{x}{}\equiv\pnorm{x}{2}$. Let $\bm{1}_n=(1,\ldots,1)^\top \in \R^n$. For a matrix $M \in \R^{m\times n}$, let $\pnorm{M}{\op}$ denote the spectral norm of $M$. For $f=(f_1,\ldots,f_n):\R^n \to \R^n$, let $\J_f(z) \equiv (\partial f_i(z)/\partial z_j)_{i,j=1}^n$ denote the Jacobian of $f$ and $
\dv f(z) \equiv \sum_{i=1}^n \frac{\partial}{\partial z_i} f_i (z)=\tr(\J_f(z))$ for $z \in \R^n$ whenever definable.

For a closed convex set $K$ and $\mu_0 \in K$, the tangent cone of $K$ at $\mu_0$, denoted as $T_K(\mu_0)$, is defined as 
\begin{align}\label{def:tangent_cone}
T_K(\mu_0)\equiv \mathrm{cl}\big\{\alpha(\nu-\mu_0): \alpha\geq 0, \nu \in K\big\}.
\end{align}
The indicator function for a closed convex set $K$ is written as $\bm{0}_K(x) = 0\cdot \bm{1}_{x \in K}+ \infty\cdot \bm{1}_{x \notin K}$. Its convex conjugate, also known as support function of $K$, is written as $\bm{0}_K^\ast(s)=\sup_{t \in K}s^\top t$. For a closed convex cone $K\subset \R^n$, let $K^\ast$ be its polar cone defined via $K^\ast \equiv \{\nu \in \R^n: \iprod{\nu}{\mu}\leq 0, \forall \mu \in K\}$.

We use $C_{x}$ to denote a generic constant that depends only on $x$, whose numeric value may change from line to line unless otherwise specified. $a\lesssim_{x} b$ and $a\gtrsim_x b$ mean $a\leq C_x b$ and $a\geq C_x b$ respectively, and $a\asymp_x b$ means $a\lesssim_{x} b$ and $a\gtrsim_x b$ ($a\lesssim b$ means $a\leq Cb$ for some absolute constant $C$). For two nonnegative sequences $\{a_n\}$ and $\{b_n\}$, we write $a_n\ll b_n$ (respectively~$a_n\gg b_n$) if $\lim_{n\rightarrow\infty} (a_n/b_n) = 0$ (respectively~$\lim_{n\rightarrow\infty} (a_n/b_n) = \infty$). We write $a_n\simeq b_n$ (resp. $a_n\pequiv b_n$) if $\lim_n (a_n/b_n)=1$ (resp. in probability). We follow the convention that $0/0 = 0$. $\bigo$ and $\smallo$ (resp. $\mathcal{O}_{\mathbf{P}}$ and $\mathfrak{o}_{\mathbf{P}}$) denote the usual big and small O notation (resp. in probability). For a generic random variable $\xi$, we write $\Prob^\xi, \E^\xi$ the conditional probability and expectation on $\xi$. Similar meaning applies to $\bigopx,\smallopx,\pequivx$. We reserve the notation $\xi=(\xi_1,\ldots,\xi_m)$ for an $m$-dimensional error vector, and $h=(h_1,\ldots,h_n)\sim \mathcal{N}(0,I_n)$ be an $n$-dimensional standard normal random vector.

\section{Theory}\label{section:risk_asymp}

This section presents the main abstract theory of this paper. Except for the main Theorem \ref{thm:risk_asymp}, proofs for most other results can be found in Section \ref{section:proof_remain_main}.

\subsection{Assumptions and further notation}

We shall formally record below the assumptions on $X,\xi$ in the Gaussian linear measurement model (\ref{model:linear_inverse}).

\begin{assumption}\label{assump:X_xi}
	Suppose $X$ and $\xi$ satisfy the following:
	\begin{enumerate}
		\item $X \in \R^{m\times n}$ contains i.i.d. $\mathcal{N}(0,1/n)$ entries.
		\item  $\xi \in \R^m$ is an error vector independent of $X$, containing i.i.d. coordinates with mean $0$ and finite, non-degenerate variance $\sigma^2 \in (0,\infty)$. 
	\end{enumerate}
\end{assumption}

These conditions are commonly used in the literature; cf. \cite{oymak2013squared,thrampoulidis2014simple,thrampoulidis2015recovering,thrampoulidis2015regularized,oymak2016sharp,thrampoulidis2018precise}. The choice of the variance level $1/n$ in  $X$ is to ensure that $\pnorm{X\mu}{}^2/m$ will be of the same order as $\pnorm{\mu}{}^2/n$. 

Next we formally record the assumption on $K$.

\begin{assumption}\label{assump:K_cone}
	$K\subset \R^n$ is a closed convex set. 
\end{assumption}

Now we define a notion of `generalized statistical dimension' $\delta_K$ associated with a closed convex set $K$: Let
\begin{align}\label{def:delta_K}
\delta_K\equiv \lim_{\sigma \uparrow \infty} \frac{\E \pnorm{\Pi_K(\sigma h)}{}^2}{\sigma^2} \leq n. 
\end{align}
Here $\Pi_K: \R^n\to \R^n$ is the natural projection map onto $K$. In the definition above, the limit is well-defined due to the monotonicity of the map $\sigma\mapsto \E \pnorm{\Pi_K(\sigma h)}{}^2/\sigma^2$ (cf. Lemma \ref{lem:monotone_est_err}). When $K$ is a closed convex cone, by homogeneity of the projection map $\Pi_K(\sigma h)=\sigma \Pi_K(h)$, the above definition recovers the usual statistical dimension for the closed convex cone $\delta_K = \E \pnorm{\Pi_K(h)}{}^2$. We refer the reader to \cite[Section 3]{amelunxen2014living} for comprehensive background review of the notion of statistical dimension associated with a closed convex cone.

We introduce some further notation that will be used throughout the paper. Let $\err_{(K,\mu_0)}(\sigma_h)$ be the squared error of $\hat{\mu}_K^{\seq}$ at noise level $\sigma_h$:
\begin{align}\label{def:F}
\err(\sigma_h)\equiv \err_{(K,\mu_0)}(\sigma_h)\equiv \pnorm{\hat{\mu}_K^{\seq}(\sigma_h)-\mu_0}{}^2,
\end{align}
and let $\lrt_{(K,\mu_0)}(\sigma_h)$ be  the (scaled) log likelihood ratio test statistics of testing the mean vector being $\mu_0 \in K$ under the Gaussian sequence model (\ref{model:seq}) (cf. \cite{han2022high}):
\begin{align}\label{def:lrt}
\lrt(\sigma_h)\equiv \lrt_{(K,\mu_0)}(\sigma_h) &\equiv \pnorm{y-\mu_0}{}^2-\pnorm{y-\hat{\mu}_K^{\seq}(\sigma_h)}{}^2.
\end{align}
The subscript $(K,\mu_0)$ in $\err$ and $\lrt$ is usually suppressed for notational simplicity. 

We need one further definition. For $r\geq 0,\delta\geq 0$, let
\begin{align}
\omega_\delta(r)\equiv \omega_\delta(r,\sigma)= \sqrt{ \frac{ r^2+\sigma^2}{\delta} }.
\end{align}
For $\sigma>0$, it is understood that $\omega_0(r)=\omega_0(r,\sigma)=\infty$ for any $r\geq 0$.

\subsection{The fixed point equation}

\begin{proposition}\label{prop:exist_unique_fixed_pt}
The following hold.
\begin{enumerate}
	\item The fixed point equation 
	\begin{align}\label{eqn:fixed_pt_eqn}
	n^{-1} \E \err\big(\omega_{m/n}(r)\big) = r^2
	\end{align}
	has at most one solution in $r \in (0,\infty)$ that exists if and only if $m>\delta_K$.
	\item Suppose $m>\delta_K$ and let $r_n$ be the unique solution to (\ref{eqn:fixed_pt_eqn}). Let the iterations $\{r_{n,t}:t=0,1,2,\ldots\}\subset \R_{\geq 0}$ be defined through 
	\begin{align}\label{def:iteration_fixed_point}
	 r_{n,t+1}^2&\equiv n^{-1} \E \err\big(\omega_{m/n}(r_{n,t})\big),\quad t =0,1,2,\ldots
	\end{align}
	with initialization $r_{n,0}^2\equiv 0$. Then for any $\rho \in \big(r_n^2/(r_n^2+\sigma^2),1\big)$, there exists $T_\rho \in \N$ such that $\abs{r_{n,t}-r_n}/r_n\leq \rho^{(t-T_\rho)_+}$. In particular, $\lim_{t \to \infty} r_{n,t}=r_n$ and the convergence is linear eventually.  
	\item Suppose $m>\delta_K$ and let $r_n=r_n(\sigma)$ be the unique solution to (\ref{eqn:fixed_pt_eqn}). Then 
	\begin{align}\label{ineq:rn_bounds_generic}
	\delta_K/(m-\delta_K)\leq r_n^2(\sigma)/\sigma^2\leq  \delta_{T_K(\mu_0)}/(m- \delta_{T_K(\mu_0)})_+.
	\end{align}
	The upper bound is tight in the  low noise limit $\sigma\downarrow 0$ when $m>\delta_{T_K(\mu_0)}$. 
\end{enumerate}	
\end{proposition}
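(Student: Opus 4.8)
The plan is to reduce everything to a single real-variable function. Substitute $s=r^2$ and introduce the normalized sequence-model error $v(\sigma_h)\equiv\E\err(\sigma_h)/\sigma_h^2$; since $\hat\mu_K^{\seq}(\sigma_h)-\mu_0=\Pi_{K-\mu_0}(\sigma_h h)$, Lemma~\ref{lem:monotone_est_err} applied to the closed convex set $K-\mu_0$ shows $v$ is non-increasing on $(0,\infty)$, with $v(0^+)=\delta_{T_K(\mu_0)}$ and $v(\infty)=\delta_K$ by Proposition~\ref{prop:est_err_variance}; moreover $\sigma_h\mapsto\E\err(\sigma_h)$ is non-decreasing (a consequence of Stein's identity, see also the monotonicity results of Section~\ref{section:est_err_lrt_process}) and continuous. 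Setting $\sigma_h(s)^2\equiv n(s+\sigma^2)/m$, so that $\omega_{m/n}(\sqrt s)=\sigma_h(s)$, the fixed point equation~(\ref{eqn:fixed_pt_eqn}) becomes $\phi(s)=s$ for $\phi(s)\equiv n^{-1}\E\err(\sigma_h(s))=\tfrac{s+\sigma^2}{m}\,v(\sigma_h(s))$, $s\geq 0$. For part~(1) I would study $f(s)\equiv\phi(s)-s=\tfrac{s}{m}\big(v(\sigma_h(s))-m\big)+\tfrac{\sigma^2}{m}v(\sigma_h(s))$. One has $f(0)=\tfrac{\sigma^2}{m}v(\sigma_h(0))>0$. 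If $m\leq\delta_K$ then $v(\sigma_h(s))\geq\delta_K\geq m$ for every $s$, both summands are nonnegative and $f(s)\geq\sigma^2>0$, so no solution exists. If $m>\delta_K$, then $v(\sigma_h(s))\downarrow\delta_K<m$ as $s\to\infty$ and the linear term forces $f(s)\to-\infty$, giving a zero by the intermediate value theorem; for uniqueness, $s\mapsto v(\sigma_h(s))$ is non-increasing, so $\{s:v(\sigma_h(s))\geq m\}$ is an initial interval on which $f\geq\sigma^2>0$, while on its complement $m-v(\sigma_h(s))>0$ and $f(s)=0$ is equivalent to $s/\sigma^2=v(\sigma_h(s))/(m-v(\sigma_h(s)))$, whose left side is strictly increasing and whose right side is non-increasing since $x\mapsto x/(m-x)$ increases on $[0,m)$; hence at most one zero, necessarily in $(0,\infty)$ and with $v(\sigma_h(r_n^2))<m$.

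For part~(2), $\phi$ is non-decreasing (as $\sigma_h(\cdot)$ is increasing and $\E\err(\cdot)$ non-decreasing), and $s_0\equiv r_{n,0}^2=0<\phi(0)=r_{n,1}^2$, so by induction $0=s_0\leq s_1\leq\cdots\leq r_n^2$ where $s_t\equiv r_{n,t}^2$; the sequence thus converges to a fixed point of $\phi$, which is $r_n^2$ by part~(1). For the rate, for $s\leq r_n^2$ write
\[
r_n^2-\phi(s)=-\frac{s+\sigma^2}{m}\big(v(\sigma_h(s))-v(\sigma_h(r_n^2))\big)+\frac{r_n^2-s}{m}\,v(\sigma_h(r_n^2))\leq\frac{v(\sigma_h(r_n^2))}{m}\,(r_n^2-s),
\]
since the first bracket is nonnegative; as $\phi(r_n^2)=r_n^2$ forces $v(\sigma_h(r_n^2))=mr_n^2/(r_n^2+\sigma^2)$, the contraction factor is exactly $\rho_\ast\equiv r_n^2/(r_n^2+\sigma^2)\in(0,1)$. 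Iterating from $s_0=0$ gives $\abs{r_{n,t}^2-r_n^2}\leq\rho_\ast^t r_n^2$, and dividing by $r_{n,t}+r_n\geq r_n$ yields $\abs{r_{n,t}-r_n}/r_n\leq\rho_\ast^t$, which implies the stated bound (indeed with $T_\rho=0$ for every $\rho\in(\rho_\ast,1)$).

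For part~(3), solving $\phi(r_n^2)=r_n^2$ gives $r_n^2/\sigma^2=v_\ast/(m-v_\ast)$ with $v_\ast\equiv v(\sigma_h(r_n^2))$, and $v_\ast\in[\delta_K,m)$ together with $v_\ast\leq v(0^+)=\delta_{T_K(\mu_0)}$ from monotonicity of $v$ and part~(1). Since $x\mapsto x/(m-x)$ is increasing on $[0,m)$, this yields $\delta_K/(m-\delta_K)\leq r_n^2/\sigma^2\leq\delta_{T_K(\mu_0)}/(m-\delta_{T_K(\mu_0)})$, the upper bound being $+\infty$ (hence vacuous) precisely when $m\leq\delta_{T_K(\mu_0)}$, which matches the $(\cdot)_+$ in the statement. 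Finally, when $m>\delta_{T_K(\mu_0)}$ the upper bound forces $r_n^2(\sigma)\to0$ as $\sigma\downarrow0$, hence $\sigma_h(r_n^2(\sigma))^2=n(r_n^2(\sigma)+\sigma^2)/m\to0$, hence $v_\ast\to v(0^+)=\delta_{T_K(\mu_0)}$, and therefore $r_n^2(\sigma)/\sigma^2\to\delta_{T_K(\mu_0)}/(m-\delta_{T_K(\mu_0)})$, establishing tightness of the upper bound.

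The bulk of the work is elementary once the qualitative facts about $v$ (monotonicity from Lemma~\ref{lem:monotone_est_err}, limits from Proposition~\ref{prop:est_err_variance}) are in hand; the one genuine subtlety is that the convenient reformulation $s/\sigma^2=v(\sigma_h(s))/(m-v(\sigma_h(s)))$ of the fixed point equation is valid only where $v(\sigma_h(s))<m$, which can fail for small $s$ when $\delta_K<m\leq\delta_{T_K(\mu_0)}$. Consequently the existence-and-uniqueness argument has to be carried out on two regions separately, arguing directly with $f$ (and the nonnegativity of both its summands) where $v(\sigma_h(s))\geq m$ and with the monotone reformulation on its complement. I expect this case split, rather than any single estimate, to be the main thing to get right.
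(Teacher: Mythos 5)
Your proposal is correct, and it rests on the same two pillars as the paper's proof (the monotonicity of $\sigma\mapsto \E\err(\sigma)/\sigma^2$ from Lemma \ref{lem:monotone_est_err}-(2) and the high/low-noise limits $\delta_K,\delta_{T_K(\mu_0)}$ from Proposition \ref{prop:est_err_variance}-(1)), but the execution differs in ways worth noting. For (1), the paper (via Proposition \ref{prop:unique_fixed_point_eqn}) factors $\E\err(\omega_{m/n}(r))/(nr^2)$ into a non-increasing term times a strictly decreasing term, so that the map is continuous and strictly decreasing wherever positive, with limits $+\infty$ at $0$ and $\delta_K/m$ at $\infty$; this gives existence and uniqueness in one stroke and avoids your case split on $\{v(\sigma_h(s))\geq m\}$ versus its complement, which you correctly identify as the price of the reformulation $s/\sigma^2=v/(m-v)$. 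For (2), your one-step inequality $0\leq r_n^2-\phi(s)\leq \rho_\ast\,(r_n^2-s)$ for $s\in[0,r_n^2]$, with $\rho_\ast=r_n^2/(r_n^2+\sigma^2)$ extracted exactly from the fixed point identity, is genuinely different from (and cleaner than) the paper's argument, which bounds the derivative of $r\mapsto\sqrt{\E\err(\omega_{m/n}(r))/n}$ on $[r_{n,t},r_n]$ and only obtains the contraction for $t\geq T_\rho$; your version yields the stated bound with $T_\rho=0$, which is strictly stronger. For (3), the identity $r_n^2/\sigma^2=v_\ast/(m-v_\ast)$ together with $v_\ast\to\delta_{T_K(\mu_0)}$ as $\sigma\downarrow 0$ gives the low-noise tightness more directly than the paper's rescaling of $\bar r_n=r_n/\sigma$, which has to argue separately that $\bar r_n$ converges. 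Two small points you should patch: continuity of $\sigma\mapsto\E\err(\sigma)$ (needed for the intermediate value theorem and for passing to the limit in the iteration) is not a consequence of Stein's identity but follows, e.g., from the stability estimate of Proposition \ref{prop:est_err_stability} (or pointwise continuity of the projection plus dominated convergence); and the degenerate case $K=\{\mu_0\}$, where $\E\err\equiv 0$ and your claim $f(0)>0$ fails, should be set aside explicitly at the outset, exactly as the paper does in the proof of Proposition \ref{prop:unique_fixed_point_eqn}-(2).
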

Proposition \ref{prop:exist_unique_fixed_pt}-(1) provides a complete picture for the solution of the key fixed point equation (\ref{eqn:fixed_pt_eqn}) to exist uniquely; in fact the solution will either be non-existent, or exist uniquely. To get a feel of this result, let us consider the toy case where $K$ is a subspace of $\R^n$ of dimension $\dim(K)$. Then it is easy to calculate that $\E\err(\sigma)=\sigma^2\cdot \dim(K)$, and therefore (\ref{eqn:fixed_pt_eqn}) reduces to
\begin{align*}
 nr^2 = \omega_{m/n}^2(r)\cdot \dim(K)\, \Leftrightarrow\, (m-\dim(K))\cdot r^2 = \dim(K)\cdot \sigma^2. 
\end{align*}
Clearly the above equation admits a unique solution for $r$ if and only if $m>\dim(K)$. Proposition \ref{prop:exist_unique_fixed_pt}-(1) proves that the above simple calculation for a subspace $K$ can be taken as far as $K$ being a general closed convex set. The only formal difference is to replace `$\dim(K)$' by the `generalized statistical dimension' $\delta_K$ defined in (\ref{def:delta_K}). 

For a general closed convex set $K$, the non-linear equation (\ref{eqn:fixed_pt_eqn}) does not admit a simple closed form solution. However, as long as the map $\sigma\mapsto \E \err(\sigma)$ can be evaluated efficiently, Proposition \ref{prop:exist_unique_fixed_pt}-(2) shows that a simple yet linearly converging iterative scheme  (\ref{def:iteration_fixed_point}) can be used to find an approximate solution $r_{n,t}$, whenever the solution $r_n$ to (\ref{eqn:fixed_pt_eqn})  exists uniquely.

Finally (\ref{ineq:rn_bounds_generic}) in Proposition \ref{prop:exist_unique_fixed_pt}-(3) provides simple upper and lower bounds for the rate $r_n^2$. The low noise limiting behavior shows that the upper bound in (\ref{ineq:rn_bounds_generic}) cannot be further improved at this level of generality, and that the fixed point equation (\ref{eqn:fixed_pt_eqn}) is compatible with the precise risk formula obtained in \cite[Theorem 3.1]{oymak2013squared} (see also (\ref{intro:risk_low_noise_limit})) in the low noise limit for $m>\delta_{T_K(\mu_0)}$. As we will see below, the behavior of $r_n(\sigma)$ in the high dimensional limit $n\to \infty$ with a fixed $\sigma>0$ is significantly different from that in the simple low noise limit $\sigma \downarrow 0$ with a fixed problem instance $(n,m,\mu_0,K)$.

\subsection{Abstract results}

To describe our main result, let
\begin{align}\label{def:Ln}
\mathfrak{L}_n \equiv \log (1+\delta_{T_K(\mu_0)})+ \log \log (16n) \lesssim \log (en).
\end{align} 
At this point, $\mathfrak{L}_n$ may be simply regarded as $\log n$. The slightly more complicated form for $\mathfrak{L}_n$ we adopt above will be useful in terms of a further reduction from $\log n$ to $\log\log n$, in several examples to be studied in Section \ref{section:examples}.

We are now in position to state the main abstract result of this paper.

\begin{theorem}\label{thm:risk_asymp}
	Suppose Assumptions \ref{assump:X_xi}-\ref{assump:K_cone} hold and $m>\delta_K$. Let $r_n$ be the solution to the fixed point equation (\ref{eqn:fixed_pt_eqn}) (which exists uniquely according to Proposition \ref{prop:exist_unique_fixed_pt}).
	Further assume the following:
	\begin{enumerate}
		\item[(R1)] $r_n^2\lesssim 1$ and $m\gg \mathfrak{L}_n$.
		\item[(R2)] 
		With $\omega_n\equiv \omega_{m/n}(r_n)$,
		\begin{align}\label{cond:R2}
		\limsup_n  \frac{1}{2n\sigma^2}\big(\E \lrt(\omega_n)-\E \err(\omega_n)\big)<1.
		\end{align}
	\end{enumerate}
    Then for any (sequence of) near minimizer(s) $\hat{\mu}(\sigma)\in K$ such that 
    \begin{align}\label{def:approximate_minimizer}
    m^{-1} \pnorm{Y-X\hat{\mu}(\sigma)}{}^2\leq \min_{\mu \in K}\, m^{-1}\pnorm{Y-X\mu}{}^2+\smallop(r_n^2),
    \end{align}
    it holds as $n \to \infty$ that
	\begin{align}\label{eqn:risk_LSE}
	n^{-1} \pnorm{\hat{\mu}(\sigma)-\mu_0}{}^2\pequiv r_n^2+\bigop\big(\mathfrak{L}_n/m\big). 
	\end{align}
\end{theorem}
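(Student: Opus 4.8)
## Proof Proposal

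The plan is to follow the Gaussian min-max (Gordon) reduction strategy, carefully tracking errors so that the argument survives all the way down to parametric-order risk $r_n^2 \asymp \mathfrak{L}_n/m$. I would begin by rewriting the optimization over $\mu \in K$ in terms of the shift $w = \mu - \mu_0$, so that the objective becomes $\|Xw - \xi\|^2$ minimized over $w \in K - \mu_0$. Decomposing $\xi$ into its component along $Xw$ and the orthogonal complement, and applying the Gordon min-max comparison inequality (the Convex Gaussian Min-max Theorem of \cite{thrampoulidis2018precise}), I would replace the bilinear form $w^\top X^\top(\cdot)$ by two independent Gaussian vectors $g \in \R^m$ and $h \in \R^n$. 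This converts the primal problem into an \emph{auxiliary optimization} whose value, after optimizing out the scalar ``norm'' direction, reduces to a problem of the form $\min_{w} \big\{\, \alpha \|w\|^2 + \beta \|w\| - \langle \sqrt{\cdot}\, h, w\rangle + \text{penalty}_K(w)\,\big\}$ for suitable scalars depending on $\|w\|/\sqrt{n}$, $m/n$, $\sigma$, and $\|g\|/\sqrt{m}$. The key observation is that, after the scalar direction is optimized, the remaining minimization over $w$ is \emph{exactly} a convex-constrained sequence-model problem: one recognizes $\hat{\mu}_K^{\seq}$ evaluated at an effective noise level of the form $\omega_{m/n}(\rho)$ where $\rho = \|w\|/\sqrt n$ is the unknown risk scale, and the stationarity condition in $\rho$ becomes precisely the fixed point equation (\ref{eqn:fixed_pt_eqn}), modulo the replacement of $\E\err$ by its random realization $\err$.

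The second main step is \textbf{de-stochastization}: I would show that the random quantity $n^{-1}\err(\omega_{m/n}(\rho))$ concentrates around its mean $n^{-1}\E\err(\omega_{m/n}(\rho))$ \emph{uniformly in $\rho$} over the relevant range, with fluctuation of order $\mathfrak{L}_n/m$ — this is exactly where the uniform concentration inequalities for the process $\sigma_h \mapsto \err(\sigma_h)$ promised in Section \ref{section:est_err_lrt_process} (and whose variance component is tied to the fixed point equation) get invoked. Condition (R1), $m \gg \mathfrak{L}_n$, guarantees this fluctuation is $\smallo(r_n^2)$ only in the regime where $r_n^2 \gg \mathfrak{L}_n/m$; in the borderline parametric regime it is genuinely $\bigop(\mathfrak{L}_n/m)$, which is why the conclusion (\ref{eqn:risk_LSE}) carries that additive error term rather than asserting exact asymptotic equality. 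Having de-stochastized, I would use the monotonicity of $\rho \mapsto n^{-1}\E\err(\omega_{m/n}(\rho)) - \rho^2$ (strictly decreasing, crossing zero uniquely at $r_n$ by Proposition \ref{prop:exist_unique_fixed_pt}) together with a \textbf{gap/margin argument}: outside a window of radius $\bigo(\sqrt{\mathfrak{L}_n/m})$ around $r_n$, the auxiliary objective exceeds its optimal value by a definite amount, so any near-minimizer of the auxiliary problem — hence, by the Gordon two-sided comparison, any near-minimizer $\hat\mu(\sigma)$ of the primal — must have $\|\hat\mu(\sigma) - \mu_0\|^2/n$ inside that window. This yields (\ref{eqn:risk_LSE}).

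The role of condition (R2) is to ensure \emph{non-degeneracy of the residual}, i.e. that $\|Y - X\hat\mu(\sigma)\|^2/m$ stays bounded away from $0$; quantitatively, the residual norm is governed by $\sigma^2 - n^{-1}\big(\E\lrt(\omega_n) - \E\err(\omega_n)\big) + \ldots$ (using $\lrt - \err = \|y - \mu_0\|^2 - \|y - \hat\mu\|^2 - \err$ type identities from Section \ref{section:est_err_lrt_process} and the scaling relating sequence-model quantities to the linear model), and (R2) says precisely that this stays positive. Without it the reduced problem can have its effective ``slope'' parameter $\beta$ vanish, the scalar optimization degenerates, and the $w$-minimizer need not be unique — this is the Regime II pathology described in the introduction. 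So I would use (R2) at the point where I optimize out the scalar direction, to certify the optimizer lies in the interior where the stationarity analysis is valid and the Gordon lower-bound half of the comparison is not vacuous.

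The step I expect to be the \textbf{main obstacle} is the uniform de-stochastization combined with the gap analysis in the parametric-rate regime: standard Gordon-based arguments in the proportional regime only need to localize the risk to a constant-factor window and concentrate at a single scale, but here I must (i) control $\err(\omega_{m/n}(\rho))$ as a process in $\rho$ with a sharp $\sqrt{\mathfrak{L}_n/m}$-rate envelope, (ii) handle the fact that the effective noise level $\omega_{m/n}(\rho)$ itself varies, requiring the monotonicity and Lipschitz-in-noise-level properties of $\err$, and (iii) ensure the margin in the gap argument dominates the accumulated $\bigop(\mathfrak{L}_n/m)$ slack from both the Gordon comparison and the concentration — a delicate balancing that forces the logarithmic factor $\mathfrak{L}_n$ and explains why the theorem cannot promise better than the stated additive error. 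This is the ``conditional localization/de-stochastization and gap analysis'' the introduction flags as the crux, and I would organize it exactly as outlined in Section \ref{section:proof_sketch}.
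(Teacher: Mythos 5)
Your overall architecture — CGMT reduction to a scalarized auxiliary problem, recognition that the scalarization produces sequence-model quantities whose stationarity condition reproduces the fixed point equation (\ref{eqn:fixed_pt_eqn}), uniform de-stochastization of the sequence-model process via the concentration results of Section \ref{section:est_err_lrt_process}, and a gap/margin argument transferred back to the PO through the two-sided Gordon comparison, with (R2) certifying non-degeneracy so the scalar optimization does not collapse — is the same as the paper's. But there is one genuine gap: you run the whole argument \emph{unconditionally} in $\xi$, and your claim that the total slack to be dominated by the margin is $\bigop(\mathfrak{L}_n/m)$ is false in that setting. The AO objective contains the term $\sqrt{\pnorm{r\alpha g+\xi}{}^2/m}$, and $\pnorm{\xi}{}^2/m$ fluctuates around $\sigma^2$ at scale $m^{-1/2}$. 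Consequently the unconditional AO value fluctuates at order at least $m^{-1/2}$, so the deterministic threshold pair $\phi<\phi_{S_w}$ that the CGMT comparison requires cannot be separated by a margin of order $r_n^2$ whenever $\mathfrak{L}_n/m\ll r_n^2\ll m^{-1/2}$ — a regime that contains the motivating examples (e.g.\ isotonic regression with $m=n$, where $r_n^2\asymp n^{-2/3}\ll n^{-1/2}$). Your margin argument therefore only localizes the risk to a window of width $\bigop(m^{-1/2})$, not $\bigop(\mathfrak{L}_n/m)$, and the theorem is not established in the most interesting part of its claimed range.

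The paper closes this hole by conditioning on a good event of $\xi$ (Definition \ref{def:xi_good}): since $\xi$ is independent of $X,g,h$, the CGMT and all probability estimates can be applied under $\Prob^\xi$, so the comparison thresholds are allowed to be $\xi$-measurable and the common fluctuation coming from $\pnorm{\xi}{}^2/m$ is frozen rather than paid for. To keep the fixed point consistent with the conditional objective one must also replace $r_n$ by the conditional solution $r_{n,\xi}$ of (\ref{def:rn_xi}) (defined with $\sigma_m^2=\pnorm{\xi}{}^2/m$ in place of $\sigma^2$), verify (R2) in its conditional form on the good event, and only at the very end de-condition via $r_{n,\xi}\pequiv r_n$ (Lemma \ref{lem:R2_deconditioning}). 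Two further points your sketch leaves untreated, though they are secondary: the case $r_n^2\lesssim \mathfrak{L}_n/m$ is handled in the paper by inflating the localization scale to $r_\vee^2=\max\{r_{n,\xi}^2,\mathfrak{u}_n\mathfrak{L}_n/m\}$ and using a one-sided exclusion set $[1+\epsilon,L_w]$, which is what produces the additive $\bigop(\mathfrak{L}_n/m)$ in (\ref{eqn:risk_LSE}); and the scalarized AO in fact involves the process $\lrt(\cdot)$ (via the dual/support-function representation of Lemma \ref{lem:lrt_qual}-(2)), with $\err(\cdot)$ entering only through the stationarity identity $(\lrt(\sigma)/\sigma)'=\err(\sigma)/\sigma^2$, and (R2) is used precisely to show the AO optimum stays bounded away from zero so the $(\cdot)_+$ truncation is inactive and a gap in $\mathsf{D}$ converts to a gap in $\mathsf{D}_+^2$.
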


A detailed proof of the above theorem can be found in Section \ref{section:proof_main_result}. As the proof is quite technical, a sketch is outlined in Section \ref{section:proof_sketch}. From the proof, (\ref{eqn:risk_LSE}) can actually be strengthened to a uniform statement with respect to the constant in (R1)-(R2) in the following sense: For any $M_n\to \infty$ and a fixed constant $L>1$, let $\mathscr{C}(M_n,L)$ be all problem instances $(n,m,\mu_0,K)$ such that $L^{-1}M_n\mathfrak{L}_n/m\leq r_n^2\leq L, m\geq L^{-1}M_n\mathfrak{L}_n$ and $(\E \lrt(\omega_n)-\E \err(\omega_n))/(2n\sigma^2)\leq 1-L^{-1}$. Then for any fixed $\epsilon>0$, any sequence of minimizers $\hat{\mu}(\sigma)\in \argmin_{\mu \in K} \pnorm{Y-X\mu}{}^2$ satisfies
\begin{align}\label{eqn:risk_uniform}
\lim_n\sup_{(n,m,\mu_0,K) \in \mathscr{C}(M_n,L)} \Prob\bigg(\biggabs{ \frac{\pnorm{\hat{\mu}(\sigma)-\mu_0}{}^2}{nr_n^2}-1}>\epsilon  \bigg)=0.
\end{align}
A non-asymptotic explicit error bound can in principle be obtained by tracking the proof; we have refrained from doing so here, as obtaining an optimal error bound with respect to $\epsilon$ in (\ref{eqn:risk_uniform}) seems to require genuinely new ideas.

Theorem \ref{thm:risk_asymp} shows that the exact risk behavior for the constrained LSE $\hat{\mu}(\sigma)$ in the model (\ref{model:linear_inverse}) is completely characterized by the high noise (resp. low noise) risk behavior of the corresponding LSE $\hat{\mu}_K^{\seq}$ in the Gaussian sequence model (\ref{model:seq}) in the under-sampling regime $m\ll n$ (resp. over-sampling regime $m\gg n$). As we will be mostly interested in the regime $m\ll n$, the key to understand the risk of $\hat{\mu}(\sigma)$ will typically be the \emph{high noise} risk behavior of $\hat{\mu}_K^{\seq}$.

It is important to note that from (\ref{eqn:risk_LSE}) and the condition (R1), the characterization (\ref{eqn:risk_LSE}) is asymptotically exact in the regime $\mathfrak{L}_n/m\ll r_n^2\lesssim 1$. The requirement $r_n^2\lesssim 1$ is barely a condition as typically we are interested in the case when the risk is not too big. On the other hand, the condition $r_n^2\gg \mathfrak{L}_n/m$ requires the problem to be intrinsically high dimensional, as $1/m$ is the squared parametric rate in this setting. In fact, modulo the multiplicative (logarithmic) factor $\mathfrak{L}_n$, the requirement $r_n^2\gg \mathfrak{L}_n/m$ cannot be further relaxed beyond the parametric rate for the characterization (\ref{eqn:risk_LSE}) to hold in probability.\footnote{This can be seen by considering the linear regression setting with $m\gg n$ (so $K=\R^n$); then the risk of LSE $\hat{\mu}(\sigma)$ is approximately $n^{-1}\pnorm{\hat{\mu}(\sigma)-\mu_0}{}^2\stackrel{d}{\approx} \sigma^2 m^{-1}\sum_{i=1}^n Z_i^2$ where $Z_i$'s are i.i.d. $\mathcal{N}(0,1)$.} 

The second condition (R2) in Theorem \ref{thm:risk_asymp} looks mysterious at this point, so deserves some further understanding. 

\begin{proposition}\label{prop:R2}
Suppose $\mathfrak{L}_n/m\ll r_n^2\lesssim 1$.
\begin{enumerate}
	\item For any (sequence of) $\hat{\mu}(\sigma)$ satisfying (\ref{def:approximate_minimizer}), the residual 	$m^{-1} \pnorm{Y-X\hat{\mu}(\sigma)}{}^2$ is bounded away from, or convergent to, $0$ in probability, according to whether the left hand side of (\ref{cond:R2}) is $<1$ or $>1$. In the latter case, we have $m^{-1} \pnorm{Y-X\hat{\mu}(\sigma)}{}^2=\smallop(r_n^2)$.
	\item The residual $m^{-1} \pnorm{Y-X\hat{\mu}(\sigma)}{}^2\pequiv \sigma^2$ is a consistent estimator of the variance $\sigma^2$ if and only if the limit in (\ref{cond:R2}) is 0 and $r_n\to 0$.
	\item  There exists a sequence of $K$, $\mu_0 \in K$ with the following properties: (i) $\limsup_n (m/\delta_{T_K(\mu_0)})<1$, (ii) the left hand side of (\ref{cond:R2}) is $>1$ and (iii) $r_n^2\gg \mathfrak{L}_n/m$, such that with asymptotically probability $1$, there exists a sequence of $\{\hat{\mu}(\sigma)\}$ satisfying (\ref{def:approximate_minimizer}) whose risk $\{\pnorm{\hat{\mu}(\sigma)-\mu_0}{}^2/(n\tilde{r}_n^2)\}$ does not converge to a deterministic limit for any normalizing sequence $\{\tilde{r}_n^2\}$. 
\end{enumerate}	
\end{proposition}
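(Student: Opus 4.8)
Write $\kappa_n$ for the left side of (\ref{cond:R2}). Two elementary reductions organize (1) and (2). Expanding $\lrt(\sigma_h)=\pnorm{y-\mu_0}{}^2-\pnorm{y-\hat{\mu}_K^{\seq}(\sigma_h)}{}^2$ with $y=\mu_0+\sigma_h h$ gives $\lrt(\sigma_h)=2\sigma_h\iprod{h}{\hat{\mu}_K^{\seq}(\sigma_h)-\mu_0}-\err(\sigma_h)$, whence by Stein's identity $\E\lrt(\sigma_h)=2\sigma_h^2\,\dof(\sigma_h)-\E\err(\sigma_h)$ with $\dof(\sigma_h)=\E\dv\hat{\mu}_K^{\seq}(\sigma_h)$ (cf.\ Section~\ref{section:est_err_lrt_process}). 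Together with the fixed point equation (\ref{eqn:fixed_pt_eqn}) and $\omega_n^2=(n/m)(r_n^2+\sigma^2)$ this yields
\[
\kappa_n=\frac{1}{\sigma^2}\Bigl(\frac{r_n^2+\sigma^2}{m}\,\dof(\omega_n)-r_n^2\Bigr),\qquad\text{hence}\qquad 1-\frac{\dof(\omega_n)}{m}=\frac{(1-\kappa_n)\,\sigma^2}{r_n^2+\sigma^2}.
\]
Also $\lrt(\sigma_h)-\err(\sigma_h)=2\iprod{y-\hat{\mu}_K^{\seq}(\sigma_h)}{\hat{\mu}_K^{\seq}(\sigma_h)-\mu_0}\geq0$ by the projection property ($\mu_0\in K$), so $\kappa_n\geq0$. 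The plan is to establish, via the Gaussian min-max reduction underlying Theorem~\ref{thm:risk_asymp} but now run also in the regime where (\ref{cond:R2}) fails, that every near minimizer in the sense of (\ref{def:approximate_minimizer}) satisfies
\[
\frac{1}{m}\pnorm{Y-X\hat{\mu}(\sigma)}{}^2=(r_n^2+\sigma^2)\Bigl(1-\frac{\dof(\omega_n)}{m}\Bigr)_+^2+\bigop(\mathfrak{L}_n/m)=\frac{\sigma^4(1-\kappa_n)_+^2}{r_n^2+\sigma^2}+\bigop(\mathfrak{L}_n/m),
\]
the $(\cdot)_+$ reflecting that the reduced scalar problem may attain a zero residual on a whole interval of radii; here $\bigop(\mathfrak{L}_n/m)=\smallop(r_n^2)$ under the standing hypothesis.

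Given this, (1) is immediate: if $\limsup_n\kappa_n<1$ then $(1-\kappa_n)_+^2$ is bounded below and $r_n^2+\sigma^2\asymp1$ (using $r_n^2\lesssim1$), so the residual is bounded away from $0$; if $\kappa_n>1$ eventually then $(1-\kappa_n)_+=0$, so the residual equals $\bigop(\mathfrak{L}_n/m)=\smallop(r_n^2)$. For (2), $m^{-1}\pnorm{Y-X\hat{\mu}(\sigma)}{}^2\pequiv\sigma^2$ iff $\sigma^2(1-\kappa_n)_+^2/(r_n^2+\sigma^2)\to1$; as both $(1-\kappa_n)_+^2$ and $\sigma^2/(r_n^2+\sigma^2)$ lie in $[0,1]$ (the first since $\kappa_n\geq0$), this forces each to tend to $1$, i.e.\ $\kappa_n\to0$ and $r_n\to0$.

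For (3) I take the constrained-$\ell_2$-ball example $K=\{x\in\R^n:\pnorm{x}{}\leq K_0\sqrt{n}\}$, $\mu_0=0$, with fixed $K_0,\sigma>0$ and $\mathfrak{L}_n\ll m\ll n$ (e.g.\ $m=\ceil{\sqrt{n}}$). Boundedness of $K$ gives $\delta_K=0$, while $0\in\mathrm{int}(K)$ gives $T_K(\mu_0)=\R^n$ and $\delta_{T_K(\mu_0)}=n$, so $\delta_K<m<\delta_{T_K(\mu_0)}$ and (i) holds. Computing the projection onto $K$ yields, for $\sigma_h\gg K_0$, $n^{-1}\E\err(\sigma_h)\approx K_0^2$ and $\dof(\sigma_h)\approx K_0 n/\sigma_h$; consequently the fixed point has $r_n^2\to K_0^2\asymp1$ (so (iii)) and $\dof(\omega_n)\approx K_0 n/\omega_n=K_0\sqrt{nm}/\sqrt{K_0^2+\sigma^2}\gg m$, i.e.\ by the displayed identity $\kappa_n\to\infty>1$, which is (ii). By part (1) the minimal residual is $\smallop(r_n^2)$. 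Here the degeneracy is explicit: since $\sigma_{\min}(X)\to1$ and $\pnorm{\xi}{}\ll K_0\sqrt{n}$, with probability $\to1$ the minimizing set is $F=\{w:\pnorm{w}{}\leq K_0\sqrt{n},\ Xw=\xi\}$, an $(n-m)$-ball of radius $(K_0^2 n-\pnorm{w_0}{}^2)^{1/2}$ centered at the minimum-norm solution $w_0=X^\top(XX^\top)^{-1}\xi$, and $n^{-1}\pnorm{w}{}^2$ ranges over all of $[\,n^{-1}\pnorm{w_0}{}^2,\ K_0^2\,]$ on $F$. Now $n^{-1}\pnorm{w_0}{}^2=n^{-1}\xi^\top(XX^\top)^{-1}\xi=\smallop(1)$ (its mean is $\sigma^2 m/(n-m-1)\to0$), while with $\mu_n^\star:=w_0+(K_0^2 n-\pnorm{w_0}{}^2)^{1/2}e$ for $e$ any unit vector of $\ker X$ one has $n^{-1}\pnorm{\mu_n^\star}{}^2=K_0^2$ exactly. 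Set
\[
\hat{\mu}(\sigma):=w_0\cdot\bm{1}_{X_{11}>0}+\mu_n^\star\cdot\bm{1}_{X_{11}\leq0},
\]
a legitimate near minimizer ($w_0,\mu_n^\star\in F$). Since $\Prob(X_{11}>0)=\tfrac12$, and $n^{-1}\pnorm{w_0}{}^2=\smallop(1)$ (also conditionally on $\{X_{11}>0\}$) while $n^{-1}\pnorm{\mu_n^\star}{}^2\equiv K_0^2$, the risk $n^{-1}\pnorm{\hat{\mu}(\sigma)-\mu_0}{}^2$ converges in law to the two-point distribution with mass $\tfrac12$ at each of $0$ and $K_0^2$. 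Hence for every deterministic $\{\tilde r_n^2\}$ one has $\limsup_n\Prob\bigl(\bigl|\pnorm{\hat{\mu}(\sigma)-\mu_0}{}^2/(n\tilde r_n^2)-1\bigr|>\tfrac14\bigr)\geq\tfrac12$ (on $\{X_{11}>0\}$ the ratio tends to $0$ unless $\tilde r_n^2\to0$, in which case the ratio blows up on $\{X_{11}\leq0\}$), so $\pnorm{\hat{\mu}(\sigma)-\mu_0}{}^2/(n\tilde r_n^2)$ does not converge in probability to $1$: the risk has no deterministic rate.

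The projection and random-matrix computations above are routine. The one genuine obstacle is the residual formula in the regime where (\ref{cond:R2}) fails, where Theorem~\ref{thm:risk_asymp} is silent: one must show the minimal residual (hence that of every near minimizer) concentrates at $(r_n^2+\sigma^2)(1-\dof(\omega_n)/m)_+^2$ with fluctuation $\bigop(\mathfrak{L}_n/m)$, and in particular, when $\kappa_n>1$, that the reduced convex problem attains residual $\smallop(m r_n^2)$. This requires re-running the conditional localization/gap analysis behind Theorem~\ref{thm:risk_asymp} for the sublevel sets of the objective without the non-degeneracy assumption --- the place where the real work lies.
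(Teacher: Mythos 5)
Your part (3) is correct and genuinely different from the paper's construction. The paper takes $K=K_+$ with an enormous signal $\mu_0=n^3\bm{1}$ and $m=(1/2+\epsilon)n$, so that $\mu_0+X^\top(XX^\top)^{-1}\xi$ plus any bounded null-space vector stays feasible; you instead take the bounded set $K=\{\pnorm{x}{}\leq K_0\sqrt n\}$ with $\mu_0=0$ and $m\ll n$, where $\delta_K=0$, $\delta_{T_K(\mu_0)}=n$, $r_n^2\to K_0^2$, and the LHS of (\ref{cond:R2}) is of order $K_0\omega_n/\sigma^2\to\infty$. Your computations check out (the fixed point, the divergence asymptotics, feasibility of the min-norm interpolant since $\pnorm{w_0}{}^2\asymp m\sigma^2\ll K_0^2n$), and the randomized selection via $\sign(X_{11})$ is a nice touch: it produces a genuine two-point limit law for the risk, which makes the impossibility of a deterministic rate (ratio $\to1$) immediate, whereas the paper's argument only records that the sup and inf of the risk over the minimizing face differ by a constant. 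Your example also has the cosmetic advantage of satisfying $r_n^2\lesssim 1$, which the paper's does not. (Both constructions share the harmless loophole that a normalization $\tilde r_n^2\to\infty$ trivially forces the ratio to $0$; the intended reading, which your final display addresses, is non-convergence to $1$.)

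For parts (1) and (2), however, there is a genuine gap, and you name it yourself: the entire argument rests on the claimed concentration of the residual at $(r_n^2+\sigma^2)\big(1-\dof(\omega_n)/m\big)_+^2=\sigma^4(1-\kappa_n)_+^2/(r_n^2+\sigma^2)$ up to $\smallop(r_n^2)$, and this is asserted, not proved. Your algebra is right --- the expression coincides with the de-stochastized AO value the paper computes in its proof, namely $(r_n^2+\sigma^2)^{-1}\big(\sigma^2-\frac{1}{2n}(\E\lrt(\omega_n)-\E\err(\omega_n))\big)_+^2$ --- but producing it is exactly the content of the paper's argument: the adjusted AO identity (Proposition \ref{prop:adjusted_AO}), the conditional de-stochastization (Proposition \ref{prop:local_destoc}-(2)), the a priori localization of the dual variables (Lemma \ref{lem:apriori_localization_u_v}), and the two one-sided CGMT comparisons. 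Moreover, your blanket two-sided formula ``for every near minimizer in all regimes'' is stronger than what is provable by this route and obscures the asymmetry the paper exploits: when the LHS of (\ref{cond:R2}) is $<1$ one needs a \emph{lower} bound on the global PO minimum, which requires localizing $w$ and is obtained from Theorem \ref{thm:risk_asymp} precisely because (R2) holds; when it is $>1$ one only needs an \emph{upper} bound, $\Phi^{\textrm{p}}\leq\Phi^{\textrm{p}}_{L_w}$, which needs no localization (and indeed, as your own part (3) shows, global minimizers need not be localized in that regime). So the correct execution is the paper's case split (bounded away from $0$ when $<1$; $\smallop(r_n^2)$ when $>1$), not a single concentration identity. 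Given that identity, your deductions for (1) and the ``iff'' in (2) (product of two $[0,1]$ factors tending to $1$ forces $\kappa_n\to0$ and $r_n\to0$) are fine and agree with the paper; but as submitted, (1)--(2) are a plan whose central step is deferred.
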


Proposition \ref{prop:R2}-(1)(2) show that the condition (R2) is intrinsically tied to the non-degenerate limiting behavior of the normalized residual $m^{-1} \pnorm{Y-X\hat{\mu}(\sigma)}{}^2$. An explicit counter-example is constructed in the proof of Proposition \ref{prop:R2}-(3), showing that when (R2) fails, for some problem instances $(K,\mu_0)$, there may exist multiple distinct near minimizers $\hat{\mu}(\sigma)$ that are far away from each other in the regime $\limsup_n (m/\delta_{T_K(\mu_0)})<1$, so the risk cannot be stabilized. Interestingly, the following proposition shows that this is almost the only possible regime in which the condition (R2) may not hold.

\begin{proposition}\label{prop:R2_verify}
The following hold.
\begin{enumerate}
	\item (\ref{cond:R2}) is fulfilled if $\liminf_n (m/\delta_{T_K(\mu_0)})>1$. If $m\gg \delta_{T_K(\mu_0)}$,  (\ref{cond:R2}) holds with limit $0$ and therefore the residual satisfies $m^{-1} \pnorm{Y-X\hat{\mu}(\sigma)}{}^2\pequiv \sigma^2$. 
	\item Suppose that $K$ is a closed convex cone. Then (\ref{cond:R2}) is fulfilled provided that
	\begin{align}\label{cond:R2_suff}
	\sigma^{-2}\cdot \limsup_n  r_n\cdot \Big(\inf\limits_{\nu \in L(K)}\pnorm{\mu_0-\nu}{}/\sqrt{n}\Big)<1,
	\end{align} 
	where $L(K)$ denotes the maximal linear subspace contained in $K$. In particular, when $r_n\to 0$ and $\inf_{\nu \in L(K)}\pnorm{\mu_0-\nu}{}/\sqrt{n}=\mathcal{O}(1)$, (\ref{cond:R2}) holds with limit $0$ and therefore the residual satisfies $m^{-1} \pnorm{Y-X\hat{\mu}(\sigma)}{}^2\pequiv \sigma^2$.
\end{enumerate}	
\end{proposition}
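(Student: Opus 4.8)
The plan is to control, for a general noise level $\sigma_h>0$, the quantity $G(\sigma_h):=\tfrac12\big(\E\lrt(\sigma_h)-\E\err(\sigma_h)\big)$, since $\frac{1}{2n\sigma^2}\big(\E\lrt(\omega_n)-\E\err(\omega_n)\big)=G(\omega_n)/(n\sigma^2)$, so (\ref{cond:R2}) reads $\limsup_n G(\omega_n)/(n\sigma^2)<1$. By the Pythagorean identity for the convex projection $\hat\mu_K^{\seq}(\sigma_h)=\Pi_K(y)$ with $y=\mu_0+\sigma_h h$, and the definitions (\ref{def:F})--(\ref{def:lrt}),
\[
G(\sigma_h)=\E\,\iprod{y-\hat\mu_K^{\seq}(\sigma_h)}{\hat\mu_K^{\seq}(\sigma_h)-\mu_0}\ \ge\ 0,
\]
the nonnegativity being the obtuse-angle property of $\Pi_K$ at the feasible point $\mu_0\in K$. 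In the two ``limit $0$'' assertions I will in fact show $G(\omega_n)/(n\sigma^2)\to0$, which (together with $r_n\to0$) upgrades via Proposition~\ref{prop:R2}-(2) to $m^{-1}\pnorm{Y-X\hat\mu(\sigma)}{}^2\pequiv\sigma^2$. Throughout, the fixed point equation (\ref{eqn:fixed_pt_eqn}) enters through $\E\err(\omega_n)=nr_n^2$ and $\omega_n^2=n(r_n^2+\sigma^2)/m$, and (\ref{ineq:rn_bounds_generic}) through $m r_n^2\le\delta_{T_K(\mu_0)}(r_n^2+\sigma^2)$ (valid once $m>\delta_{T_K(\mu_0)}$).

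For part (1), write $G(\sigma_h)=\E\iprod{y-\mu_0}{\Pi_K(y)-\mu_0}-\E\err(\sigma_h)$. Since $\Pi_K(y)-\mu_0\in K-\mu_0\subseteq T_K(\mu_0)$ and $T_K(\mu_0)$ is a closed convex cone, the elementary Moreau-type inequality $\iprod{z}{u}\le\pnorm{u}{}\,\pnorm{\Pi_C(z)}{}$ (for $u$ in a closed convex cone $C$ and any $z$), applied with $z=y-\mu_0=\sigma_h h$, plus positive homogeneity of the cone projection, gives $\iprod{y-\mu_0}{\Pi_K(y)-\mu_0}\le\sigma_h\pnorm{\Pi_K(y)-\mu_0}{}\,\pnorm{\Pi_{T_K(\mu_0)}(h)}{}$; Cauchy--Schwarz together with $\E\pnorm{\Pi_{T_K(\mu_0)}(h)}{}^2=\delta_{T_K(\mu_0)}$ then yields
\[
G(\sigma_h)\ \le\ \sigma_h\sqrt{\E\err(\sigma_h)\cdot\delta_{T_K(\mu_0)}}-\E\err(\sigma_h).
\]
Evaluating at $\sigma_h=\omega_n$, substituting $\E\err(\omega_n)=nr_n^2$ and $\omega_n^2=n(r_n^2+\sigma^2)/m$, and then using $m r_n^2\le\delta_{T_K(\mu_0)}(r_n^2+\sigma^2)$ inside the square root, the right side collapses --- after discarding the nonpositive term $-(m-\delta_{T_K(\mu_0)})r_n^2$ --- to $n\sigma^2\,\delta_{T_K(\mu_0)}/m$. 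Hence $G(\omega_n)/(n\sigma^2)\le\delta_{T_K(\mu_0)}/m$, so $\limsup_n G(\omega_n)/(n\sigma^2)\le 1/\liminf_n(m/\delta_{T_K(\mu_0)})$, which is $<1$ under the hypothesis and $\to0$ when $m\gg\delta_{T_K(\mu_0)}$; in the latter case $r_n\to0$ as well by (\ref{ineq:rn_bounds_generic}), so Proposition~\ref{prop:R2}-(2) applies.

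For part (2) let $K$ be a closed convex cone. Moreau's decomposition $y=\Pi_K(y)+\Pi_{K^\ast}(y)$ with $\iprod{\Pi_K(y)}{\Pi_{K^\ast}(y)}=0$ makes $y-\Pi_K(y)=\Pi_{K^\ast}(y)$ orthogonal to $\Pi_K(y)$, so $G(\sigma_h)=-\E\iprod{y-\Pi_K(y)}{\mu_0}$. Since $\pm\nu\in K$ for every $\nu\in L(K)$ and $y-\Pi_K(y)\in K^\ast$, the residual $y-\Pi_K(y)$ is orthogonal to $L(K)$; hence $\E[y-\Pi_K(y)]=-\E[\hat\mu_K^{\seq}(\sigma_h)-\mu_0]$ is orthogonal to $L(K)$ as well, and Cauchy--Schwarz followed by Jensen give
\[
G(\sigma_h)=\iprod{\E[\hat\mu_K^{\seq}(\sigma_h)-\mu_0]}{\mu_0-\Pi_{L(K)}\mu_0}\ \le\ \sqrt{\E\err(\sigma_h)}\cdot\inf_{\nu\in L(K)}\pnorm{\mu_0-\nu}{}.
\]
At $\sigma_h=\omega_n$ this reads $G(\omega_n)/(n\sigma^2)\le\sigma^{-2}\,r_n\cdot\big(\inf_{\nu\in L(K)}\pnorm{\mu_0-\nu}{}/\sqrt n\big)$; taking $\limsup_n$ and invoking (\ref{cond:R2_suff}) gives (\ref{cond:R2}). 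When $r_n\to0$ and $\inf_{\nu\in L(K)}\pnorm{\mu_0-\nu}{}/\sqrt n=\bigo(1)$, the right side tends to $0$, so the limit in (\ref{cond:R2}) is $0$, and Proposition~\ref{prop:R2}-(2) once more gives $m^{-1}\pnorm{Y-X\hat\mu(\sigma)}{}^2\pequiv\sigma^2$.

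Most of this is routine; the one point that needs care is the \emph{sharpness} of the part-(1) estimate. Discarding the quadratic term $-\E\err(\sigma_h)$ there would only give $G(\omega_n)/(n\sigma^2)\lesssim\delta_{T_K(\mu_0)}/(m-\delta_{T_K(\mu_0)})$, hence the weaker requirement $\liminf_n(m/\delta_{T_K(\mu_0)})>2$; obtaining the optimal threshold $1$ hinges on keeping that term and feeding the fixed-point bound $m r_n^2\le\delta_{T_K(\mu_0)}(r_n^2+\sigma^2)$ back in. The only external inputs are the identity $\E\pnorm{\Pi_C(h)}{}^2=\delta_C$ for closed convex cones (which also gives $\lim_{\sigma_h\downarrow0}\E\err(\sigma_h)/\sigma_h^2=\delta_{T_K(\mu_0)}$) and Moreau's decomposition. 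Alternatively, part (1) can be routed through the exact identity $\E\lrt(\sigma_h)-\E\err(\sigma_h)=2\big(\sigma_h^2\E\dof(\sigma_h)-\E\err(\sigma_h)\big)$ (Pythagoras plus Gaussian integration by parts) together with the monotonicity bound $\E\dof(\sigma_h)\le\delta_{T_K(\mu_0)}$ from Section~\ref{section:est_err_lrt_process}, which reduces (\ref{cond:R2}) to $\limsup_n\E\dof(\omega_n)/m<1$.
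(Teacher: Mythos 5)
Your proof is correct, and the quantity you control, $\tfrac12(\E\lrt-\E\err)=\E\iprod{y-\hat{\mu}_K^{\seq}}{\hat{\mu}_K^{\seq}-\mu_0}$, is exactly what the paper bounds, but your part (1) reaches the bound by a different mechanism. The paper uses Proposition \ref{prop:lrt_prop}-(2) together with the DoF integral bound of Proposition \ref{prop:dof}-(3), namely $\E\dof(\sigma)\le\sigma\int_0^\sigma \E\err(\tau)\tau^{-2}\,\d{\tau}\le\sigma^2\delta_{T_K(\mu_0)}$, which rests on the monotonicity of $\tau\mapsto\E\err(\tau)/\tau^2$ and its low-noise limit and is valid at every noise level; you instead bound $\E\dof(\omega_n)$ by the Moreau/obtuse-angle inequality $\iprod{z}{u}\le\pnorm{u}{}\pnorm{\Pi_C(z)}{}$ for $u\in C=T_K(\mu_0)$ plus Cauchy--Schwarz and the statistical-dimension identity for cones, and then you need the fixed-point bound $mr_n^2\le\delta_{T_K(\mu_0)}(r_n^2+\sigma^2)$ from (\ref{ineq:rn_bounds_generic}) to close the loop. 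Both routes land on the identical estimate $\delta_{T_K(\mu_0)}/m-\sigma^{-2}(1-\delta_{T_K(\mu_0)}/m)r_n^2\le\delta_{T_K(\mu_0)}/m$; the paper's intermediate bound on $\E\dof$ is noise-level-uniform and reusable, while yours is more self-contained (only Moreau, $\E\pnorm{\Pi_C(h)}{}^2=\delta_C$, and the fixed point), and your remark that retaining the $-\E\err$ term is what gives threshold $1$ rather than $2$ is exactly the right point. Your part (2) is essentially the paper's argument: the paper invokes the cone inequality of Proposition \ref{prop:lrt_prop}-(2) with $\nu\in L(K)$ and then Cauchy--Schwarz, whereas you re-derive it as an exact identity via orthogonality of the residual to $L(K)$, which is marginally sharper before Cauchy--Schwarz but equivalent in effect; your explicit appeal to Proposition \ref{prop:R2}-(2), checking $r_n\to0$ in both residual claims, is the right way to justify the ``therefore'' in the statement. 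One small notational caution: in your closing aside, $\sigma_h^2\E\dof(\sigma_h)$ only makes sense if $\dof$ there denotes $\dv\hat{\mu}_K^{\seq}$; with the paper's normalization (\ref{def:dof}) the identity is $\E\lrt-\E\err=2(\E\dof-\E\err)$, with no extra factor $\sigma_h^2$.
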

 
Proposition \ref{prop:R2_verify}-(1) formalizes the aforementioned claim above the statement of the proposition. Of course,  although counter-examples exist for which (R2) fails when $m$ falls outside the regime $\liminf_n (m/\delta_{T_K(\mu_0)})>1$, whether (R2) actually fails for a given problem is not within the scope of Proposition \ref{prop:R2_verify}-(1). Proposition \ref{prop:R2_verify}-(2) gives a general recipe along this line under the further condition that $K$ is a closed convex cone:  (R2) is fulfilled in `regular situations' where neither the rate $r_n$ nor the signal strength $\inf_{\nu \in L(K)}\pnorm{\mu_0-\nu}{}/\sqrt{n}$ is not too big. This result will be convenient in some of the examples to be studied in Section \ref{section:examples} ahead.

\subsection{Three regimes of $m$ and connections to noiseless linear inverse problems}

Theorem \ref{thm:risk_asymp} and Propositions \ref{prop:R2}-\ref{prop:R2_verify} taken together suggest three regimes of $m$, according to its size compared to $\delta_K=\lim_{\sigma \uparrow \infty} \E \err(\sigma)/\sigma^2 $ and $\delta_{T_K(\mu_0)}=\lim_{\sigma \downarrow 0} \E \err(\sigma)/\sigma^2$. 

\vspace{0.5em}
\noindent \textbf{Regime I: $\liminf_n (m/\delta_{T_K(\mu_0)})>1$}. 
\vspace{0.5em}

In Regime I, $m$ exceeds low noise limit of the normalized risk of $\hat{\mu}_K^{\seq}$. This is an easy regime for both noiseless and noisy Gaussian linear inverse problems:

\begin{itemize}
	\item For noiseless Gaussian linear inverse problems ($\sigma=0$), \cite{amelunxen2014living,goldstein2017gaussian} showed that in this Regime I, exact recovery of $\mu_0 \in K$ via $\hat{\mu}(0)$ (i.e., $y=X\mu$ has a unique solution $\mu_0\in K$) is possible with high probability. 
	\item 
	For noisy Gaussian linear inverse problems ($\sigma>0$), Theorem \ref{thm:risk_asymp} admits the following simplified risk characterization.
\end{itemize}

\begin{proposition}
Suppose Assumptions \ref{assump:X_xi}-\ref{assump:K_cone} hold, and $\liminf_n (m/\delta_{T_K(\mu_0)})>1$. Let $r_n$ be the solution to the fixed point equation (\ref{eqn:fixed_pt_eqn}) which exists uniquely eventually. If $m\gg \mathfrak{L}_n$, then for any (sequence of) near minimizer(s) $\hat{\mu}(\sigma)\in K$ satisfying (\ref{def:approximate_minimizer}), it holds as $n\to \infty$ that $n^{-1} \pnorm{\hat{\mu}(\sigma)-\mu_0}{}^2\pequiv r_n^2+\bigop\big(\mathfrak{L}_n/m\big)$. 
\end{proposition}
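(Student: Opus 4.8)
The plan is to read this proposition off directly from Theorem \ref{thm:risk_asymp}. Its conclusion is verbatim the assertion we want, and Assumptions \ref{assump:X_xi}-\ref{assump:K_cone} are common to both, so the only work is to check that in Regime I the two extra hypotheses (R1) and (R2) of that theorem hold automatically. First I would handle existence of $r_n$ together with the standing requirement $m>\delta_K$: since $\delta_K\leq\delta_{T_K(\mu_0)}$ for every closed convex set (cf.\ Proposition \ref{prop:est_err_variance} and the discussion around Figure \ref{fig:m_regime}), the hypothesis $\liminf_n(m/\delta_{T_K(\mu_0)})>1$ forces $m>\delta_{T_K(\mu_0)}\geq\delta_K$ for all large $n$; Proposition \ref{prop:exist_unique_fixed_pt}-(1) then yields the unique solution $r_n\in(0,\infty)$ of (\ref{eqn:fixed_pt_eqn}) eventually.

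Next I would verify (R1). The requirement $m\gg\mathfrak{L}_n$ is assumed outright. For $r_n^2\lesssim 1$, choose $c\equiv\tfrac{1}{2}\big(\liminf_n(m/\delta_{T_K(\mu_0)})-1\big)>0$, so that $m-\delta_{T_K(\mu_0)}\geq c\,\delta_{T_K(\mu_0)}$ for all large $n$; substituting this into the upper bound $r_n^2\leq\sigma^2\,\delta_{T_K(\mu_0)}/(m-\delta_{T_K(\mu_0)})_+$ of (\ref{ineq:rn_bounds_generic}) in Proposition \ref{prop:exist_unique_fixed_pt}-(3) gives $r_n^2\leq\sigma^2/c=\bigo(1)$, as $\sigma$ is a fixed constant. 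For (R2) there is nothing to prove: Proposition \ref{prop:R2_verify}-(1) states precisely that (\ref{cond:R2}) is fulfilled whenever $\liminf_n(m/\delta_{T_K(\mu_0)})>1$, which is the standing hypothesis here.

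With (R1) and (R2) in hand, Theorem \ref{thm:risk_asymp} applies to any near minimizer $\hat{\mu}(\sigma)$ obeying (\ref{def:approximate_minimizer}) and delivers $n^{-1}\pnorm{\hat{\mu}(\sigma)-\mu_0}{}^2\pequiv r_n^2+\bigop\big(\mathfrak{L}_n/m\big)$, which is the claim. I do not expect any real obstacle: all the analytic substance---process-level concentration and monotonicity estimates, the Gaussian min-max reduction, the localization/gap analysis---is already packaged inside Theorem \ref{thm:risk_asymp} and Propositions \ref{prop:exist_unique_fixed_pt} and \ref{prop:R2_verify}. The only point needing a little care is bookkeeping of the several ``for all large $n$'' clauses ($m>\delta_K$, uniqueness of $r_n$, and $m-\delta_{T_K(\mu_0)}\geq c\,\delta_{T_K(\mu_0)}$), whose finite intersection is again an eventual statement.
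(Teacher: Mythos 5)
Your proposal is correct and follows essentially the same route as the paper's own proof: the paper likewise notes $\delta_{T_K(\mu_0)}\geq\delta_K$ to get $m>\delta_K$ eventually, invokes Proposition \ref{prop:exist_unique_fixed_pt}-(3) for $r_n^2=\bigo(1)$, Proposition \ref{prop:R2_verify}-(1) for (R2), and then applies Theorem \ref{thm:risk_asymp}. Your extra bookkeeping (the explicit constant $c$ in the bound on $r_n^2$) is a harmless elaboration of the same argument.
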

\begin{proof}
By Proposition \ref{prop:R2_verify}-(1), condition (R2) in Theorem \ref{thm:risk_asymp} is verified. As $\delta_{T_K(\mu_0)}\geq \delta_K$, $m>\delta_K$ eventually and $r_n^2=\bigo(1)$ by Proposition \ref{prop:exist_unique_fixed_pt}-(3). Now apply Theorem \ref{thm:risk_asymp} to conclude. 
\end{proof}

In particular, using  Proposition \ref{prop:exist_unique_fixed_pt}-(3), consistent recovery is guaranteed: $n^{-1} \pnorm{\hat{\mu}(\sigma)-\mu_0}{}^2=\smallop(1)$ if $m\gg \delta_{T_K(\mu_0)}$.

\vspace{0.5em}
\noindent \textbf{Regime II: $\liminf_n (m/\delta_K)>1, \limsup_n (m/\delta_{T_K(\mu_0)})<1$}.
\vspace{0.5em}

In Regime II, $m$ falls in between the high and low noise limits of the normalized risk of $\hat{\mu}_K^{\seq}$. Different behavior of $\hat{\mu}(\sigma)$ appears for noiseless and noisy settings:

\begin{itemize}
	\item For noiseless Gaussian linear inverse problems ($\sigma=0$), \cite{amelunxen2014living} showed that in the regime $\liminf_n (m/\delta_{T_K(\mu_0)})<1$ (including Regime III below), exact recovery of $\mu_0 \in K$ via $\hat{\mu}(0)$ fails with high probability. 
	\item For noisy Gaussian linear inverse problems ($\sigma>0$), as have been shown in Theorem \ref{thm:risk_asymp} and Proposition \ref{prop:R2}, this Regime II is quite subtle, in that the validity of the risk characterization in Theorem \ref{thm:risk_asymp} depends on the condition (R2) which need be investigated in a case-by-case manner. 
\end{itemize}

As such, in Regime II, while exact recovery of $\mu_0$ in the noiseless linear inverse problem setting fails, consistent recovery of $\mu_0$ may still be possible in the noisy setting. A significant example is given by the isotonic regression problem in Section \ref{section:shape} ahead: For general smooth monotone signals, while $m\gg n^{1/3}$ as many samples are needed for exact recovery in the noiseless Gaussian linear measurement model, only $m\gg \log n$ many samples are required for consistent recovery in the noisy setting.

\vspace{0.5em}
\noindent \textbf{Regime III: $\limsup_n (m/\delta_K)<1$}. 
\vspace{0.5em}

In Regime III, $m$ falls below high noise limit of the normalized risk of $\hat{\mu}_K^{\seq}$. This is a hard regime for both noiseless and noisy Gaussian linear inverse problems:

\begin{itemize}
	\item For noiseless Gaussian linear inverse problems ($\sigma=0$), as mentioned above, exact recovery fails already in Regime II and so does it in Regime III. 
	\item For noisy Gaussian linear inverse problems ($\sigma>0$), as the fixed point equation (\ref{eqn:fixed_pt_eqn}) does not admit a solution in Regime III, it is natural to conjecture that the risk of $\hat{\mu}(\sigma)$ blows up. We formalize this below in the case where  $K$ is a closed convex cone with a diverging statistical dimension.
\end{itemize}

\begin{proposition}\label{prop:regime_low_m}
Suppose $K$ is a closed convex cone with $\delta_K\to \infty$. In the regime $\limsup_n (m/\delta_K)<1$, 
\begin{align*}
\Prob\bigg(\sup_{\tilde{\mu}(\sigma) \in \argmin\limits_{\mu \in K} \pnorm{Y-X\mu}{}^2} n^{-1}\pnorm{\tilde{\mu}(\sigma)-\mu_0}{}^2=\infty\,\hbox{ for all $\mu_0 \in K$}\bigg)\to 1. 
\end{align*}
\end{proposition}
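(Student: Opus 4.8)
The plan is to reduce the statement to an event that depends only on $X$, namely
\[
\mathcal{E} \;\equiv\; \{\, X(K) = \R^n \hbox{'s image, i.e. } X(K)=\R^m \,\}\cap\{\, \ker X\cap K \neq \{0\}\,\},
\]
and to argue that $\mathcal{E}$ forces the asserted blow-up simultaneously for \emph{every} $\mu_0\in K$. Since $X$ has a rotationally invariant law and $m<\delta_K\le n$, the matrix has full rank a.s., so $\ker X$ is uniformly distributed among the $(n-m)$-dimensional subspaces of $\R^n$ and $W\equiv(\ker X)^\perp$ uniformly among the $m$-dimensional ones. On $\{X(K)=\R^m\}$, for any $\mu_0\in K$ the data $Y=X\mu_0+\xi$ lies in $X(K)$, hence $\min_{\mu\in K}\pnorm{Y-X\mu}{}^2=0$, this minimum is attained, and $\argmin_{\mu\in K}\pnorm{Y-X\mu}{}^2$ equals the nonempty affine slice $\{\mu\in K:X\mu=Y\}$, whose recession cone is $\ker X\cap K$. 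On $\mathcal{E}$ this recession cone contains some $v\neq0$, so for any minimizer $\bar\mu$ the points $\bar\mu+tv$ ($t\ge0$) all lie in $K$ (as $K$ is a cone), satisfy $X(\bar\mu+tv)=Y$, hence remain minimizers, while $\pnorm{\bar\mu+tv-\mu_0}{}\to\infty$. As $\mathcal{E}$ does not depend on $\mu_0$, this yields the conclusion for all $\mu_0\in K$ at once, so it remains only to show $\Prob(\mathcal{E})\to1$.

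The next step is a dual description of $\{X(K)=\R^m\}$. A convex subset of $\R^m$ whose closure is $\R^m$ must equal $\R^m$ (its interior is nonempty and, by a supporting-hyperplane argument, cannot be a proper subset of $\R^m$); therefore $X(K)=\R^m$ iff $\overline{X(K)}=\R^m$ iff the polar cone $\overline{X(K)}^\ast=\{u\in\R^m:X^\top u\in K^\ast\}$ is $\{0\}$, which, by injectivity of $X^\top$, is exactly $W\cap K^\ast=\{0\}$. Thus $\mathcal{E}=\{W\cap K^\ast=\{0\}\}\cap\{\ker X\cap K\neq\{0\}\}$, and both events are instances of the conic phase transition: a uniformly random subspace of dimension $\ell$ misses (resp.\ meets nontrivially) a closed convex cone $C$ according to whether $\ell+\delta_C$ lies below (resp.\ above) $n$.

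Quantitatively, I would invoke the conic kinematic relation \cite{amelunxen2014living}, which bounds $\Prob(\ker X\cap K=\{0\})$ by (a constant times) a lower tail $\Prob\big(V_K\le m+\bigo(1)\big)$ of the intrinsic-volume random variable $V_K$ of $K$ (which has mean $\delta_K$), and bounds $\Prob(W\cap K^\ast\neq\{0\})$ by an upper tail $\Prob\big(V_{K^\ast}\ge n-m-\bigo(1)\big)$, using $\delta_{K^\ast}=n-\delta_K$ and $(K^\ast)^\ast=K$. In the regime $\limsup_n(m/\delta_K)<1$ one has $\delta_K-m\gtrsim\delta_K$, so both tails sit at distance $\gtrsim\delta_K$ from the respective means; combining with the concentration of $V_C$ around $\delta_C$ at scale $\sqrt{\delta_C\wedge(n-\delta_C)}$ (so in particular $\var(V_C)\lesssim\delta_C$) and Chebyshev gives $\Prob(V_K\le m+\bigo(1))\lesssim1/\delta_K$ and $\Prob(V_{K^\ast}\ge n-m-\bigo(1))\lesssim(\delta_{K^\ast}\wedge\delta_K)/\delta_K^2$, both $\to0$ as $\delta_K\to\infty$. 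Hence $\Prob(\mathcal{E})\ge1-\Prob(\ker X\cap K=\{0\})-\Prob(W\cap K^\ast\neq\{0\})\to1$.

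The step I expect to be the main obstacle is the conic-geometry input, and specifically its \emph{sharpness}. One cannot afford Gordon's escape-through-the-mesh with the worst-case Gaussian width, since that yields only a margin of order $(\delta_K-m)/\sqrt{n}$, which is vacuous when $\delta_K\ll\sqrt n$ — precisely the interesting case (e.g.\ $\delta_K\asymp\log n$ for the isotonic cone, as in Section~\ref{section:shape}). What is needed is the phase transition measured at the intrinsic scale $\sqrt{\delta_K}$, i.e.\ the exact conic kinematic formula for a subspace against a cone together with the $\sqrt{\delta_C\wedge(n-\delta_C)}$-scale concentration of intrinsic volumes. A secondary point requiring care is that $X(K)$ need not be closed for a general closed convex cone $K$, so existence of a minimizer of $\pnorm{Y-X\mu}{}^2$ over $K$ is not automatic; the argument circumvents this by showing that in Regime III $X(K)$ is in fact all of $\R^m$, which simultaneously guarantees a minimizer exists and pins the optimal value at $0$.
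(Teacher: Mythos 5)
Your proposal is correct and its core mechanism is the same as the paper's: show that $\ker X\cap K$ contains a nonzero direction with probability tending to one, using the exact conic Crofton formula together with the concentration of the intrinsic-volume random variable $V_K$ at scale $\sqrt{\delta_K}$ (i.e.\ $\E V_K=\delta_K$, $\var(V_K)\lesssim\delta_K$, then Chebyshev against the gap $\delta_K-m\gtrsim\delta_K$), and then ride the resulting ray $\hat\mu+t v$, $t\ge 0$, of minimizers to make the risk blow up simultaneously for every $\mu_0\in K$ (the event depends only on $X$). You correctly diagnose that the crude $\sqrt n$-scale approximate kinematic bound is useless here and that the sharp $\sqrt{\delta_K}$-scale version is the needed input — this is exactly what the paper invokes via the Crofton-type bound $\Prob(\ker X\cap K=\{0\})\le\Prob(V_K\le m)\le 4\delta_K/(\delta_K-m)_+^2$. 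The one genuine difference is your extra event $\{X(K)=\R^m\}$, handled through the duality $X(K)=\R^m \Leftrightarrow \mathrm{row}(X)\cap K^\ast=\{0\}$, which guarantees that the minimum is attained (the paper's proof does not address attainment and simply argues from an arbitrary minimizer when one exists); this refinement costs nothing extra, since by $V_{K^\ast}\stackrel{d}{=}n-V_K$ the dual event reduces to the very same lower tail $\Prob(V_K\le m+\bigo(1))\to 0$ that controls the primal one. So your argument is, if anything, slightly more careful than the paper's on that technical point, while relying on the same conic-integral-geometry inputs.
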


The above proposition shows that in Regime III, with asymptotic probability $1$ there exists an exact minimizer of the constrained least squares problem (\ref{def:generic_est}) for which the risk can be arbitrarily large. The major difficulty here is similar to that in the noiseless setting: In Regime III, the random null space of $X$ must intersect $K$ in a non-trivial way with high probability, so the cone structure of $K$ entails the existence of an exact minimizer with arbitrarily large signal size.

\subsection{Vanishing and non-vanishing risks}

First we study a version of Theorem \ref{thm:risk_asymp} when the risk is vanishing.

\begin{theorem}\label{thm:risk_asymp_consist}
Suppose Assumptions \ref{assump:X_xi} and \ref{assump:K_cone} hold. If $m\gg \mathfrak{L}_n$, 
\begin{align}\label{cond:risk_consist}
\bar{r}_n^2\equiv n^{-1} \E \err\big( \omega_{m/n}(0)\big) \to 0,
\end{align}
and (R2) holds, then as $n \to \infty$,
\begin{align*}
n^{-1} \pnorm{\hat{\mu}(\sigma)-\mu_0}{}^2\pequiv \bar{r}_n^2+\bigop(\mathfrak{L}_n/m). 
\end{align*}
\end{theorem}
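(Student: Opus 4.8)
The plan is to derive Theorem \ref{thm:risk_asymp_consist} as a consequence of Theorem \ref{thm:risk_asymp} by showing that under the vanishing-risk hypothesis (\ref{cond:risk_consist}), the quantity $\bar r_n^2 = n^{-1}\E\err(\omega_{m/n}(0))$ and the true fixed point $r_n^2$ are interchangeable up to the tolerated error $\bigop(\mathfrak{L}_n/m)$. First I would verify that the fixed point equation (\ref{eqn:fixed_pt_eqn}) actually admits a solution, i.e. $m > \delta_K$: since $\E\err(\sigma)/\sigma^2 \downarrow \delta_K$ as $\sigma\uparrow\infty$ and $\omega_{m/n}(0)^2 = \sigma^2/(m/n)$ is a fixed finite noise level, the hypothesis $\bar r_n^2 = n^{-1}\E\err(\omega_{m/n}(0))\to 0$ forces $\delta_K/m \le \bar r_n^2\cdot(n/\sigma^2)\cdot(m/n)/m$-type bounds — more cleanly, $n^{-1}\E\err(\omega_{m/n}(0)) \ge (m/n)\omega_{m/n}(0)^2\cdot\delta_K/m \cdot(\text{something})$; the transparent route is that $\E\err(\sigma)\ge \delta_K\sigma^2\cdot(1+o(1))$ fails for small $\sigma$, so instead use monotonicity of $\sigma\mapsto\E\err(\sigma)/\sigma^2$ from Lemma \ref{lem:monotone_est_err}, giving $n^{-1}\E\err(\omega_{m/n}(0)) \ge (\delta_K/n)\omega_{m/n}(0)^2 = \delta_K\sigma^2/m$, and $\to 0$ then yields $m \gg \delta_K$, in particular $m>\delta_K$ eventually, so $r_n$ exists uniquely by Proposition \ref{prop:exist_unique_fixed_pt}-(1).

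Next I would control $r_n^2$ itself. Since $\sigma\mapsto n^{-1}\E\err(\sigma)$ is nondecreasing and $\omega_{m/n}(r)^2 = (r^2+\sigma^2)/(m/n) \ge \omega_{m/n}(0)^2$, the fixed point satisfies $r_n^2 = n^{-1}\E\err(\omega_{m/n}(r_n)) \ge n^{-1}\E\err(\omega_{m/n}(0)) = \bar r_n^2$, so $r_n^2 \ge \bar r_n^2$. For the reverse bound, run one step of the iteration (\ref{def:iteration_fixed_point}): $r_{n,1}^2 = n^{-1}\E\err(\omega_{m/n}(0)) = \bar r_n^2$, and $r_{n,2}^2 = n^{-1}\E\err(\omega_{m/n}(\bar r_n))$. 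By Proposition \ref{prop:exist_unique_fixed_pt}-(2) the iterates $r_{n,t}^2$ increase monotonically to $r_n^2$ (initialized at $0$), so $r_n^2 \ge r_{n,1}^2 = \bar r_n^2$ again; to upper-bound $r_n^2$ I would instead use Proposition \ref{prop:exist_unique_fixed_pt}-(3): $r_n^2 \le \sigma^2\delta_{T_K(\mu_0)}/(m-\delta_{T_K(\mu_0)})_+$ if $m>\delta_{T_K(\mu_0)}$, but in Regime II this may not hold, so the cleaner argument is a contraction/Lipschitz estimate: writing $g(r^2) \equiv n^{-1}\E\err(\omega_{m/n}(r))$, one has $g(r_n^2) = r_n^2$ and $g(0) = \bar r_n^2$, and from the concentration/derivative bounds on $\sigma\mapsto\E\err(\sigma)$ developed in Section \ref{section:est_err_lrt_process} (the map $\sigma\mapsto\E\err(\sigma)/\sigma^2$ is nonincreasing, hence $\E\err(\sigma)\le\E\err(\sigma_0)\cdot\sigma^2/\sigma_0^2$ for $\sigma\ge\sigma_0$), we get $g(r_n^2)\le g(0)\cdot\omega_{m/n}(r_n)^2/\omega_{m/n}(0)^2 = \bar r_n^2\cdot(r_n^2+\sigma^2)/\sigma^2$. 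Thus $r_n^2 \le \bar r_n^2(1 + r_n^2/\sigma^2)$, and since $\bar r_n^2\to 0$ this yields $r_n^2(1 - \bar r_n^2/\sigma^2) \le \bar r_n^2$, i.e. $r_n^2 \le \bar r_n^2/(1-\bar r_n^2/\sigma^2) = \bar r_n^2(1+o(1))$. Combined with $r_n^2\ge\bar r_n^2$ this gives $r_n^2 = \bar r_n^2(1+o(1))$, and in particular $r_n^2\to 0$, so condition (R1) of Theorem \ref{thm:risk_asymp} ($r_n^2\lesssim 1$, $m\gg\mathfrak{L}_n$) holds.

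With (R1) and the assumed (R2) both in force, Theorem \ref{thm:risk_asymp} applies verbatim to any near minimizer satisfying (\ref{def:approximate_minimizer}) — noting that the $\smallop(r_n^2)$ slack there is equivalent to $\smallop(\bar r_n^2)$ since $r_n^2\asymp\bar r_n^2$ — and yields $n^{-1}\pnorm{\hat\mu(\sigma)-\mu_0}{}^2 \pequiv r_n^2 + \bigop(\mathfrak{L}_n/m)$. Finally replacing $r_n^2$ by $\bar r_n^2$ costs only $|r_n^2-\bar r_n^2| = o(\bar r_n^2) \le o(r_n^2)$, which is absorbed into the $\pequiv$; one should just check that this $o(\bar r_n^2)$ term is itself $\bigop(\mathfrak{L}_n/m)$ or smaller — if $\bar r_n^2 \lesssim \mathfrak{L}_n/m$ this is automatic, and if $\bar r_n^2\gg\mathfrak{L}_n/m$ then $o(\bar r_n^2)$ is genuinely $o$ of the main term and can be folded into the probabilistic equivalence. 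The main obstacle I anticipate is precisely this last bookkeeping of error terms: making sure the comparison $r_n^2$ versus $\bar r_n^2$ is quantitative enough (not merely $r_n^2\sim\bar r_n^2$) so that the substitution does not degrade the $\bigop(\mathfrak{L}_n/m)$ remainder — this is handled by the explicit inequality $r_n^2 - \bar r_n^2 \le \bar r_n^4/(\sigma^2-\bar r_n^2) = O(\bar r_n^4)$, which is $o(\mathfrak{L}_n/m)$ whenever $\bar r_n^2 = O(1)$ and $\bar r_n^2 = o((\mathfrak{L}_n/m)^{1/2}\cdot(\mathfrak{L}_n/m)^{1/2}\cdot\text{const})$... more simply, $\bar r_n^4 = \bar r_n^2\cdot\bar r_n^2 = o(\bar r_n^2)$, so the difference is lower order than $\bar r_n^2$ and the stated conclusion follows.
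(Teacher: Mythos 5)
Your proposal is correct and follows essentially the same route as the paper: derive $m\gg\delta_K$ from $\bar r_n^2\to 0$, use the stability/monotonicity estimates (Lemma \ref{lem:monotone_est_err}, Proposition \ref{prop:est_err_stability}) to get $\bar r_n^2\le r_n^2\le \bar r_n^2/(1-\bar r_n^2/\sigma^2)$, hence $r_n^2\simeq\bar r_n^2\to 0$ so that (R1) holds, and then invoke Theorem \ref{thm:risk_asymp} and absorb the $o(\bar r_n^2)$ discrepancy into the asymptotic equivalence. The only (harmless) deviation is in the first step, where you use $\E\err(\sigma)\ge\delta_K\sigma^2$ (a consequence of the non-increasing map $\sigma\mapsto\E\err(\sigma)/\sigma^2$ and the high-noise limit in Proposition \ref{prop:est_err_variance}-(1), and used by the paper elsewhere, e.g. in Proposition \ref{prop:exist_unique_fixed_pt}-(3)), whereas the paper argues via $\err(\sigma)\ge(1-\epsilon)\pnorm{\Pi_K(\sigma h)}{}^2-C_\epsilon\pnorm{\mu_0}{}^2$ together with the stability bound at scale $M=\pnorm{\mu_0}{}$; both yield $m/\delta_K\to\infty$.
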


Theorem \ref{thm:risk_asymp_consist} above provides a convenient reduction of the abstract Theorem \ref{thm:risk_asymp} in the regime where consistent estimation of $\mu_0$ via $\hat{\mu}(\sigma)$ is possible. In this regime, we may bypass the non-linear fixed point equation (\ref{eqn:fixed_pt_eqn}) and directly resort to (\ref{cond:risk_consist}) to solve for the asymptotically exact convergence rate. Note that we have not explicitly assumed any real condition on $m$ beyond the technical one $m\gg \mathfrak{L}_n$. In fact, the proof shows that $m\gg \delta_K$ is a consequence of the condition (\ref{cond:risk_consist}), so the fixed point equation (\ref{eqn:fixed_pt_eqn}) eventually has a unique solution $r_n^2$. An implicit compatibility issue here, which will be verified during the proof of Theorem \ref{thm:risk_asymp_consist}, is that the $r_n^2$ solved from (\ref{eqn:fixed_pt_eqn}) will be asymptotically equivalent to the $\bar{r}_n^2$ defined via (\ref{cond:risk_consist}), whenever $\bar{r}_n^2\to 0$.

Next we study a version of Theorem \ref{thm:risk_asymp} with non-vanishing risks. 

\begin{theorem}\label{thm:risk_asymp_const}
	Suppose Assumptions \ref{assump:X_xi}-\ref{assump:K_cone} hold and $m>\delta_K$. Let $r_n$ be the unique solution to (\ref{eqn:fixed_pt_eqn}) such that $r_n\to \mathsf{r}$ for some $\mathsf{r} \in (0,\infty)$, and (R2) replaced by the following:
	\begin{enumerate}
		\item[(R2-c)] Either (R2) holds, or holds with $\omega_n$ replaced by $\omega_{\tau}(\mathsf{r})$ if additionally $\lim m/n=\tau \in (0,\infty)$.
	\end{enumerate}
	If $m\gg \mathfrak{L}_n$, then as $n \to \infty$, $n^{-1} \pnorm{\hat{\mu}(\sigma)-\mu_0}{}^2\pequiv \mathsf{r}^2$. 
\end{theorem}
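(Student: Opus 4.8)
The plan is to derive Theorem~\ref{thm:risk_asymp_const} from the main Theorem~\ref{thm:risk_asymp} by verifying the latter's hypotheses (R1)--(R2) under the present assumptions. Since $m>\delta_K$, Proposition~\ref{prop:exist_unique_fixed_pt} guarantees that $r_n$ is well defined and unique, and by hypothesis $r_n\to\mathsf{r}\in(0,\infty)$, so $r_n^2\to\mathsf{r}^2\in(0,\infty)$; in particular $r_n^2\lesssim 1$, which together with the standing assumption $m\gg\mathfrak{L}_n$ gives (R1). Granting (R2) for the moment, Theorem~\ref{thm:risk_asymp} applies to any (sequence of) near minimizer(s) $\hat{\mu}(\sigma)$ satisfying (\ref{def:approximate_minimizer}) and yields $n^{-1}\pnorm{\hat{\mu}(\sigma)-\mu_0}{}^2\pequiv r_n^2+\bigop(\mathfrak{L}_n/m)$. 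As $r_n^2\to\mathsf{r}^2$ is deterministic and $\mathfrak{L}_n/m\to 0$, the right-hand side equals $\mathsf{r}^2\big(1+\smallop(1)\big)$, so $n^{-1}\pnorm{\hat{\mu}(\sigma)-\mu_0}{}^2\pequiv\mathsf{r}^2$, which is the claim.

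It remains to verify (R2), and the two clauses of (R2-c) dictate the argument. If (R2) holds outright there is nothing more to do. In the alternative clause we are in the proportional regime $\lim m/n=\tau\in(0,\infty)$ and (\ref{cond:R2}) is assumed at the fixed anchor $\omega_\tau(\mathsf{r})$ in place of $\omega_n$; I would show this forces (\ref{cond:R2}) at $\omega_n$ as well. Observe that $\omega_n=\omega_{m/n}(r_n)=\sqrt{(r_n^2+\sigma^2)/(m/n)}\to\sqrt{(\mathsf{r}^2+\sigma^2)/\tau}=\omega_\tau(\mathsf{r})$, so for large $n$ both noise levels lie in a fixed compact interval $[a,b]\subset(0,\infty)$. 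The key input is the dimension-free local Lipschitz property of the normalized maps $\sigma_h\mapsto n^{-1}\E\err(\sigma_h)$ and $\sigma_h\mapsto n^{-1}\E\lrt(\sigma_h)$ on $[a,b]$: because $\Pi_K$ is $1$-Lipschitz with $\Pi_K(\mu_0)=\mu_0$, one has on $[a,b]$ the sample-path bound $\bigabs{\err(\sigma_h')-\err(\sigma_h'')}\leq 2b\pnorm{h}{}^2\bigabs{\sigma_h'-\sigma_h''}$, and, using $\lrt(\sigma_h)=2\sigma_h\iprod{h}{\hat{\mu}_K^{\seq}(\sigma_h)-\mu_0}-\err(\sigma_h)$, a bound of the same type for $\lrt$; taking expectations and dividing by $n=\E\pnorm{h}{}^2$ shows $n^{-1}\bigabs{\E\err(\sigma_h')-\E\err(\sigma_h'')}\leq 2b\bigabs{\sigma_h'-\sigma_h''}$ and an analogous dimension-free bound for $\lrt$, uniformly over the (varying) problem instance (these estimates also follow from the analytic results of Section~\ref{section:est_err_lrt_process}). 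Applying this with $\sigma_h'=\omega_n$, $\sigma_h''=\omega_\tau(\mathsf{r})$ and $\abs{\omega_n-\omega_\tau(\mathsf{r})}\to 0$ shows that the quantities $\frac{1}{2n\sigma^2}\big(\E\lrt(\omega_n)-\E\err(\omega_n)\big)$ and $\frac{1}{2n\sigma^2}\big(\E\lrt(\omega_\tau(\mathsf{r}))-\E\err(\omega_\tau(\mathsf{r}))\big)$ differ by $\smallo(1)$ and hence share the same $\limsup$; thus (R2) holds, and the conclusion follows as in the previous paragraph.

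I do not anticipate a genuine obstacle: the only step requiring care is transferring (\ref{cond:R2}) from the limiting anchor $\omega_\tau(\mathsf{r})$ to the $n$-dependent anchor $\omega_n$, and this is clean precisely because in the proportional regime $m\asymp n$ all the relevant normalized functionals ($n^{-1}\delta_{T_K(\mu_0)}$, $n^{-1}\E\err$, $n^{-1}\E\lrt$) are uniformly $\bigo(1)$ with dimension-free continuity moduli. No control of these functionals down to a vanishing scale is needed here, since $\mathsf{r}>0$ keeps the anchor bounded away from the degenerate endpoints $\sigma_h\in\{0,\infty\}$.
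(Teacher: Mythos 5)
Your proposal is correct and follows essentially the same route as the paper: reduce to Theorem \ref{thm:risk_asymp} (with (R1) immediate from $r_n\to\mathsf{r}\in(0,\infty)$ and $m\gg\mathfrak{L}_n$), and justify replacing the anchor $\omega_\tau(\mathsf{r})$ in (R2-c) by $\omega_n$ via a dimension-free continuity estimate for the normalized $\E\err$ and $\E\lrt$ (equivalently $\E\dof$) in the noise level, using the $1$-Lipschitz property of $\Pi_K$. The paper's own proof phrases this continuity through the stability estimate of Proposition \ref{prop:est_err_stability} plus the bound $\pnorm{\hat{\mu}_K^{\seq}(\sigma)-\hat{\mu}_K^{\seq}(\sigma')}{}\leq\abs{\sigma-\sigma'}\pnorm{h}{}$, while you use a direct sample-path Lipschitz bound on a compact interval of noise levels, but this is only a minor technical variation of the same argument.
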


Characterizing the exact risk when it is of constant order has recently received much attention in the literature, in the so-called `proportional high dimensional regime' $m/n\to \tau \in (0,\infty)$. We will not attempt to give a full literature review here; interested readers are referred to e.g. \cite{bayati2012lasso,thrampoulidis2015regularized,donoho2016high,elkaroui2018impact,thrampoulidis2018precise,sur2019modern,miolane2021distribution,bellec2021debias} for more references in this direction. In the setting of Theorem \ref{thm:risk_asymp_const}, a constant order risk does not apriori postulate $m/n \to \tau \in (0,\infty)$, and vice versa. However, simplification of the condition (R2) is possible in the prescribed proportional growth regime.

\section{Examples}\label{section:examples}

In this section we work out several concrete examples for the abstract theory in Theorem \ref{thm:risk_asymp}. Proofs for all results in this section can be found in Section \ref{section:proof_example}.

\subsection{Non-negative least squares}\label{section:orthant}

Consider the non-negative least squares (NNLS) problem, where $K\equiv K_+\equiv \{\mu \in \R^n: \mu\geq 0\}$. Such a non-negativity constraint arises naturally in a variety of statistical and optimization problems; see e.g. \cite{kudo1963multivariate,raubertas1986hypothesis,lawson1995solving,kim2008nonegative,chen2010nonnegativity} and references therein. The convex program (\ref{def:generic_est}) under the constraint $K_+$ is a quadratic programming with a (simple) linear constraint, so can be computed easily.

We will need some further notation to describe our results in this section. Let $\varphi,\Phi$ be the normal p.d.f. and c.d.f., and let for $x\geq 0$
\begin{align}\label{def:orthant_GH}
\mathsf{G}(x)&\equiv \Phi(x)-x\varphi(x)+x^2 \Phi(-x),\nonumber\\ \mathsf{H}(x)&\equiv \Phi(x)-\mathsf{G}(x)=x\varphi(x)-x^2\Phi(-x).
\end{align}

%
To avoid unnecessary notational complications, we will work out in the following theorem the asymptotics in a specific case where the coordinates of the signal $\mu_0$ follow the same distribution.

\begin{theorem}\label{thm:orthant}
Suppose Assumption \ref{assump:X_xi} holds. Suppose that $m/n>1/2$, and that the coordinates of $\mu_0=(\mu_{0,i}) \in K_+$ are independent and identically distributed as a non-negative random variable $U$ with $\E U^2<\infty$. The following hold for any (sequence of) near minimizer(s) $\hat{\mu}(\sigma)$ satisfying (\ref{def:approximate_minimizer}).
\begin{enumerate}
	\item The fixed point equation
	\begin{align}\label{eqn:orthant_fixed_point_eqn}
	\omega^2_{m/n}(r)\cdot \E \mathsf{G}\bigg(\frac{U}{\omega_{m/n}(r)}\bigg) = r^2
	\end{align}
	admits a unique solution $r_{n,+} \in (0,\infty)$. If furthermore $\liminf_n m/n>1/2$ and
	\begin{align}\label{cond:orthant_R2_1}
	 \frac{1}{\sigma^2}\cdot \limsup_n \omega_{m/n}^2(r_{n,+}) \E \mathsf{H}\bigg(\frac{U}{\omega_{m/n}(r_{n,+})}  \bigg)<1,
	\end{align}
	then as $n \to \infty$, $n^{-1}\pnorm{\hat{\mu}(\sigma)-\mu_0}{}^2\pequiv r_{n,+}^2$.
	\item Suppose $m/n\to \delta \in (1/2,\infty)$. Then the fixed point equation (\ref{eqn:orthant_fixed_point_eqn}), with $m/n$ replaced by $\delta$, admits a unique solution $\mathsf{r}_+(\delta)$ in $(0,\infty)$, for which $\delta \mapsto \mathsf{r}_+(\delta)$ is non-increasing on $(1/2,\infty)$ with $\lim_{\delta \uparrow \infty} \mathsf{r}_+(\delta)=0$, $\lim_{\delta \downarrow 1/2} \mathsf{r}_+(\delta)=\infty$. Furthermore,  if
	\begin{align}\label{cond:orthant_R2_2}
	\omega_\delta^2(\mathsf{r}_+(\delta)) \E \mathsf{H}\big(U/ \omega_\delta(\mathsf{r}_+(\delta))\big)<\sigma^2,
	\end{align}
	 we have $n^{-1}\pnorm{\hat{\mu}(\sigma)-\mu_0}{}^2\pequiv \mathsf{r}_+^2(\delta)$ as $n \to \infty$.

	\item Suppose $m/n \to \infty$. Then as $n \to \infty$, $
	n^{-1}\pnorm{\hat{\mu}(\sigma)-\mu_0}{}^2\pequiv (1-p_0/2)\sigma^2 n/m$ where $p_0\equiv \Prob(U=0)$.
\end{enumerate}
The equivalence in probability in the above statements is taken with respect to the randomness due to both $h$ and $\mu_0$.
\end{theorem}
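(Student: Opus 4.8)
The plan is to derive parts (1), (2) and (3) by specializing Theorems~\ref{thm:risk_asymp}, \ref{thm:risk_asymp_const} and \ref{thm:risk_asymp_consist} respectively to $K=K_+$, after (a) computing in closed form the quantities $\E\err$ and $\E\lrt$ that enter conditions (R1), (R2) and (R2-c), and (b) de-randomizing the signal $\mu_0$. For (a): since $\Pi_{K_+}(z)=z_+$ acts coordinatewise, $\hat{\mu}_{K_+}^{\seq}(\sigma_h)=(\mu_0+\sigma_h h)_+$, so $\err(\sigma_h)$ and $\lrt(\sigma_h)$ split across coordinates, and evaluating the resulting scalar Gaussian integrals with $x_i\equiv\mu_{0,i}/\sigma_h$ yields $\E\err(\sigma_h)=\sigma_h^2\sum_{i=1}^n\mathsf{G}(x_i)$ and $\E\lrt(\sigma_h)-\E\err(\sigma_h)=2\sigma_h^2\sum_{i=1}^n\mathsf{H}(x_i)$, while $\E\pnorm{\Pi_{K_+}(\sigma h)}{}^2=n\sigma^2/2$, so that $\delta_{K_+}=n/2$ and the standing hypothesis $m/n>1/2$ is precisely $m>\delta_{K_+}$. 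I will also use the elementary facts that $\mathsf{G}$ is nondecreasing with $\mathsf{G}(0)=1/2$ and $\mathsf{G}(\infty)=1$, that $0\le\mathsf{H}\le C$ with $\mathsf{H}(0)=\mathsf{H}(\infty)=0$ and $1-\mathsf{G}(x)\ge\Phi(-x)\ge0$, and that $\delta_{T_{K_+}(\mu_0)}=n-k/2$ with $k\equiv\#\{i:\mu_{0,i}=0\}$ (the tangent cone being a product of lines and half-lines).

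Next I analyze the two fixed point equations. Conditionally on $\mu_0$, equation~(\ref{eqn:fixed_pt_eqn}) reads $\omega_{m/n}^2(r)\cdot n^{-1}\sum_i\mathsf{G}(\mu_{0,i}/\omega_{m/n}(r))=r^2$, with a unique root $r_n=r_n(\mu_0)$ whenever $m>\delta_{K_+}$ by Proposition~\ref{prop:exist_unique_fixed_pt}. The population equation~(\ref{eqn:orthant_fixed_point_eqn}) is the same with $n^{-1}\sum_i\mathsf{G}(\cdot)$ replaced by $\E\mathsf{G}(U/\cdot)$; since $w\mapsto w^2\E\mathsf{G}(U/w)$ is an average of one-dimensional sequence-model risks for the half-line $[0,\infty)$, it inherits the monotonicity properties that drive Proposition~\ref{prop:exist_unique_fixed_pt}, with the role of the generalized statistical dimension played by $\lim_{w\uparrow\infty}\E\mathsf{G}(U/w)=\mathsf{G}(0)=1/2$ per coordinate, so the same argument gives existence and uniqueness of $r_{n,+}$ exactly when $m/n>1/2$. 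For the proportional version, writing $\Psi_\delta(r)\equiv\omega_\delta^2(r)\E\mathsf{G}(U/\omega_\delta(r))$, the root $\mathsf{r}_+(\delta)$ of $\Psi_\delta(r)=r^2$ is unique; $\delta\mapsto\mathsf{r}_+(\delta)$ is nonincreasing because $\Psi_\delta(r)$ is pointwise nonincreasing in $\delta$ (as $\omega_\delta(r)$ is decreasing in $\delta$ and $w\mapsto w^2\E\mathsf{G}(U/w)$ is nondecreasing); $\mathsf{r}_+(\delta)\to\infty$ as $\delta\downarrow1/2$ (a finite limit $\mathsf{r}$ would force $2(\mathsf{r}^2+\sigma^2)\cdot\tfrac12\le\mathsf{r}^2$, impossible); and $\mathsf{r}_+(\delta)\to0$ as $\delta\uparrow\infty$ (then $\omega_\delta(\mathsf{r}_+(\delta))\to0$ and $\mathsf{G}\le1$). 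I then de-randomize: with $\mu_0$-probability tending to $1$, $r_n(\mu_0)^2\pequiv r_{n,+}^2$ and $\omega_{m/n}(r_n(\mu_0))\pequiv\omega_{m/n}(r_{n,+})$, via a uniform law of large numbers for $\omega\mapsto n^{-1}\sum_i\mathsf{G}(\mu_{0,i}/\omega)$ on the compact window of noise levels containing the fixed point (monotonicity of $\mathsf{G}$ makes this uniformity automatic, in Glivenko--Cantelli fashion, even with only $\E U^2<\infty$) together with the quantitative stability of the strictly increasing fixed point map against perturbations of its defining function, as in Proposition~\ref{prop:exist_unique_fixed_pt}-(2). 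Under $\liminf_n m/n>1/2$, the monotonicity above gives $r_{n,+}^2=\mathsf{r}_+(m/n)^2\le\mathsf{r}_+(\liminf_n m/n)^2<\infty$ eventually, so $r_n(\mu_0)^2\le C$ with $\mu_0$-probability tending to $1$, which together with $m>n/2\gg\mathfrak{L}_n$ verifies (R1).

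I then apply the abstract theorems conditionally on a typical $\mu_0$. For (1): (R1) holds by the previous step, and by the identities above condition (R2) becomes $\sigma^{-2}\limsup_n\omega_{m/n}^2(r_{n,+})\E\mathsf{H}(U/\omega_{m/n}(r_{n,+}))<1$, which is (\ref{cond:orthant_R2_1}); Theorem~\ref{thm:risk_asymp} then gives $n^{-1}\pnorm{\hat{\mu}(\sigma)-\mu_0}{}^2\pequivx r_n(\mu_0)^2+\bigopx(\mathfrak{L}_n/m)$, and since $\mathfrak{L}_n\lesssim\log n\ll n$ while $r_{n,+}^2\ge\sigma^2 n/(2m)$ (from $\mathsf{G}\ge\tfrac12$) the remainder is $\smallopx(r_{n,+}^2)$, so $n^{-1}\pnorm{\hat{\mu}(\sigma)-\mu_0}{}^2\pequiv r_{n,+}^2$. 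For (2): now $m/n\to\delta\in(1/2,\infty)$, so $r_n(\mu_0)\to\mathsf{r}_+(\delta)\in(0,\infty)$ and condition (R2-c) with $\omega_\tau(\mathsf{r})=\omega_\delta(\mathsf{r}_+(\delta))$ becomes exactly (\ref{cond:orthant_R2_2}); Theorem~\ref{thm:risk_asymp_const} yields $n^{-1}\pnorm{\hat{\mu}(\sigma)-\mu_0}{}^2\pequiv\mathsf{r}_+^2(\delta)$. For (3): $m/n\to\infty$ gives $\bar r_n^2=\omega_{m/n}^2(0)\cdot n^{-1}\sum_i\mathsf{G}(\mu_{0,i}/\omega_{m/n}(0))\pequiv\sigma^2(n/m)\,\E\mathsf{G}\big(U/(\sigma\sqrt{n/m})\big)$, and dominated convergence gives $\E\mathsf{G}(U/(\sigma\sqrt{n/m}))\to\mathsf{G}(\infty)\,\Prob(U>0)+\mathsf{G}(0)\,\Prob(U=0)=1-p_0/2$, hence $\bar r_n^2\pequiv(1-p_0/2)\sigma^2 n/m\to0$; condition (R2) holds with limit $0$ by Proposition~\ref{prop:R2_verify}-(2), since $K_+$ is a closed convex cone with $L(K_+)=\{0\}$, $\pnorm{\mu_0}{}/\sqrt n\pequiv(\E U^2)^{1/2}=\mathcal{O}(1)$ and $r_n\to0$; Theorem~\ref{thm:risk_asymp_consist} then gives $n^{-1}\pnorm{\hat{\mu}(\sigma)-\mu_0}{}^2\pequiv\bar r_n^2+\bigop(\mathfrak{L}_n/m)\pequiv(1-p_0/2)\sigma^2 n/m$.

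The main obstacle is the de-randomization step. The abstract theorems are stated for a deterministic signal, whereas conditions (\ref{cond:orthant_R2_1})--(\ref{cond:orthant_R2_2}) in the statement are population quantities, so the crux is to show that the conditional fixed point $r_n(\mu_0)$ and the noise level $\omega_n$ that it feeds into (R2) concentrate tightly enough around their population counterparts, uniformly in $n$. With only $\E U^2<\infty$ this requires care near $m/n\downarrow1/2$, where $\mathsf{r}_+(m/n)$ blows up; the boundedness of $\mathsf{r}_+$ away from $1/2$ obtained in the fixed point analysis, combined with the monotone-function uniform law of large numbers, is precisely what makes the reduction go through.
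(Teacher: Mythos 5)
Your proposal follows essentially the same route as the paper's proof: the closed-form identities $\E\err(\sigma)=\sigma^2\sum_i\mathsf{G}(\mu_{0,i}/\sigma)$ and $\E\lrt(\sigma)-\E\err(\sigma)=2\sigma^2\sum_i\mathsf{H}(\mu_{0,i}/\sigma)$ with $\delta_{K_+}=n/2$, the monotonicity analysis of the fixed point in $\delta$ and its limits, the Glivenko--Cantelli de-randomization of the empirical fixed point $r_n(\mu_0)$ toward $r_{n,+}$, conditional verification of (R1)--(R2) on a good $\mu_0$-event, application of the abstract theorems, and dominated convergence to combine the two sources of randomness, with only cosmetic differences (invoking Theorem \ref{thm:risk_asymp_const} for part (2) and Theorem \ref{thm:risk_asymp_consist} plus Proposition \ref{prop:R2_verify}-(2) for part (3), where the paper reduces both to part (1)). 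The one loose point is that your monotone-class uniform law of large numbers covers the $\mathsf{G}$-averages but not the non-monotone $\mathsf{H}$-average entering (R2); since $\mathsf{H}$ is bounded, this is repaired by the same one-line variance/Chebyshev bound the paper uses, so it does not affect correctness.
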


The setting in Theorem \ref{thm:orthant} where the coordinates of $\mu_0$ are random with a `prior' distribution has been commonly adopted in a different direction; see e.g. \cite{bayati2012lasso,donoho2016high,sur2019modern} in a number of different high dimensional problems. From a purely technical standpoint, the proof of Theorem \ref{thm:orthant} suggests a technique to apply Theorem \ref{thm:risk_asymp} in such a random signal setting. 

Let us now examine some more concrete examples of Theorem \ref{thm:orthant}.

\begin{example}
Suppose that the random variable $U$ charges a point mass at $u\geq 0$, so $\mu_0 = u \bm{1}_n$ is deterministic.

\noindent (\textbf{Case 1}). Let $u=0$. Then $\mu_0=0$. This is a `degenerate' case in which Regime II in Figure \ref{fig:m_regime} does not exist: $\delta_{T_{K_+}(\mu_0)}=\delta_{K_+}=n/2$ (cf. \cite[Table 3.1]{amelunxen2014living}). Using $\mathsf{G}(0)=1/2$, it is easy to solve the fixed point equation (\ref{eqn:orthant_fixed_point_eqn}) to obtain $r_{n,+}^2 = \sigma^2\cdot (n/2)/(m-(n/2))$, and the condition (\ref{cond:orthant_R2_1}) is automatically fulfilled. Consequently, $n^{-1}\pnorm{\hat{\mu}(\sigma)}{}^2\pequiv \sigma^2\cdot (n/2)/(m-(n/2))$, provided $\liminf_n (m/n)>1/2$.

\noindent (\textbf{Case 2}). Let $u>0$. Then $\delta_{T_{K_+}(\mu_0)}=\delta_{\R_n}=n$, while $\delta_{K_+} = n/2$. In Regime I where $\liminf_n (m/n)>1$, the condition (\ref{cond:orthant_R2_1}) is satisfied by Proposition \ref{prop:R2_verify}-(1) (this can also be verified directly by using $H< 0.5\leq1-G$), so the risk is directly solvable from (\ref{eqn:orthant_fixed_point_eqn}). In Regime II where $1/2<\liminf_n (m/n)\leq \limsup_n (m/n)< 1$, risk asymptotics exist for $\hat{\mu}(\sigma)$ only if (\ref{cond:orthant_R2_1}) is verified. In fact, the counter-example constructed in the proof of Proposition \ref{prop:R2}-(3) falls in this regime that violates (\ref{cond:orthant_R2_1}).
\end{example}

\noindent \textbf{Illustrative simulation I}. We carry out a small simulation in Case 2 of the above example with $n=50, u=5$ and Gaussian error with noise level $\sigma=1$. The simulation result is summarized in the left panel of Figure \ref{fig:risk_asymp}. The theoretical risk $r_{n,+}$ from the fixed point equation (\ref{eqn:orthant_fixed_point_eqn}) (red curve) is computed via the iterative scheme in Proposition \ref{prop:exist_unique_fixed_pt}-(2). The approximate theoretical risk $\tilde{r}_{n,+}=(\sigma^2 n/m)^{1/2}$ (green curve) refers to the risk asymptotics in Theorem \ref{thm:orthant}-(3) that is proved to be valid in the regime $m/n \to \infty$. The empirical risk (blue curve) is computed via the Monte Carlo average over 1000 repetitions. By the left panel of Figure \ref{fig:risk_asymp}, the theoretical and empirical risk curves are almost indistinguishable. The gap between these curves and the approximate theoretical risk curve diminishes as $m$ grows. These numerical findings match the theory in Theorem \ref{thm:orthant}.

\subsection{Shape constrained problems}\label{section:shape}

In the Gaussian sequence model (\ref{model:seq}), shape constrained regression consists of a class of problems that impose certain qualitative structures on $K$. Two canonical examples are the monotone cone $K_{\uparrow}$ corresponding to univariate isotonic regression, and the cone $K_{\vee}$ corresponding to univariate convex regression with equally spaced design points:
\begin{align*}
K_{\uparrow}&\equiv \big\{\mu \in \R^n: \mu_1\leq \cdots \leq \mu_n\big\},\\
K_{\vee}& \equiv \big\{\mu \in \R^n: 2\mu_i\leq \mu_{i-1}+\mu_{i+1},\, i=2,\ldots,n-1\big\}.
\end{align*}
It is now well understood that the LSE $\hat{\mu}_K^{\seq}$ can `adapt' to certain `low-dimensional structures' associated with $K$. This is usually formulated using the so-called (sharp) oracle inequalities, e.g., \cite{chatterjee2015risk,bellec2018sharp,chatterjee2018matrix,han2019isotonic,fang2021multivariate,kur2020convex}. For instance, in the example of monotone cone $K_{\uparrow}$, the `low-dimensional structures' in $K_{\uparrow}$ refer to the class of piecewise constant signals $\cup_{1\leq k\leq n}\mathcal{M}_k$, where $\mathcal{M}_k$ denotes the class of all piecewise constant $\mu \in K_{\uparrow}$ with at most $k$ pieces. The isotonic LSE $\hat{\mu}_K^{\seq}$ satisfies a sharp oracle inequality (cf. \cite[Theoem 3.2]{bellec2018sharp}):
\begin{align}\label{ineq:orac_ineq_iso}
 \frac{1}{n}\cdot \E \err(\sigma)\leq \inf_{1\leq k\leq n} \bigg(\inf_{\mu \in \mathcal{M}_k}\frac{\pnorm{\mu-\mu_0}{}^2}{n}+ \frac{\sigma^2 k\log (en/k)}{n}\bigg).
\end{align}
The adaptive behavior of $\hat{\mu}_K^{\seq}$ refers to the fact if $\mu_0 \in \mathcal{M}_k$, then the LSE  $\hat{\mu}_K^{\seq}$ achieves a near parametric risk $n^{-1}\E \err(\sigma)\leq \sigma^2 k \log (en/k)/n$, as opposed to the much bigger nonparametric risk $n^{-1/3}$ for general $\mu_0 \in K_{\uparrow}$ (cf. \cite{zhang2002risk}). 

\begin{figure}[t]
	\begin{minipage}[t]{0.495\textwidth}
		\includegraphics[width=\textwidth]{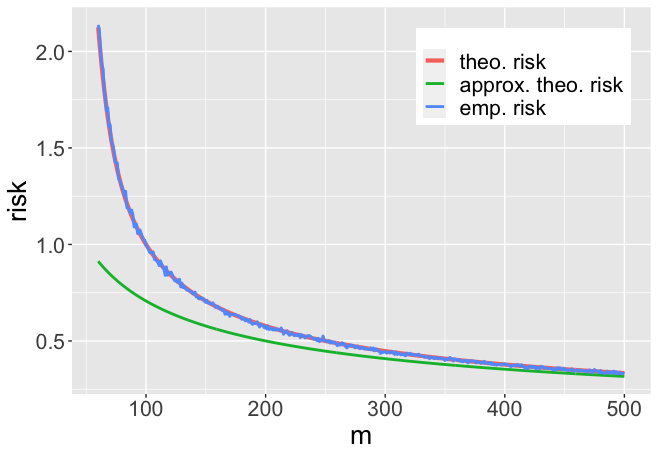}
	\end{minipage}
	\begin{minipage}[t]{0.495\textwidth}
		\includegraphics[width=\textwidth]{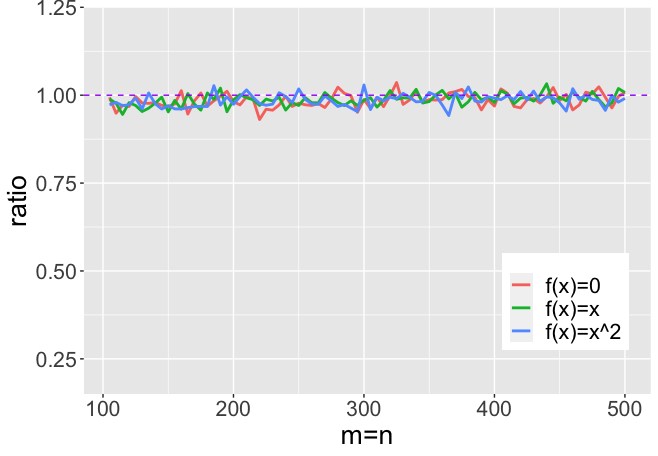}
	\end{minipage}
	\caption{$\sigma=1$. \emph{Left panel}: (Non-negative least squares) $n=50,u=5$. Theoretical risk = $r_{n,+}$; approximate theoretical risk = $(\sigma^2 n/m)^{1/2} = \tilde{r}_{n,+}$. \emph{Right panel}: (Isotonic regression) $100\leq m=n\leq 500, (\mu_0)_i=f(i/n)$. Ratio = theoretical risk/empirical risk. }
	\label{fig:risk_asymp}
\end{figure}

Our goal here is to provide (asymptotically) sharp oracle inequalities analogous to (\ref{ineq:orac_ineq_iso}), for the constrained LSE $\hat{\mu}(\sigma)$ in the noisy Gaussian linear measurement model (\ref{model:linear_inverse}) for general pairs of $(K,\mu_0)$. To this end, let us give a general formulation of `low-dimension structures' in $K$: For $1\leq k\leq n$, let
\begin{align*}
\mathscr{K}_k&\equiv \big\{K_S\cap K:  S=\{S_\ell\}_{\ell=1}^k \in \mathcal{P}_n\hbox{ is a partition of } [1:n],\\
&\qquad\qquad K_S\equiv \times_{\ell=1}^k K_{S_\ell}\equiv \{\nu \in \R^n: \nu|_{S_\ell} \in K_{S_\ell}\subset \R^{S_\ell}\}   \\
&\qquad\qquad \hbox{where $K_{S_\ell}$ is a closed convex cone s.t. $K|_{S_\ell}-K_{S_\ell}\subset K|_{S_\ell}$} \big\}.
\end{align*}

\begin{theorem}\label{thm:orac_ineq_shape}
Suppose Assumption \ref{assump:X_xi} holds and $K$ is a closed convex cone. Let
\begin{align*}
\mathcal{E}_K(\mu_0)\equiv \inf_{1\leq k\leq n}\inf_{ \substack{\mu \in K_S\cap K \in \mathscr{K}_k,\\ S=\{S_\ell\}_{\ell=1}^k \in \mathcal{P}_n }}\bigg(\frac{\pnorm{\mu-\mu_0}{}^2}{n}+ \frac{\sigma^2}{m}\sum_{\ell=1}^k \delta_{K|_{S_\ell}}\bigg).
\end{align*}
Then for any (sequence of) near minimizer(s) $\hat{\mu}(\sigma)$ satisfying (\ref{def:approximate_minimizer}), as $n \to \infty$ the normalized estimation error satisfies
\begin{align*}
n^{-1}\pnorm{\hat{\mu}(\sigma)-\mu_0}{}^2\leq (1+\smallop(1))\mathcal{E}_K(\mu_0) +\bigop\big(\mathfrak{L}_n/m\big),
\end{align*}
provided $m\gg \mathfrak{L}_n$, $\mathcal{E}_K(\mu_0)=\smallo(1)$ and $\inf_{\nu \in L(K)}\pnorm{\mu_0-\nu}{}/\sqrt{n}=\bigo(1)$.
\end{theorem}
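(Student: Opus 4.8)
The strategy is to invoke Theorem~\ref{thm:risk_asymp_consist} (the vanishing-risk version of the master theorem) with the partition-based oracle bound used to control the deterministic rate $\bar r_n^2$. First I would observe that the hypotheses of Theorem~\ref{thm:orac_ineq_shape} are tailored exactly to this: the condition $\mathcal{E}_K(\mu_0)=\smallo(1)$ will force $\bar r_n^2\to 0$, and $\inf_{\nu\in L(K)}\pnorm{\mu_0-\nu}{}/\sqrt n=\bigo(1)$ together with $r_n\to 0$ will verify (R2) via Proposition~\ref{prop:R2_verify}-(2), while $m\gg\mathfrak{L}_n$ is assumed outright. So the whole argument reduces to the \emph{analytic} claim
\begin{align*}
\bar r_n^2 \;=\; n^{-1}\E\,\err\big(\omega_{m/n}(0)\big) \;\le\; (1+\smallo(1))\,\mathcal{E}_K(\mu_0) + \bigo\big(\mathfrak{L}_n/m\big),
\end{align*}
i.e. a sharp oracle inequality for the Gaussian sequence LSE $\hat\mu_K^{\seq}$ at noise level $\sigma_h=\omega_{m/n}(0)=\sigma/\sqrt{m/n}$, stated in terms of the class $\mathscr{K}_k$ of block-structured cones.

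The second and main step is therefore to prove this oracle inequality. I would fix a partition $S=\{S_\ell\}_{\ell=1}^k$ with $K_S\cap K\in\mathscr{K}_k$ and an element $\mu\in K_S\cap K$, and decompose the error $\err(\sigma_h)=\pnorm{\hat\mu_K^{\seq}(\sigma_h)-\mu_0}{}^2$ through $\mu$. The key structural fact, built into the definition of $\mathscr{K}_k$ via the hypothesis $K|_{S_\ell}-K_{S_\ell}\subset K|_{S_\ell}$, is a ``projection-compatibility'' property: because $K_S$ is a product cone $\times_\ell K_{S_\ell}$ and $\mu$ sits in $K_S\cap K$, the LSE error can be bounded block by block, with each block contributing a localized Gaussian-width / statistical-dimension term. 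Concretely, one uses that $\hat\mu_K^{\seq}$ is a contraction composed with a projection and the standard decomposition $\E\err \le \pnorm{\mu-\mu_0}{}^2 + (\text{fluctuation term})$, where the fluctuation term on block $S_\ell$ is governed by $\E\pnorm{\Pi_{K_{S_\ell}}(\sigma_h\, h|_{S_\ell})}{}^2 = \sigma_h^2\,\delta_{K_{S_\ell}} = \sigma_h^2\,\delta_{K|_{S_\ell}}$ (using homogeneity of the cone projection and that $\delta_{K_{S_\ell}}=\delta_{K|_{S_\ell}}$, which should follow from $K|_{S_\ell}-K_{S_\ell}\subset K|_{S_\ell}$ forcing $T_{K|_{S_\ell}}$-type agreement at $0$). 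Summing over $\ell$ and dividing by $n$, and substituting $\sigma_h^2 = \sigma^2 n/m$, gives $n^{-1}\E\err(\sigma_h) \le n^{-1}\pnorm{\mu-\mu_0}{}^2 + (\sigma^2/m)\sum_\ell \delta_{K|_{S_\ell}}$ up to lower-order cross terms; optimizing over $k$, $S$, $\mu$ yields $\mathcal{E}_K(\mu_0)$. The $\bigo(\mathfrak{L}_n/m)$ slack and the $(1+\smallo(1))$ factor come respectively from the error term already present in Theorem~\ref{thm:risk_asymp_consist}'s conclusion and from converting expected-error oracle bounds (which may carry $\log$ factors like $\log(en/k)$ absorbed into $\delta_{K|_{S_\ell}}$ or into $\mathfrak{L}_n$) into the clean asymptotic form.

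The third step is bookkeeping: check that $\mathcal{E}_K(\mu_0)=\smallo(1)$ indeed implies $m\gg\delta_K$ (so that the fixed-point equation has a solution and $\bar r_n^2\simeq r_n^2$, as promised in the discussion following Theorem~\ref{thm:risk_asymp_consist}) --- this is because the single-block choice $k=1$, $\mu=0$ (legitimate since $K$ is a cone, so $0\in K$) already gives $\bar r_n^2 \le \pnorm{\mu_0}{}^2/n + \sigma^2\delta_K/m$, and more to the point $\mathcal{E}_K(\mu_0)\ge \sigma^2\delta_K/m$ from that same choice after noting $\pnorm{\mu-\mu_0}{}^2\ge 0$; wait, one actually needs the reverse, so instead: any admissible term is $\ge (\sigma^2/m)\delta_{K|_{[1:n]}}$ only when $k=1$, and for the oracle to be $\smallo(1)$ we in particular need \emph{some} term $\smallo(1)$, which combined with a lower bound $\bar r_n^2\gtrsim \sigma^2\delta_K/m$ (valid because $\err(\sigma_h)\ge \pnorm{\Pi_K(\sigma_h h)}{}^2$-type quantities in a cone, giving $n^{-1}\E\err\gtrsim \sigma_h^2\delta_K/n = \sigma^2\delta_K/m$) forces $m\gg\delta_K$. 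Then verify (R2) through Proposition~\ref{prop:R2_verify}-(2) using $r_n\to 0$ and the signal-strength hypothesis, and finally apply Theorem~\ref{thm:risk_asymp_consist} and substitute the oracle bound for $\bar r_n^2$. The main obstacle I anticipate is the second step --- specifically, proving the block-decomposable oracle inequality with the \emph{sharp} leading constant $1$ on $\pnorm{\mu-\mu_0}{}^2/n$: naive convexity/contraction arguments give a constant larger than $1$, so one needs the refined sharp-oracle machinery (in the spirit of Bellec's approach for shape-constrained LSEs) adapted to the product-cone structure, carefully handling the cross terms between blocks and between $\mu-\mu_0$ and the Gaussian fluctuation, and ensuring the statistical-dimension identity $\delta_{K_{S_\ell}}=\delta_{K|_{S_\ell}}$ genuinely follows from the cone-closure condition imposed in the definition of $\mathscr{K}_k$.
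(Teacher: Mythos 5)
Your outer architecture matches the paper's: reduce to Theorem \ref{thm:risk_asymp_consist}, observe that $\mathcal{E}_K(\mu_0)=\smallo(1)$ forces $\bar{r}_n^2=n^{-1}\E\err\big(\omega_{m/n}(0)\big)\to 0$, verify (R2) from $r_n\to 0$ and the signal-strength condition via Proposition \ref{prop:R2_verify}-(2), and then everything hinges on the sequence-model oracle inequality $n^{-1}\E\err\big(\omega_{m/n}(0)\big)\leq \mathcal{E}_K(\mu_0)$. But that inequality is the actual content of the theorem, and it is exactly where your sketch has a genuine gap. Your proposed mechanism bounds the per-block fluctuation by $\E\pnorm{\Pi_{K_{S_\ell}}(\sigma_h\, h|_{S_\ell})}{}^2=\sigma_h^2\delta_{K_{S_\ell}}$ and then asserts $\delta_{K_{S_\ell}}=\delta_{K|_{S_\ell}}$. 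That identity is false in the motivating example: for isotonic regression with $\mu\in\mathcal{M}_k$ one takes $K_{S_\ell}$ to be the constant vectors on $S_\ell$, so $\delta_{K_{S_\ell}}=1$, while $\delta_{K|_{S_\ell}}\asymp \log\abs{S_\ell}$; the condition $K|_{S_\ell}-K_{S_\ell}\subset K|_{S_\ell}$ is a one-sided invariance and does not equate the two dimensions. The cones $K_{S_\ell}$ are not where the noise projects; their only role is to certify a containment of the tangent cone of $K$ at $\mu$.

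What closes the argument, and what the paper does, is the two-step sharp-oracle computation you flagged as the ``main obstacle'' but did not carry out. First, for any $\mu\in K$ the projection inequality $\pnorm{y-\mu}{}^2\geq \pnorm{y-\hat{\mu}_K^{\seq}}{}^2+\pnorm{\hat{\mu}_K^{\seq}-\mu}{}^2$ gives, after expanding $y=\mu_0+\sigma_h h$, the bound $\err(\sigma_h)\leq \pnorm{\mu-\mu_0}{}^2+\pnorm{\sigma_h h}{}^2-\pnorm{\sigma_h h-(\hat{\mu}_K^{\seq}-\mu)}{}^2$; since $\hat{\mu}_K^{\seq}-\mu\in T_K(\mu)$, Moreau's decomposition (Lemma \ref{lem:moreau_thm}) bounds the last two terms by $\pnorm{\Pi_{T_K(\mu)}(\sigma_h h)}{}^2$. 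This is the Bellec--Chatterjee argument and already delivers leading constant $1$ with no logarithmic loss, so neither the $(1+\smallop(1))$ factor nor any $\log(en/k)$ correction arises at this stage (they come only from Theorem \ref{thm:risk_asymp_consist}). Second, the block structure enters through a containment rather than a per-block projection identity: for $\mu\in K_S\cap K$ and any $\nu\in K$, $(\nu-\mu)|_{S_\ell}\in K|_{S_\ell}-K_{S_\ell}\subset K|_{S_\ell}$, hence $T_K(\mu)\subset \times_{\ell=1}^k K|_{S_\ell}$ and $\delta_{T_K(\mu)}\leq \sum_{\ell=1}^k \delta_{K|_{S_\ell}}$ by \cite[Proposition 3.1]{amelunxen2014living}. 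Taking expectations at noise level $\omega_{m/n}(0)=\sigma/\sqrt{m/n}$ and optimizing over $(k,S,\mu)$ yields $\bar{r}_n^2\leq \mathcal{E}_K(\mu_0)$; the rest of your bookkeeping is fine (in particular $m\gg\delta_K$ need not be checked separately, as it is derived from $\bar{r}_n^2\to 0$ inside the proof of Theorem \ref{thm:risk_asymp_consist}).
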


Now we examine the implications of Theorem \ref{thm:orac_ineq_shape} in the canonical shape constrained problem of isotonic regression with $K=K_{\uparrow}$.

\begin{corollary}\label{cor:risk_asymp_iso}
Suppose Assumption \ref{assump:X_xi} holds and $K=K_{\uparrow}$. Then for any (sequence of) near minimizer(s) $\hat{\mu}(\sigma)$ satisfying (\ref{def:approximate_minimizer}), as $n \to \infty$ the normalized estimation error $n^{-1}\pnorm{\hat{\mu}(\sigma)-\mu_0}{}^2$ is bounded from above by
\begin{align*}
(1+\smallop(1))\bigg[\inf_{1\leq k\leq n} \bigg(\inf_{\mu \in \mathcal{M}_k}\frac{\pnorm{\mu-\mu_0}{}^2}{n}+ \frac{\sigma^2 k\log (en/k)}{m}\bigg) \bigg] +\bigop\bigg(\frac{\mathfrak{L}_n}{m}\bigg) =\smallop(1),
\end{align*}
provided $m\gg \log n$ and $\mu_{0,n}-\mu_{0,1}=\bigo(1)$. The term $\bigop\big(\mathfrak{L}_n/m\big)$ can be eliminated when $\mu_0 \in \mathcal{M}_k$. 
\end{corollary}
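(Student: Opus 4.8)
The plan is to obtain Corollary~\ref{cor:risk_asymp_iso} as a direct specialization of Theorem~\ref{thm:orac_ineq_shape} to the monotone cone $K=K_{\uparrow}$. So there are three things to do: verify the hypotheses of that theorem for $K_{\uparrow}$, upper bound the abstract oracle quantity $\mathcal{E}_{K_{\uparrow}}(\mu_0)$ by the explicit isotonic oracle expression appearing in the corollary, and confirm that this expression is $\smallo(1)$ under the stated moment condition.

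For the hypotheses: $K_{\uparrow}$ is a closed convex cone, and since $\mathfrak{L}_n\lesssim\log(en)$ by \eqref{def:Ln}, the assumption $m\gg\log n$ forces $m\gg\mathfrak{L}_n$. The maximal linear subspace of $K_{\uparrow}$ is $L(K_{\uparrow})=\R\bm{1}_n$ (a vector and its negation are both nondecreasing only if constant), and since $\mu_0$ is nondecreasing, $\inf_{\nu\in L(K_{\uparrow})}\pnorm{\mu_0-\nu}{}/\sqrt n\le\pnorm{\mu_0-\mu_{0,1}\bm{1}_n}{}/\sqrt n\le\mu_{0,n}-\mu_{0,1}=\bigo(1)$, so the signal-strength condition holds. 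The only remaining hypothesis, $\mathcal{E}_{K_{\uparrow}}(\mu_0)=\smallo(1)$, will fall out of the bound in the next step.

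The core step is bounding $\mathcal{E}_{K_{\uparrow}}(\mu_0)$. I would restrict the infimum defining it to partitions $S=\{S_\ell\}_{\ell=1}^k$ of $[1:n]$ into consecutive blocks, taking $K_{S_\ell}=\R\bm{1}_{S_\ell}$; this is legitimate since adding a constant preserves monotonicity, so $K_{\uparrow}|_{S_\ell}-\R\bm{1}_{S_\ell}\subset K_{\uparrow}|_{S_\ell}$, and then $K_S\cap K_{\uparrow}$ is precisely the set of blockwise-constant nondecreasing vectors, which contains any $\mu\in\mathcal{M}_{k'}$ with $k'\le k$ once $S$ is taken to be the pieces of $\mu$. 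The complexity contribution is $\sum_\ell\delta_{K_{\uparrow}|_{S_\ell}}=\sum_\ell H_{|S_\ell|}$, where $H_d=\sum_{j=1}^d j^{-1}$ is the statistical dimension of the length-$d$ monotone cone (a standard fact); using $H_d\le\log(ed)$ together with concavity of $\log$ gives $\sum_{\ell=1}^k H_{|S_\ell|}\le k\log(en/k)$, and monotonicity of $x\mapsto x\log(en/x)$ covers $k'\le k$. Hence $\mathcal{E}_{K_{\uparrow}}(\mu_0)\le\inf_{1\le k\le n}\big(\inf_{\mu\in\mathcal{M}_k}\pnorm{\mu-\mu_0}{}^2/n+\sigma^2 k\log(en/k)/m\big)$, which is the bracketed quantity in the corollary. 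That it is $\smallo(1)$: approximating a nondecreasing $\mu_0$ with $\mu_{0,n}-\mu_{0,1}=\bigo(1)$ by a $k$-piecewise-constant nondecreasing vector (bucketing the value range into $k$ equal pieces) gives error $\lesssim((\mu_{0,n}-\mu_{0,1})/k)^2$, so choosing $k=k_n\to\infty$ slowly enough—possible because $m\gg\log n$—makes the whole expression vanish. Plugging into Theorem~\ref{thm:orac_ineq_shape} then yields $n^{-1}\pnorm{\hat\mu(\sigma)-\mu_0}{}^2\le(1+\smallop(1))\mathcal{E}_{K_{\uparrow}}(\mu_0)+\bigop(\mathfrak{L}_n/m)=\smallop(1)$, as claimed. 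For the closing remark, when $\mu_0\in\mathcal{M}_k$ one takes $S$ equal to the pieces of $\mu_0$: the approximation term is zero, $\mathcal{E}_{K_{\uparrow}}(\mu_0)\le\sigma^2 k\log(en/k)/m$, and the tangent cone $T_{K_{\uparrow}}(\mu_0)$ splits blockwise into monotone cones, so $\delta_{T_{K_{\uparrow}}(\mu_0)}=\sum_\ell H_{|S_\ell|}\le k\log(en/k)$ and thus $\mathfrak{L}_n\lesssim\log(1+k\log(en/k))+\log\log n\ll k\log(en/k)$; hence $\bigop(\mathfrak{L}_n/m)$ is of strictly smaller order than the leading term of $\mathcal{E}_{K_{\uparrow}}(\mu_0)$ and is absorbed into the $(1+\smallop(1))$ factor.

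I do not anticipate a deep obstacle—the result is an application of Theorem~\ref{thm:orac_ineq_shape}—but the step needing the most care is the translation of the abstract family $\mathscr{K}_k$ into blockwise-constant nondecreasing vectors with the right choice $K_{S_\ell}=\R\bm{1}_{S_\ell}$ and verification of the cone condition, together with the elementary but crucial bookkeeping ($H_d\le\log(ed)$, concavity of $\log$, monotonicity of $x\log(en/x)$) that produces the clean constant and, in the piecewise-constant case, makes $\mathfrak{L}_n/m$ provably negligible.
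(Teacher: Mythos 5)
Your proposal is correct and follows essentially the same route as the paper: specialize Theorem \ref{thm:orac_ineq_shape} to $K_{\uparrow}$ (with $K_{S_\ell}=\R\bm{1}_{S_\ell}$ and $\delta_{K_{\uparrow}|_{S_\ell}}=\sum_{j\le |S_\ell|}1/j\le\log(e|S_\ell|)$ plus Jensen to get $k\log(en/k)$), verify $\mathcal{E}_{K_{\uparrow}}(\mu_0)=\smallo(1)$ via a piecewise-constant approximation exploiting $\mu_{0,n}-\mu_{0,1}=\bigo(1)$, and absorb $\bigop(\mathfrak{L}_n/m)$ via $\delta_{T_{K_{\uparrow}}(\mu_0)}\le k\log(en/k)\gg\mathfrak{L}_n$ when $\mu_0\in\mathcal{M}_k$. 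The only cosmetic difference is that you bucket the value range (giving approximation error $\bigo(1/k^2)$) while the paper buckets the index range into equal blocks (giving $\bigo(1/k)$); both suffice.
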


In isotonic regression, $\delta_{K_\uparrow}=\sum_{i=1}^n (1/i)\simeq \log n$ (cf. \cite[Eqn. (D.12)]{amelunxen2014living}), while $\delta_{T_{K_{\uparrow}}(\mu_0)}\gtrsim n^{1/3}$ holds for general smooth monotone signals $\mu_0\in K_{\uparrow}$\footnote{To see this, let $\mu_{0,i} = f_0(i/n)$, where $f_0:[0,1]\to [0,1]$ is a smooth increasing function with $f'$ continuously bounded away from $0$ and $\infty$, then \cite[Theorem 2]{meyer2000degrees} yields that
\begin{align*}
\delta_{T_{K_{\uparrow}}(\mu_0)}\geq \E \pnorm{\hat{\mu}_{K_{\uparrow}}(1)-\mu_0}{}^2\simeq \E \mathbb{D}_1^2\cdot \int_0^1 \big(f'(x)/2\big)^{2/3}\,\d{x}\cdot n^{1/3},
\end{align*}
where $\mathbb{D}_1$ is the (scaled) Chernoff distribution (cf. \cite{groeneboom2014nonparametric,han2019limit,han2022berry}). }. Consequently, exact recovery of such smooth $\mu_0$'s requires $m\gg n^{1/3}$ many samples (Regime I in Figure \ref{fig:m_regime}) in the noiseless Gaussian linear measurement model, while the above corollary shows that only as few as $m\gg \log n$ samples (Regime I+II in Figure \ref{fig:m_regime}) are needed for consistent recovery in the noisy setting.

One heuristic way to understand this phenomenon can be described as follows. The isotonic LSE $\hat{\mu}_{K_{\uparrow}}^{\seq}$ in the Gaussian sequence model is known to `adapt' to constant signals with a much smaller mean squared error of order $\log n$ compared to the general order $n^{1/3}$ for smooth signals (cf. \cite{zhang2002risk,chatterjee2015risk,bellec2018sharp}). Now as the risk behavior of $\hat{\mu}(\sigma)$ is intrinsically tied to that of $\sigma\mapsto \E\pnorm{\hat{\mu}_{K_{\uparrow}}^{\seq}(\sigma)-\mu_0}{}^2= \E\err_{(K_{\uparrow},\mu_0)}(\sigma)$ \emph{in the high noise limit}, a regime in which the regular signal $\mu_0$ is `collapsed' to a constant-like signal after rescaling by the noise level. This suggests that the LSE $\hat{\mu}_{K_{\uparrow}}^{\seq}$ is essentially learning a constant signal at this noise scale, which requires $m\gg \log n$ instead of $m\gg n^{1/3}$ for consistent recovery.

It is easy to generalize Corollary \ref{cor:risk_asymp_iso} to other common shape constrained cones $K$, for instance the ones corresponding to multiple isotonic/convex regression on a fixed lattice design by using the results in \cite{han2019isotonic,kur2020convex}. The phenomenon described above also continues hold for those problems. We omit the details. 

\vspace{0.5em}

\noindent \textbf{Illustrative simulation II}. We carry out another illustrative simulation study for the isotonic regression studied above. The constrained LSE $\hat{\mu}(\sigma)$ is computed by the AMP algorithm described essentially in \cite[Section 7.2]{berthier2020state}\footnote{ The design matrix in the AMP literature (cf. \cite{bayati2011dynamics,javanmard2013state,berthier2020state}) usually works with i.i.d. $\mathcal{N}(0,1/m)$ entries instead of $\mathcal{N}(0,1/n)$ entries used in this paper, so a proper rescaling is needed.}: Let the initialization be $\mu^0 \equiv 0 \in \R^n, r^0 \equiv Y \in \R^m$. Now suppose for $t\geq 0$, $(\mu^0,r^0),\ldots,(\mu^{t},r^{t}) \in \R^n\times \R^m$ have been computed, then the $(t+1)$-iteration is
\begin{align*}
\mu^{t+1}& \equiv \Pi_{K_{\uparrow}} \big((n/m)X^\top r^t+\mu^t\big) \in \R^n,\\
r^{t+1}&\equiv Y-X\mu^{t+1}+ \frac{k_{\uparrow}\big((n/m)X^\top r^t+\mu^t\big)}{m}\cdot r^t \in \R^m. 
\end{align*}
Here $k_{\uparrow}(\nu) =$ the number of constant pieces in $\Pi_{K_{\uparrow}}(\nu)$. In principle $\hat{\mu}(\sigma)$ can also be computed using quadratic programming with a linear constraint. However, the above AMP algorithm seems in general much faster, in particular for larger scales of $m,n$ where its convergence typically only takes a few iterations. 

The right panel of Figure \ref{fig:risk_asymp} shows the ratio of (the square root of) the theoretical risk and the empirical risk for three signals $(\mu_0)_i = f(i/n)$ corresponding to $f(x)\in \{0,x,x^2\}$, where $m=n$ ranges from $100$ to $500$. Gaussian error with noise level $\sigma=1$ is used. The empirical risk of $\hat{\mu}(\sigma)$ is computed via the AMP algorithm described above via 500 Monte Carlo averages. The theoretical risk, i.e., the solution to the fixed point equation (\ref{eqn:fixed_pt_eqn}), is computed via the iterative scheme in Proposition \ref{prop:exist_unique_fixed_pt}-(2). The risk map $\sigma \mapsto \E \pnorm{\hat{\mu}_{K_{\uparrow}}^{\seq}(\sigma)-\mu_0}{}^2$ does not have a closed form formula for general monotone $\mu_0$'s, so is evaluated by Monte Carlo simulations as well. All curves in the right panel of Figure \ref{fig:risk_asymp} are quite uniformly close to $1$, giving strong support for our theory and (\ref{eqn:risk_uniform}). Note that although here the choice $m/n=1$ seems superficially to fall in the proportional high dimensional regime, the risk of $\hat{\mu}(\sigma)$ actually vanishes so the setting intrinsically requires Theorem \ref{thm:risk_asymp}.

\subsection{Generalized Lasso}\label{section:lasso}
The Lasso \cite{tibshirani1996regression} in its constrained form can be realized in our setup by taking $K\equiv K_{\lambda}\equiv K_{\ell_1,\lambda}\equiv \{\mu\in \R^n: \pnorm{\mu}{1}\leq \lambda\}$. This is a special case of the more general formulation, where for some closed convex function $\mathsf{f}$, the constraint is described by $K_{(\mathsf{f},\mu_0)}\equiv \{\mu\in \R^n: \mathsf{f}(\mu)\leq \mathsf{f}(\mu_0)\}$, cf. \cite{oymak2013squared,thrampoulidis2014simple}.

\begin{theorem}\label{thm:risk_asymp_lasso}
Let $	\delta_0\equiv \delta_{T_{K_{(\mathsf{f},\mu_0)} }(\mu_0)}$. Suppose $\liminf_n (m/\delta_0)>1$ and $m\gg \log\log n$. Then for any (sequence of) near minimizer(s) $\hat{\mu}(\sigma)$ satisfying (\ref{def:approximate_minimizer}), as $n \to \infty$ the normalized estimation error satisfies
\begin{align*}
n^{-1}\pnorm{\hat{\mu}(\sigma)-\mu_0}{}^2&\pequiv r_n^2 + \bigop\big(\mathfrak{L}_n/m\big) \leq \sigma^2\frac{ \delta_0}{m-\delta_0}+\bigop\bigg(\frac{\log \log n}{m}\bigg).
\end{align*}
The inequality takes equality if $K_{(\mathsf{f},\mu_0)}-\mu_0$ is a closed convex cone. 
\end{theorem}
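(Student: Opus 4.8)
The plan is to read Theorem~\ref{thm:risk_asymp_lasso} off Theorem~\ref{thm:risk_asymp} specialized to Regime~I, so that the task is mostly to verify the hypotheses of Theorem~\ref{thm:risk_asymp} and then to streamline its conclusion. Throughout, write $K\equiv K_{(\mathsf{f},\mu_0)}=\{\mu\in\R^n:\mathsf{f}(\mu)\leq\mathsf{f}(\mu_0)\}$, which is a closed convex set containing $\mu_0$, so Assumption~\ref{assump:K_cone} holds; Assumption~\ref{assump:X_xi} is in force.

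First I would place us in Regime~I and check condition~(R1). Since $\delta_K\leq\delta_{T_K(\mu_0)}=\delta_0$ (a general relation, cf.\ Proposition~\ref{prop:est_err_variance}) and $\liminf_n(m/\delta_0)>1$, fix $c>0$ with $m\geq(1+c)\delta_0$ for all large $n$; then $m>\delta_K$ eventually, so the fixed point equation~(\ref{eqn:fixed_pt_eqn}) admits the unique solution $r_n$ (Proposition~\ref{prop:exist_unique_fixed_pt}-(1)). From $m-\delta_0\geq c\delta_0$ and Proposition~\ref{prop:exist_unique_fixed_pt}-(3), $r_n^2\leq\sigma^2\delta_0/(m-\delta_0)\leq\sigma^2/c=\bigo(1)$, which is the first half of~(R1). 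For the second half: $m\gg\log\log n$ forces $m\to\infty$, and $\delta_0\leq m$ eventually, so $\log(1+\delta_0)\leq\log(1+m)=\smallo(m)$; together with $m\gg\log\log(16n)$ this gives $m\gg\log(1+\delta_0)+\log\log(16n)=\mathfrak{L}_n$. Finally~(R2) follows from Proposition~\ref{prop:R2_verify}-(1), whose hypothesis is exactly $\liminf_n(m/\delta_{T_K(\mu_0)})>1$.

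Next I would invoke Theorem~\ref{thm:risk_asymp}, which now yields $n^{-1}\pnorm{\hat{\mu}(\sigma)-\mu_0}{}^2\pequiv r_n^2+\bigop(\mathfrak{L}_n/m)$ for any near minimizer satisfying~(\ref{def:approximate_minimizer}); this is the first relation in the statement. The displayed inequality then follows from $r_n^2\leq\sigma^2\delta_0/(m-\delta_0)$ (Proposition~\ref{prop:exist_unique_fixed_pt}-(3)) together with the elementary estimate $\log(1+\delta_0)=\smallo(\delta_0)$ when $\delta_0\to\infty$ (and $=\bigo(1)$ otherwise): splitting $\mathfrak{L}_n/m=\log(1+\delta_0)/m+\log\log(16n)/m$ and using $\delta_0/m\leq\delta_0/(m-\delta_0)$, one gets $\bigop(\mathfrak{L}_n/m)=\smallop\big(\sigma^2\delta_0/(m-\delta_0)\big)+\bigop\big(\log\log n/m\big)$, so the bound becomes $\sigma^2\frac{\delta_0}{m-\delta_0}\,(1+\smallop(1))+\bigop\big(\log\log n/m\big)$. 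For the equality clause, when $K-\mu_0$ is a closed convex cone $C$ one has $\hat{\mu}_K^{\seq}(\sigma_h)=\mu_0+\sigma_h\Pi_C(h)$, hence $\err(\sigma_h)=\sigma_h^2\pnorm{\Pi_C(h)}{}^2$ and $\delta_K=\delta_{T_K(\mu_0)}=\E\pnorm{\Pi_C(h)}{}^2$; then the two-sided bounds of Proposition~\ref{prop:exist_unique_fixed_pt}-(3) coincide, forcing $r_n^2=\sigma^2\delta_0/(m-\delta_0)$ exactly.

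I do not anticipate a real obstacle: the statement is essentially Theorem~\ref{thm:risk_asymp} read in Regime~I. The only places needing (mild) care are deriving $m\gg\mathfrak{L}_n$ from $m\gg\log\log n$ (which hinges on $\delta_0<m$ in Regime~I, making $\log(1+\delta_0)$ at most logarithmic in $m$) and the bookkeeping that downgrades $\mathfrak{L}_n$ to $\log\log n$ in the remainder, since $\log(1+\delta_0)/m$ is always of smaller order than the principal term $\sigma^2\delta_0/(m-\delta_0)$.
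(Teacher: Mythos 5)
Your proposal is correct and follows essentially the same route as the paper: bound $r_n^2\leq\sigma^2\delta_0/(m-\delta_0)$ via the fixed point equation and $\E\err(\sigma)\leq\sigma^2\delta_0$, deduce (R1) from $m\gg\log\log n$ together with $\delta_0<m$ (so $\log(1+\delta_0)\ll m$), verify (R2) by Proposition \ref{prop:R2_verify}-(1), apply Theorem \ref{thm:risk_asymp}, and absorb $\log_+\delta_0/m$ into either the principal term or the $\log\log n/m$ remainder by the same dichotomy ($\delta_0\to\infty$ versus $\delta_0=\bigo(1)$). Your treatment of the cone case (homogeneity of $\Pi_C$ forcing equality in the fixed point bound) matches the paper's observation that the key inequality becomes an equality when $K_{(\mathsf{f},\mu_0)}-\mu_0$ is a closed convex cone.
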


Let us now examine two concrete examples of Theorem \ref{thm:risk_asymp_lasso}.

\begin{example}
Let $K$ be a closed convex cone, $\mathsf{f}=\bm{0}_K$ and $\mu_0=0$. This is an exceptionally simple case, where Theorem \ref{thm:risk_asymp_lasso} applies to obtain
\begin{align*}
n^{-1}\pnorm{\hat{\mu}(\sigma)-\mu_0}{}^2\pequiv \sigma^2\frac{ \delta_K}{m-\delta_K}+\bigop\bigg(\frac{\log \log n}{m}\bigg)
\end{align*}
under $\liminf_n (m/\delta_K)>1$ and $m\gg \log \log n$. To put this result in the literature, \cite[Theorem 3.1]{oymak2013squared} proved the above formula in a low noise limit $\sigma \downarrow 0$ setting; \cite[Eqn. (37)]{thrampoulidis2018precise} proved the above formula in the proportional high dimensional regime $m/n\to \tau \in (0,\infty)$ with $\delta_{K_0}/n\to \bar{\delta}_0 \in (0,1)$. Non-exact results, i.e., upper bounds for the risk, are proved in e.g. \cite{thrampoulidis2014simple}. To the best knowledge of the author, the exact risk result above is new in this simple setting. 
\end{example}

\begin{example}\label{example:lasso}
Consider the constrained Lasso setting, where $\mathsf{f}(\mu)\equiv \pnorm{\mu}{1}$.  \cite[Eqn. (4.4)]{amelunxen2014living} shows that with $s\equiv \pnorm{\mu_0}{0}$, 
\begin{align*}
n\big(\psi(s/n)-2/\sqrt{sn}\big)\leq \delta_0= \delta_{ T_{ K_{(\pnorm{\cdot}{1},\mu_0)}(\mu_0) }  }\leq n\psi(s/n),
\end{align*}
where $\psi(\rho)\equiv \inf_{\gamma\geq 0}\big\{\rho(1+\gamma^2)+(1-\rho)\E\big(\abs{\mathcal{N}(0,1)}-\gamma\big)_+^2\big\}$. A simple upper bound for $\delta_0$ is given by \cite[Proposition 3.10]{chandrasekaran2012convex}, which states that $\delta_0\leq 2s \log(n/s)+5s/4$. Consequently, Theorem \ref{thm:risk_asymp_lasso} applies to obtain that $n^{-1}\pnorm{\hat{\mu}(\sigma)-\mu_0}{}^2\pequiv r_n^2$, provided $\liminf_n (m/\delta_0)>1$ and $m\gg \log\log n$ (which holds in particular in the common regime $m\gg s\log (n/s))$. 

Using the well-known fact on the representation of projection onto $\ell_1$ norm balls (cf. Lemma \ref{lem:proj_l1_ball}), $r_n^2$ can also be described by a more explicit fixed point equation. For $x \in \R^n$ and $\lambda\geq 0$, let $\mathsf{u}_{\lambda}(x) \in \R$ be the unique solution to the equation $
\bigpnorm{\big(\abs{x}-\mathsf{u}_\lambda (x)\bm{1}_n\big)_+ }{1} = \lambda$. 
Here the functions $\abs{\cdot},(\cdot)_+,\sign(\cdot)$ are understood as applied component-wise. Then $r_n^2$ can be characterized as the unique solution to
\begin{align}\label{eqn:fixed_pt_eqn_lasso}
n^{-1} \E \biggpnorm{\sign\big(y_n(r_n)\big)\Big(\abs{y_n(r_n)}- \big\{\mathsf{u}_{\pnorm{\mu_0}{1}}\big(y_n(r_n)\big)\big\}_+\bm{1}_n\Big)_+}{}^2 = r_n^2,
\end{align}
where $y_n(r)\equiv \mu_0+\omega_{m/n}(r) h$. 
\end{example}

\begin{remark}
We compare the risk asymptotics for the constrained Lasso in Example \ref{example:lasso} to the penalized Lasso: For $\sigma^2,\lambda>0$, define
\begin{align*}
\hat{\mu}_{\plasso}(\sigma;\lambda) \in \argmin_{\mu \in \R^n} \bigg\{\frac{1}{2} \pnorm{Y-X\mu}{}^2+\lambda \pnorm{\mu}{1}\bigg\}, 
\end{align*}
and its `counterpart' in the Gaussian sequence model (\ref{model:seq})
\begin{align*}
\hat{\mu}_{\plasso}^{\seq}(\sigma;\lambda) \equiv \argmin_{\mu \in \R^n} \bigg\{\frac{1}{2} \pnorm{y-\mu}{}^2+\lambda \pnorm{\mu}{1}\bigg\} = \Big((\abs{y_i}-\lambda)_+\cdot \sign(y_i)\Big)_i. 
\end{align*}
In the proportional high dimensional regime $m/n\to \tau \in (0,\infty)$, \cite[Corollary 1.6]{bayati2012lasso} proved that, under several other conditions, for any fixed $\sigma^2,\lambda>0$, there exists some $\lambda_\ast>0$ and $\omega_\ast>\sigma$ which solves certain fixed point equation, and
\begin{align}\label{eqn:plasso_risk}
n^{-1}\pnorm{ \hat{\mu}_{\plasso}(\sigma;\lambda)-\mu_0 }{}^2 \pequiv n^{-1} \E \pnorm{\hat{\mu}_{\plasso}^{\seq}(\omega_\ast;\lambda_\ast)-\mu_0}{}^2.
\end{align}
Our results show that for the constrained Lasso as in Example \ref{example:lasso}, as long as $m\gg s\log (n/s)$, with $\omega_n\equiv \omega_{m/n}(r_n)$ where $r_n$ solves the fixed point equation (\ref{eqn:fixed_pt_eqn_lasso}), 
\begin{align}\label{eqn:classo_risk}
n^{-1}\pnorm{ \hat{\mu}(\sigma)-\mu_0 }{}^2 \pequiv n^{-1} \E \bigpnorm{\hat{\mu}^{\seq}_{ K_{(\pnorm{\cdot}{1},\mu_0)} }(\omega_n)-\mu_0}{}^2.
\end{align}
Clearly (\ref{eqn:plasso_risk}) and (\ref{eqn:classo_risk}) are similar in spirit: the risks of the penalized and constrained Lasso can be characterized by their counterparts in the Gaussian sequence model with a different noise level. On the other hand, for the penalized Lasso the effective noise level is always inflated $\omega_\ast>\sigma$ in the proportional high dimensional regime $m/n\to \tau \in (0,\infty)$, while for the constrained Lasso the effective noise level $\omega_n$ can in principle be either inflated or deflated. Furthermore, (\ref{eqn:classo_risk}) for the constrained Lasso holds in a much wider regime $m\gg s \log (n/s)$ than the proportional high dimensional regime $m/n \to \tau \in (0,\infty)$ as required for (\ref{eqn:plasso_risk}). 
\end{remark}

\section{Estimation error, LRT and DoF processes}\label{section:est_err_lrt_process}

In this section we present several important analytic and probabilistic results for $\err(\cdot)$, $\lrt(\cdot)$ defined in (\ref{def:F}) and (\ref{def:lrt}) respectively, as well as $\dof(\cdot)$ to be defined in (\ref{def:dof}) below. These results will be essential to the proofs of the main results in Section \ref{section:risk_asymp}. Proofs for most results in this section can be found in Section \ref{section:proof_est_lrt_dof}. 

\subsection{Estimation error process}\label{section:est_process}

We start with analytic properties of $\err(\cdot)$.

\begin{lemma}\label{lem:monotone_est_err}
	The following hold.
	\begin{enumerate}
		\item The map $\sigma \mapsto \err(\sigma)$ is non-decreasing on $[0,\infty)$.
		\item $0\leq \err'(\sigma)\leq 2\err(\sigma)/\sigma$ a.e., or equivalently, the map $\sigma \mapsto \err(\sigma)/\sigma^2$ is non-increasing on $(0,\infty)$. 
	\end{enumerate}
\end{lemma}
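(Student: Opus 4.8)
The starting point is that, with the sequence-model response $y=\mu_0+\sigma_h h$, the LSE is just a metric projection, $\hat{\mu}_K^{\seq}(\sigma_h)=\Pi_K(\mu_0+\sigma_h h)$. Writing $\phi(\sigma):=\Pi_K(\mu_0+\sigma h)$ and $\psi(\sigma):=\phi(\sigma)-\mu_0$, one has $\err(\sigma)=\pnorm{\psi(\sigma)}{}^2$. Since $\Pi_K$ is $1$-Lipschitz, $\sigma\mapsto\phi(\sigma)$ is $\pnorm{h}{}$-Lipschitz on $[0,\infty)$, so $\err$ is locally Lipschitz and in particular absolutely continuous on compact subintervals of $(0,\infty)$; hence it suffices to control $\err'$ a.e., and the equivalence asserted in (2) is the elementary identity $\big(\err(\sigma)/\sigma^2\big)'=\big(\sigma\,\err'(\sigma)-2\,\err(\sigma)\big)/\sigma^3$.

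For (1) I would argue directly from the variational characterization of $\Pi_K$. Fix $0\le\sigma_1<\sigma_2$; for $i\in\{1,2\}$, $\iprod{(\mu_0+\sigma_i h)-\phi(\sigma_i)}{\mu-\phi(\sigma_i)}\le 0$ for all $\mu\in K$. Taking $\mu=\phi(\sigma_{3-i})$ and adding the resulting two inequalities yields $(\sigma_2-\sigma_1)\iprod{h}{\psi(\sigma_2)-\psi(\sigma_1)}\ge\pnorm{\psi(\sigma_2)-\psi(\sigma_1)}{}^2\ge 0$, hence $\iprod{h}{\psi(\sigma_2)-\psi(\sigma_1)}\ge 0$. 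Taking instead $\mu=\phi(\sigma_2)$ in the $\sigma_1$-inequality gives $\iprod{\psi(\sigma_1)}{\psi(\sigma_2)}\ge\pnorm{\psi(\sigma_1)}{}^2+\sigma_1\iprod{h}{\psi(\sigma_2)-\psi(\sigma_1)}\ge\pnorm{\psi(\sigma_1)}{}^2$, and Cauchy--Schwarz then forces $\pnorm{\psi(\sigma_1)}{}\le\pnorm{\psi(\sigma_2)}{}$, i.e. $\err(\sigma_1)\le\err(\sigma_2)$. Since $\mu_0\in K$ gives $\err(0)=\pnorm{\Pi_K(\mu_0)-\mu_0}{}^2=0$, this is monotonicity on all of $[0,\infty)$.

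For (2) the plan is to compute $\phi'$. At a point $\sigma_0>0$ where the Lipschitz map $\sigma\mapsto\phi(\sigma)$ is differentiable (a.e.\ by Rademacher in one variable), put $p:=\phi(\sigma_0)$ and $r:=\mu_0+\sigma_0 h-p$, which lies in the normal cone of $K$ at $p$; invoke the classical formula for the one-sided derivative of a metric projection onto a closed convex set (Zarantonello, Haraux; see also Rockafellar--Wets), namely $\phi'(\sigma_0)=\Pi_{\mathcal{C}}(h)$ with $\mathcal{C}:=T_K(p)\cap r^\perp$ a closed convex cone. Two facts about $\mathcal{C}$ finish the proof: (i) every element of $\mathcal{C}$ is orthogonal to $r$, so completing the square gives $\Pi_{\mathcal{C}}(\sigma_0 h)=\Pi_{\mathcal{C}}\big(\psi(\sigma_0)+r\big)=\Pi_{\mathcal{C}}\big(\psi(\sigma_0)\big)$, which by positive homogeneity of the projection onto a cone equals $\sigma_0\Pi_{\mathcal{C}}(h)=\sigma_0\phi'(\sigma_0)$; (ii) $\iprod{h}{\Pi_{\mathcal{C}}(h)}=\pnorm{\Pi_{\mathcal{C}}(h)}{}^2$ (Moreau's cone identity) and $\iprod{r}{\Pi_{\mathcal{C}}(h)}=0$. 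Combining these, using $\psi(\sigma_0)=\sigma_0 h-r$, and using that $\Pi_{\mathcal{C}}$ is $1$-Lipschitz and fixes $0$,
\begin{align*}
\err'(\sigma_0)=2\iprod{\psi(\sigma_0)}{\phi'(\sigma_0)}=2\iprod{\sigma_0 h-r}{\Pi_{\mathcal{C}}(h)}=2\sigma_0\pnorm{\Pi_{\mathcal{C}}(h)}{}^2=\frac{2}{\sigma_0}\pnorm{\Pi_{\mathcal{C}}\big(\psi(\sigma_0)\big)}{}^2\le\frac{2\,\err(\sigma_0)}{\sigma_0},
\end{align*}
and the middle expression $2\sigma_0\pnorm{\Pi_{\mathcal{C}}(h)}{}^2\ge 0$ simultaneously re-proves (1).

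The main obstacle is the rigorous use of the projection directional-derivative formula and the attendant measure-theoretic bookkeeping: one should note that the one-sided derivative $\phi'_+(\sigma)=\Pi_{\mathcal{C}_\sigma}(h)$ exists for every $\sigma>0$, that it agrees with the two-sided derivative wherever $\phi$ is differentiable, and---if one prefers to avoid the a.e.\ argument---that a continuous function with nonnegative (resp.\ nonpositive) right derivative is non-decreasing (resp.\ non-increasing), which lets one run everything with $\phi'_+$. A self-contained alternative to the directional-derivative formula is the second-order route via Alexandrov's theorem: $\Pi_K=\nabla\big(\tfrac12\pnorm{\cdot}{}^2-\tfrac12\dist{\cdot}{K}^2\big)$ is the gradient of a finite convex function, so a.e.\ its Jacobian $J_{\Pi_K}$ exists, is symmetric, and satisfies $0\preceq J_{\Pi_K}\preceq I$ (by monotonicity and $1$-Lipschitzness); since $\Pi_K$ is locally constant along any normal direction at $p$, one gets $J_{\Pi_K}(\mu_0+\sigma h)\,r=0$, and plugging $\phi'(\sigma)=J_{\Pi_K}(\mu_0+\sigma h)h$ and $\sigma h=\psi(\sigma)+r$ into $\err'(\sigma)=2\iprod{\psi(\sigma)}{\phi'(\sigma)}$ yields $\sigma\phi'(\sigma)=J_{\Pi_K}(\mu_0+\sigma h)\psi(\sigma)$ and the same two bounds. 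It is worth recording that the purely elementary manipulations of (1) genuinely fail to give (2): they recover the bound only when $K-\mu_0$ is a cone, which is exactly the case in which $\err(\sigma)/\sigma^2$ is constant, so the second-order/projection-derivative structure is really being used.
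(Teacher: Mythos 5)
Your part (1) is correct, and it is a genuinely different route from the paper's: the paper gets monotonicity of $\sigma\mapsto\err(\sigma)$ from Chatterjee's representation of $\err^{1/2}(\sigma)$ as the argmax of the strictly concave drifted process $t\mapsto \sigma M(t)-t^2/2$, whereas you obtain it directly by combining the two obtuse-angle inequalities for $\Pi_K$ at $\sigma_1,\sigma_2$ and Cauchy--Schwarz. Your argument is more elementary and self-contained, at no loss.

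Part (2), however, has a genuine gap, and it sits exactly where you wave at "measure-theoretic bookkeeping". Your primary route asserts that the one-sided derivative $\phi_+'(\sigma)=\Pi_{T_K(p)\cap r^\perp}(h)$ exists for every $\sigma>0$. That formula is a theorem for polyhedral $K$ (Haraux) and under additional second-order regularity of $\partial K$, but it is false for a general closed convex set: the metric projection need not be directionally differentiable (Kruskal's example of a compact convex body in $\R^3$ with a nondifferentiable nearest-point map), and Assumption B only requires $K$ closed and convex. The Alexandrov alternative does not repair this. Alexandrov/Rademacher give a symmetric Jacobian $0\preceq \J_{\Pi_K}\preceq I$ at Lebesgue-a.e.\ point of $\R^n$, but the line $\{\mu_0+\sigma h:\sigma>0\}$ is a Lebesgue-null set, so for a fixed $h$ nothing guarantees that $\J_{\Pi_K}(\mu_0+\sigma h)$ exists for a.e.\ $\sigma$; the one-dimensional a.e.\ derivative of the Lipschitz map $\sigma\mapsto\phi(\sigma)$ that Rademacher does supply need not have the form $\J_{\Pi_K}(\mu_0+\sigma h)h$ on which your computation (the steps $\J_{\Pi_K}(\mu_0+\sigma h)r=0$ and $\sigma\phi'(\sigma)=\J_{\Pi_K}(\mu_0+\sigma h)\psi(\sigma)$) relies. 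A Fubini argument over $h$ would only yield the statement for almost every $h$, which is weaker than the deterministic claim and weaker than what the paper later uses pointwise (e.g.\ the stability estimate $\err(M\sigma)\leq M^2\err(\sigma)$ for every realization).

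The paper closes this exact hole differently, and the fix can be grafted onto your argument: carry out the derivative computation only for convex polytopes, where the piecewise-affine structure (equivalently, Haraux's theorem) makes your critical-cone identity legitimate and $\J_{\Pi_K}$ an orthogonal projection off a negligible set of $\sigma$'s; then restate (2) as the pointwise inequality $\err(M\sigma)\leq M^2\err(\sigma)$ for all $M\geq 1$, and pass to a general closed convex $K$ by approximating $K\cap B(R)$ in Hausdorff distance by polytopes, using pointwise convergence of the associated projections. With that approximation step added, your critical-cone computation gives a complete proof; as written, part (2) does not go through for general $K$.
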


Both claims in Lemma \ref{lem:monotone_est_err} are important qualitative statements for the estimation error process. The monotonicity of $\sigma \mapsto \err(\sigma)/\sigma^2$ will be fundamental in the proof of a number of results, including the existence and uniqueness of the solution to the fixed point equation (\ref{eqn:fixed_pt_eqn}) in Proposition \ref{prop:exist_unique_fixed_pt}.  Another particularly important consequence of the monotonicity properties of the estimation process processes proved in the above lemma is its stability, explicitly formulated as below. 
\begin{proposition}\label{prop:est_err_stability}
	For any $\sigma\geq 0, M\geq 1$, 
	\begin{align*}
	\err(\sigma)\leq \err(M\sigma)\leq M^2 \err(\sigma).
	\end{align*}
\end{proposition}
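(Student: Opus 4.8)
The statement is an immediate consequence of the two monotonicity properties established in Lemma~\ref{lem:monotone_est_err}, so the plan is simply to unpack those two properties at the points $\sigma$ and $M\sigma$. First, for the left inequality $\err(\sigma)\le \err(M\sigma)$: since $M\ge 1$ we have $M\sigma \ge \sigma$, and part~(1) of Lemma~\ref{lem:monotone_est_err} says the map $t\mapsto \err(t)$ is non-decreasing on $[0,\infty)$; apply it with $t=\sigma$ and $t=M\sigma$. (If $\sigma=0$ the claim is trivial since $\err(0)=\pnorm{\hat\mu_K^{\seq}(0)-\mu_0}{}^2=0$, the projection being the identity on $K$; one should note this edge case so that the subsequent division by $\sigma^2$ is only invoked when $\sigma>0$.)

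For the right inequality $\err(M\sigma)\le M^2\err(\sigma)$: assume $\sigma>0$ (the case $\sigma=0$ being trivial as above). Part~(2) of Lemma~\ref{lem:monotone_est_err} states that $t\mapsto \err(t)/t^2$ is non-increasing on $(0,\infty)$. Since $M\sigma\ge \sigma>0$, this gives
\begin{align*}
\frac{\err(M\sigma)}{(M\sigma)^2}\le \frac{\err(\sigma)}{\sigma^2},
\end{align*}
and multiplying both sides by $(M\sigma)^2=M^2\sigma^2>0$ yields $\err(M\sigma)\le M^2\err(\sigma)$, as desired.

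There is essentially no obstacle here — the content of the proposition is entirely front-loaded into Lemma~\ref{lem:monotone_est_err}, and the only thing to be careful about is the degenerate case $\sigma=0$, where one should not divide by $\sigma^2$ but instead observe directly that all three quantities vanish (or, more precisely, that $0=\err(0)\le\err(0)=0$ and the inequalities hold trivially). So the write-up is a two-line deduction plus a one-line remark on the boundary case.
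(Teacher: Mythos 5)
Your proposal is correct and follows exactly the paper's argument: the right inequality from the monotonicity of $\sigma\mapsto\err(\sigma)/\sigma^2$ (Lemma \ref{lem:monotone_est_err}-(2)) and the left from the monotonicity of $\sigma\mapsto\err(\sigma)$ (Lemma \ref{lem:monotone_est_err}-(1)), with the $\sigma=0$ case dismissed as trivial just as the paper does.
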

\begin{proof}
The case $\sigma=0$ is trivial so we prove the claim for $\sigma>0$ below. By monotonicity of $\sigma \mapsto \err(\sigma)/\sigma^2$ in Lemma \ref{lem:monotone_est_err}-(2), we have
\begin{align*}
\frac{\err(M\sigma)}{(M\sigma)^2}\leq \frac{\err(\sigma)}{\sigma^2}\quad\Rightarrow\quad  \err(M\sigma)\leq M^2 \err(\sigma),
\end{align*}
proving the right inequality. The left equality follows from the monotonicity of $\sigma \mapsto \err(\sigma)$ in Lemma \ref{lem:monotone_est_err}-(1).
\end{proof}

Next we derive several useful probabilistic properties for $\err(\cdot)$.

\begin{proposition}\label{prop:est_err_variance}
	The following hold.
	\begin{enumerate}
		\item $\lim_{\sigma\uparrow \infty}\pnorm{\Pi_K(\sigma h)}{}^2/\sigma^2 \leq \err(\sigma)/\sigma^2 \leq \pnorm{h}{}^2$ for $\mu_0 \in K$. The lower bound is achieved as $\sigma \uparrow\infty$, as well as its expectation version: $\lim_{\sigma\uparrow \infty} \E \err(\sigma)/\sigma^2 =\delta_K$. Furthermore, $\lim_{\sigma \downarrow 0} \E \err(\sigma)/\sigma^2=\delta_{T_K(\mu_0)}$. Consequently, $\delta_K\leq \delta_{T_K(\mu_0)}$ for any $\mu_0 \in K$.
		\item The variance bound $\var\big(\err(\sigma)\big)\leq 4\sigma^2\E \err(\sigma)$ holds. 
		\item For any $\lambda < 1/(2\sigma^2)$,
		\begin{align*}
		\E \exp\big(\lambda (\err(\sigma)-\E \err(\sigma))\big)\leq \exp\bigg(\frac{2\sigma^2\lambda^2 \E \err(\sigma)}{1-2\sigma^2\lambda }\bigg).
		\end{align*} 
		Consequently, for all $t\geq 0$,
		\begin{align*}
		\Prob\big(\abs{\err(\sigma)-\E \err(\sigma)}\geq \sqrt{8\sigma^2\cdot \E \err(\sigma)\cdot t}+2\sigma^2 t\big)\leq 2e^{-t}.
		\end{align*}
	\end{enumerate}
	
\end{proposition}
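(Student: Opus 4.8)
The plan is to reduce both the variance and the exponential bounds to a single analytic fact, and to treat the convergence statements in (1) separately. Write $\hat{\mu}_K^{\seq}(\sigma)=\Pi_K(\mu_0+\sigma h)$, so that (using $\mu_0=\Pi_K(\mu_0)$) the map $h\mapsto\sqrt{\err(\sigma)}=\pnorm{\Pi_K(\mu_0+\sigma h)-\mu_0}{}$ is $\sigma$-Lipschitz by $1$-Lipschitzness of $\Pi_K$; equivalently, viewing $g(h):=\err(\sigma)$ as a locally Lipschitz function of $h$, one has $\pnorm{\nabla g(h)}{}^2\le 4\sigma^2 g(h)$ for a.e.\ $h$ (at points of differentiability $\nabla g(h)=2\sigma\,\J_{\Pi_K}(\mu_0+\sigma h)^\top\big(\Pi_K(\mu_0+\sigma h)-\mu_0\big)$ with $\pnorm{\J_{\Pi_K}}{\op}\le1$). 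This estimate drives (2) and (3).

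For (1), the upper bound $\err(\sigma)\le\sigma^2\pnorm{h}{}^2$ is immediate from non-expansiveness and $\mu_0=\Pi_K(\mu_0)$. For the lower bound, Lemma \ref{lem:monotone_est_err}(2) gives that $\sigma\mapsto\err(\sigma)/\sigma^2$ is non-increasing, hence $\err(\sigma)/\sigma^2\ge\lim_{\sigma'\uparrow\infty}\err(\sigma')/(\sigma')^2$; and since $\big|\pnorm{\Pi_K(\mu_0+\sigma' h)-\mu_0}{}-\pnorm{\Pi_K(\sigma' h)}{}\big|\le2\pnorm{\mu_0}{}$, dividing by $\sigma'$ shows this limit equals $\lim_{\sigma'\uparrow\infty}\pnorm{\Pi_K(\sigma' h)}{}^2/(\sigma')^2$, which is also where the lower bound is attained. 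Taking expectations with dominated convergence (the ratio is $\le\pnorm{h}{}^2$, and $\pnorm{\Pi_K(\sigma h)}{}^2/\sigma^2\le(2\pnorm{\mu_0}{}+\pnorm{h}{})^2$ for $\sigma\ge1$), together with the defining identity \eqref{def:delta_K}, gives $\lim_{\sigma\uparrow\infty}\E\err(\sigma)/\sigma^2=\delta_K$. For the $\sigma\downarrow0$ limit, translation invariance gives $\err(\sigma)/\sigma^2=\pnorm{\Pi_{(K-\mu_0)/\sigma}(h)}{}^2$; the convex sets $(K-\mu_0)/\sigma$ increase as $\sigma\downarrow0$ with $\mathrm{cl}\big(\bigcup_{\sigma>0}(K-\mu_0)/\sigma\big)=T_K(\mu_0)$, so (continuity of projection along increasing convex sets) $\err(\sigma)/\sigma^2\uparrow\pnorm{\Pi_{T_K(\mu_0)}(h)}{}^2$, and monotone convergence yields $\lim_{\sigma\downarrow0}\E\err(\sigma)/\sigma^2=\E\pnorm{\Pi_{T_K(\mu_0)}(h)}{}^2=\delta_{T_K(\mu_0)}$. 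Finally $\delta_K\le\delta_{T_K(\mu_0)}$ follows from monotonicity of $\sigma\mapsto\E\err(\sigma)/\sigma^2$.

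For (2), the Gaussian Poincar\'e inequality applied to $g$ gives $\var(\err(\sigma))\le\E\pnorm{\nabla g(h)}{}^2\le4\sigma^2\E g(h)=4\sigma^2\E\err(\sigma)$. For (3), apply the Gaussian logarithmic Sobolev inequality to $f=e^{\lambda g/2}$: since $\pnorm{\nabla f}{}^2=\tfrac{\lambda^2}{4}e^{\lambda g}\pnorm{\nabla g}{}^2\le\lambda^2\sigma^2\,g\,e^{\lambda g}$, one obtains $\mathrm{Ent}(e^{\lambda g})\le2\sigma^2\lambda^2\,\E[g\,e^{\lambda g}]$. Writing $H(\lambda)=\E e^{\lambda g}$ (finite for $\lambda<1/(2\sigma^2)$ because $\sqrt{g}$ is $\sigma$-sub-Gaussian), this is the differential inequality $\lambda H'(\lambda)-H(\lambda)\log H(\lambda)\le2\sigma^2\lambda^2 H'(\lambda)$. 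A Herbst-type argument — setting $K(\lambda)=\lambda^{-1}\log H(\lambda)$, using $K'(\lambda)\le2\sigma^2(\log H)'(\lambda)$ and $K(0^+)=\E g$, and integrating — gives $K(\lambda)(1-2\sigma^2\lambda)\le\E g$, so for $0\le\lambda<1/(2\sigma^2)$
\begin{align*}
\log\E\exp\big(\lambda(\err(\sigma)-\E\err(\sigma))\big)=\lambda\big(K(\lambda)-\E g\big)\le\frac{2\sigma^2\lambda^2\,\E\err(\sigma)}{1-2\sigma^2\lambda};
\end{align*}
for $\lambda<0$ the right side is bounded by $2\sigma^2\lambda^2\E\err(\sigma)$, so the stated MGF bound holds. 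The two-sided deviation inequality is then the standard sub-gamma Chernoff bound read off from this MGF estimate (variance factor $4\sigma^2\E\err(\sigma)$, scale $2\sigma^2$).

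I expect the main obstacle to be the regularity bookkeeping for the differential calculus: $g$ is only locally (not globally) Lipschitz, so one must verify that $g$ belongs to the Gaussian Sobolev space (which follows from $\E g^2<\infty$ and $\E\pnorm{\nabla g}{}^2\le4\sigma^2\E g<\infty$), that $f=e^{\lambda g/2}$ is an admissible test function in the log-Sobolev inequality for $\lambda<1/(2\sigma^2)$ (a routine truncation/approximation), and that the a.e.\ identity $\pnorm{\nabla g}{}^2\le4\sigma^2 g$ is legitimate using a.e.\ differentiability of $\Pi_K$ with $\pnorm{\J_{\Pi_K}}{\op}\le1$. The only other delicate point is the set-convergence step in (1) identifying the $\sigma\downarrow0$ and $\sigma\uparrow\infty$ limits and justifying the exchange of limit and expectation, which is standard once the monotonicity and dominating bounds above are in place.
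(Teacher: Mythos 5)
Your argument is correct and, for parts (2)--(3) and the $\sigma\uparrow\infty$ side of (1), follows essentially the paper's own route: the a.e.\ gradient identity $\nabla_h \err(\sigma)=2\sigma\,\J_{\Pi_K}(\mu_0+\sigma h)^\top\big(\hat{\mu}_K^{\seq}(\sigma)-\mu_0\big)$ with $\pnorm{\J_{\Pi_K}}{\op}\le 1$, the Gaussian Poincar\'e inequality for the variance, and the log-Sobolev inequality plus Herbst's argument for the moment generating function (centering before or after the Herbst step is immaterial), together with the same $\pnorm{\Pi_K(\sigma h)}{}\pm 2\pnorm{\mu_0}{}$ sandwich for the high-noise limit. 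The one place you genuinely depart from the paper is the low-noise limit: the paper simply cites \cite[Theorem 1.1]{oymak2016sharp}, whereas you prove $\lim_{\sigma\downarrow 0}\E\err(\sigma)/\sigma^2=\delta_{T_K(\mu_0)}$ directly from the rescaling $\err(\sigma)/\sigma^2=\pnorm{\Pi_{(K-\mu_0)/\sigma}(h)}{}^2$, the fact that the sets $(K-\mu_0)/\sigma$ increase as $\sigma\downarrow 0$ (they are convex and contain $0$), and monotone convergence. This is a sound, self-contained alternative; the step you leave as ``continuity of projection along increasing convex sets'' deserves one line: since $(K-\mu_0)/\sigma\subset T_K(\mu_0)$ and $\dis\big(h,(K-\mu_0)/\sigma\big)\downarrow\dis\big(h,T_K(\mu_0)\big)$, the (bounded) projections have every subsequential limit lying in $T_K(\mu_0)$ and attaining the limiting distance, hence equal to $\Pi_{T_K(\mu_0)}(h)$ by uniqueness.

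One small repair in (3): your justification of the MGF bound for $\lambda<0$ (``the right side is bounded by $2\sigma^2\lambda^2\E\err(\sigma)$'') points the wrong way, since for $\lambda<0$ the stated bound is \emph{stronger} than $\exp\big(2\sigma^2\lambda^2\E\err(\sigma)\big)$, so that comparison proves nothing by itself. The fix is immediate: your differential inequality holds for all nonzero $\lambda$ in the domain, and integrating it over $[\lambda,0]$ with boundary value $\E\err(\sigma)$ at $0^-$ gives $\lambda^{-1}\log\E e^{\lambda \err(\sigma)}\ge \E\err(\sigma)/(1-2\sigma^2\lambda)$, which upon multiplying by $\lambda<0$ yields exactly the stated MGF bound; alternatively, the two-sided tail you quote only requires the cruder sub-Gaussian bound $\exp\big(2\sigma^2\lambda^2\E\err(\sigma)\big)$ on the negative side, obtained the same way. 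With that sentence amended, the proof is complete and matches the claimed constants.
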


The expected low noise limit in Proposition \ref{prop:est_err_variance}-(1) is known, see e.g. \cite[Theorem 1.1]{oymak2016sharp}. The variance bound and the exponential inequality in \ref{prop:est_err_variance}-(2)(3), proved using Poincar\'e and log-Sobolev inequalities, appear to be new. These results are closely related to some results in \cite{chatterjee2014new,van2015concentration}. For instance, \cite[Theorem 1.1]{chatterjee2014new} shows the (Gaussian) concentration of $\err^{1/2}(\sigma)=\pnorm{\hat{\mu}_K^{\seq}(\sigma)-\mu_0}{}$ around $t_{\mu_0}$, defined as the location of maximum for the map $t\mapsto  \E\sup_{\nu \in K: \pnorm{\nu-\mu_0}{}\leq t}\iprod{\sigma h}{\nu-\mu_0}-t^2/2$. When $t_{\mu_0}$ is replaced by $\E \err^{1/2}(\sigma)= \E \pnorm{\hat{\mu}_K^{\seq}(\sigma)-\mu_0}{}$, Gaussian concentration follows by the Lipschitz property of $h\mapsto \err^{1/2}(\sigma)$ (see e.g. \cite[Theorem 2.1]{van2015concentration} for more general formulations). These results imply non-exact large deviation inequalities for $\err(\sigma)$ with respect to $(1\pm \epsilon)\E \err(\sigma)$. Here we show in (3) via a different method that the concentration of $\err(\sigma)$ that can be centered exactly at $\E \err(\sigma)$. Furthermore, the variance bound in (2) does not contain a Poisson component. 

As an illustration of the usefulness of the developed analytic and probabilistic properties of the estimation error process, we prove the following result which is essentially Proposition \ref{prop:exist_unique_fixed_pt}-(1).

\begin{proposition}\label{prop:unique_fixed_point_eqn}
	The following hold.
	\begin{enumerate}
		\item The map $r\mapsto \err\big(\omega_{m/n}(r)\big)/r^2$ is non-increasing and strictly decreasing at $r>0$ such that $\err\big(\omega_{m/n}(r)\big)>0$. The same conclusion holds when $\err(\cdot)$ is replaced by its expectation $\E \err(\cdot)$. 
		\item The fixed point equation 
		\begin{align*}
		\E \err\big(\omega_{m/n}(r)\big) = n r^2
		\end{align*}
		has at most one solution in $r \in (0,\infty)$ that exists if and only if $m>\delta_K$. 
	\end{enumerate}
\end{proposition}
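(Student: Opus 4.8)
\emph{The plan} is to express the ratio in part~(1) as a product of two monotone factors so that it follows immediately from Lemma~\ref{lem:monotone_est_err}, and then to obtain part~(2) by inspecting the two boundary limits supplied by Proposition~\ref{prop:est_err_variance}-(1). Throughout I would first discard the trivial case $K=\{\mu_0\}$, in which $\err\equiv0$.

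\emph{For part (1)}, I would write, for $r>0$,
\[
\frac{\err\big(\omega_{m/n}(r)\big)}{r^2}=A(r)\,B(r),\qquad A(r)\equiv\frac{\err\big(\omega_{m/n}(r)\big)}{\omega_{m/n}(r)^2},\qquad B(r)\equiv\frac{\omega_{m/n}(r)^2}{r^2}=\frac{n}{m}\Big(1+\frac{\sigma^2}{r^2}\Big).
\]
Since $\sigma>0$, the factor $B$ is strictly decreasing and strictly positive on $(0,\infty)$; since $r\mapsto\omega_{m/n}(r)=\sqrt{(n/m)(r^2+\sigma^2)}$ is strictly increasing, the monotonicity of $\sigma\mapsto\err(\sigma)/\sigma^2$ in Lemma~\ref{lem:monotone_est_err}-(2) makes $A$ non-increasing and non-negative. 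A product of a non-increasing non-negative function with a strictly decreasing positive one is non-increasing, and is strictly decreasing at any $r$ with $A(r)>0$, i.e.\ with $\err(\omega_{m/n}(r))>0$; and by Lemma~\ref{lem:monotone_est_err}-(1) together with the monotonicity of $\omega_{m/n}$, this set of $r$ is an interval of the form $(r_\ast,\infty)$ or $[r_\ast,\infty)$, so the assertion is well posed. For the expectation version I would take expectations in the a.e.\ inequality behind Lemma~\ref{lem:monotone_est_err}-(2) to get that $\sigma\mapsto\E\err(\sigma)/\sigma^2$ is non-increasing, and note that $\E\err(\omega_{m/n}(r))>0$ for every $r\geq0$ when $K\neq\{\mu_0\}$ — indeed $\Pi_K(\mu_0+\omega_{m/n}(r)h)=\mu_0$ forces $h$ into the polar cone of $T_K(\mu_0)$, which lies in a half-space and hence is an event of probability $\leq1/2$ — so there "strictly decreasing" holds on all of $(0,\infty)$.

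\emph{For part (2)}, I would set $\Psi(r)\equiv n^{-1}r^{-2}\,\E\err(\omega_{m/n}(r))$, so that the fixed point equation reads $\Psi(r)=1$. By the expectation version of part~(1), $\Psi$ is strictly decreasing on $(0,\infty)$, which already gives uniqueness of any solution. For existence I would first record continuity of $\sigma\mapsto\E\err(\sigma)$ — a.s.\ continuity of $\sigma\mapsto\Pi_K(\mu_0+\sigma h)$ plus dominated convergence using $\err(\sigma)\leq\sigma^2\pnorm{h}{}^2$ from Proposition~\ref{prop:est_err_variance}-(1) — whence $\Psi$ is continuous. Then $\Psi(r)\uparrow+\infty$ as $r\downarrow0$ (since $\omega_{m/n}(r)\downarrow\sigma\sqrt{n/m}>0$ and $\E\err$ is positive and continuous there), while writing $\Psi(r)=\big(\E\err(\omega_{m/n}(r))/\omega_{m/n}(r)^2\big)\cdot(r^2+\sigma^2)/(mr^2)$ and invoking $\lim_{\sigma\uparrow\infty}\E\err(\sigma)/\sigma^2=\delta_K$ from Proposition~\ref{prop:est_err_variance}-(1) gives $\Psi(r)\to\delta_K/m$ as $r\uparrow\infty$. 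Hence, if $m>\delta_K$ the intermediate value theorem yields a solution (unique by strict monotonicity), whereas if $m\leq\delta_K$ then $\Psi(r)>\delta_K/m\geq1$ for every finite $r$, so no solution exists.

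\emph{The only points needing care}, I expect, are giving precise meaning to ``strictly decreasing at $r$ with $\err(\omega_{m/n}(r))>0$'' (handled by the observation that this set is a terminal interval) and justifying the $r\uparrow\infty$ limit, which rests on the high-noise identity $\lim_{\sigma\uparrow\infty}\E\err(\sigma)/\sigma^2=\delta_K$; there is no genuinely hard step, the content being the bookkeeping of already-established monotonicity and limit facts.
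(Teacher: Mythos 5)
Your proposal is correct and follows essentially the same route as the paper: the factorization into $A(r)B(r)$ is exactly the paper's $G_1(r)G_2(r)$ decomposition (up to the harmless factor $n$), and part (2) is settled identically by strict monotonicity plus the boundary limits $\Psi(r)\to\infty$ as $r\downarrow0$ and $\Psi(r)\to\delta_K/m$ as $r\uparrow\infty$ via Proposition \ref{prop:est_err_variance}-(1). The only cosmetic differences are that you justify continuity of $\sigma\mapsto\E\err(\sigma)$ by Lipschitzness of $\Pi_K$ and dominated convergence rather than the stability estimate of Proposition \ref{prop:est_err_stability}, and you spell out two small points (positivity of $\E\err$ via the half-space argument, and well-posedness of the strict-decrease claim) that the paper leaves implicit.
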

\begin{proof}
	\noindent (1). We consider a rescaled version
	\begin{align*}
	G(r)\equiv \frac{\err\big(\omega_{m/n}(r)\big) }{nr^2} &= \frac{\err\big(\omega_{m/n}(r)\big)
	}{(r^2+\sigma^2)/(m/n)}\cdot \frac{1+(\sigma^2/r^2) }{m}\equiv G_1(r)G_2(r).
	\end{align*}
	By Lemma \ref{lem:monotone_est_err}-(2), $G_1$ is non-increasing. Clearly $G_2$ is strictly decreasing, so $G$ is non-increasing and strictly decreasing when $G_1>0$. By Proposition \ref{prop:est_err_stability}, $G$ is also continuous. The same argument applies to the expectation version.

	\noindent (2). 
	If $K=\{\mu_0\}$, then $r=0$ is the only solution. So let us  assume  $K\neq \{\mu_0\}$. Then $\E \err(\sigma)>0$ for all $\sigma>0$, and therefore the map $r\mapsto \E \err\big(\omega_{m/n}(r)\big)/r^2 = \E G(r)$ is strictly decreasing. This means that there can be at most one solution to  the equation $\E G(r)=1$. Now as $r\mapsto \E G(r)$ is continuous and strictly decreasing with $\lim_{r \downarrow 0} \E G(r)=\infty$. The (unique) solution $r \in (0,\infty)$ to $\E G(r)=1$ exists if and only if $\lim_{r\uparrow \infty} \E G(r)<1$. Clearly $\lim_{r\uparrow \infty} \E G_2(r) = 1/m$, and by Proposition \ref{prop:est_err_variance}-(1), $
	\lim_{r\uparrow \infty} \E G_1(r) = \lim_{\sigma \uparrow \infty} {\E \err(\sigma)}/{\sigma^2}=\delta_K$. Consequently, $
	\lim_{r \uparrow \infty} \E G(r) = \delta_K/m$, 
	which would be less than $1$ if and only if $m>\delta_K$, completing the proof of (2).
\end{proof}

\subsection{LRT process}\label{section:lrt_process}

 First some analytic properties for $\lrt(\cdot)$:

\begin{lemma}\label{lem:lrt_qual}
	The following hold for $\lrt(\cdot)$ defined in (\ref{def:lrt}).
	\begin{enumerate}
		\item 	For any $\sigma>0$, we have $
		\lrt(\sigma) -\sigma\cdot \lrt'(\sigma) = - \err(\sigma)$. 
		\item For any (possibly random) $\nu$ such that $K+\nu \subset K$ and $\hat{\mu}_K^{\seq}(\sigma)-\nu \in K$, 
		\begin{align*}
		\lrt(\sigma)=\min_{s \in \R^n} \big\{\pnorm{\sigma h-s}{}^2-2\big(s^\top (\mu_0-\nu)-\bm{0}_K^\ast(s)\big)\big\},
		\end{align*}
		where $\bm{0}_K^\ast(\cdot)=\sup_{t\in K}t^\top(\cdot)$ is the support function of $K$, or equivalently, the convex conjugate of the indicator function $\bm{0}_K$. 
		\item $(\lrt(\sigma)/\sigma)' = \err(\sigma)/\sigma^2$, so $\sigma\mapsto \lrt(\sigma)/\sigma$ is non-decreasing and concave. 
		\item The map $\sigma\mapsto \lrt(\sigma)/\sigma^2$ is non-increasing. 
	\end{enumerate}
\end{lemma}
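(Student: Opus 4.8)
The plan is to obtain part (1) from an envelope-theorem (Danskin) computation, part (2) from a Fenchel/Moreau duality identity after a translation, and parts (3)--(4) as elementary consequences of (1) together with Lemma~\ref{lem:monotone_est_err}. For part (1), I would write $f(\sigma)\equiv \min_{\mu\in K}\pnorm{\mu_0+\sigma h-\mu}{}^2$, so that $\lrt(\sigma)=\sigma^2\pnorm{h}{}^2-f(\sigma)$ with minimizer $\hat\mu_K^{\seq}(\sigma)=\Pi_K(\mu_0+\sigma h)$, and use the standard fact that $x\mapsto \min_{\mu\in K}\pnorm{x-\mu}{}^2$ is continuously differentiable on $\R^n$ with gradient $x\mapsto 2\big(x-\Pi_K(x)\big)$. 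The chain rule then gives $f'(\sigma)=2\iprod{y-\hat\mu_K^{\seq}(\sigma)}{h}$ with $y\equiv\mu_0+\sigma h$, hence $\lrt'(\sigma)=2\sigma\pnorm{h}{}^2-2\iprod{y-\hat\mu_K^{\seq}(\sigma)}{h}$. Substituting into $\lrt(\sigma)-\sigma\lrt'(\sigma)$, writing $a\equiv y-\hat\mu_K^{\seq}(\sigma)$ and $b\equiv y-\mu_0=\sigma h$, and collecting terms should collapse the expression to $-\pnorm{a-b}{}^2=-\pnorm{\hat\mu_K^{\seq}(\sigma)-\mu_0}{}^2=-\err(\sigma)$.

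For part (2), I would first show that the hypotheses $K+\nu\subset K$ and $\hat\mu_K^{\seq}(\sigma)-\nu\in K$ force $\Pi_K(y-\nu)=\hat\mu_K^{\seq}(\sigma)-\nu$: for any $\mu\in K$ one has $\mu+\nu\in K$, so the obtuse-angle inequality for $\hat\mu_K^{\seq}(\sigma)=\Pi_K(y)$ gives $\iprod{y-\hat\mu_K^{\seq}(\sigma)}{(\mu+\nu)-\hat\mu_K^{\seq}(\sigma)}\le 0$, which rearranges to $\iprod{(y-\nu)-(\hat\mu_K^{\seq}(\sigma)-\nu)}{\mu-(\hat\mu_K^{\seq}(\sigma)-\nu)}\le 0$ for all $\mu\in K$; together with $\hat\mu_K^{\seq}(\sigma)-\nu\in K$ this is exactly the variational characterization of the projection, so $\min_{\mu\in K}\pnorm{(y-\nu)-\mu}{}^2=\pnorm{y-\hat\mu_K^{\seq}(\sigma)}{}^2$. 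Next I would invoke the Moreau decomposition for the indicator $\bm{0}_K$, i.e. $\min_\mu\{\pnorm{(y-\nu)-\mu}{}^2+2\bm{0}_K(\mu)\}=\max_s\{\pnorm{y-\nu}{}^2-\pnorm{(y-\nu)-s}{}^2-2\bm{0}_K^\ast(s)\}$, plug it into $\lrt(\sigma)=\pnorm{y-\mu_0}{}^2-\min_{\mu\in K}\pnorm{(y-\nu)-\mu}{}^2$, and expand all squared norms around $\mu_0-\nu$ and $\sigma h$; the $(\mu_0-\nu)$-terms and the $\pnorm{h}{}^2$-terms cancel, leaving $\lrt(\sigma)=\min_s\{\pnorm{\sigma h-s}{}^2-2(s^\top(\mu_0-\nu)-\bm{0}_K^\ast(s))\}$.

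For part (3), rewrite part (1) as $\sigma\lrt'(\sigma)-\lrt(\sigma)=\err(\sigma)$ and divide by $\sigma^2$ to get $(\lrt(\sigma)/\sigma)'=\err(\sigma)/\sigma^2\ge 0$; thus $\sigma\mapsto\lrt(\sigma)/\sigma$ is non-decreasing, and it is concave because its derivative $\err(\sigma)/\sigma^2$ is non-increasing by Lemma~\ref{lem:monotone_est_err}-(2). For part (4), differentiating $\lrt(\sigma)/\sigma^2$ and using part (1) once more gives $(\lrt(\sigma)/\sigma^2)'=(\err(\sigma)-\lrt(\sigma))/\sigma^3$; since $\lrt(\sigma)=\err(\sigma)+2\iprod{y-\hat\mu_K^{\seq}(\sigma)}{\hat\mu_K^{\seq}(\sigma)-\mu_0}$ and the projection inequality applied at $\mu_0\in K$ yields $\iprod{y-\hat\mu_K^{\seq}(\sigma)}{\hat\mu_K^{\seq}(\sigma)-\mu_0}\ge 0$, we have $\lrt(\sigma)\ge\err(\sigma)$ and hence $(\lrt(\sigma)/\sigma^2)'\le 0$. (Alternatively, (4) follows from (3): a concave $g$ on $[0,\infty)$ with $g(0)=0$ has $g(\sigma)/\sigma$ non-increasing, and $g\equiv\lrt(\cdot)/(\cdot)$ extends continuously with $g(0)=0$ since $\err(\sigma)=\mathcal{O}(\sigma^2)$.)

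I expect the main obstacle to be part (2): the conditions imposed on $\nu$ look unmotivated until one recognizes that their only purpose is to make $\hat\mu_K^{\seq}(\sigma)-\nu$ equal to $\Pi_K(y-\nu)$, after which the dual representation is a routine Fenchel/Moreau computation. Parts (1), (3) and (4) are, respectively, a standard envelope-theorem identity and elementary calculus resting on the monotonicity already recorded in Lemma~\ref{lem:monotone_est_err}; the only mild care needed there is the $C^1$-smoothness of the squared-distance function used in part (1).
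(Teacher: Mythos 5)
Your proposal is correct in all four parts; the differences from the paper are in which tools carry the two nontrivial steps. For (1) and (3) you do essentially what the paper does (the paper organizes the same differentiation through $\dof(\sigma)=\iprod{\hat{\mu}_K^{\seq}(\sigma)-\mu_0}{\sigma h}$, writing $\lrt=2\dof-\err$ and $\lrt'=2\dof/\sigma$, but the algebra is identical to your $-\pnorm{a-b}{}^2$ computation). For (2) the paper argues in the opposite direction: it starts from the dual expression, swaps $\min_s\sup_{t\in K}$ via Sion's min-max theorem to recognize $\sigma^2\pnorm{h}{}^2-\dis^2(\sigma h+\mu_0-\nu,K)$, and then invokes its Lemma \ref{lem:proj_split} to drop $\nu$; your route first establishes the projection-shift identity $\Pi_K(y-\nu)=\hat{\mu}_K^{\seq}(\sigma)-\nu$ by the obtuse-angle characterization (which is word-for-word the proof of Lemma \ref{lem:proj_split}) and then applies the Moreau-envelope duality $\min_\mu\{\pnorm{x-\mu}{}^2+2\bm{0}_K(\mu)\}=\max_s\{\pnorm{x}{}^2-\pnorm{x-s}{}^2-2\bm{0}_K^\ast(s)\}$, which is a standard named identity and spares you the min-max interchange; the two are essentially equivalent (the paper's Sion computation is precisely a proof of that Moreau identity for this quadratic), with yours slightly more self-contained and the paper's slightly more flexible. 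For (4) the paper cites $\err(\sigma)\le\lrt(\sigma)$ from Proposition \ref{prop:lrt_prop}-(2), a forward reference whose proof runs through the DoF comparison in Proposition \ref{prop:dof}-(3); your one-line derivation $\lrt(\sigma)-\err(\sigma)=2\iprod{y-\hat{\mu}_K^{\seq}(\sigma)}{\hat{\mu}_K^{\seq}(\sigma)-\mu_0}\ge0$ from the projection inequality gives the same inequality directly and removes that dependence, and your alternative via concavity of $\lrt(\sigma)/\sigma$ with $\lrt(\sigma)/\sigma\le\sigma\pnorm{h}{}^2\to0$ is also valid.
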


Lemma \ref{lem:lrt_qual}-(1) is a simple but useful result that can be verified directly by definition. Lemma \ref{lem:lrt_qual}-(2) gives an important variational characterization of $\lrt(\cdot)$. This characterization, proved using convex duality and Sion's min-max theorem (cf. Lemma \ref{lem:sion_minmax}), will be particularly useful in terms of bounding $\lrt(\cdot)$ by $\err(\cdot)$ \emph{from above}. Similar to the stability estimate in Proposition \ref{prop:est_err_stability} for $\err(\cdot)$, the monotonicity properties for $\lrt(\cdot)$ proved Lemma \ref{lem:lrt_qual}-(3)(4) immediately yield the following.

\begin{proposition}\label{prop:lrt_stability}
For any $\sigma\geq 0$ and $M\geq 1$,
\begin{align*}
M\lrt(\sigma) \leq \lrt(M\sigma) \leq M^2 \lrt(\sigma^2).
\end{align*}
\end{proposition}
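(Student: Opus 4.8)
The plan is to deduce Proposition~\ref{prop:lrt_stability} directly from the two monotonicity statements in Lemma~\ref{lem:lrt_qual}-(3)(4), in exact parallel to how Proposition~\ref{prop:est_err_stability} was obtained from Lemma~\ref{lem:monotone_est_err}. First I would dispose of the degenerate case $\sigma = 0$: by definition (\ref{def:lrt}), $\lrt(0) = \pnorm{y-\mu_0}{}^2 - \pnorm{y-\hat\mu_K^{\seq}(0)}{}^2$ with $y = \mu_0$, so $\lrt(0) = 0$ (since $\hat\mu_K^{\seq}(0) = \mu_0$), and all three quantities in the claimed inequality vanish. So assume $\sigma > 0$ and $M \geq 1$.

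For the right-hand inequality, I would invoke Lemma~\ref{lem:lrt_qual}-(4): the map $\sigma \mapsto \lrt(\sigma)/\sigma^2$ is non-increasing. Evaluating at $\sigma$ and at $M\sigma$ (with $M\sigma \geq \sigma$) gives $\lrt(M\sigma)/(M\sigma)^2 \leq \lrt(\sigma)/\sigma^2$, i.e.\ $\lrt(M\sigma) \leq M^2 \lrt(\sigma)$, which is the stated upper bound (note the statement writes $\lrt(\sigma^2)$ on the far right, presumably a typo for $\lrt(\sigma)$, matching the analogous Proposition~\ref{prop:est_err_stability}). For the left-hand inequality, I would use Lemma~\ref{lem:lrt_qual}-(3): $\sigma \mapsto \lrt(\sigma)/\sigma$ is non-decreasing. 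Evaluating at $\sigma \leq M\sigma$ gives $\lrt(\sigma)/\sigma \leq \lrt(M\sigma)/(M\sigma)$, i.e.\ $M\lrt(\sigma) \leq \lrt(M\sigma)$. Combining the two chains yields $M\lrt(\sigma) \leq \lrt(M\sigma) \leq M^2\lrt(\sigma)$ for all $\sigma > 0$, $M \geq 1$.

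One small point worth checking before quoting Lemma~\ref{lem:lrt_qual}-(3)(4): those monotonicity statements are asserted on $(0,\infty)$, so the manipulation above is valid there, and the $\sigma = 0$ endpoint is handled separately by the direct computation. There is essentially no genuine obstacle here — the proof is a two-line application of the already-established monotonicity of the normalized LRT process, mirroring Proposition~\ref{prop:est_err_stability}. If anything, the only thing to be careful about is the direction of each monotonicity (one increasing after dividing by $\sigma$, one decreasing after dividing by $\sigma^2$) so that the inequalities point the right way when plugging in $\sigma$ versus $M\sigma$; since $M \geq 1$ this is unambiguous.

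In writing it up I would keep it short, following the template of the proof of Proposition~\ref{prop:est_err_stability} verbatim: state the $\sigma = 0$ triviality, derive $\lrt(M\sigma) \le M^2\lrt(\sigma)$ from Lemma~\ref{lem:lrt_qual}-(4), derive $M\lrt(\sigma) \le \lrt(M\sigma)$ from Lemma~\ref{lem:lrt_qual}-(3), and conclude. No localization, no Gaussian analysis, no convex-duality machinery is needed — all the work was already done in establishing the concavity/monotonicity of $\sigma \mapsto \lrt(\sigma)/\sigma$ via the identity $(\lrt(\sigma)/\sigma)' = \err(\sigma)/\sigma^2 \geq 0$ in Lemma~\ref{lem:lrt_qual}.
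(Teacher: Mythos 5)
Your proof is correct and follows essentially the same route as the paper: the right inequality from the monotonicity of $\sigma\mapsto\lrt(\sigma)/\sigma^2$ (Lemma \ref{lem:lrt_qual}-(4)), the left from the monotonicity of $\sigma\mapsto\lrt(\sigma)/\sigma$ (Lemma \ref{lem:lrt_qual}-(3)), with the $\sigma=0$ case handled trivially. You also correctly identify that $\lrt(\sigma^2)$ in the statement is a typo for $\lrt(\sigma)$.
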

\begin{proof}
We only need to consider $\sigma>0$. The right inequality follows from the same arguments as that of the proof of Proposition \ref{prop:est_err_stability} upon using Lemma \ref{lem:lrt_qual}-(4). The left inequality follows the same strategy, but using Lemma \ref{lem:lrt_qual}-(3), which entails that $\lrt(\sigma)/\sigma \leq \lrt(M\sigma)/(M\sigma)$. 
\end{proof}

Next we derive several useful probabilistic properties for $\lrt(\cdot)$.

\begin{proposition}\label{prop:lrt_prop}
	The following hold for $\lrt(\cdot)$ defined in (\ref{def:lrt}).
	\begin{enumerate}
		\item $\E \lrt (\sigma)\leq 2\sigma^2 \E \dv \hat{\mu}_K^{\seq}(\sigma)\leq 2\sigma^2 n$.
		\item For any $\sigma>0$, we have
		\begin{align*}
		\err(\sigma)\leq \lrt(\sigma)\leq \err(\sigma)+ 2\sigma \int_0^\sigma \bigg(\frac{\err(\tau)}{\tau^2}-\frac{\err(\sigma)}{\sigma^2}\bigg)\,\d{\tau}.
		\end{align*}		
		Suppose further that $K$ is a closed convex cone. For any $\sigma >0$, and any (possibly random) $\nu$ such that $K+\nu \subset K$ and $\hat{\mu}_K^{\seq}(\sigma)-\nu \in K$,
		\begin{align*}
		\err(\sigma)&\leq \lrt(\sigma)\leq \err(\sigma)+2\iprod{ \hat{\mu}_K^{\seq}(\sigma)-\mu_0-\sigma h}{\mu_0-\nu}.
		\end{align*}
		The right hand side inequality takes equality when $\nu = \hat{\mu}_K^{\seq}(\sigma)$. 
		\item The variance bound $\var\big(\lrt(\sigma)\big)\leq 4\sigma^2 \E \err(\sigma)$ holds. 
		\item For any $\lambda \in \R$ with $\lambda^2< \E \err(\sigma)/(8\sigma^2)$, 
		\begin{align*}
		&\E \exp\bigg[\lambda\cdot \frac{\lrt (\sigma)-\E \lrt (\sigma)}{\sigma\cdot \E^{1/2} \err(\sigma)} \bigg]\leq \exp\bigg(\frac{4\lambda^2}{1-8 \big(\sigma^2/\E \err(\sigma)\big) \lambda^2 }\bigg).
		\end{align*}
		Consequently, there exists some absolute constant $L>0$ such that
		\begin{align*}
		&\Prob\Big( L^{-1}\bigabs{\lrt (\sigma)-\E \lrt (\sigma)} \geq \sqrt{\sigma^2\cdot \E \err(\sigma)\cdot t}+\sigma^2 t \Big)\leq L e^{-t/L}
		\end{align*}
		holds for all $t\geq 0$.
	\end{enumerate}
\end{proposition}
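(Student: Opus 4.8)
The plan is to extract all four parts from one pointwise identity together with one gradient computation. First I would expand the definitions (\ref{def:F})--(\ref{def:lrt}) with $y=\mu_0+\sigma h$ to get
\begin{align*}
\lrt(\sigma)=\pnorm{\sigma h}{}^2-\pnorm{\sigma h-\big(\hat{\mu}_K^{\seq}(\sigma)-\mu_0\big)}{}^2=2\sigma\iprod{h}{\hat{\mu}_K^{\seq}(\sigma)-\mu_0}-\err(\sigma),
\end{align*}
equivalently $\lrt(\sigma)=F(\mu_0+\sigma h)$ with $F(y)\equiv\pnorm{y-\mu_0}{}^2-\dist{y}{K}^2$. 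Since $y\mapsto\tfrac12\dist{y}{K}^2$ is $C^{1,1}$ with gradient $y-\Pi_K(y)$, we have $\nabla_yF(y)=2\big(\Pi_K(y)-\mu_0\big)$, hence for a.e.\ $h$
\begin{align*}
\nabla_h\lrt(\sigma)=2\sigma\big(\hat{\mu}_K^{\seq}(\sigma)-\mu_0\big),\qquad \pnorm{\nabla_h\lrt(\sigma)}{}^2=4\sigma^2\err(\sigma).
\end{align*}

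For (1), I would drop the nonpositive term $-\err(\sigma)$ and take expectations; Gaussian integration by parts applied coordinatewise to the Lipschitz map $h\mapsto\hat{\mu}_K^{\seq}(\sigma)=\Pi_K(\mu_0+\sigma h)$ yields $\E\iprod{h}{\hat{\mu}_K^{\seq}(\sigma)-\mu_0}=\sigma\,\E\dv\hat{\mu}_K^{\seq}(\sigma)$, so $\E\lrt(\sigma)\le 2\sigma^2\,\E\dv\hat{\mu}_K^{\seq}(\sigma)$. For the second inequality I would use that $\Pi_K$ is the gradient of the convex function $y\mapsto\tfrac12\pnorm{y}{}^2-\tfrac12\dist{y}{K}^2$ and is nonexpansive, so wherever its Jacobian exists it is symmetric with eigenvalues in $[0,1]$; taking traces gives $0\le\dv\hat{\mu}_K^{\seq}(\sigma)\le n$ a.e.

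For (2), the lower bound $\err(\sigma)\le\lrt(\sigma)$ follows from the projection inequality $\iprod{y-\hat{\mu}_K^{\seq}(\sigma)}{\mu_0-\hat{\mu}_K^{\seq}(\sigma)}\le 0$, which rearranges to $\err(\sigma)\le\sigma\iprod{h}{\hat{\mu}_K^{\seq}(\sigma)-\mu_0}$ and is then fed into the identity above. For the upper bound I would integrate Lemma \ref{lem:lrt_qual}-(3) (using $\lrt(\sigma)/\sigma\to 0$ as $\sigma\downarrow 0$) to obtain $\lrt(\sigma)=\sigma\int_0^\sigma\err(\tau)/\tau^2\,\d{\tau}$, and observe that the claimed right-hand side equals $2\lrt(\sigma)-\err(\sigma)$; so the upper bound is exactly equivalent to the lower bound already proved. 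For the cone version, the identity gives $\lrt(\sigma)-\err(\sigma)=2\iprod{y-\hat{\mu}_K^{\seq}(\sigma)}{\hat{\mu}_K^{\seq}(\sigma)-\mu_0}$; since $K$ is a cone and $\hat{\mu}_K^{\seq}(\sigma),\,\hat{\mu}_K^{\seq}(\sigma)-\nu\in K$, we get $2\hat{\mu}_K^{\seq}(\sigma)-\nu\in K+K\subset K$, and the projection inequality with test point $2\hat{\mu}_K^{\seq}(\sigma)-\nu$ forces $\iprod{y-\hat{\mu}_K^{\seq}(\sigma)}{\hat{\mu}_K^{\seq}(\sigma)-\nu}\le 0$; splitting $\hat{\mu}_K^{\seq}(\sigma)-\mu_0=\big(\hat{\mu}_K^{\seq}(\sigma)-\nu\big)+(\nu-\mu_0)$ and using $y-\hat{\mu}_K^{\seq}(\sigma)=-\big(\hat{\mu}_K^{\seq}(\sigma)-\mu_0-\sigma h\big)$ then gives the asserted bound, with equality at $\nu=\hat{\mu}_K^{\seq}(\sigma)$ by direct substitution.

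Part (3) is immediate from the gradient computation and the Gaussian Poincar\'e inequality applied to $h\mapsto\lrt(\sigma)$: $\var\big(\lrt(\sigma)\big)\le\E\pnorm{\nabla_h\lrt(\sigma)}{}^2=4\sigma^2\,\E\err(\sigma)$. For (4) I would run a Herbst argument off the Gaussian log-Sobolev inequality: with $\psi(t)\equiv\log\E e^{t\lrt(\sigma)}$,
\begin{align*}
\mathrm{Ent}\big(e^{t\lrt(\sigma)}\big)\le\tfrac12\,\E\big[\pnorm{\nabla_h\big(t\lrt(\sigma)\big)}{}^2e^{t\lrt(\sigma)}\big]=2\sigma^2t^2\,\E\big[\err(\sigma)\,e^{t\lrt(\sigma)}\big],
\end{align*}
so that $\E\err(\sigma)$ --- rather than $\E\lrt(\sigma)$ --- is what the gradient feeds into the bound. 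The hard part is to turn this into $\E\big[\err(\sigma)e^{t\lrt(\sigma)}\big]\le\big(1+\smallo(1)\big)\,\E\err(\sigma)\cdot\E e^{t\lrt(\sigma)}$ with explicit constants (the crude estimate $\err\le\lrt$ only gives the weaker bound with $\E\lrt(\sigma)$): I would write $\err(\sigma)=\E\err(\sigma)+\big(\err(\sigma)-\E\err(\sigma)\big)$ and control the fluctuation term by Cauchy--Schwarz against the moment generating function of $\lrt(\sigma)$ together with the sub-gamma concentration of $\err(\sigma)$ from Proposition \ref{prop:est_err_variance}-(2)(3), using also the self-bounding inequality $\pnorm{\nabla_h\lrt(\sigma)}{}^2\le 4\sigma^2\lrt(\sigma)$ (whence $\sqrt{\lrt(\sigma)}$ is $\sigma$-Lipschitz in $h$ and $\psi$ is finite near $0$), which makes the estimate self-improving on a short interval. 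Substituting into $\mathrm{Ent}\big(e^{t\lrt(\sigma)}\big)=e^{\psi(t)}\big(t\psi'(t)-\psi(t)\big)$ turns the entropy bound into a first-order differential inequality for $t\mapsto\psi(t)/t$; integrating from $t=0^+$ gives the stated moment generating function bound, and the tail inequality then follows from Markov's inequality optimized over $t$ (done symmetrically for $t>0$ and $t<0$). I expect this last step --- the quantitative decoupling of $\err(\sigma)$ and $e^{t\lrt(\sigma)}$ with the correct constants --- to be the main obstacle.
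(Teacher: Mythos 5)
Parts (1)--(3) of your proposal are correct, in places by routes genuinely different from the paper's. For (1) you bound $0\le\dv\hat{\mu}_K^{\seq}(\sigma)\le n$ directly from the fact that $\Pi_K$ is the gradient of a convex function and is nonexpansive (the paper instead cites Meyer--Woodroofe), and for the cone inequality in (2) you use the projection inequality with the test point $2\hat{\mu}_K^{\seq}(\sigma)-\nu\in K$ rather than the dual variational characterization of Lemma \ref{lem:lrt_qual}-(2); both work. Your observation that integrating $(\lrt(\sigma)/\sigma)'=\err(\sigma)/\sigma^2$ (Lemma \ref{lem:lrt_qual}-(3), with $\lrt(\tau)/\tau\to0$ as $\tau\downarrow0$) gives the exact identity $\lrt(\sigma)=\sigma\int_0^\sigma\err(\tau)\tau^{-2}\,\d{\tau}$, so that the general upper bound in (2) is literally equivalent to the lower bound $\err(\sigma)\le\lrt(\sigma)$, is a clean shortcut compared to the paper's route through the inequality $\dof(\sigma)\le\sigma\int_0^\sigma\err(\tau)\tau^{-2}\,\d{\tau}$ of Proposition \ref{prop:dof}-(3). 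Part (3) coincides with the paper's argument.

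Part (4) contains a genuine gap, which you flag yourself. After the log-Sobolev step $\mathrm{Ent}(e^{t\lrt(\sigma)})\le 2\sigma^2t^2\,\E[\err(\sigma)e^{t\lrt(\sigma)}]$ you must replace $\err(\sigma)$ inside the expectation by (a multiple of) $\E\err(\sigma)$, and your proposed decoupling does not close: Cauchy--Schwarz applied to $(\err(\sigma)-\E\err(\sigma))e^{t\lrt(\sigma)}$ produces either $\E^{1/2}e^{2t\lrt(\sigma)}$ or $\E^{1/2}[(\err(\sigma)-\E\err(\sigma))^2e^{t\lrt(\sigma)}]$, neither of which feeds back into the Herbst differential inequality, the ``self-improving on a short interval'' bootstrap is not an argument, and the statement demands the explicit bound $\exp\big(4\lambda^2/\big(1-8(\sigma^2/\E\err(\sigma))\lambda^2\big)\big)$ rather than a $(1+\smallo(1))$-type estimate. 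The paper avoids decoupling altogether: with $G=\lambda(\lrt(\sigma)-\E\lrt(\sigma))/(\sigma\E^{1/2}\err(\sigma))$ the entropy bound reads $\mathrm{Ent}(e^{tG})\le\tfrac{t^2}{2}\E[\Gamma^2e^{tG}]$ with $\Gamma^2=4\lambda^2\err(\sigma)/\E\err(\sigma)$, and the exponential Poincar\'e inequality of Bobkov--Ledoux (Lemma \ref{lem:expo_poincare_ineq}) converts this directly into $\E e^{G}\le\E e^{\Gamma^2}$; the right-hand side is a moment generating function of $\err(\sigma)$, already controlled by Proposition \ref{prop:est_err_variance}-(3), which yields exactly the stated bound, and Lemma \ref{lem:tail_bound_generic} then gives the tail. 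This transfer-of-MGF device (or an equivalent substitute) is the missing ingredient in your plan; without it the Herbst argument only delivers the weaker inequality with variance proxy $\sigma^2\E\lrt(\sigma)$.
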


Proposition \ref{prop:lrt_prop}-(2) provides two powerful inequalities tracking the difference between $\lrt(\cdot)$ and $\err(\cdot)$. The first inequality is generic and tight for some (many) choice(s) of $(K,\mu_0)$, while the second inequality  is tight for \emph{every} choice of $(K,\mu_0)$ at the cost of a stronger cone condition on $K$. These inequalities will be essential in understanding and verifying the condition (R2) in Theorem \ref{thm:risk_asymp}.

We note that although the appearance of Proposition \ref{prop:lrt_prop}-(4) is similar to Proposition \ref{prop:est_err_variance}-(3), the proof takes a rather different route by resorting to `exponential Poincar\'e-type inequalities' due to \cite{bobkov1999exponential}. The subtle point here is that we need the variance component to scale like $\sigma^2 \cdot \E \err(\sigma)$ rather than the bigger quantity $\sigma^2 \cdot \E \lrt(\sigma)$ in the exponential inequality in the proof of the main Theorem \ref{thm:risk_asymp}. This sub-gaussian tail behavior of $\lrt(\cdot)$ is quite natural and cannot be improved in view of the Gaussian approximation for $\lrt(\cdot)$ proved in \cite[Theorem 3.1]{han2022high}.

\subsection{DoF process}\label{section:dof}

Define the (scaled) degree-of-freedom (DoF) process
\begin{align}\label{def:dof}
\dof(\sigma)\equiv \iprod{\hat{\mu}_K^{\seq}(\sigma)-\mu}{\sigma h}.
\end{align}
This definition is different from  \cite{meyer2000degrees,kato2009degrees}, where the quantity $\dv\hat{\mu}_K^{\seq}(\sigma)$ is defined as the `degree-of-freedom' associated with $\hat{\mu}_K^{\seq}$ in the Gaussian sequence model (\ref{model:seq}). As we will see below, the two definitions agree \emph{in expectation} modulo a multiplicative scaling factor $\sigma^2$. We will work with the definition (\ref{def:dof}) above, as thus defined $\dof(\cdot)$ is both directly connected to, and also shares similar properties as $\err(\cdot)$ and $\lrt(\cdot)$ studied in the previous subsections. Below we summarize some useful analytic and probabilistic properties for $\dof(\cdot)$.

\begin{proposition}\label{prop:dof}
The following hold for $\dof(\cdot)$ defined in (\ref{def:dof}).
\begin{enumerate}
	\item $\E \dof(\sigma)=\sigma^2\E \dv\hat{\mu}_K^{\seq}(\sigma)$.
	\item $\sigma \mapsto \dof (\sigma)/\sigma^2 $ is non-increasing, and for any $\sigma\geq 0, M\geq 1$, we have $\dof (\sigma)\leq \dof (M\sigma) \leq M^2 \dof(\sigma)$.
	\item For any $\sigma>0$, we have
	\begin{align*}
	\err(\sigma)\leq \dof(\sigma)\leq \sigma \int_0^\sigma \frac{\err(\tau)}{\tau^2}\,\d{\tau}.
	\end{align*}
	Consequently, $\lim_{\sigma \downarrow 0} \E \dv\hat{\mu}_K^{\seq}(\sigma)= \lim_{\sigma\downarrow 0} \E\err(\sigma)/\sigma^2 = \delta_{T_K(\mu_0)}$.
	\item The variance bound $\var(\dof(\sigma))\leq 4\sigma^2 \E \err(\sigma)$ holds. Furthermore there exists some absolute constant $L>0$ such that
	\begin{align*}
	&\Prob\Big( L^{-1}\bigabs{\dof (\sigma)-\E \dof (\sigma)} \geq \sqrt{\sigma^2\cdot \E \err(\sigma)\cdot t}+\sigma^2 t \Big)\leq L e^{-t/L}
	\end{align*}
	holds for all $t\geq 0$.
\end{enumerate}
\end{proposition}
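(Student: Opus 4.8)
The plan is to prove each of the four assertions of Proposition \ref{prop:dof} by exploiting the formal parallel with the analysis of $\err(\cdot)$ and $\lrt(\cdot)$ already carried out, together with the classical divergence identity for the constrained LSE. For part (1), I would start from Stein's identity: writing $\hat{\mu}_K^{\seq}(\sigma) = \Pi_K(\mu_0+\sigma h)$ and noting $h\mapsto \Pi_K(\mu_0+\sigma h)$ is $1$-Lipschitz hence a.e.\ differentiable, Gaussian integration by parts gives $\E\iprod{\hat{\mu}_K^{\seq}(\sigma)-\mu_0}{\sigma h} = \sigma^2\,\E\,\dv_h \big[\Pi_K(\mu_0+\sigma h)\big] = \sigma^2\,\E\,\dv\hat{\mu}_K^{\seq}(\sigma)$, where the chain rule absorbs the factor $\sigma$ from the argument against the $\sigma^{-1}$ from differentiating in $h$ versus in $y$. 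One should be slightly careful that $\dv\hat{\mu}_K^{\seq}(\sigma)$ is interpreted as the divergence of the proximal/projection map in the $y$-variable, matching \cite{meyer2000degrees,kato2009degrees}; this is exactly the content of the sentence in the text following \eqref{def:dof}.

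For part (2), the monotonicity of $\sigma\mapsto\dof(\sigma)/\sigma^2$ should follow the same template as Lemma \ref{lem:monotone_est_err}-(2) and Lemma \ref{lem:lrt_qual}-(4): differentiate in $\sigma$ using the a.e.\ differentiability of $\sigma\mapsto\hat{\mu}_K^{\seq}(\sigma)$ (the projection path is Lipschitz in $\sigma$ by non-expansiveness), obtain an expression for $\dof'(\sigma)$, and show $\dof'(\sigma)\le 2\dof(\sigma)/\sigma$ a.e.; the stability bound $\dof(\sigma)\le\dof(M\sigma)\le M^2\dof(\sigma)$ is then the verbatim two-line argument from the proof of Proposition \ref{prop:est_err_stability} (using monotonicity of $\sigma\mapsto\dof(\sigma)$ itself for the left inequality, which in turn should come from the same differential inequality giving $\dof'(\sigma)\ge 0$). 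I would want $\dof'(\sigma)\ge 0$ as well; this should be extractable from the lower bound $\err(\sigma)\le\dof(\sigma)$ of part (3) combined with the differential relation, or directly.

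Part (3) is where the real work is. The lower bound $\err(\sigma)\le\dof(\sigma)$ should come from the first-order optimality (variational inequality) for $\hat{\mu}_K^{\seq}(\sigma)=\Pi_K(y)$: since $\mu_0\in K$, $\iprod{y-\hat{\mu}_K^{\seq}(\sigma)}{\mu_0-\hat{\mu}_K^{\seq}(\sigma)}\le 0$, i.e.\ $\iprod{\sigma h}{\mu_0-\hat{\mu}_K^{\seq}(\sigma)}\le -\pnorm{\hat{\mu}_K^{\seq}(\sigma)-\mu_0}{}^2$, which rearranges to $\dof(\sigma)=\iprod{\hat{\mu}_K^{\seq}(\sigma)-\mu_0}{\sigma h}\ge\err(\sigma)$. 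For the upper bound $\dof(\sigma)\le\sigma\int_0^\sigma\err(\tau)/\tau^2\,\d\tau$, I would use the identity $\dof'(\sigma)/\sigma$-type manipulation: establish (again by differentiating) that $\big(\dof(\sigma)/\sigma\big)'$ or a related quantity equals $\err(\sigma)/\sigma^2$ up to a nonnegative correction, in analogy with Lemma \ref{lem:lrt_qual}-(3); then integrate from $0$ to $\sigma$, using $\dof(0)=0$. The cleanest route is probably: $\frac{\d}{\d\sigma}\dof(\sigma) = \dof(\sigma)/\sigma + (\hbox{something})$, but I expect the precise relation is $\sigma\,\frac{\d}{\d\sigma}\!\big(\dof(\sigma)/\sigma\big) = \err(\sigma)/\sigma - (\hbox{nonneg})$, giving the integral bound after division by $\sigma$ and integration; matching constants will need the chain-rule computation of $\frac{\d}{\d\sigma}\iprod{\hat{\mu}_K^{\seq}(\sigma)-\mu_0}{h}$, noting the term $\iprod{\frac{\d}{\d\sigma}\hat{\mu}_K^{\seq}(\sigma)}{h}$ is $\ge 0$ (projection paths move in directions correlated with $h$) so it can be dropped for the upper bound. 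The stated consequence $\lim_{\sigma\downarrow 0}\E\,\dv\hat{\mu}_K^{\seq}(\sigma)=\delta_{T_K(\mu_0)}$ then follows by combining part (1), the sandwich $\err(\sigma)\le\dof(\sigma)\le\sigma\int_0^\sigma\err(\tau)/\tau^2\d\tau$, dividing by $\sigma^2$, and applying dominated convergence with Proposition \ref{prop:est_err_variance}-(1) (the integrand $\E\err(\tau)/\tau^2$ is monotone in $\tau$ and converges to $\delta_{T_K(\mu_0)}$ as $\tau\downarrow 0$, so the average $\frac1\sigma\int_0^\sigma$ of it also converges to $\delta_{T_K(\mu_0)}$).

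For part (4), the variance bound $\var(\dof(\sigma))\le 4\sigma^2\E\err(\sigma)$ should follow from the Gaussian Poincar\'e inequality applied to $h\mapsto\dof(\sigma) = \iprod{\Pi_K(\mu_0+\sigma h)-\mu_0}{\sigma h}$: compute the gradient in $h$, which by the chain rule and $1$-Lipschitzness of $\Pi_K$ has squared norm controlled by $\sigma^2\pnorm{\hat{\mu}_K^{\seq}(\sigma)-\mu_0}{}^2$ plus cross terms that telescope—this mirrors exactly the variance computations for $\err$ in Proposition \ref{prop:est_err_variance}-(2) and for $\lrt$ in Proposition \ref{prop:lrt_prop}-(3). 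The exponential concentration inequality should then follow the \emph{same} route as Proposition \ref{prop:lrt_prop}-(4), namely the exponential Poincar\'e-type inequalities of \cite{bobkov1999exponential} applied to the same Lipschitz-type functional, so that the variance proxy scales as $\sigma^2\E\err(\sigma)$ and not the larger $\sigma^2\E\dof(\sigma)$. I expect the main obstacle to be part (3): getting the correct constant in the differential identity relating $\dof'$ to $\err(\sigma)/\sigma^2$ requires a careful chain-rule argument for $\sigma\mapsto\Pi_K(\mu_0+\sigma h)$ and a sign argument that $\iprod{\frac{\d}{\d\sigma}\hat{\mu}_K^{\seq}(\sigma)}{h}\ge 0$ (which itself follows from the monotonicity of projections along the ray $\mu_0+\sigma h$, or from Lemma \ref{lem:monotone_est_err}-(1) differentiated), and one must handle the a.e.-differentiability and the behavior at $\sigma=0$ with care.
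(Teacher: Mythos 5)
Your part (1), the lower bound in part (3), and the limiting consequence are fine and essentially identical to the paper's argument (Stein's identity; the variational inequality $\iprod{y-\hat{\mu}_K^{\seq}(\sigma)}{\mu_0-\hat{\mu}_K^{\seq}(\sigma)}\le 0$; averaging $\E\err(\tau)/\tau^2$). The genuine gap is in the upper bound of part (3), which is the substantive content of the proposition. Since $\dof(\sigma)/\sigma=\iprod{\hat{\mu}_K^{\seq}(\sigma)-\mu_0}{h}$, the quantity you must bound is exactly $\frac{\d}{\d\sigma}\iprod{\hat{\mu}_K^{\seq}(\sigma)-\mu_0}{h}=\iprod{\J_{\Pi_K}(\mu_0+\sigma h)\,h}{h}$, and your proposed mechanism --- note that $\iprod{\tfrac{\d}{\d\sigma}\hat{\mu}_K^{\seq}(\sigma)}{h}\ge 0$ and ``drop'' it --- cannot work: that inner product \emph{is} the entire derivative, and a lower bound on it says nothing about an upper bound on $\dof$. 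The crude estimate $\iprod{\J_{\Pi_K}h}{h}\le\pnorm{h}{}^2$ from $\pnorm{\J_{\Pi_K}}{\op}\le1$ is also far too lossy. The missing ingredient is the Jacobian identity $\J_{\Pi_K}(\mu_0+\sigma h)^\top\hat{\mu}_K^{\seq}(\sigma)=\J_{\Pi_K}(\mu_0+\sigma h)^\top(\mu_0+\sigma h)$ (display (\ref{ineq:est_err_var_3}), from \cite[Lemma 2.1-(2)]{han2022high}), which allows one to replace $\sigma h$ by $\hat{\mu}_K^{\seq}(\sigma)-\mu_0$ inside the quadratic form and identify, via the derivative formula (\ref{ineq:est_err_var_1}), that $\iprod{\J_{\Pi_K}(\mu_0+\sigma h)h}{h}=\err'(\sigma)/(2\sigma)$. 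Only then does Lemma \ref{lem:monotone_est_err}-(2) ($\err'(\tau)\le 2\err(\tau)/\tau$ a.e.) give $(\dof(\sigma)/\sigma)'\le\err(\tau)/\tau^2$ pointwise in $\tau$, and integration from $0$ to $\sigma$ yields the stated bound. Your claimed relation $\sigma\frac{\d}{\d\sigma}\big(\dof(\sigma)/\sigma\big)=\err(\sigma)/\sigma-(\mathrm{nonneg})$ is precisely what remains unproven.

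The same gap propagates into your versions of (2) and (4). Your direct proof of $\dof'(\sigma)\le 2\dof(\sigma)/\sigma$ reduces, after the product rule, to $\sigma\err'(\sigma)/2\le\dof(\sigma)$, which again needs the derivative identity above (plus $\err\le\dof$); and your Poincar\'e/Bobkov--Ledoux computation in (4) needs $\pnorm{\nabla_h\dof(\sigma)}{}\le 2\sigma\,\err^{1/2}(\sigma)$, whose proof requires rewriting the term $\sigma\J_{\Pi_K}^\top(\sigma h)$ as $\sigma\J_{\Pi_K}^\top\big(\hat{\mu}_K^{\seq}(\sigma)-\mu_0\big)$ via the same identity; ``cross terms that telescope'' is not a substitute. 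The paper sidesteps all of this for (2) and (4) by using the identity $\dof(\sigma)=\big(\lrt(\sigma)+\err(\sigma)\big)/2$ from the proof of Lemma \ref{lem:lrt_qual}, so that monotonicity of $\dof(\sigma)/\sigma^2$, the stability bounds, the variance bound, and the exponential inequality are all inherited directly from the corresponding statements for $\err$ and $\lrt$ (Propositions \ref{prop:est_err_stability}, \ref{prop:lrt_stability}, \ref{prop:est_err_variance}, \ref{prop:lrt_prop}); adopting that decomposition would repair (2) and (4) immediately, but (3) still requires the Jacobian identity argument.
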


The inequality in Proposition \ref{prop:dof}-(3) provides an important quantitative link between $\err,\dof,\lrt$ by tracking the tightness of the easy inequality $\err(\sigma)\leq \dof(\sigma)\leq \lrt(\sigma)$. This inequality also plays a key role in the proof of Proposition \ref{prop:lrt_prop}-(2). In the expectation form, this inequality reads
\begin{align*}
\E\err(\sigma)\leq \sigma^2\E \dv\hat{\mu}_K^{\seq}(\sigma) \leq \sigma \int_0^\sigma \frac{\E\err(\tau)}{\tau^2}\,\d{\tau}.
\end{align*}
The left hand side is essentially known in \cite[Eqn. (10), pp. 1086]{meyer2000degrees}. The right hand side seems genuinely new. Furthermore, the upper bound is tight in the low noise limit as $\sigma \downarrow 0$.

\subsection{A uniform concentration inequality}\label{section:exp_ineq_err_lrt}

Using the analytic and probabilistic properties for $\err(\cdot),\lrt(\cdot),\dof(\cdot)$ derived in the previous subsections, we may prove the following uniform concentration inequality.

\begin{proposition}\label{prop:est_err_lrt_sup}
	Let $H\in \{\err,\lrt, \dof\}$. There exists a universal constant $C>0$ such that for any $0<\epsilon_0<M_0$ and $t\geq 1$, 
	\begin{align*}
	&\Prob\bigg[\abs{H(\sigma)-\E H(\sigma)}\leq C\big( \sigma \cdot \E^{1/2} \err(\sigma)\cdot \sqrt{t}+ \sigma^2\cdot t \big),\quad \forall \sigma \in [\epsilon_0, M_0]\bigg]\\
	&\geq 1- Ce^{-t/C}\cdot \big(1+\delta_{T_K(\mu_0)}\big)\cdot  \log_+(M_0/\epsilon_0)\geq 1-2C e^{-t/C}\cdot n \log_+(M_0/\epsilon_0).
	\end{align*}
\end{proposition}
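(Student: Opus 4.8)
The plan is a discretization (chaining) argument that promotes the pointwise sub-exponential tail bounds for $\err(\cdot)$, $\lrt(\cdot)$, $\dof(\cdot)$ (Propositions \ref{prop:est_err_variance}-(3), \ref{prop:lrt_prop}-(4), \ref{prop:dof}-(4)) to a supremum bound over $[\epsilon_0,M_0]$, exploiting the pathwise monotonicity and stability already established for these three processes. Write $H$ for any member of $\{\err,\lrt,\dof\}$. I will use three ingredients: \textbf{(i)} there is a universal $L$ so that for every fixed $\sigma>0$ and $t\geq 0$, $\Prob\big(\abs{H(\sigma)-\E H(\sigma)}> L(\sigma\,\E^{1/2}\err(\sigma)\sqrt{t}+\sigma^2 t)\big)\leq Le^{-t/L}$ (absorbing the explicit constants in the $\err$-case); \textbf{(ii)} $\sigma\mapsto H(\sigma)$ is pathwise non-decreasing and obeys $H(\sigma)\leq H(M\sigma)\leq M^2H(\sigma)$ for $M\geq 1$ (Propositions \ref{prop:est_err_stability}, \ref{prop:lrt_stability}, \ref{prop:dof}-(2)), and likewise for $\E H$; \textbf{(iii)} $\E H(\sigma)\leq 2\sigma^2\big(1+\delta_{T_K(\mu_0)}\big)$ for all $\sigma>0$ --- for $H=\err$ this is Proposition \ref{prop:est_err_variance}-(1), while for $H\in\{\lrt,\dof\}$ it follows from $\E\lrt(\sigma)\leq 2\sigma^2\E\dv\hat{\mu}_K^{\seq}(\sigma)$, $\E\dof(\sigma)=\sigma^2\E\dv\hat{\mu}_K^{\seq}(\sigma)$, and the fact that $\sigma\mapsto\E\dv\hat{\mu}_K^{\seq}(\sigma)=\E\dof(\sigma)/\sigma^2$ is non-increasing with limit $\delta_{T_K(\mu_0)}$ as $\sigma\downarrow 0$ (Propositions \ref{prop:lrt_prop}-(1), \ref{prop:dof}-(1),(2),(3)).

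Concretely, I would fix the net ratio via $\eta\equiv 1\wedge\big(t/(1+\delta_{T_K(\mu_0)})\big)$ and take $\sigma_j\equiv(1+\eta)^j\epsilon_0$, $j=0,1,\dots,N$, with $N$ the least index making $\sigma_N\geq M_0$. Since $\log(1+\eta)\geq\eta/2$ on $(0,1]$, we get $N+1\leq 2+2\log_+(M_0/\epsilon_0)/\eta$; distinguishing the cases $\eta=1$ and $\eta<1$ (in which $1/\eta=(1+\delta_{T_K(\mu_0)})/t$) and using $t\geq 1$, $\log_+\geq 1$, this yields $N+1\lesssim \log_+(M_0/\epsilon_0)\big(1+\delta_{T_K(\mu_0)}\big)$. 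A union bound of (i) over the $N+1$ net points then produces an event $\mathcal{A}$ of probability at least $1-(N+1)Le^{-t/L}$ on which the pointwise bound holds at every $\sigma_j$, and $(N+1)Le^{-t/L}\lesssim e^{-t/L}\log_+(M_0/\epsilon_0)\big(1+\delta_{T_K(\mu_0)}\big)$, which is the claimed failure bound; the final inequality in the statement follows from $\delta_{T_K(\mu_0)}\leq n$ (Proposition \ref{prop:est_err_variance}-(1)).

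It then remains to interpolate on $\mathcal{A}$. For $\sigma\in[\sigma_j,\sigma_{j+1}]$, monotonicity of $H$ and of $\E H$ gives
\[
H(\sigma)-\E H(\sigma)\leq \big[H(\sigma_{j+1})-\E H(\sigma_{j+1})\big]+\big[\E H(\sigma_{j+1})-\E H(\sigma_j)\big],
\]
together with the matching lower bound obtained by swapping $j$ and $j+1$. The first bracket is at most $L\big(\sigma_{j+1}\E^{1/2}\err(\sigma_{j+1})\sqrt t+\sigma_{j+1}^2 t\big)$; since $\sigma_{j+1}\leq 2\sigma$ and, by the stability and monotonicity of $\err$, $\E\err(\sigma_{j+1})\leq 4\E\err(\sigma)$, this is $\lesssim\sigma\,\E^{1/2}\err(\sigma)\sqrt t+\sigma^2 t$. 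The second bracket is at most $\big((1+\eta)^2-1\big)\E H(\sigma_j)\leq 3\eta\cdot 2\sigma^2\big(1+\delta_{T_K(\mu_0)}\big)\leq 6\sigma^2 t$ by (iii), $\sigma_j\le\sigma$, and the choice of $\eta$. Summing the two brackets yields $\abs{H(\sigma)-\E H(\sigma)}\leq C\big(\sigma\,\E^{1/2}\err(\sigma)\sqrt t+\sigma^2 t\big)$ simultaneously for all $\sigma\in[\epsilon_0,M_0]$, which is the assertion.

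The only genuinely delicate point --- and the one that dictates the shape of the probability bound --- is the calibration of the net spacing $\eta$: the oscillation of $\E H$ across a single cell is of order $\eta\,\sigma^2(1+\delta_{T_K(\mu_0)})$ and must be absorbed into the target tolerance $\sigma^2 t$, forcing $\eta\lesssim t/(1+\delta_{T_K(\mu_0)})$, which in turn inflates the cardinality of the net (and thus the union bound) by essentially the factor $(1+\delta_{T_K(\mu_0)})/t$ responsible for the $(1+\delta_{T_K(\mu_0)})$ in the stated bound. Everything else --- transferring the $\sigma$-dependent variance proxy $\E^{1/2}\err(\sigma)$ between adjacent grid points and the bookkeeping of absolute constants --- is routine once the stability estimates of this section are in place.
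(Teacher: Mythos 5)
Your argument is correct, and it delivers the stated bound with the same overall skeleton as the paper: pointwise sub-exponential concentration for $\err,\lrt,\dof$ at fixed $\sigma$ (Propositions \ref{prop:est_err_variance}-(3), \ref{prop:lrt_prop}-(4), \ref{prop:dof}-(4)), a net of cardinality of order $(1+\delta_{T_K(\mu_0)})\log_+(M_0/\epsilon_0)$, and a union bound. The difference lies in how the oscillation within a cell of the net is handled. The paper works scale by scale on dyadic intervals $[u,2u]$ and controls the \emph{random} oscillation between adjacent grid points through the pathwise derivative estimates $\abs{\err'(\tau)}\leq 2\err(\tau)/\tau$ and $\abs{\lrt'(\tau)}\leq 2\lrt(\tau)/\tau$ (via Lemma \ref{lem:monotone_est_err}-(2) and Lemma \ref{lem:lrt_qual}-(1),(3)), which leaves a residual random term of the form $N^{-1}\big(H(u)+\E H(u)\big)$ that must then itself be converted into a deterministic quantity via Proposition \ref{prop:dof}-(3) before choosing $N\gtrsim 1+\delta_{T_K(\mu_0)}$; it treats $\dof$ through the identity $\dof=(\lrt+\err)/2$. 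You instead exploit that all three processes (and their means) are pathwise non-decreasing, so the classical monotone sandwich reduces the within-cell error to the deviations at the two neighboring grid points plus the \emph{deterministic} increment $\E H(\sigma_{j+1})-\E H(\sigma_j)$, which you control by the stability estimates and the a priori bound $\E H(\sigma)\leq 2\sigma^2(1+\delta_{T_K(\mu_0)})$ (itself a consequence of Propositions \ref{prop:est_err_variance}-(1), \ref{prop:lrt_prop}-(1) and \ref{prop:dof}-(1)--(3)); your calibration $\eta\asymp 1\wedge t/(1+\delta_{T_K(\mu_0)})$ then absorbs this increment into the tolerance $\sigma^2 t$ and produces the same (in fact marginally smaller, by a factor $1/t$) net cardinality. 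Your route is somewhat more elementary in that it never needs to bound a random oscillation term and avoids the derivative estimates in the chaining step, at the price of invoking monotonicity of all three processes and the mean bound; the paper's route is more local (it only needs information at the base scale $u$ of each dyadic block) and extends more readily to processes that are Lipschitz-in-$\log\sigma$ but not monotone. Both yield the claimed probability bound, and your final reduction $\delta_{T_K(\mu_0)}\leq n$ matches the paper's.
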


For the above inequality to be meaningful in applications, we need to choose $t$ that grows at a certain rate depending on the growth of $M_0/\epsilon_0$. In the proof of Theorem \ref{thm:risk_asymp} in the next section, we will use $\mathfrak{L}_n$ (defined in (\ref{def:Ln})) to control the growth of $t$. Compared to the choice $t\asymp \log n$, this refined choice is beneficial in e.g. Corollary \ref{cor:risk_asymp_iso} and Theorem \ref{thm:risk_asymp_lasso} that further reduces $\log n$ to $\log \log n$.

\section{Proof outline of Theorem \ref{thm:risk_asymp}}\label{section:proof_sketch}

The basic approach of the proof of Theorem \ref{thm:risk_asymp} is to reduce the primal optimization (PO) problem (\ref{def:generic_est}) to a simpler, but probabilistically `equivalent' auxiliary optimization (AO) problem (cf. Theorem \ref{thm:CGMT}). This method of reduction is now well understood; see \cite{thrampoulidis2018precise}. Here with the help of the variational characterization of $\lrt(\cdot)$ proved in Lemma \ref{lem:lrt_qual} and an appropriate reparametrization, it can be shown that we only need to deal with the AO problem
\begin{align}\label{ineq:proof_sketch_1}
\Psi^{\textrm{a}}(g,h)&\equiv \frac{1}{2} \min_{\alpha \in [0,L_w]}\max_{\eta\geq 0}\bigg[   \sqrt{ \pnorm{r_n \alpha g+\xi}{}^2/m}-\frac{\alpha }{2\eta}\cdot r_n^2- \frac{1}{2\alpha \eta}\cdot  \frac{1}{n} \lrt\bigg(\frac{\alpha\eta}{\sqrt{m/n}}\bigg)\bigg]_+^2\nonumber\\
& = \frac{1}{2} \min_{\alpha \in [0,L_w]} \max_{\eta \geq 0}  \mathfrak{D}_n(\alpha,\eta)_+^2 \equiv \mathcal{R}(\mathfrak{D}_n;[0,L_w])
\end{align}
for some $L_w>1$ large enough; see Proposition \ref{prop:adjusted_AO} for a formal statement.  Here the randomness on the standard Gaussian vector $h \in \R^n$ is implicit in $\lrt(\cdot)$. 

The goal now is to show that for the $r_n>0$ chosen according to the fixed point equation (\ref{eqn:fixed_pt_eqn}), the minimizer of $\min_{\alpha \in [0,L_w]}$ in the above AO should be very close to $1$ whenever $r_n^2\gg \mathfrak{L}_n/m$. The basic logic to show this is to prove an assertion of the following type: for every small but fixed $\epsilon>0$, 
\begin{align}\label{ineq:proof_sketch_2}
\hbox{$\mathcal{R}(\mathfrak{D}_n;[0,L_w]_\epsilon)$ is `larger' than $\mathcal{R}(\mathfrak{D}_n;[0,L_w])$ w.h.p. in the limit}, 
\end{align}
where $[0,L_w]_\epsilon\equiv [0,1-\epsilon]\cup [1+\epsilon,L_w]$.

To motivate the discussion of our approach, it is useful to have a sense of how (\ref{ineq:proof_sketch_2}) works in the `proportional high dimensional regime' (cf. \cite{thrampoulidis2018precise}). This regime postulates a non-degenerate limit for the objective function in the AO, i.e., $\mathfrak{D}_n\to \mathfrak{D}$ for some non-trivial $\mathfrak{D}$ in an appropriate sense. Then (\ref{ineq:proof_sketch_2}) reduces to essentially a deterministic inequality $\mathcal{R}(\mathfrak{D};[0,L_w]_\epsilon)> \mathcal{R}(\mathfrak{D};[0,L_w])$. Clearly, such an approach will be useful only if $\mathfrak{D}$ is non-degenerate. 

In our setting, for most interesting problem instances $(K,\mu_0)$, in particular those with vanishing risks, the limit $\mathfrak{D}$ is degenerate $\mathfrak{D}(\cdot,\cdot)\equiv \sigma$, so the above method of analysis necessarily fails. This suggests that in order to analyze (\ref{ineq:proof_sketch_1}), we need to study (i) the precise order of the gap between suitable versions of $\mathcal{R}(\mathfrak{D}_n;[0,L_w]_\epsilon), \mathcal{R}(\mathfrak{D}_n;[0,L_w])$ and (ii) their stochastic fluctuations. It turns out this rough idea can be formalized by a conditional argument on a `good event' of $\xi \in \R^m$, on which the aforementioned two intertwined issues can be resolved at the same time all the way down to $r_n^2\gg \mathfrak{L}_n/m$. More concretely:
\begin{enumerate}
	\item The version of $\mathfrak{D}_n(\alpha,\eta)$ we will be working with is
	\begin{align}\label{ineq:proof_sketch_5}
	\mathsf{D}_n(\alpha,\eta)\equiv \sqrt{r_{n,\xi}^2\alpha^2+\pnorm{\xi}{}^2/m}-\frac{\alpha }{2\eta}\cdot r_{n,\xi}^2 - \frac{1}{2\alpha \eta}\cdot  \frac{1}{n} \lrt\bigg(\frac{\alpha\eta}{\sqrt{m/n}}\bigg),
	\end{align}
	where $r_{n,\xi}$ solves the (conditional) fixed point equation (\ref{eqn:fixed_pt_eqn}) with $\sigma^2$ therein replaced by $\pnorm{\xi}{}^2/m$; see (\ref{def:rn_xi}) below for a formal definition. So conditional on the `good event' of $\xi \in \R^m$, the randomness in $\mathsf{D}_n(\alpha,\eta)$ is entirely driven by the Gaussian vector $h \in \R^n$ in $\lrt(\cdot)$. The precise meaning of the `good event' of $\xi \in \R^m$ will be stated in Definition \ref{def:xi_good}.

   \item Using $\mathsf{D}_n$, we will establish the following version of (\ref{ineq:proof_sketch_2}): For any fixed $\epsilon>0$, conditional on the `good event' of $\xi$, there exists $c_\epsilon>0$ such that
   \begin{align}\label{ineq:proof_sketch_3}
   \mathcal{R}(\mathsf{D}_n;[0,L_w]_\epsilon)\geq \mathcal{R}(\mathsf{D}_n;[0,L_w])+ c_\epsilon\cdot r_n^2\quad \hbox{w.h.p}
   \end{align}
   for all $n$ large enough. See Proposition \ref{prop:gap_det_eps} for a formal statement. The gap order $r_n^2$ in (\ref{ineq:proof_sketch_3}) is essential, as will be seen below.
   \item  Next we study the (conditional) stochastic fluctuation problem. For the risk of the constrained LSE $\hat{\mu}(\sigma)$ in PO to be related to AO in a probabilistically `equivalently way', it is necessary that either side of (\ref{ineq:proof_sketch_3}) should be roughly deterministic for large enough $n$. We achieve this conditional `de-stochastization' step by showing that the random variable on the right hand side of (\ref{ineq:proof_sketch_3}) can be replaced by a conditionally deterministic quantity, with fluctuations controlled strictly below the gap order  $r_n^2$ in (\ref{ineq:proof_sketch_3}):
   \begin{align}\label{ineq:proof_sketch_4}
   \mathcal{R}(\mathsf{D}_n;[0,L_w]) = \mathcal{R}(\E^\xi \mathsf{D}_n;[0,L_w])+ \smallopx(r_n^2).
   \end{align}
   The above equality is formally established in Proposition \ref{prop:local_destoc}. We mention that it is important to choose the right hand side, instead of the left hand side, of (\ref{ineq:proof_sketch_3}) for the above conditional de-stochastization step. The conditional stochastic fluctuation of $\mathcal{R}(\mathfrak{D}_n;[0,L_w]_\epsilon)$ seems much harder to control, in particular in the region ${\alpha \in [1+\epsilon, L_w]}$.  
	
\end{enumerate}

Now we may explain the reason for choosing the version $\mathsf{D}_n$ in (\ref{ineq:proof_sketch_5}). The key point therein is to separate the term $\pnorm{\xi}{}^2/m$ apart from the calculations of stochastic fluctuations that are targeted below the gap order $r_n^2$ as in (\ref{ineq:proof_sketch_3}). In fact,  for each fixed $\alpha,\eta>0$, conditionally on $\xi$, an easy calculation shows that $\abs{\mathsf{D}_n(\alpha,\eta)-\mathfrak{D}_n(\alpha,\eta)}=\smallopx(r_n^2)$. If $\pnorm{\xi}{}^2/m$ is replaced by its limit $\sigma^2$ in $\mathsf{D}_n$ and proceed with unconditional arguments, the fluctuation will be necessarily of a much larger order $\max\{\smallo(r_n^2), m^{-1/2}\}$, where the hard threshold $m^{-1/2}$ comes from the fluctuation of $\pnorm{\xi}{}^2/m$. To put this in other words, the main reason for adopting a conditional argument on $\xi$ is that the speed at which $\mathfrak{D}_n$ converges can be much slower than the targeted gap order $r_n^2$, so keeping $\pnorm{\xi}{}^2/m$ in $\mathsf{D}_n$ amounts to decoupling the undesirably large stochastic fluctuations due to $\pnorm{\xi}{}^2/m$. On the other hand, the potentially slower convergence of $\mathfrak{D}_n$ does not cause problems in the risk analysis, as $r_{n,\xi}$ and $r_n$ are asymptotically equivalent as long as $\pnorm{\xi}{}^2/m$ is consistent for $\sigma^2$ (cf. Lemma \ref{lem:R2_deconditioning}).

The key inequalities (\ref{ineq:proof_sketch_3}) and (\ref{ineq:proof_sketch_4}), valid all the way down to almost the parametric rate $r_n^2\gg \mathfrak{L}_n/m$, are proved using very different ideas that will be of more technical nature, so will be detailed in Section \ref{section:proof_main_result} below. Clearly, in view of the form of $\mathfrak{D}_n, \mathsf{D}_n$ in (\ref{ineq:proof_sketch_1}) and (\ref{ineq:proof_sketch_5}), the analytical and probabilistic results on $\err(\cdot),\lrt(\cdot)$ and other related quantities in Section \ref{section:est_err_lrt_process} will be crucial, for obtaining the correct gap order $r_n^2$ in (\ref{ineq:proof_sketch_3}) and the conditional fluctuation order $\smallopx(r_n^2)$ in (\ref{ineq:proof_sketch_4}).

\section{Proof of Theorem \ref{thm:risk_asymp}}\label{section:proof_main_result}

\subsection{Some further notation}

We introduce some further notation. Let
\begin{align}
\sigma_m^2 \equiv \pnorm{\xi}{}^2/m.
\end{align}
Let $r_{n,\xi}>0$ be the solution to the fixed point equation
\begin{align}\label{def:rn_xi}
\E^\xi \err \big(\omega_{m/n}(r,\sigma_m)\big) = n r^2. 
\end{align}
Then for $\sigma_m>0$, which holds on the `good event' in Definition \ref{def:xi_good} below, there exists a unique solution $r_{n,\xi}>0$ if and only if $m>\delta_K$ (cf. Proposition \ref{prop:exist_unique_fixed_pt}).

Fix any slowly growing sequence $\mathfrak{u}_n\uparrow \infty$ with $\mathfrak{u}_n\mathfrak{L}_n/m \to 0$, and let
\begin{align}\label{def:r_vee}
r_\vee^2&\equiv r_\vee^2(\mathfrak{u}_n)\equiv \max\Big\{r_{n,\xi}^2, \frac{\mathfrak{u}_n \mathfrak{L}_n}{m}\Big\},\quad \bar{r}_\vee^2 \equiv \bar{r}_\vee^2(\mathfrak{u}_n)\equiv \max\Big\{r_{n}^2, \frac{2\mathfrak{u}_n \mathfrak{L}_n}{m}\Big\}.
\end{align}
Notational dependence on $\mathfrak{u}_n$ will typically be suppressed for simplicity. In the case $m/\mathfrak{L}_n\geq M_n$ for some $M_n\uparrow \infty$, we choose $\mathfrak{u}_n\equiv M_n^{1/2}$.
\begin{lemma}\label{lem:r_vee_fixed_pt_eqn}
For $r_\vee $ defined in (\ref{def:r_vee}), $
\E^\xi \err\big(\omega_{m/n}(r_\vee,\sigma_m)\big)\leq nr_\vee^2$
with equality if and only if $r_\vee = r_{n,\xi}$. 
\end{lemma}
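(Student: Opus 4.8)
The plan is to split according to which of the two terms attains the maximum in $r_\vee^2 = \max\{r_{n,\xi}^2,\, \mathfrak{u}_n\mathfrak{L}_n/m\}$. On the good event $\sigma_m>0$, and under the standing hypothesis $m>\delta_K$, the conditional fixed point $r_{n,\xi}$ exists uniquely in $(0,\infty)$ by Proposition \ref{prop:exist_unique_fixed_pt}, so $r_\vee\geq r_{n,\xi}>0$, with $r_\vee = r_{n,\xi}$ exactly when $r_{n,\xi}^2\geq \mathfrak{u}_n\mathfrak{L}_n/m$. In that case the asserted identity $\E^\xi\err\big(\omega_{m/n}(r_\vee,\sigma_m)\big)=nr_\vee^2$ is nothing but the defining equation (\ref{def:rn_xi}) for $r_{n,\xi}$, so equality holds. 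The remaining case is $r_\vee^2 = \mathfrak{u}_n\mathfrak{L}_n/m > r_{n,\xi}^2$, where $r_\vee>r_{n,\xi}>0$ and the goal is the strict inequality $\E^\xi\err\big(\omega_{m/n}(r_\vee,\sigma_m)\big)<nr_\vee^2$.

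The key tool for this second case is the conditional analogue of Proposition \ref{prop:unique_fixed_point_eqn}-(1): the map $r\mapsto \E^\xi\err\big(\omega_{m/n}(r,\sigma_m)\big)/(nr^2)$ is non-increasing on $(0,\infty)$, and strictly decreasing at every $r$ where $\E^\xi\err\big(\omega_{m/n}(r,\sigma_m)\big)>0$. This transfers from the unconditional statement for free, since the proof of Proposition \ref{prop:unique_fixed_point_eqn}-(1) only uses the \emph{pathwise} monotonicity of $\sigma\mapsto \err(\sigma)/\sigma^2$ from Lemma \ref{lem:monotone_est_err}-(2) together with the algebraic structure $\omega_{m/n}^2(r,\sigma_m) = (r^2+\sigma_m^2)/(m/n)$; replacing $\sigma$ by $\sigma_m$ throughout and then applying $\E^\xi$ changes nothing. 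Evaluating this monotone map at $r_\vee>r_{n,\xi}$ and using that it takes the value $1$ at $r_{n,\xi}$ (by (\ref{def:rn_xi})) immediately yields $\E^\xi\err\big(\omega_{m/n}(r_\vee,\sigma_m)\big)/(nr_\vee^2)\leq 1$, i.e.\ the inequality in the lemma.

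To pin down the equality case, it remains to upgrade this to a strict inequality whenever $r_\vee\neq r_{n,\xi}$. Because a positive solution $r_{n,\xi}$ of (\ref{def:rn_xi}) exists, we must have $K\neq\{\mu_0\}$, hence $\E^\xi\err(\sigma)>0$ for every $\sigma>0$, and in particular $\E^\xi\err\big(\omega_{m/n}(r,\sigma_m)\big)>0$ for all $r\geq 0$. Consequently the monotone map above is \emph{strictly} decreasing, so its value at $r_\vee>r_{n,\xi}$ is strictly below its value $1$ at $r_{n,\xi}$, giving $\E^\xi\err\big(\omega_{m/n}(r_\vee,\sigma_m)\big)<nr_\vee^2$. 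Combining the two cases proves the claim. I do not anticipate any genuine obstacle here: the statement is essentially an immediate corollary of the (conditional) monotonicity of the fixed point map, and the only point needing a line of care is verifying that this monotonicity transfers verbatim to the conditional-on-$\xi$ setting, which it does since all the underlying inequalities are pathwise.
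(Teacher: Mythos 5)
Your proposal is correct and follows essentially the same route as the paper: the paper's proof is exactly the observation that $r_\vee \geq r_{n,\xi}$ together with the monotonicity of $r \mapsto \E^\xi\err\big(\omega_{m/n}(r,\sigma_m)\big)/(nr^2)$ (the conditional form of Proposition \ref{prop:unique_fixed_point_eqn}-(1)) and the defining equation (\ref{def:rn_xi}). Your extra care in spelling out the equality case via strict monotonicity and the pathwise transfer to the conditional-on-$\xi$ setting is fine but does not change the argument.
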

\begin{proof}
By definition, $r_\vee \geq r_{n,\xi}$, so by monotonicity of $r\mapsto \E^\xi \err(\omega_{m/n}(r,\sigma_m))/r^2$ proved in Proposition \ref{prop:unique_fixed_point_eqn}-(1), we have $\E^\xi \err\big(\omega_{m/n}(r_\vee,\sigma_m)\big)/nr_\vee^2 \leq \E^\xi \err\big(\omega_{m/n}(r_{n,\xi},\sigma_m)\big)/ nr_{n,\xi}^2=1$, as desired.
\end{proof}

With $r_\vee $ defined in (\ref{def:r_vee}), let
\begin{align}
\mathsf{D}(\alpha,\eta)&\equiv \sqrt{r_\vee^2\alpha^2+\sigma_m^2}-\frac{\alpha }{2\eta}\cdot r_\vee^2 - \frac{1}{2\alpha \eta}\cdot  \frac{1}{n} \lrt\bigg(\frac{\alpha\eta}{\sqrt{m/n}}\bigg),\label{def:D_alpha_eta}
\end{align}
and $\overline{\mathsf{D}}(\alpha,\eta)\equiv \E^\xi \mathsf{D}(\alpha,\eta)$. Derivatives of $\mathsf{D}$ are given by:
\begin{align}
\frac{\d \mathsf{D}}{\d \alpha}
& =  \frac{\alpha r_\vee^2 }{ \sqrt{r_\vee^2\alpha^2+\sigma_m^2 } }-\frac{1}{2\eta}\cdot r_\vee^2  - \frac{1}{2\alpha^2 \eta}\cdot \frac{  \err\big(\alpha\eta/\sqrt{m/n}\big) }{n},\label{eq:dD_dalpha}\\
\frac{\d \mathsf{D}}{\d \eta}
& = \frac{\alpha}{2\eta^2}\cdot r_\vee^2- \frac{1}{2\alpha \eta^2}\cdot \frac{  \err\big(\alpha\eta/\sqrt{m/n}\big)  }{n}. \label{eq:dD_deta}
\end{align}	
In the above calculations we used Lemma \ref{lem:lrt_qual}-(3). The derivatives for $\overline{\mathsf{D}}$ take the same form upon changing $\err(\cdot)$ to $\E^\xi \err(\cdot)$ in (\ref{eq:dD_dalpha})-(\ref{eq:dD_deta}).

\subsection{`Good event' of $\xi$}

We now define formally the good event of $\xi \in \R^m$ on which the conditioning arguments will be performed. 

\begin{definition}\label{def:xi_good}
	Fix some slowly growing sequences $M_n^r, M_n^\sigma \uparrow \infty$ with $M_n^r \wedge M_n^\sigma\geq 2$ and $C_0>1$. Define the good event $E(C_0)\equiv E(\{M_n^r\},\{M_n^\sigma\},C_0)$ to be the collection of all $\xi \in \R^m$ such that:
	\begin{enumerate}
		\item $\sigma_m$ and $\sigma$ are close enough in the sense that
		\begin{align}\label{ineq:sigma_m_high_prob}
		\biggabs{ \frac{\sigma_m^2}{\sigma^2}-1}\leq \frac{1}{M_n^\sigma}.
		\end{align}
		\item $r_{n,\xi}$ and $r_n$ are close enough in the sense that
		\begin{align}\label{ineq:rn_xi_high_prob}
		\biggabs{ \frac{r_{n,\xi}^2}{r_n^2}-1}\leq \frac{1}{M_n^r}.
		\end{align}
		\item (R2) holds conditionally in the sense that
		\begin{align}\label{ineq:R2_xi}
		\frac{1}{2n\sigma_m^2}\bigg(\E^\xi \lrt\big(\omega_{m/n}(r_{n,\xi},\sigma_m)\big)-\E^\xi \err\big(\omega_{m/n}(r_{n,\xi},\sigma_m)\big)\bigg)\leq 1-1/C_0.
		\end{align}
		\item  It holds that $r_\vee \leq C_0$.
	\end{enumerate}
\end{definition}

Below we establish that the above definition indeed leads to a `good event'.

\begin{lemma}\label{lem:R2_deconditioning}
	The following hold.
	\begin{enumerate}
		\item (\ref{ineq:sigma_m_high_prob}) holds with probability tending to $1$ for some slowly growing $M_n^\sigma$.
		\item $\min\{\sigma_m^2/\sigma^2,\sigma^2/\sigma_m^2\}\leq r_{n,\xi}^2/r_n^2 \leq \max\{\sigma_m^2/\sigma^2,\sigma^2/\sigma_m^2\}$. So $r_{n,\xi}\pequiv r_n$, and (\ref{ineq:rn_xi_high_prob}) holds with probability tending to $1$ for some slowly growing $M_n^r$. 
		\item Suppose (R2) holds. Then (\ref{ineq:R2_xi}) holds with probability tending to $1$ for sufficiently large $C_0>1$. 
	\end{enumerate}
	Consequently, under (R2) and $r_n\lesssim 1$, for $\{M_n^\sigma\}$ chosen according to (1), $\{M_n^r\}$ according to (2) and large enough $C_0>1$, the event $E(C_0)$ in Definition \ref{def:xi_good} satisfies $\Prob(E(C_0))\to 1$. 
\end{lemma}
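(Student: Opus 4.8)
The plan is to treat the four items in Definition \ref{def:xi_good} one at a time, then combine them by a union bound. Item (4) requires no separate work: once (2) is established, $r_{n,\xi}\pequiv r_n$ together with $r_n\lesssim 1$ gives $r_\vee^2 = \max\{r_{n,\xi}^2, \mathfrak{u}_n\mathfrak{L}_n/m\}\leq C_0$ with probability tending to $1$ for $C_0$ large, since $\mathfrak{u}_n\mathfrak{L}_n/m\to 0$ by the choice of $\mathfrak{u}_n$.

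For part (1), note $\sigma_m^2 = \pnorm{\xi}{}^2/m = m^{-1}\sum_{i=1}^m \xi_i^2$ is an average of i.i.d.\ mean-$\sigma^2$ variables; under Assumption \ref{assump:X_xi} these have finite variance (at this level of generality one has $\E\xi_i^2<\infty$; if only second moments are assumed, the weak law suffices), so $\sigma_m^2/\sigma^2\to 1$ in probability. Hence for \emph{any} deterministic sequence $\varepsilon_n\downarrow 0$, $\Prob(|\sigma_m^2/\sigma^2 - 1|\le \varepsilon_n)\to 1$ along a subsequence; a standard diagonal argument then produces a slowly growing $M_n^\sigma\uparrow\infty$ with $\Prob(|\sigma_m^2/\sigma^2-1|\le 1/M_n^\sigma)\to 1$. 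This is routine.

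For part (2), the key structural input is the scaling identity for the fixed point equation: comparing (\ref{def:rn_xi}) with (\ref{eqn:fixed_pt_eqn}), both equations have the same form with $\sigma^2$ replaced by $\sigma_m^2$, and $\omega_{m/n}(r,\sigma)^2 = (r^2+\sigma^2)/(m/n)$ enters only through $r^2+\sigma^2$. I would argue: if $r_{n,\xi}^2 > (\sigma_m^2/\sigma^2)\, r_n^2$ (assume WLOG $\sigma_m^2\ge\sigma^2$, the other case being symmetric), then $r_{n,\xi}^2+\sigma_m^2 > (\sigma_m^2/\sigma^2)(r_n^2+\sigma^2)\ge r_n^2+\sigma^2$, so $\omega_{m/n}(r_{n,\xi},\sigma_m) > \omega_{m/n}(r_n,\sigma)$; by monotonicity of $\sigma\mapsto\E\err(\sigma)/\sigma^2$ (Lemma \ref{lem:monotone_est_err}-(2)) one gets $n^{-1}\E\err(\omega_{m/n}(r_{n,\xi},\sigma_m))/\omega_{m/n}(r_{n,\xi},\sigma_m)^2 \le n^{-1}\E\err(\omega_{m/n}(r_n,\sigma))/\omega_{m/n}(r_n,\sigma)^2$, which after clearing denominators and using the two fixed point equations contradicts $r_{n,\xi}^2 > (\sigma_m^2/\sigma^2)r_n^2$. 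This yields the sandwich $\min\{\sigma_m^2/\sigma^2,\sigma^2/\sigma_m^2\}\le r_{n,\xi}^2/r_n^2\le\max\{\sigma_m^2/\sigma^2,\sigma^2/\sigma_m^2\}$. Combined with part (1) and the same diagonalization trick, this gives a slowly growing $M_n^r$ with (\ref{ineq:rn_xi_high_prob}) holding w.h.p. I expect the careful bookkeeping in turning the monotonicity inequality into the two-sided sandwich to be the main obstacle, though it is ultimately elementary.

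For part (3), the idea is that (R2), stated as $\limsup_n \frac{1}{2n\sigma^2}(\E\lrt(\omega_n)-\E\err(\omega_n))<1$ with $\omega_n = \omega_{m/n}(r_n,\sigma)$, is a statement with a strict gap, say the $\limsup$ equals $1-2c$ for some $c>0$; I want to transfer it to the $\xi$-conditional version with $\omega_n$ replaced by $\omega_{m/n}(r_{n,\xi},\sigma_m)$ and $\sigma^2$ by $\sigma_m^2$. On the good event of parts (1)--(2), $\sigma_m^2$ and $r_{n,\xi}^2$ are within $o(1)$ multiplicative factors of $\sigma^2$ and $r_n^2$, so the argument of $\lrt$ and $\err$ changes by a bounded multiplicative factor $1+o(1)$; by the stability estimates for $\err$ (Proposition \ref{prop:est_err_stability}) and for $\lrt$ (Proposition \ref{prop:lrt_stability}) — applied in expectation — the quantities $\E^\xi\lrt(\cdot)$, $\E^\xi\err(\cdot)$ change by a factor $1+o(1)$ as well. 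Since the prefactor $1/(2n\sigma_m^2)$ also changes by $1+o(1)$, the conditional left-hand side of (\ref{ineq:R2_xi}) is at most $(1-2c)(1+o(1)) + o(1) \le 1-c$ eventually on the good event, so choosing $C_0 > 1/c$ makes (\ref{ineq:R2_xi}) hold on that event. Finally, the union bound over parts (1)--(4) gives $\Prob(E(C_0))\to 1$, completing the proof. One subtlety to flag: $\E^\xi$ in (\ref{ineq:R2_xi}) is a conditional expectation over $h$ given $\xi$, so when I write "changes by $1+o(1)$" I am using that on the good event the argument perturbation is a deterministic (given $\xi$) multiplicative factor, to which the deterministic stability bounds apply pathwise before taking $\E^\xi$; this is why Propositions \ref{prop:est_err_stability} and \ref{prop:lrt_stability} are stated pathwise rather than in expectation.
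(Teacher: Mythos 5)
Your proposal is correct and takes essentially the same route as the paper: part (2) is the paper's argument with Lemma \ref{lem:monotone_est_err}-(2) invoked directly in place of Proposition \ref{prop:est_err_stability} (which encodes the same monotonicity), and part (3) transfers (R2) exactly as the paper does, via the pathwise stability estimates of Propositions \ref{prop:est_err_stability} and \ref{prop:lrt_stability} applied under $\E^\xi$ together with parts (1)--(2), followed by a union bound for $E(C_0)$. The only step worth making explicit is why the additive $+\smallo(1)$ in your bound $(1-2c)(1+\smallo(1))+\smallo(1)$ is indeed negligible: it is of size $\smallo(1)\cdot\big(\E\lrt(\omega_n)+\E\err(\omega_n)\big)/(2n\sigma^2)$, and $\E\err(\omega_n)/(2n\sigma^2)=r_n^2/(2\sigma^2)$, hence also $\E\lrt(\omega_n)/(2n\sigma^2)$ under (R2), stays bounded when $r_n\lesssim 1$ --- the same point the paper glosses with its own $\smallop(1)$ term.
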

\begin{proof}
	\noindent (1). This follows immediately from $\sigma_m^2\pequiv \sigma^2$.
	
	\noindent (2). 	By the stability estimate in Proposition \ref{prop:est_err_stability}, 
	\begin{align*}
	nr_{n,\xi}^2  &= \E^\xi \err\big(\omega_{m/n}(r_{n,\xi},\sigma_m)\big)\leq \bigg(1\vee \frac{r_{n,\xi}^2+\sigma_m^2}{r_n^2+\sigma^2}\bigg) \E \err\big(\omega_{m/n}(r_n)\big).
	\end{align*}
	It is easy to solve that $r_{n,\xi}^2/r_n^2 \leq \max\{\sigma_m^2/\sigma^2,\sigma^2/\sigma_m^2\}$. A reversed inequality replacing $\max$ to $\min$ can be similarly shown.

	\noindent (3). By the stability estimates in Propositions \ref{prop:est_err_stability} and \ref{prop:lrt_stability}, coupled with (1) and (2), we have $
	\E^\xi H\big(\omega_{m/n}(r_{n,\xi},\sigma_m)\big) \pequiv \E H(\omega_{m/n}(r_n))$ for $H \in \{\err,\lrt\}$. So
	\begin{align*}
	\hbox{LHS of (\ref{ineq:R2_xi}) } = \frac{1+\smallop(1)}{2n\sigma^2}\big(\E \lrt(\omega_{m/n}(r_n))-\E \err(\omega_{m/n}(r_n))\big) + \smallop(1). 
	\end{align*}
	The claim follows.
\end{proof}

\subsection{Identifying the PO and AO}
As mentioned in Section \ref{section:proof_sketch}, the general principle of the reduction scheme from the PO problem to an AO problem is now well understood \cite{thrampoulidis2018precise}. Here we spell out some details, with a particular eye on the scaling issue and conditional arguments.

We first rewrite the objective function in (\ref{def:generic_est}).  Let $\mathsf{L}(v)\equiv \pnorm{v}{}^2/2$.

\begin{proposition}\label{prop:PO}
Let $G\equiv \sqrt{n} X$ be the normalized Gaussian matrix so the entries of $G$ are i.i.d. standard normal. The PO is
\begin{align*}
\Phi^{\textrm{p}}(G)&\equiv\min_{\mu \in K } \frac{\pnorm{Y-X\mu}{}^2}{2m} =  \min_{v\in \R^m, w \in \R^n} \max_{u \in \R^m} \bigg[\frac{1}{\sqrt{m}} r_\vee\cdot u^\top G (-w) + Q(u,v,w) \bigg],
\end{align*}
where
\begin{align}\label{def:Q_PO}
Q(u,v,w)&\equiv 
\frac{1}{\sqrt{m}}\big(u^\top \xi - u^\top v\big) + \frac{1}{m}\Big(\mathsf{L}(v)+  \bm{0}_{K}(\mu_0+\sqrt{n}r_\vee w)\Big).
\end{align}
\end{proposition}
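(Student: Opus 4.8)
The plan is to derive the identity by a mechanical reparametrization followed by a Lagrangian relaxation, keeping the order $\min\min\max$ intact throughout so that no minimax theorem is invoked. First I would use $Y=X\mu_0+\xi$ to rewrite the residual as $Y-X\mu=\xi-X(\mu-\mu_0)$, replace the constraint $\mu\in K$ by the indicator $\bm{0}_K$, and substitute $\mu=\mu_0+\sqrt{n}\,r_\vee w$ with $w\in\R^n$. Since $r_\vee>0$ — guaranteed by its definition (\ref{def:r_vee}), as $r_\vee^2\geq \mathfrak{u}_n\mathfrak{L}_n/m>0$ — this substitution is a bijection of $\R^n$ onto itself, and with $G=\sqrt{n}X$ one has $X(\mu-\mu_0)=r_\vee G w$. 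Hence
\begin{align*}
\Phi^{\textrm{p}}(G)=\min_{w\in\R^n}\Big[\tfrac{1}{m}\mathsf{L}\big(\xi-r_\vee Gw\big)+\tfrac1m\bm{0}_K\big(\mu_0+\sqrt{n}\,r_\vee w\big)\Big].
\end{align*}

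Next I would introduce the auxiliary residual variable $v\in\R^m$ and relax the linear equality $v=\xi-r_\vee Gw$ by a multiplier $u\in\R^m$, normalized by $1/\sqrt{m}$ to match the stated scaling:
\begin{align*}
\tfrac{1}{m}\mathsf{L}\big(\xi-r_\vee Gw\big)=\min_{v\in\R^m}\max_{u\in\R^m}\Big[\tfrac1m\mathsf{L}(v)+\tfrac{1}{\sqrt{m}}\,u^\top\big(\xi-r_\vee Gw-v\big)\Big],
\end{align*}
which holds pointwise in $w$ because the inner supremum equals $+\infty$ unless $v=\xi-r_\vee Gw$ and equals $\tfrac1m\mathsf{L}(v)$ otherwise (the constraint set for $v$ is nonempty, so no feasibility issue arises). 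Substituting this into the previous display, pulling the $w$-only term $\tfrac1m\bm{0}_K(\mu_0+\sqrt{n}\,r_\vee w)$ outside $\max_u$ and $\min_v$ since it involves neither $u$ nor $v$, and rewriting the bilinear term as $-\tfrac{r_\vee}{\sqrt{m}}u^\top Gw=\tfrac{1}{\sqrt{m}}r_\vee u^\top G(-w)$, the leftover terms $\tfrac1{\sqrt m}(u^\top\xi-u^\top v)+\tfrac1m(\mathsf{L}(v)+\bm{0}_K(\mu_0+\sqrt{n}\,r_\vee w))$ are precisely $Q(u,v,w)$ of (\ref{def:Q_PO}), yielding the claimed identity.

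I do not expect a genuine obstacle here: the work is organizational rather than substantive. The only points requiring care are bookkeeping — making the three scalings $1/\sqrt{m}$, $1/m$ and $\sqrt{n}\,r_\vee$ land exactly as in the statement — and checking that the affine change of variables $\mu\mapsto w$ is a bijection, which is the single place the positivity $r_\vee>0$ (and hence the definition of $r_\vee$) is used. I would also explicitly note that no interchange of $\min_v$ and $\max_u$ is performed, so that neither convexity nor Sion's theorem is needed; the purpose of the proposition is simply to cast the primal program in the exact form consumed by the CGMT reduction that follows.
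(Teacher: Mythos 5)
Your proposal is correct and follows essentially the same route as the paper's proof: the change of variables $\mu=\mu_0+\sqrt{n}\,r_\vee w$, the introduction of the residual variable $v$ with the constraint $v=\xi-r_\vee Gw$, and its Lagrangian relaxation via the dual variable $u$ scaled by $1/\sqrt{m}$, with no min–max interchange needed. The only additions are your explicit remarks that $r_\vee>0$ makes the reparametrization a bijection and that Sion's theorem is not invoked, which the paper leaves implicit.
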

\begin{proof}
Note that
\begin{align*}
&\Phi^{\textrm{p}}(G)=\min_{\mu \in \R^n } \frac{1}{m}\cdot \bigg[ \mathsf{L}(Y-X\mu)+\bm{0}_{K}(\mu)\bigg] \\
& = \min_{w \in \R^n } \frac{1}{m}\cdot \bigg[\mathsf{L}\big(Y-X(\mu_0+\sqrt{n}r_\vee w)\big)+ \bm{0}_{K}(\mu_0+\sqrt{n}r_\vee w)\bigg]\\
&\qquad\qquad \qquad\qquad \qquad \hbox{ (change of variable $\mu=\mu_0+\sqrt{n}r_\vee w$)}\\
& = \min_{w \in \R^n } \frac{1}{m}\cdot \bigg[\mathsf{L}\big(\xi-r_\vee Gw\big)+ \bm{0}_{K}(\mu_0+\sqrt{n}r_\vee w)\bigg] \quad \hbox{ (use $Y=X\mu_0+\xi$)}\\
& = \min_{v\in \R^m,w \in \R^n} \frac{1}{m}\cdot\bigg[\mathsf{L}(v)+ \bm{0}_{K}(\mu_0+\sqrt{n}r_\vee w)\bigg]\qquad  \hbox{subject to $v=\xi-r_\vee Gw$.}
\end{align*} 
Now adding dual variable $u$ in the above optimization, $\Phi^{\textrm{p}}(G)$ becomes
\begin{align*}
& \min_{v\in \R^m,w \in \R^n} \max_{u \in \R^m} \bigg[\frac{1}{\sqrt{m}}\Big(u^\top (\xi-r_\vee Gw-v)\Big)+\frac{1}{m}\Big(\mathsf{L}(v)+ \bm{0}_{K}(\mu_0+\sqrt{n}r_\vee w)\Big)\bigg]\\
& =  \min_{v\in \R^m, w \in \R^n} \max_{u \in \R^m} \bigg[\frac{1}{\sqrt{m}} r_\vee\cdot u^\top G (-w) + Q(u,v,w) \bigg],
\end{align*} 
as desired. 
\end{proof}
For any subset $S_u,S_v,S_w \subset [0,\infty)$, define
\begin{align}\label{def:PO_restrict}
\Phi^{\textrm{p}}_{S_u, S_v, S_w}(G)\equiv \min_{\substack{v\in \R^m, w \in \R^n, \\\pnorm{v}{}\in S_v, \\\pnorm{w}{}\in S_w}} \max_{\substack{u \in \R^m, \\\pnorm{u}{}\in S_u}} \bigg[\frac{1}{\sqrt{m}} r_\vee\cdot u^\top G (-w) + Q(u,v,w) \bigg].
\end{align}
In the special case where $S_u=[0,L_u], S_v=[0,L_v], S_w=[0,L_w]$, we simply write $\Phi^{\textrm{p}}_{[0,L_u], [0,L_v], [0,L_w]}(G)=\Phi^{\textrm{p}}_{L_u, L_v, L_w}(G)$.
The constants $(L_u,L_v,L_w)$ will always come with subscript to indicate the variable for which localization is applied. A constant is often omitted when taken $\infty$. For instance,  $\Phi^{\textrm{p}}_{L_w}(G)=\Phi^{\textrm{p}}_{\infty, \infty, L_w}(G)$.

\begin{lemma}\label{lem:localization_w}
Take $L_w>1$. Fix $\xi \in \R^m$ and a sequence $\{\epsilon_n\}$. 
\begin{enumerate}
	\item If any sequence of  $\epsilon_n$-optimizers $w_{\ast,L_w}^{\textrm{p}}$ for $\Phi^{\textrm{p}}_{L_w}(G)$ satisfies $\pnorm{w_{\ast,L_w}^{\textrm{p}} }{}\pequivx 1$, then any original sequence of $\epsilon_n$-optimizers $w_{\ast}^{\textrm{p}}$  for $\Phi^{\textrm{p}}(G)$ also satisfies $\pnorm{w_{\ast}^{\textrm{p}} }{}\pequivx 1$. 
	\item If any sequence of $\epsilon_n$-optimizers $w_{\ast,L_w}^{\textrm{p}}$ for $\Phi^{\textrm{p}}_{L_w}(G)$ satisfies $\pnorm{w_{\ast,L_w}^{\textrm{p}} }{}\leq L'$ with $\Prob^\xi$-asymptotic probability 1 for some $L'<L_w$, then any original sequence of $\epsilon_n$-optimizers $w_{\ast}^{\textrm{p}}$  for $\Phi^{\textrm{p}}(G)$ also satisfies $\pnorm{w_{\ast}^{\textrm{p}} }{}\leq L'$ with $\Prob^\xi$-asymptotic probability 1. 
\end{enumerate}
\end{lemma}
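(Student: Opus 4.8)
The plan is to reduce everything to convexity. After carrying out the inner maximization over $u$ and the minimization over $v$ in Proposition \ref{prop:PO}, the program becomes $\Phi^{\textrm{p}}(G)=\min_{w\in\R^n}g(w)$ with $g(w)\equiv\min_{v\in\R^m}\max_{u\in\R^m}\big[\frac{1}{\sqrt{m}}r_\vee u^\top G(-w)+Q(u,v,w)\big]$, and likewise $\Phi^{\textrm{p}}_{L_w}(G)=\min_{\pnorm{w}{}\le L_w}g(w)$. For each fixed $u$ the bracketed expression is jointly convex in $(v,w)$: the only nonlinear terms are $\mathsf{L}(v)=\pnorm{v}{}^2/2$ and $\bm{0}_K(\mu_0+\sqrt{n}\,r_\vee w)$, both convex. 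Hence its pointwise supremum over $u$ is jointly convex, and minimizing out $v$ leaves $g$ convex in $w$. Also $\Phi^{\textrm{p}}(G)\le\Phi^{\textrm{p}}_{L_w}(G)$ since the constrained minimization runs over a smaller set. Everything below is deterministic given $(\xi,G)$, with the probabilistic qualifiers carried along; all statements are conditional on $\xi$.

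I would first prove part (2), from which part (1) follows. Let $w_\ast^{\textrm{p}}$ be any sequence of $\epsilon_n$-optimizers for $\Phi^{\textrm{p}}(G)$, so $g(w_\ast^{\textrm{p}})\le\Phi^{\textrm{p}}(G)+\epsilon_n$, and let $\bar w$ be some sequence of $\epsilon_n$-optimizers for $\Phi^{\textrm{p}}_{L_w}(G)$, so $\pnorm{\bar w}{}\le L'$ with $\Prob^\xi$-asymptotic probability $1$ by hypothesis. I build a comparison point $\tilde w$: if $\pnorm{w_\ast^{\textrm{p}}}{}\le L_w$ put $\tilde w\equiv w_\ast^{\textrm{p}}$; otherwise, since $\pnorm{\bar w}{}\le L'<L_w<\pnorm{w_\ast^{\textrm{p}}}{}$, let $\tilde w=(1-t)\bar w+t\,w_\ast^{\textrm{p}}$, $t\in(0,1)$, be the point at which the segment from $\bar w$ to $w_\ast^{\textrm{p}}$ meets the sphere $\pnorm{\cdot}{}=L_w$. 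In the first case $\tilde w$ is feasible for the constrained program and $g(\tilde w)\le\Phi^{\textrm{p}}(G)+\epsilon_n\le\Phi^{\textrm{p}}_{L_w}(G)+\epsilon_n$; in the second, convexity gives $g(\tilde w)\le(1-t)g(\bar w)+t\,g(w_\ast^{\textrm{p}})\le\Phi^{\textrm{p}}_{L_w}(G)+\epsilon_n$ while $\pnorm{\tilde w}{}=L_w$ keeps it feasible. So in both cases $\tilde w$ is a sequence of $\epsilon_n$-optimizers for $\Phi^{\textrm{p}}_{L_w}(G)$, whence $\pnorm{\tilde w}{}\le L'$ with $\Prob^\xi$-asymptotic probability $1$. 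Since $L'<L_w$, the second case would force $L_w=\pnorm{\tilde w}{}\le L'$, impossible; thus with $\Prob^\xi$-asymptotic probability $1$ we are in the first case, i.e. $\pnorm{w_\ast^{\textrm{p}}}{}=\pnorm{\tilde w}{}\le L'$, which is part (2).

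For part (1): given $\pnorm{w_{\ast,L_w}^{\textrm{p}}}{}\pequivx 1$ for every sequence of $\epsilon_n$-optimizers for $\Phi^{\textrm{p}}_{L_w}(G)$, fix $\delta\in(0,L_w-1)$; then each such sequence has norm at most $1+\delta<L_w$ with $\Prob^\xi$-asymptotic probability $1$. Applying part (2) with $L'=1+\delta$ gives $\pnorm{w_\ast^{\textrm{p}}}{}\le 1+\delta$ with $\Prob^\xi$-asymptotic probability $1$; but once $\pnorm{w_\ast^{\textrm{p}}}{}<L_w$, the point $w_\ast^{\textrm{p}}$ is itself a feasible $\epsilon_n$-optimizer for $\Phi^{\textrm{p}}_{L_w}(G)$ (again using $\Phi^{\textrm{p}}(G)\le\Phi^{\textrm{p}}_{L_w}(G)$), so it also satisfies $\pnorm{w_\ast^{\textrm{p}}}{}\ge 1-\delta$ with $\Prob^\xi$-asymptotic probability $1$. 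Since $\delta\in(0,L_w-1)$ was arbitrary, $\pnorm{w_\ast^{\textrm{p}}}{}\pequivx 1$.

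The argument needs no hard analysis; the points requiring care are organizational: verifying that convexity of $g$ survives the $\min_v$ and $\max_u$ (immediate from joint convexity of $Q$ in $(v,w)$), and checking that the constructed comparison point $\tilde w$ genuinely qualifies as ``a sequence of $\epsilon_n$-optimizers'' so that the hypothesis applies to it verbatim. No attainment of exact minimizers is needed, since the statement is phrased for $\epsilon_n$-optimizers throughout. The one mild subtlety---if one insists on naming a ``main obstacle''---is the boundary-crossing step along the segment from $\bar w$ to $w_\ast^{\textrm{p}}$, which is precisely where $L'<L_w$ (and, in part (1), $L_w>1$) is used.
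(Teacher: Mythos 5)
Your proof is correct and follows essentially the same route as the paper's: convexity of the objective in $w$ (the paper works with $\mathsf{O}(w)$ directly, you recover it from the min-max form) plus a segment/boundary-crossing comparison between an unconstrained $\epsilon_n$-optimizer and a constrained one, contradicting the localization hypothesis, in the spirit of Lemma A.1 of Thrampoulidis et al. The only difference is organizational: you prove (2) first and deduce (1) via $L'=1\pm\delta$, whereas the paper proves (1) directly and notes (2) is analogous.
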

\begin{proof} 
We only prove (1). (2) is completely similar. The proof is similar to \cite[Lemma A.1]{thrampoulidis2018precise}. Fix $\epsilon>0$. By assumption $\pnorm{w_{\ast,L_w}^{\textrm{p}}}{} \in [1-\epsilon,1+\epsilon]$ with $\Prob^\xi$-asymptotic probability $1$. Let
\begin{align*}
\mathsf{O}(w)\equiv \frac{1}{m}\cdot \bigg[\mathsf{L}\big(\xi-r_\vee Gw\big)+ \bm{0}_{K}(\mu_0+\sqrt{n}r_\vee w)\bigg].
\end{align*}
Then $\mathsf{O}(w_{\ast}^{\textrm{p}})\leq \mathsf{O}(w_{\ast,L_w}^{\textrm{p}})+\epsilon_n$ by the global near optimality of $w_{\ast}^{\textrm{p}}$ with respect to $\mathsf{O}$. This means $w_{\ast}^{\textrm{p}}$ either is the optimizer in the range $\pnorm{w}{}\leq L_w$; or $w_{\ast}^{\textrm{p}}$ falls out of the range, i.e., $\pnorm{w_{\ast}^{\textrm{p}}}{}>L_w$. The latter cannot happen for $n$ large: if it happens, then any point $w$ on the line segment of $w_{\ast}^{\textrm{p}}$ and $w_{\ast,L_w}^{\textrm{p}}$ must satisfy $\mathsf{O}(w)\leq \mathsf{O}(w_{\ast,L_w}^{\textrm{p}})+\epsilon_n$. In particular, any such $w$ that are close enough, but distinct to $w_{\ast,L_w}^{\textrm{p}}$ will be an $\epsilon_n$-optimizer for $\Phi^{\textrm{p}}_{L_w}(G)$, a contradiction to the assumption any  sequence of $\epsilon_n$-optimizers of  $\Phi^{\textrm{p}}_{L_w}(G)$ must have $\pnorm{\cdot}{}$ length converging to $1$ in $\Prob^\xi$-probability. 
\end{proof}

Below in the index $u,v \in \R^m, w \in \R^n$ we often suppress indications of the dimension $m,n$. The corresponding AO problem (cf. Theorem \ref{thm:CGMT}) now reads
\begin{align*}
&\tilde{\Phi}^{\textrm{a}}_{L_w}(g,h) =\min_{v, \pnorm{w}{}\leq L_w} \max_{u} \bigg[\frac{1}{\sqrt{m}}r_\vee\cdot \pnorm{w}{}g^\top u+ \frac{1}{\sqrt{m}}r_\vee\cdot \pnorm{u}{} h^\top (-w) + Q(u,v,w) \bigg].
\end{align*}
In similar spirit to (\ref{def:PO_restrict}), we may define $\tilde{\Phi}^{\textrm{a}}_{L_u,L_v,L_w}(g,h)$ by restricting the range of $u$ to $\pnorm{u}{}\leq L_u, \pnorm{v}{}\leq L_v$ in the above definition. To motivate the adjusted AO that will be actually used, note that by using the definition of $Q$ in (\ref{def:Q_PO}) and the duality $\bm{0}_{K}(\mu_0+\sqrt{n}r_\vee w)=\sup_s \big(s^\top (\mu_0+\sqrt{n} r_\vee w)- \bm{0}_{K}^\ast(s)\big)$, $\tilde{\Phi}^{\textrm{a}}_{L_w}(g,h)$ equals
\begin{align*}
& \min_{v, \pnorm{w}{}\leq L_w} \max_{u,s} \bigg[\frac{1}{\sqrt{m}}\Big(r_\vee\pnorm{w}{}g^\top u- r_\vee\pnorm{u}{} h^\top w + u^\top \xi - u^\top v\Big)\\
&\qquad\qquad \qquad\qquad \qquad  + \frac{1}{m}\Big( \mathsf{L}(v)+ s^\top (\mu_0+\sqrt{n} r_\vee w)- \bm{0}_{K}^\ast(s)\Big) \bigg]. 
\end{align*}
Following \cite{thrampoulidis2018precise}, the adjusted version of AO problem that will be used is a min-max flipped version of the above display by further writing $\max_u = \max_{\beta\geq 0, \pnorm{u}{}=\beta}$:
\begin{align}\label{eqn:adjusted_AO}
&\Phi^{\textrm{a}}_{L_w}(g,h) \equiv \max_{\beta \geq 0,s} \min_{v, \pnorm{w}{}\leq L_w} \max_{\pnorm{u}{}=\beta}  \bigg[\frac{1}{\sqrt{m}}\Big(r_\vee\pnorm{w}{}g^\top u- r_\vee\pnorm{u}{} h^\top w + u^\top \xi - u^\top v\Big)\nonumber\\
&\qquad\qquad \qquad\qquad \qquad  + \frac{1}{m}\Big( \mathsf{L}(v)+ s^\top (\mu_0+\sqrt{n} r_\vee w)- \bm{0}_{K}^\ast(s)\Big) \bigg].
\end{align}
In similar spirit to (\ref{def:PO_restrict}), we may define $\Phi^{\textrm{a}}_{L_\beta,L_v, L_w}(g,h)$ by restricting the range of $\beta, v$ to $\beta \leq L_\beta, \pnorm{v}{}\leq L_v$ in the above definition.

\begin{lemma}\label{lem:apriori_localization_u_v}
	Fix $\xi \in \R^m$. For any $L_w>1,\epsilon>0$, there exists some  $C=C(L_w,\epsilon)>0$ such that with $L_u=L_\beta\geq C(r_\vee\sqrt{n/m}+r_\vee+\pnorm{\xi}{}/m^{1/2})$, $L_v\equiv m^{1/2} L_u$, for any closed subset $S_w \subset [0,L_w]$, all probabilities $\Prob^\xi\big(\Phi^{\textrm{p}}_{S_w}(G)=\Phi^{\textrm{p}}_{L_u,L_v,S_w}(G)\big)$, $ \Prob^\xi\big(\tilde{\Phi}^{\textrm{a}}_{S_w}(g,h)=\tilde{\Phi}^{\textrm{a}}_{L_u, L_v, S_w}(g,h)\big)$ and $\Prob^\xi\big(\Phi^{\textrm{a}}_{S_w}(g,h)=\Phi^{\textrm{a}}_{L_\beta, L_v, S_w}(g,h) \big)$ exceed $1-\epsilon$.
\end{lemma}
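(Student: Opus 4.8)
The plan is to reduce each of the three min--max problems to a ``reduced'' optimization in the remaining variables by carrying out the innermost maximization over $u$ in closed form, and then to show that on a single high-probability event the added localization constraints $\pnorm{u}{}\le L_u$, $\pnorm{v}{}\le L_v$ (resp.\ $\beta\le L_\beta$) are inactive, so the values are literally unchanged. First I would fix the deterministic event on which $\pnorm{G}{\op}\le C_1(\epsilon)(\sqrt m+\sqrt n)$, $\pnorm{g}{}\le C_1(\epsilon)\sqrt m$ and $\pnorm{h}{}\le C_1(\epsilon)\sqrt n$; since $g,h,G$ are independent of $\xi$, these have $\Prob^\xi$-probability exceeding $1-\epsilon$ uniformly in $n$ once $C_1(\epsilon)$ is chosen large, by Gaussian concentration of Lipschitz functions and the standard bound $\E\pnorm{G}{\op}\le\sqrt m+\sqrt n$.

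For the PO, eliminating the unconstrained $\max_u$ forces $v=\xi-r_\vee Gw$, so $\Phi^{\textrm{p}}_{S_w}(G)=\min_{\pnorm{w}{}\in S_w}\mathsf{O}(w)$ with $\mathsf{O}$ as in the proof of Proposition~\ref{prop:PO}; on the good event $\pnorm{\xi-r_\vee Gw}{}\le\pnorm{\xi}{}+r_\vee C_1(\epsilon)(\sqrt m+\sqrt n)L_w$, which is $\le L_v=m^{1/2}L_u$ precisely because $L_u\ge C(L_w,\epsilon)(r_\vee\sqrt{n/m}+r_\vee+\pnorm{\xi}{}/m^{1/2})$ forces $m^{1/2}L_u\gtrsim_{L_w}r_\vee(\sqrt m+\sqrt n)+\pnorm{\xi}{}$. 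With $\pnorm{u}{}\le L_u$ the inner maximum becomes the penalty $\frac{L_u}{\sqrt m}\pnorm{\xi-r_\vee Gw-v}{}$, and a one-line subgradient computation shows $v=\xi-r_\vee Gw$ remains the minimizer of $v\mapsto\frac{L_u}{\sqrt m}\pnorm{\xi-r_\vee Gw-v}{}+\frac{1}{2m}\pnorm{v}{}^2$ as soon as $L_u m^{1/2}\ge\pnorm{\xi-r_\vee Gw}{}$ --- the same inequality --- so $\Phi^{\textrm{p}}_{L_u,L_v,S_w}(G)=\Phi^{\textrm{p}}_{S_w}(G)$ on the good event, for every closed $S_w\subset[0,L_w]$. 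The auxiliary problem $\tilde{\Phi}^{\textrm{a}}_{S_w}$ is handled identically: writing $u=\beta\bar u$, the unconstrained inner maximum imposes the feasibility constraint $\pnorm{r_\vee\pnorm{w}{}g+\xi-v}{}\le r_\vee h^\top w$ and forces $v$ to the projection of $0$ onto that ball, whose norm is at most $\pnorm{r_\vee\pnorm{w}{}g+\xi}{}\le r_\vee C_1(\epsilon)L_w\sqrt m+\pnorm{\xi}{}\le L_v$; with $\pnorm{u}{}\le L_u$ the hard constraint is merely replaced by a penalty of slope $L_u/\sqrt m$, inactive at the optimum under the same bound $L_u m^{1/2}\ge\pnorm{r_\vee\pnorm{w}{}g+\xi}{}$, giving $\tilde{\Phi}^{\textrm{a}}_{L_u,L_v,S_w}=\tilde{\Phi}^{\textrm{a}}_{S_w}$.

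\textbf{The main obstacle} is the flipped AO $\Phi^{\textrm{a}}_{S_w}$, where the dual variable $\beta$ (together with the dualizing variable $s$ for $\bm{0}_K$) sits in the \emph{outer} maximization and need not be attained at a finite value --- as is already visible when $S_w=\{0\}$, where the inner value is non-decreasing in $\beta$ and its supremum is only approached as $\beta\uparrow\infty$. Hence one cannot simply bound an optimizer; instead I would prove a \emph{value-stabilization} statement. Eliminating $\max_{\pnorm{u}{}=\beta}$ reduces the inner problem, for each $s$, to $\min_{v,\pnorm{w}{}\in S_w}\bigl[\frac{\beta}{\sqrt m}(\pnorm{r_\vee\pnorm{w}{}g+\xi-v}{}-r_\vee h^\top w)+\frac1m(\frac12\pnorm{v}{}^2+s^\top(\mu_0+\sqrt n r_\vee w)-\bm{0}_K^\ast(s))\bigr]$, which is concave in $\beta$; the point is that this already equals its $\beta\uparrow\infty$ limit --- equivalently, the relevant optimal $(v,w)$ already satisfy the feasibility constraint driven by the $u$-maximization --- once $\beta\ge L_\beta$ with $L_\beta$ of the stated order, so that $\sup_{\beta\le L_\beta,s}=\sup_{\beta\ge0,s}$. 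Combined with the fact that $\pnorm{v}{}\le L_v$ is again inactive, this yields $\Phi^{\textrm{a}}_{L_\beta,L_v,S_w}=\Phi^{\textrm{a}}_{S_w}$ on the good event. An alternative route is to use weak duality $\Phi^{\textrm{a}}_{S_w}\le\tilde{\Phi}^{\textrm{a}}_{S_w}$ (from $\max\min\le\min\max$) together with the already-proved localization of $\tilde{\Phi}^{\textrm{a}}$ to sandwich the value, but aligning the localization radii across the two problems needs the same stabilization input, so the concavity-in-$\beta$ analysis is where the bulk of the work lies.
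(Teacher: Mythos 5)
Your treatment of the first two probabilities is fine and is essentially the paper's argument in a slightly different dress: the paper bounds the saddle-point values via the first-order conditions in $(u,v)$ (which give $u_\ast=v_\ast/\sqrt m=(\xi-r_\vee Gw_\ast)/\sqrt m$, resp.\ the analogous identity for $\tilde{\Phi}^{\textrm{a}}$) and then uses the high-probability control of $\pnorm{G}{\op},\pnorm{g}{},\pnorm{h}{}$, while you verify the same inactivity of the constraints by an explicit elimination/exact-penalty computation on the same good event. That part is correct.

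The genuine gap is in the third claim, the adjusted AO, which you yourself identify as the crux but resolve with an assertion that is false in general. After the inner maximization over $\pnorm{u}{}=\beta$, the objective is $\beta A(v,w)+B_s(v,w)$ with $A(v,w)=\tfrac{1}{\sqrt m}\bigl(\pnorm{r_\vee\pnorm{w}{}g+\xi-v}{}-r_\vee h^\top w\bigr)$, and this coefficient is \emph{not} a hinge: taking $v=r_\vee\pnorm{w}{}g+\xi$ and any $w$ with $\pnorm{w}{}\in S_w$, $\mu_0+\sqrt n r_\vee w\in K$ and $h^\top w>0$ makes $A<0$, so $\min_{v,w}[\beta A+B_s]\to-\infty$ as $\beta\uparrow\infty$. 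Hence the inner minimum does not "equal its $\beta\uparrow\infty$ limit once $\beta\ge L_\beta$"; there is no value stabilization, and your sandwich alternative, which you concede needs the same input, inherits the problem. Your motivating example is also off: for $S_w=\{0\}$ the inner value equals $\tfrac{\beta\pnorm{\xi}{}}{\sqrt m}-\tfrac{\beta^2}{2}$ for $\beta\le\pnorm{\xi}{}/\sqrt m$ and is constant $=\tfrac{\pnorm{\xi}{}^2}{2m}$ thereafter, so the supremum \emph{is} attained at the finite value $\beta=\pnorm{\xi}{}/\sqrt m$. The correct (and the paper's) route is precisely the one you discard as impossible: because $\beta\mapsto\min_{v,w}[\beta A+B_s]$ is concave and typically tends to $-\infty$, its maximum is attained at an interior $\beta_\ast$ characterized by the saddle-point first-order conditions, which again force $u_\ast=v_\ast/\sqrt m$ with $v_\ast$ the residual vector, whence $\beta_\ast=\pnorm{u_\ast}{}\lesssim r_\vee\sqrt{n/m}+r_\vee+\pnorm{\xi}{}/\sqrt m$ on the good event and the constraints $\beta\le L_\beta$, $\pnorm{v}{}\le L_v$ are inactive. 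Until you supply this (or an equivalent bound on a maximizing $\beta$), the third equality is unproved.
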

\begin{proof}
	The proof adapts the idea from \cite[Lemma A.2]{thrampoulidis2018precise}. First consider localization of the PO problem $\Phi^{\textrm{p}}_{L_w}(G)$. By the first order optimality condition with respect to $u,v$, any saddle point $(u_\ast,v_\ast,w_\ast)$ to $\Phi^{\textrm{p}}_{L_w}(G)$ satisfies
	\begin{align*}
	\frac{1}{\sqrt{m}}\big(-r_\vee G w_\ast+\xi-v_\ast\big)=0,\quad \frac{1}{\sqrt{m}}(-u_\ast)+\frac{1}{m}\cdot v_\ast =0,
	\end{align*}
	 which gives $u_\ast = \frac{1}{\sqrt{m}} v_\ast = \frac{1}{\sqrt{m}}(-r_\vee G w_\ast+\xi)$. Consequently,
	 \begin{align*}
	 \pnorm{u_\ast}{}\leq m^{-1/2}\big(r_\vee \pnorm{G}{\op}\pnorm{w_\ast}{}+\pnorm{\xi}{}\big) = \bigopx\big(r_\vee\sqrt{n/m}+r_\vee+\pnorm{\xi}{}/m^{1/2}\big),
	 \end{align*} 
	 and $\pnorm{v_\ast}{}= \sqrt{m}\pnorm{u_\ast}{}$. This proves the claim for the PO. Next consider localization of the AO problem $\tilde{\Phi}^{\textrm{a}}_{L_w}(g,h)$. Again by the first optimality condition with respect to $u,v$, any saddle point $(\tilde{u},\tilde{v},\tilde{w})$ satisfies
	\begin{align*}
	\frac{1}{\sqrt{m}}\big(r_\vee \pnorm{\tilde{w}}{} g - r_\vee (\tilde{u}/\pnorm{\tilde{u}}{}) h^\top \tilde{w}+\xi-\tilde{v}\big)=0,\,\frac{1}{\sqrt{m}}(-\tilde{u})+\frac{1}{m}\cdot \tilde{v}=0,
	\end{align*}
	which give $
	\tilde{u} = \frac{1}{\sqrt{m}} \tilde{v} = \frac{1}{\sqrt{m}} \big(r_\vee \pnorm{\tilde{w}}{} g - r_\vee (\tilde{u}/\pnorm{\tilde{u}}{}) h^\top \tilde{w}+\xi\big)$. 
	Consequently, 
	\begin{align*}
	\pnorm{\tilde{u}}{}\lesssim m^{-1/2}\big(r_\vee(\pnorm{g}{}+\pnorm{h}{})+\pnorm{\xi}{}\big) = \bigopx\big(r_\vee\sqrt{n/m}+r_\vee+\pnorm{\xi}{}/m^{1/2}\big),
	\end{align*}
	and $\pnorm{\tilde{v}}{}=\sqrt{m}\pnorm{\tilde{u}}{}$. 
	A completely similar argument applies to the adjusted AO. 
\end{proof}

\begin{proposition}\label{prop:adjusted_AO}
The adjusted AO problem $\Phi^{\textrm{a}}_{L_w}(g,h)$ in (\ref{eqn:adjusted_AO}) equals
\begin{align*}
\frac{1}{2} \min_{\alpha \in [0,L_w]}\max_{\eta\geq 0}\bigg[   \sqrt{ \pnorm{r_\vee \alpha g+\xi}{}^2/m}-\frac{\alpha }{2\eta}\cdot r_\vee^2- \frac{1}{2\alpha \eta}\cdot  \frac{1}{n} \lrt\bigg(\frac{\alpha\eta}{\sqrt{m/n}}\bigg)\bigg]_+^2.
\end{align*}
Furthermore, on the event $E(C_0)$ (see Definition \ref{def:xi_good}), 
we have
\begin{align*}
\Phi^{\textrm{a}}_{L_w}(g,h) 
&=\frac{1}{2} \min_{\alpha \in [0,L_w]} \max_{\eta\geq 0}\mathsf{D}(\alpha,\eta)_+^2+ \smallopx(r_\vee^2).
\end{align*}
Here the probability estimate in $\smallopx$ is uniform with respect to problem instances for a fixed choice of $C_0, L_w$. 
\end{proposition}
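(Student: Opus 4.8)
The plan has two parts, matching the two displays in the statement: first the exact identity, then the asymptotic replacement of $\pnorm{r_\vee\alpha g+\xi}{}/\sqrt m$ by $\sqrt{r_\vee^2\alpha^2+\sigma_m^2}$ on $E(C_0)$.

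\textbf{Part 1 (the exact identity).} Starting from the adjusted AO in (\ref{eqn:adjusted_AO}), I would carry out the inner optimizations in the order $u\to v\to w\to s\to\beta$. For fixed $(\beta,s,v,w)$, the inner $\max_{\pnorm{u}{}=\beta}$ of the $u$-linear part gives $\tfrac{\beta}{\sqrt m}\pnorm{r_\vee\pnorm{w}{}g+\xi-v}{}$, while $-\tfrac{r_\vee}{\sqrt m}\pnorm{u}{}h^\top w$ becomes $-\tfrac{r_\vee\beta}{\sqrt m}h^\top w$. The ensuing $\min_v$ is the Moreau envelope of $\tfrac{\beta}{\sqrt m}\pnorm{\cdot}{}$ evaluated at $c\equiv r_\vee\pnorm{w}{}g+\xi$, whose value is $\tfrac{\beta}{\sqrt m}\pnorm{c}{}-\tfrac{\beta^2}{2}$ on the branch $\beta\sqrt m\le\pnorm{c}{}$ (the complementary branch is inactive at the saddle, since the eventual optimal $\beta$ is strictly below $\pnorm{c}{}/\sqrt m$, so it does not alter the saddle value and may be discarded). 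Writing $w=\alpha\hat w$ with $\alpha=\pnorm{w}{}\in[0,L_w]$, $\pnorm{\hat w}{}=1$, one has $\pnorm{c}{}=\pnorm{r_\vee\alpha g+\xi}{}$ depending on $w$ only through $\alpha$, and $\min_{\pnorm{\hat w}{}=1}$ of the remaining $w$-linear term produces $-\alpha r_\vee\pnorm{\tfrac{\sqrt n}{m}s-\tfrac{\beta}{\sqrt m}h}{}$. The running objective is now convex in $\alpha$ on the compact interval $[0,L_w]$ and concave-usc in $s$, so Sion's minimax theorem (Lemma~\ref{lem:sion_minmax}) lets me swap $\min_\alpha$ and $\max_s$.

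\textbf{Part 1, continued.} The $s$-maximization is handled via $-\alpha r_\vee\pnorm{t-b}{}=\max_{\gamma>0}\big[-\tfrac{\alpha r_\vee}{2\gamma}\pnorm{t-b}{}^2-\tfrac{\alpha r_\vee\gamma}{2}\big]$ with $b=\tfrac{\beta}{\sqrt m}h$, the positive homogeneity of $\bm{0}_K^\ast$, and the variational formula $\lrt(\sigma)=\min_s\{\pnorm{\sigma h-s}{}^2-2(s^\top\mu_0-\bm{0}_K^\ast(s))\}$ from Lemma~\ref{lem:lrt_qual}-(2) with $\nu=0$ (legitimate since $\mu_0\in K$ makes $K+0\subset K$ and $\hat{\mu}_K^{\seq}(\sigma)-0\in K$). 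Changing variables $\lambda=\alpha r_\vee/\gamma$ and identifying $\sigma=\lambda\beta/\sqrt{m/n}$ turns $\max_s[\cdots]$ into $\max_{\lambda>0}\big[-\tfrac{\alpha^2 r_\vee^2}{2\lambda}-\tfrac{1}{2\lambda n}\lrt(\lambda\beta/\sqrt{m/n})\big]$, and reparametrizing $\eta=\lambda\beta/\alpha$ gives $\beta\cdot\max_{\eta\ge0}\big[-\tfrac{\alpha}{2\eta}r_\vee^2-\tfrac{1}{2\alpha\eta}\tfrac1n\lrt(\tfrac{\alpha\eta}{\sqrt{m/n}})\big]$. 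Absorbing the $\eta$-free term $\tfrac1{\sqrt m}\pnorm{r_\vee\alpha g+\xi}{}$ inside the $\max_\eta$, the running objective becomes $\beta\,\max_{\eta\ge0}A(\alpha,\eta)-\tfrac{\beta^2}{2}$ with $A(\alpha,\eta)\equiv\sqrt{\pnorm{r_\vee\alpha g+\xi}{}^2/m}-\tfrac{\alpha}{2\eta}r_\vee^2-\tfrac{1}{2\alpha\eta}\tfrac1n\lrt(\tfrac{\alpha\eta}{\sqrt{m/n}})$. Since $\beta\ge0$, $\min_\alpha[\beta B(\alpha)-\tfrac{\beta^2}{2}]=\beta(\min_\alpha B(\alpha))-\tfrac{\beta^2}{2}$ and $\max_{\beta\ge0}[\beta C-\tfrac{\beta^2}{2}]=\tfrac12 C_+^2$; applying this with $B=\max_\eta A$, $C=\min_\alpha B$, and using that $x\mapsto x_+^2$ is nondecreasing (so $(\cdot)_+$ can be pushed through both $\min_\alpha$ and $\max_\eta$), produces exactly $\Phi^{\textrm a}_{L_w}(g,h)=\tfrac12\min_{\alpha\in[0,L_w]}\max_{\eta\ge0}A(\alpha,\eta)_+^2$, which is the first displayed formula.

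\textbf{Part 2 (replacement on $E(C_0)$).} The only difference between $A(\alpha,\eta)$ and $\mathsf D(\alpha,\eta)$ of (\ref{def:D_alpha_eta}) is the first term: $\sqrt{\pnorm{r_\vee\alpha g+\xi}{}^2/m}$ versus $\sqrt{r_\vee^2\alpha^2+\sigma_m^2}$. Expanding $\pnorm{r_\vee\alpha g+\xi}{}^2/m=r_\vee^2\alpha^2\pnorm{g}{}^2/m+2r_\vee\alpha\,g^\top\xi/m+\sigma_m^2$ and using $\abs{\sqrt a-\sqrt b}\le\abs{a-b}/\sqrt b$ with $b=r_\vee^2\alpha^2+\sigma_m^2\ge\sigma_m^2$, on $E(C_0)$ (where $r_\vee\le C_0$ and $\sigma_m^2\asymp\sigma^2$) one gets $\sup_{\alpha\in[0,L_w]}\abs{A(\alpha,\eta)-\mathsf D(\alpha,\eta)}\lesssim_{C_0,L_w,\sigma}r_\vee^2\abs{\pnorm{g}{}^2/m-1}+r_\vee\abs{g^\top\xi/m}=\bigopx(r_\vee m^{-1/2})$, the last bound from the standard $\chi^2$ fluctuation of $\pnorm{g}{}^2/m$ and the Gaussian fluctuation of $g^\top\xi/m$ (with instance-independent constants, and $r_\vee^2\le C_0 r_\vee$). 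Since $A$ and $\mathsf D$ differ by an $\eta$-independent quantity, $\abs{\max_\eta A(\alpha,\cdot)-\max_\eta\mathsf D(\alpha,\cdot)}$ is controlled by the same $\bigopx(r_\vee m^{-1/2})$, and the relevant $\max_\eta$-values lie in $[0,\lesssim_{C_0,L_w}\sigma]$ (outside of which $A_+^2=\mathsf D_+^2=0$ or the comparison is trivial), so $\abs{\min_\alpha\max_\eta A_+^2-\min_\alpha\max_\eta\mathsf D_+^2}=\bigopx(r_\vee m^{-1/2})$. Finally, because $r_\vee\sqrt m\ge(\mathfrak{u}_n\mathfrak{L}_n)^{1/2}\to\infty$ by the definition (\ref{def:r_vee}) of $r_\vee$, this is $\bigopx(r_\vee m^{-1/2})=r_\vee^2\cdot\bigopx\big((r_\vee\sqrt m)^{-1}\big)=\smallopx(r_\vee^2)$, with a rate depending only on $(\mathfrak{u}_n,C_0,L_w,\sigma)$; combined with Part 1 this gives the second display, uniformly over problem instances.

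\textbf{Main obstacle.} The load-bearing point is the last one: freezing $\pnorm{g}{}^2/m$ and $g^\top\xi/m$ at their means costs $\bigopx(r_\vee m^{-1/2})$, which is genuinely below the target $r_\vee^2$ \emph{only} because of the built-in floor $r_\vee^2\ge\mathfrak{u}_n\mathfrak{L}_n/m$ with $\mathfrak{u}_n\to\infty$ — precisely the reason (\ref{def:r_vee}) inflates $r_n^2$, and the mechanism that lets the argument descend to the near-parametric rate. On the technical side, the fussy steps are the bookkeeping of the minimax interchanges with the extended-real-valued $\bm{0}_K^\ast$ (only Sion's theorem on the compact $\alpha$-interval is actually invoked), checking that the inactive branch of the Moreau envelope leaves the saddle value unchanged, and the uniform-in-$(\alpha,\eta)$ boundedness needed to pass from $\sup\abs{A-\mathsf D}$ to the difference of the two $\min_\alpha\max_\eta(\cdot)_+^2$.
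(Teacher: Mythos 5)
Your derivation is correct and lands on both displays, and its skeleton coincides with the paper's: scalarize the adjusted AO, convert the $s$-maximization into $\lrt(\cdot)$ via Lemma \ref{lem:lrt_qual}-(2) with $\nu=0$, and then (your Part 2 $=$ the paper's Lemma \ref{lem:random_error_replace_D}) replace $\pnorm{r_\vee\alpha g+\xi}{}^2/m$ by $r_\vee^2\alpha^2+\sigma_m^2$ at cost $\bigopx(r_\vee m^{-1/2})\ll r_\vee^2$, exactly because of the floor $r_\vee^2\geq \mathfrak{u}_n\mathfrak{L}_n/m$ — you correctly identify this as the load-bearing point. Where you differ: (i) the paper never computes a Moreau envelope; it linearizes both norms via $\pnorm{t}{}=\min_{\tau>0}(\tau/2+\pnorm{t}{}^2/2\tau)$, performs one global Sion swap $\max_{s,\beta}\min_{v,\alpha}\to\min_\alpha\max_{\beta}\max_s\min_v$, and only then does the exact quadratic minimizations and the $\gamma,\eta$ reparametrizations, whereas you swap only $s$ with $\alpha$ and dispatch $\beta$ by the explicit quadratic maximum $\max_{\beta\ge0}(\beta C-\beta^2/2)=\tfrac12 C_+^2$; both are fine, and your branch worry is not a real gap — the discarded branch value $\pnorm{c}{}^2/2m$ always dominates $\beta\pnorm{c}{}/\sqrt m-\beta^2/2$, and the subsequent $\max_\beta$ of the retained expression is attained at $\beta\le\pnorm{c}{}/\sqrt m$, so the saddle value is unchanged (the paper's evaluation of $\min_{\tau_g}$ quietly makes the same move). (ii) In Part 2 your boundedness argument is genuinely simpler than the paper's: since the two subtracted terms in the bracket are nonnegative, the positive part is dominated by the first term alone, giving a uniform $\bigopx(1)$ bound with constants depending only on $(C_0,L_w,\sigma)$; the paper instead routes its factor $(II)$ through Lemma \ref{lem:lrt_qual}-(3), the stability estimates and the conditional (R2) bound (\ref{ineq:R2_xi}) to control $\lrt\big(\omega_{m/n}(r_{n,\xi},\sigma_m)\big)=\bigopx(n)$, which your argument does not need. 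Two small touch-ups: the "relevant $\max_\eta$-values lie in $[0,\lesssim_{C_0,L_w}\sigma]$" should read $\bigopx(1)$ with constants depending on $(C_0,L_w,\sigma)$ (on $E(C_0)$ one has the deterministic bound $\sqrt{C_0^2L_w^2+2\sigma^2}$ for $\mathsf D$ and $r_\vee L_w\pnorm{g}{}/\sqrt m+\sigma_m$ for your $A$), and when converting $\sup_{\alpha,\eta}\abs{A-\mathsf D}$ into the difference of the two $\min_\alpha\max_\eta(\cdot)_+^2$ you should state the elementary inequality $\abs{x_+^2-y_+^2}\le 2\max(x_+,y_+)\abs{x-y}$ that makes the uniform boundedness enter.
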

\begin{proof}
First taking maximum over $u$ with $\pnorm{u}{}=\beta$, $\Phi^{\textrm{a}}_{L_w}(g,h)$ equals
\begin{align*}
&  \max_{\beta \geq 0,s} \min_{v, \pnorm{w}{}\leq L_w} \bigg[ \frac{1}{\sqrt{m}}\Big(-r_\vee\beta h^\top w+ \beta \bigpnorm{ r_\vee\pnorm{w}{}g+\xi-v }{}\Big)\\
&\qquad\qquad \qquad\qquad \qquad  + \frac{1}{m}\Big( \mathsf{L}(v)+ s^\top (\mu_0+\sqrt{n} r_\vee w)- \bm{0}_{K}^\ast(s)\Big) \bigg]. 
\end{align*}
Minimizing over $w$ using the same trick as above for $u$, $\Phi^{\textrm{a}}_{L_w}(g,h)$ equals
\begin{align*}
&\max_{s,\beta\geq 0} \min_{v} \bigg[\frac{1}{\sqrt{m}}\cdot \min_{\pnorm{w}{}\leq L_w}\bigg\{w^\top\Big(-r_\vee\beta h+(r_\vee/\sqrt{m/n}) s\Big)\\
&\qquad\qquad + \beta \bigpnorm{ r_\vee\pnorm{w}{}g+\xi-v }{} \bigg\} +\frac{1}{m}\Big(\mathsf{L}(v)  + \big(s^\top \mu_0- \bm{0}_{K}^\ast(s)\big)\Big) \bigg]\\
& = \max_{s,\beta\geq 0} \min_{v, \alpha \leq L_w} \bigg[\frac{1}{\sqrt{m}}\bigg\{-\alpha\bigpnorm{r_\vee\beta h-(r_\vee/\sqrt{m/n}) s}{}+ \beta \bigpnorm{r_\vee \alpha g+\xi-v }{}\bigg\} \\
&\qquad\qquad\qquad\qquad\qquad+ \frac{1}{m}\Big( \mathsf{L}(v)  +  \big(s^\top \mu_0- \bm{0}_{K}^\ast(s)\big)\Big)\bigg].
\end{align*}
As the objective function is jointly convex in $(\alpha,v)$ and concave in $(\beta,s)$ and the range of $\alpha$ is bounded, we may apply Sion's min-max theorem (cf. Lemma \ref{lem:sion_minmax}) so the above $\max_{s,\beta\geq 0} \min_{v, \alpha \leq L_w} = \min_{\alpha \leq L_w} \max_{\beta\geq 0} \max_s \min_{v}$. As the variables $s,v$ are decoupled in the objective function in the above display, $\max_s \min_{v}$ can be freely interchanged. So by the writing $\pnorm{t}{}=\min_{\tau>0}\big(\tau/2+ \pnorm{t}{}^2/(2\tau)\big)$ where $t\in \{r_\vee\beta h- (r_\vee/\sqrt{m/n})s, r_\vee\alpha g+\xi-v\}$,  $\Phi^{\textrm{a}}_{L_w}(g,h)$ becomes
\begin{align*}
& \min_{\alpha \in [0,L_w]}\max_{\beta \geq 0} \max_{\tau_h>0} \min_{\tau_g>0}
\bigg[\frac{1}{\sqrt{m}}\cdot \frac{\beta \tau_g}{2}+\frac{1}{\sqrt{m}}\cdot \frac{\beta}{2\tau_g } \bigpnorm{r_\vee\alpha g+\xi-v}{}^2 + \frac{1}{m}\cdot \mathsf{L}(v)\\
&\qquad - \frac{1}{\sqrt{m}}\cdot \frac{\alpha \tau_h}{2}- \frac{1}{\sqrt{m}}\cdot \frac{\alpha}{2\tau_h} \bigpnorm{r_\vee\beta h-(r_\vee/\sqrt{m/n})s}{}^2+\frac{1}{m}\big( s^\top \mu_0 - \bm{0}_{K}^\ast(s)\big) \bigg].
\end{align*}
Rescaling $\tau_g,\tau_h$ by $\sqrt{m}\tau_g,\sqrt{m}\tau_h$, $\Phi^{\textrm{a}}_{L_w}(g,h)$ becomes
\begin{align*}
& \min_{\alpha \in [0,L_w]}\max_{\beta \geq 0} \max_{\tau_h>0} \min_{\tau_g>0}
 \bigg[\frac{\beta \tau_g}{2}+ \frac{1}{m}\cdot \min_{v \in \R^m} \bigg( \frac{\beta}{2\tau_g } \pnorm{r_\vee\alpha g+\xi-v}{}^2 +  \mathsf{L}(v)\bigg)\\
&\quad -  \frac{\alpha\tau_h}{2}- \frac{1}{m}\cdot \min_{s \in \R^n}\bigg(\frac{\alpha}{2\tau_h} \pnorm{r_\vee\beta h- (r_\vee/\sqrt{m/n})s}{}^2-  \big(s^\top \mu_0 -  \bm{0}_{K}^\ast(s)\big)\bigg) \bigg].
\end{align*}
Now we shall rewrite the inner two minimization problems. The first minimization problem is easy, as
\begin{align*}
\min_{v \in \R^m} \bigg( \frac{\beta}{2\tau_g } \pnorm{r_\vee\alpha g+\xi-v}{}^2 +  \mathsf{L}(v)\bigg)= \frac{\pnorm{r_\vee\alpha g+\xi}{}^2 }{2(\tau_g/\beta+1)}
\end{align*}
by simple calculations. To handle the second minimization problem, using Lemma \ref{lem:lrt_qual}-(2), with some calculations we have
\begin{align*}
&\min_{s \in \R^n}\bigg(\frac{\alpha}{2\tau_h} \pnorm{r_\vee\beta h-(r_\vee/\sqrt{m/n})s}{}^2-  \big(s^\top \mu_0 -  \bm{0}_{K}^\ast(s)\big)\bigg)\\
& = \frac{\tau_h}{2\alpha (r_\vee/\sqrt{m/n})^2}\cdot \lrt\bigg(\frac{\alpha\beta}{\tau_h}\frac{r_\vee^2}{\sqrt{m/n}}\bigg).
\end{align*}
Now combining all these calculations, $\Phi^{\textrm{a}}_{L_w}(g,h)$  becomes
\begin{align*}
&  \min_{\alpha \in [0,L_w]} \max_{\beta\geq 0,\tau_h>0}  \bigg[ \min_{\tau_g}\bigg\{\frac{\beta \tau_g}{2}+ \frac{1}{m}\cdot \frac{\pnorm{r_\vee\alpha g+\xi}{}^2 }{2(\tau_g/\beta+1)}\bigg\}-  \frac{\alpha\tau_h}{2}- \frac{\tau_h}{2\alpha nr_\vee^2}\cdot \lrt\bigg(\frac{\alpha\beta}{\tau_h}\frac{r_\vee^2}{\sqrt{m/n}}\bigg)\bigg]\\
& = \min_{\alpha \in [0,L_w]}\max_{\beta\geq 0,\gamma>0} \bigg[ \beta\sqrt{ \pnorm{r_\vee\alpha g+\xi}{}^2/m}-\frac{\beta^2}{2}-  \frac{\alpha \gamma}{2}\cdot r_\vee^2- \frac{\gamma }{2\alpha }\cdot  \frac{1}{n} \lrt\bigg(\frac{\alpha\beta}{\gamma\sqrt{m/n}}\bigg)\bigg],
\end{align*}
where we changed $(\beta,\tau_h)$ to $(\beta,\gamma)$ with $\gamma = \tau_h/(r_\vee^2)$ in the last equality. Further changing $(\beta,\gamma)$ to $(\eta,\gamma)$ with $\eta = \beta/\gamma$, $\Phi^{\textrm{a}}_{L_w}(g,h)$  becomes
\begin{align*}
& \min_{\alpha \in [0,L_w]}\max_{\eta\geq 0,\gamma>0} \bigg[ \eta \gamma \sqrt{ \pnorm{r_\vee\alpha g+\xi}{}^2/m}-\frac{\eta^2\gamma^2 }{2}-  \frac{\alpha \gamma}{2}\cdot r_\vee^2- \frac{\gamma }{2\alpha }\cdot  \frac{1}{n} \lrt\bigg(\frac{\alpha\eta}{\sqrt{m/n}}\bigg)\bigg]\\
& = \min_{\alpha \in [0,L_w]}\max_{\eta\geq 0} \max_{\gamma>0}\bigg[\gamma\bigg\{ \eta \sqrt{ \pnorm{r_\vee\alpha g+\xi}{}^2/m}-\frac{\alpha }{2}\cdot r_\vee^2 - \frac{1}{2\alpha }\cdot  \frac{1}{n} \lrt\bigg(\frac{\alpha\eta}{\sqrt{m/n}}\bigg) \bigg\}-\eta^2\cdot \frac{\gamma^2}{2}  \bigg].
\end{align*}
 Now computing the inner most maximum with respect to $\gamma$,  $\Phi^{\textrm{a}}_{L_w}(g,h)$  equals
\begin{align*}
\frac{1}{2} \min_{\alpha \in [0,L_w]}\max_{\eta\geq 0}\bigg[   \sqrt{ \pnorm{r_\vee\alpha g+\xi}{}^2/m}-\frac{\alpha }{2\eta}\cdot r_\vee^2 - \frac{1}{2\alpha \eta}\cdot  \frac{1}{n} \lrt\bigg(\frac{\alpha\eta}{\sqrt{m/n}}\bigg)\bigg]_+^2.
\end{align*}
The claim now follows by Lemma \ref{lem:random_error_replace_D} below.
\end{proof}

\begin{lemma}\label{lem:random_error_replace_D}
On the event $E(C_0)$ (see Definition \ref{def:xi_good}), 
\begin{align*}
&\min_{\alpha \in [0,L_w]}\max_{\eta\geq 0}\bigg[   \sqrt{ \pnorm{r_\vee\alpha g+\xi}{}^2/m}-\frac{\alpha }{2\eta}\cdot r_\vee^2 - \frac{1}{2\alpha \eta}\cdot  \frac{1}{n} \lrt\bigg(\frac{\alpha\eta}{\sqrt{m/n}}\bigg)\bigg]_+^2 \nonumber\\
& = \min_{\alpha \in [0,L_w]}\max_{\eta\geq 0}\mathsf{D}(\alpha,\eta)_+^2 +\smallopx(r_\vee^2).
\end{align*}
Here the probability estimate in $\smallopx$ is uniform with respect to problem instances for a fixed choice of $C_0, L_w$. 
\end{lemma}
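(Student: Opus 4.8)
The plan is to bound both $\min$–$\max$ values by the uniform discrepancy of their integrands over $(\alpha,\eta)$. Write
\[
A(\alpha)\equiv \sqrt{\pnorm{r_\vee\alpha g+\xi}{}^2/m},\qquad B(\alpha)\equiv \sqrt{r_\vee^2\alpha^2+\sigma_m^2},
\]
and $\Psi(\alpha,\eta)\equiv -\frac{\alpha}{2\eta}r_\vee^2-\frac{1}{2\alpha\eta}\cdot\frac{1}{n}\lrt\big(\alpha\eta/\sqrt{m/n}\big)$, noting $\Psi\le 0$ since $\lrt\ge 0$ by (\ref{def:lrt}). The two quantities in the statement are $\min_{\alpha\in[0,L_w]}\max_{\eta\ge 0}(A(\alpha)+\Psi(\alpha,\eta))_+^2$ and $\min_{\alpha\in[0,L_w]}\max_{\eta\ge 0}(B(\alpha)+\Psi(\alpha,\eta))_+^2$. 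Since $|\inf_\alpha\sup_\eta F_1-\inf_\alpha\sup_\eta F_2|\le \sup_{\alpha,\eta}|F_1-F_2|$ (valid once $\sup_{\alpha,\eta}|F_1-F_2|<\infty$, which holds on the good event below), it suffices to prove
\[
\sup_{\alpha\in[0,L_w],\,\eta\ge 0}\Big|(A(\alpha)+\Psi(\alpha,\eta))_+^2-(B(\alpha)+\Psi(\alpha,\eta))_+^2\Big|=\smallopx(r_\vee^2).
\]

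First I would record a uniform a priori envelope. On the event $E(C_0)$ of Definition \ref{def:xi_good} one has $B(\alpha)\le\sqrt{C_0^2L_w^2+2\sigma^2}$ deterministically, while $A(\alpha)\le C_0L_w\pnorm{g}{}/\sqrt m+\sigma_m\le M_0$ for every $\alpha\in[0,L_w]$ with $\Prob^\xi$-probability tending to $1$ uniformly over instances, using $\pnorm{g}{}^2/m\pequivx 1$ and $\sigma_m^2\le 2\sigma^2$ on $E(C_0)$; here $M_0=M_0(C_0,L_w,\sigma)$ is a large enough constant dominating both bounds. On this event, whenever $\Psi(\alpha,\eta)\le-M_0$ both positive parts vanish (difference $0$), and whenever $\Psi(\alpha,\eta)>-M_0$ both $|A(\alpha)+\Psi|$ and $|B(\alpha)+\Psi|$ are $\le M_0$; combining with the elementary inequality $|x_+^2-y_+^2|\le|x-y|(|x|+|y|)$ applied with $x=A(\alpha)+\Psi$, $y=B(\alpha)+\Psi$ (so $x-y=A(\alpha)-B(\alpha)$) reduces everything to showing $\sup_{\alpha\in[0,L_w]}|A(\alpha)-B(\alpha)|=\smallopx(r_\vee^2)$.

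For this last step, expand $A(\alpha)^2-B(\alpha)^2=r_\vee^2\alpha^2\big(\pnorm{g}{}^2/m-1\big)+2r_\vee\alpha\langle g,\xi\rangle/m$ and use $|\sqrt a-\sqrt b|\le|a-b|/\sqrt b\le|a-b|/\sigma_m$ together with $\sigma_m\gtrsim\sigma$ on $E(C_0)$; with $\alpha\le L_w$ and $r_\vee\le C_0$ there, this gives $\sup_\alpha|A(\alpha)-B(\alpha)|\lesssim_{L_w,\sigma}r_\vee^2\,\big|\pnorm{g}{}^2/m-1\big|+r_\vee\,|\langle g,\xi\rangle|/m$. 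By $\chi^2$-concentration $\big|\pnorm{g}{}^2/m-1\big|=\bigopx(m^{-1/2})$, and conditionally on $\xi$ one has $\langle g,\xi\rangle\sim\mathcal{N}(0,\pnorm{\xi}{}^2)$, hence $|\langle g,\xi\rangle|/m=\bigopx(\pnorm{\xi}{}/m)=\bigopx(\sigma_m m^{-1/2})=\bigopx(m^{-1/2})$ on $E(C_0)$. Therefore $\sup_\alpha|A(\alpha)-B(\alpha)|=\bigopx(r_\vee^2 m^{-1/2})+\bigopx(r_\vee m^{-1/2})$; the first term is $\smallopx(r_\vee^2)$ because $m\to\infty$, and the second because $r_\vee^{-1}m^{-1/2}=(r_\vee^2 m)^{-1/2}\to 0$, since $r_\vee^2 m\ge\mathfrak{u}_n\mathfrak{L}_n\to\infty$ by the definition (\ref{def:r_vee}) of $r_\vee$. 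All bounds used are dimension-free Gaussian concentration estimates, so the $\smallopx$ rate is uniform over problem instances for fixed $C_0,L_w$.

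The only real obstacle is bookkeeping: producing the uniform envelope $M_0$ valid for all $\alpha\in[0,L_w]$ simultaneously, and handling the positive parts so that the very negative region of $\Psi$ — where both truncations vanish — is simply discarded rather than requiring a separate $\eta$-localization. Conceptually nothing is delicate: every term by which $\sqrt{\pnorm{r_\vee\alpha g+\xi}{}^2/m}$ differs from $\sqrt{r_\vee^2\alpha^2+\sigma_m^2}$ carries a factor $r_\vee$ (the contribution $\pnorm{\xi}{}^2/m=\sigma_m^2$, whose fluctuation is the ``hard'' $O(m^{-1/2})$, is left untouched on both sides), so the ambient $O(m^{-1/2})$ Gaussian fluctuations are automatically upgraded to $\smallopx(r_\vee^2)$ because $r_\vee^2 m\to\infty$ on the good event.
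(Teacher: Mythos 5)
Your proposal is correct and follows essentially the same route as the paper's proof: both reduce the min--max discrepancy to the quantity $\sup_{\alpha\in[0,L_w]}\bigl|\sqrt{\pnorm{r_\vee\alpha g+\xi}{}^2/m}-\sqrt{r_\vee^2\alpha^2+\sigma_m^2}\bigr|$ times an $\bigopx(1)$ envelope of the positive parts, bound the former by $\bigopx(r_\vee/m^{1/2})$ via the same expansion and $\sigma_m\gtrsim\sigma$ on $E(C_0)$, and conclude from $r_\vee^2 m\geq \mathfrak{u}_n\mathfrak{L}_n\to\infty$. The one genuine difference is the envelope step: you observe that since $\lrt\geq 0$ the subtracted term $\Psi(\alpha,\eta)$ is nonpositive, so each positive part is trivially dominated by $A(\alpha)$ resp.\ $B(\alpha)$, both $\bigopx(1)$ on the good event; the paper instead bounds this factor through its term $(II)_2$, invoking the monotonicity of $\sigma\mapsto\lrt(\sigma)/\sigma$ (Lemma \ref{lem:lrt_qual}-(3)), the stability estimate of Proposition \ref{prop:lrt_stability}, and the conditional condition (\ref{ineq:R2_xi}). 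Your shortcut is valid and in fact removes any reliance on (\ref{ineq:R2_xi}) for this particular lemma, at no loss of uniformity in the problem instance.
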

\begin{proof}
Note that for generic functions $F_1(\alpha),F_2(\alpha)$ and $G(\alpha,\eta)$,
\begin{align*}
&\bigabs{\min_\alpha \max_\eta \big(F_1(\alpha)+G(\alpha,\eta)\big)_+^2-\min_\alpha \max_\eta \big(F_2(\alpha)+G(\alpha,\eta)\big)_+^2 }\\
&\leq \max_\alpha \max_\eta \bigabs{ \big(F_1(\alpha)+G(\alpha,\eta)\big)_+^2- \big(F_2(\alpha)+G(\alpha,\eta)\big)_+^2 }\\
&\leq 2 \bigg[\max_{\alpha}\max_\eta \big(F_1(\alpha)+G(\alpha,\eta)\big)_+\\
&\qquad\qquad \vee \max_{\alpha}\max_\eta \big(F_2(\alpha)+G(\alpha,\eta)\big)_+\bigg] \max_\alpha \bigabs{F_1(\alpha)-F_2(\alpha)}.
\end{align*}
So the difference between the LHS and the first term of the RHS in the equality stated in the lemma is at most the product of
\begin{align*}
(I)\equiv 2\max_{\alpha \in [0,L_w]} \biggabs{  \sqrt{ \pnorm{r_\vee\alpha g+\xi}{}^2/m}- \sqrt{ r_\vee^2 \alpha^2+\sigma_m^2} }
\end{align*}
and
\begin{align*}
(II)& = \max_{\alpha \in [0,L_w]}\max_{\eta\geq 0}\bigg[   \sqrt{ \pnorm{r_\vee\alpha g+\xi}{}^2/m}-\frac{\alpha }{2\eta}\cdot r_\vee^2 - \frac{1}{2\alpha \eta}\cdot  \frac{1}{n} \lrt\bigg(\frac{\alpha\eta}{\sqrt{m/n}}\bigg)\bigg]_+\\
&\qquad\qquad \vee \max_{\alpha \in [0,L_w]}\max_{\eta\geq 0}\bigg[   \sqrt{ r_\vee^2\alpha^2+\sigma_m^2}-\frac{\alpha }{2\eta}\cdot r_\vee^2 - \frac{1}{2\alpha \eta}\cdot  \frac{1}{n} \lrt\bigg(\frac{\alpha\eta}{\sqrt{m/n}}\bigg)\bigg]_+.
\end{align*}
For $(I)$, note that
\begin{align*}
(I)& \leq 2\sigma_m^{-1} \max_{\alpha \in [0,L_w]}\bigabs{m^{-1}\pnorm{r_\vee \alpha g+\xi}{}^2- (r_\vee^2\alpha^2+\sigma_m^2)} \\
&\lesssim \sigma_m^{-1} \max_{\alpha \in [0,L_w]}\bigg[r_\vee^2\alpha^2 \biggabs{\frac{\pnorm{g}{}^2}{m}-1}+r_\vee \alpha\cdot \frac{1}{m}  \abs{\iprod{g}{\xi}}\bigg] = \bigopx \big(r_\vee/m^{1/2}\big).
\end{align*}
For $(II)$, we only handle the first term therein, as the second term is actually simpler. To this end, note that
\begin{align*}
& \max_{\alpha \in [0,L_w]}\max_{\eta\geq 0}\bigg[   \sqrt{ \pnorm{r_\vee\alpha g+\xi}{}^2/m}-\frac{\alpha }{2\eta}\cdot r_\vee^2 - \frac{1}{2\alpha \eta}\cdot  \frac{1}{n} \lrt\bigg(\frac{\alpha\eta}{\sqrt{m/n}}\bigg)\bigg]_+\\
&\leq \max_{\alpha \in [0,L_w]}  \sqrt{ \pnorm{r_\vee\alpha g+\xi}{}^2/m}+ \max_{\alpha \in [0,L_w]}\min_{\eta\geq 0}\bigg[\frac{\alpha }{2\eta}\cdot r_\vee^2 + \frac{1}{2\alpha \eta}\cdot  \frac{1}{n} \lrt\bigg(\frac{\alpha\eta}{\sqrt{m/n}}\bigg)\bigg]\\
&\equiv (II)_1+(II)_2. 
\end{align*}
It is easy to see $(II)_1=\bigopx(1)$. For $(II)_2$, by choosing $\eta=1$,  we have
\begin{align*}
(II)_2&\lesssim  \bigopx(1)+ \frac{1}{n}\cdot \max_{\alpha \in [0,L_w]}  \frac{1}{\alpha} \lrt\bigg(\frac{\alpha}{\sqrt{m/n}}\bigg)\\
&\leq \bigopx(1)+ \frac{1}{n}\cdot  \frac{1}{L_w} \lrt\bigg(\frac{L_w}{\sqrt{m/n}}\bigg) \quad \hbox{(by Lemma \ref{lem:lrt_qual}-(3))}\\
&\lesssim_{L_w} \bigopx(1)+\frac{1}{n}\cdot \frac{L_w^2}{r_{n,\xi}^2+\sigma_m^2}\cdot \lrt\big(\omega_{m/n}(r_{n,\xi},\sigma_m)\big).
\end{align*} 
As $\lrt(\cdot)\geq 0$, 
\begin{align*}
&\lrt\big(\omega_{m/n}(r_{n,\xi},\sigma_m)\big) = \bigopx\Big(\E^\xi \lrt\big(\omega_{m/n}(r_{n,\xi},\sigma_m)\big)\Big)\\
& = \bigopx\Big(\E^\xi \err\big(\omega_{m/n}(r_{n,\xi},\sigma_m)\big)\Big)+\bigopx(n) \quad \hbox{(using (\ref{ineq:R2_xi}))}\\
& = \bigopx(nr_{n,\xi}^2+n) = \bigopx(n).
\end{align*}
Combining the above two displays, we have $(II) = \bigopx(1)$. The claim follows by noting that $r_\vee/m^{1/2}\ll r_\vee^2$. 
\end{proof}

\subsection{Conditional localization and de-stochastization}

\begin{proposition}\label{prop:local_destoc}
Fix a sequence $M_n\uparrow \infty$ such that $m/\mathfrak{L}_n\geq M_n$. The following hold on the event $E(C_0)$ (see Definition \ref{def:xi_good}).
\begin{enumerate}
	\item It holds that
	\begin{align*}
	\Prob^\xi\bigg(\min_{\alpha \in [0,L_w]} \max_{\eta\geq 0}\mathsf{D}(\alpha,\eta)_+^2=\min_{\alpha \in [0,L_w]} \max_{\eta\geq 0}\mathsf{D}(\alpha,\eta)^2\bigg) = 1-\smallopx(1).
	\end{align*}
     The terms in the above probability equal 
	\begin{align}\label{eqn:local_destoc_1}
	&\frac{1}{r_\vee^2+\sigma_m^2 }\bigg[\sigma_m^2-\frac{1+\smallopx(1)}{2n} \nonumber\\
	&\qquad\qquad \times \Big(\E^\xi \lrt\big(\omega_{m/n}(r_{n,\xi},\sigma_m)\big)-\E^\xi\err\big(\omega_{m/n}(r_{n,\xi},\sigma_m)\big) \Big) \bigg]_+^2.
	\end{align}
	\item 
    It holds that
	\begin{align*}
	\min_{\alpha \in [0,L_w]} \max_{\eta\geq 0}\overline{\mathsf{D}}(\alpha,\eta)_+^2&=\min_{\alpha \in [0,L_w]} \max_{\eta\geq 0}\overline{\mathsf{D}}(\alpha,\eta)^2.
	\end{align*}
	These terms equal (\ref{eqn:local_destoc_1}) with $1+\smallopx(1)$ replaced by $1+\smallo(1)\bm{1}_{r_\vee>r_{n,\xi}}$ therein, where $\smallo(1)$ depends only on the choice of $\{M_n\}$.
	\item It holds that 
	\begin{align*}
	\min_{\alpha \in [0,L_w]} \max_{\eta\geq 0}\mathsf{D}(\alpha,\eta)^2= \min_{\alpha \in [0,L_w]} \max_{\eta\geq 0}\overline{\mathsf{D}}(\alpha,\eta)^2+\smallopx(\bar{r}_\vee^2).
	\end{align*}
\end{enumerate}
The probability estimates in the $\smallopx(1)$ terms are uniform with respect to problem instances for a fixed choice of $C_0, L_w$ and $\{M_n\}$. 
\end{proposition}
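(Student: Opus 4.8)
The plan is to reduce, for both $\mathsf{D}$ and $\overline{\mathsf{D}}$, the saddle value $\min_{\alpha\in[0,L_w]}\max_{\eta\geq 0}(\cdot)$ to a one-variable minimization, to locate its minimizer exactly through the fixed point equation, and then to evaluate and compare. For fixed $\alpha$, the first-order condition (\ref{eq:dD_deta}) reads $\alpha^2 r_\vee^2=\err(\alpha\eta/\sqrt{m/n})/n$; since $\err(\cdot)$ is non-decreasing (Lemma \ref{lem:monotone_est_err}-(1)), $\eta\mapsto\mathsf{D}(\alpha,\eta)$ is unimodal with maximizer $\eta^\ast(\alpha)$. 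Putting $s=\alpha\eta^\ast(\alpha)/\sqrt{m/n}$ and collecting terms with the help of Lemma \ref{lem:lrt_qual}-(1) (i.e.\ $\sigma\lrt'(\sigma)=\lrt(\sigma)+\err(\sigma)$), one obtains
\begin{align*}
\min_{\alpha\in[0,L_w]}\max_{\eta\geq 0}\mathsf{D}(\alpha,\eta)=\min_{s\in[0,s_\ast(L_w)]}g(s),\quad g(s)\equiv\sqrt{\tfrac{\err(s)}{n}+\sigma_m^2}-\frac{\err(s)+\lrt(s)}{2ns\sqrt{m/n}},
\end{align*}
where $s_\ast(L_w)$ is the value of $s$ at $\alpha=L_w$, and the same with $\err,\lrt$ replaced by $\E^\xi\err,\E^\xi\lrt$ for $\overline{\mathsf{D}}$. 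Differentiating and using $\sigma\lrt'(\sigma)=\lrt(\sigma)+\err(\sigma)$ once more gives $\sign g'(s)=\sign\err'(s)\cdot\sign\big(s^2-(\err(s)+n\sigma_m^2)/m\big)$.

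By Lemma \ref{lem:monotone_est_err}-(2), $s\mapsto\err(s)/s^2$ is non-increasing, so $s^{-2}\big(s^2-(\err(s)+n\sigma_m^2)/m\big)$ is strictly increasing and therefore changes sign exactly once, from $-$ to $+$; hence $g$ is unimodal with unique minimizer $\hat s=\omega_{m/n}(\hat r,\sigma_m)$, where $\hat r>0$ solves the \emph{random} fixed point equation $\err(\omega_{m/n}(\hat r,\sigma_m))=n\hat r^2$, and for $\overline{\mathsf{D}}$ the minimizer is $s^\ast=\omega_{m/n}(r_{n,\xi},\sigma_m)$. (This requires checking that the corresponding $\alpha=\hat r/r_\vee$, resp.\ $r_{n,\xi}/r_\vee\leq 1$, lies in $[0,L_w]$, which holds with $\Prob^\xi$-probability $1-\smallopx(1)$ because $\hat r\pequivx r_{n,\xi}\leq r_\vee$ and $L_w>1$.) Evaluating $g$ at $\hat s$ and using $\err(\hat s)=n\hat r^2$, $\hat s\sqrt{m/n}=\sqrt{\hat r^2+\sigma_m^2}$ gives the identity
\begin{align*}
\min_{\alpha\in[0,L_w]}\max_{\eta\geq 0}\mathsf{D}(\alpha,\eta)=\frac{1}{\sqrt{\hat r^2+\sigma_m^2}}\Big[\sigma_m^2-\frac{\lrt(\hat s)-\err(\hat s)}{2n}\Big],
\end{align*}
and analogously $\min_\alpha\max_\eta\overline{\mathsf{D}}=(r_{n,\xi}^2+\sigma_m^2)^{-1/2}\big[\sigma_m^2-\tfrac{1}{2n}(\E^\xi\lrt(s^\ast)-\E^\xi\err(s^\ast))\big]$. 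On $E(C_0)$, (\ref{ineq:R2_xi}) forces $\tfrac1{2n}(\E^\xi\lrt(s^\ast)-\E^\xi\err(s^\ast))\leq(1-1/C_0)\sigma_m^2$; combined with the uniform concentration of $\err,\lrt$ (Proposition \ref{prop:est_err_lrt_sup}) and $\hat r\pequivx r_{n,\xi}$, the bracket for $\mathsf{D}$ is $\geq\sigma_m^2/C_0-\smallopx(1)>0$ with $\Prob^\xi$-probability $1-\smallopx(1)$, so $\max_\eta\mathsf{D}>0$ at the minimizer, $g>0$ on its whole range, and the positive part in $\mathsf{D}_+^2$ is immaterial. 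Squaring and replacing $\hat r,\hat s,(\lrt-\err)(\hat s)$ by $r_{n,\xi},s^\ast,(\E^\xi\lrt-\E^\xi\err)(s^\ast)$ via Proposition \ref{prop:est_err_lrt_sup} and the stability estimates (Propositions \ref{prop:est_err_stability}, \ref{prop:lrt_stability}) yields (\ref{eqn:local_destoc_1}), the factor $1+\smallopx(1)$ absorbing the $\smallopx(r_\vee^2)$-order errors — this is (1). Part (2) is the same computation carried out deterministically in $\xi$: the bracket positivity is immediate from (\ref{ineq:R2_xi}), the identity matches (\ref{eqn:local_destoc_1}) with multiplier $1$ when $r_\vee=r_{n,\xi}$, and when $r_\vee>r_{n,\xi}$ (so $r_\vee^2=\mathfrak{u}_n\mathfrak{L}_n/m\to 0$) the only mismatch is the ratio $(r_\vee^2+\sigma_m^2)/(r_{n,\xi}^2+\sigma_m^2)=1+\smallo(1)$ (depending only on $\{M_n\}$), absorbed into the bracket as the factor $1+\smallo(1)\bm{1}_{r_\vee>r_{n,\xi}}$.

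For part (3), the two identities give $\big|(\min\max\mathsf{D})^2-(\min\max\overline{\mathsf{D}})^2\big|\leq|g(\hat s)-\bar g(s^\ast)|\cdot\bigopx(1)$, and $|g(\hat s)-\bar g(s^\ast)|$ splits into (a) the prefactor gap, of order $|\hat r^2-r_{n,\xi}^2|/\sigma_m^3$, and (b) $\tfrac1{2n}\big|(\lrt-\err)(\hat s)-(\E^\xi\lrt-\E^\xi\err)(s^\ast)\big|$. For (a), Proposition \ref{prop:est_err_lrt_sup} with deviation parameter $\asymp\mathfrak{L}_n$ gives $|\err(s)-\E^\xi\err(s)|=\bigopx(\sigma\sqrt{\E^\xi\err(s)\,\mathfrak{L}_n}+\sigma^2\mathfrak{L}_n)$ uniformly over the relevant $s$; comparing the random and conditional fixed point equations, with $\E^\xi\err\lesssim nr_\vee^2$ there and $\mathfrak{L}_n/n\leq\mathfrak{L}_n/m\leq r_\vee^2/\mathfrak{u}_n$, yields $|\hat r^2-r_{n,\xi}^2|=\smallopx(r_\vee^2)$, hence also $\hat s^2/(s^\ast)^2-1=(\hat r^2-r_{n,\xi}^2)/(r_{n,\xi}^2+\sigma_m^2)=\smallopx(r_\vee^2)$. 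For (b), $|(\lrt-\err)(s^\ast)-(\E^\xi\lrt-\E^\xi\err)(s^\ast)|=\smallopx(nr_\vee^2)$ by Proposition \ref{prop:est_err_lrt_sup} — here the variance component $\sigma^2\E^\xi\err(s^\ast)$, rather than the larger $\sigma^2\E^\xi\lrt(s^\ast)$, is essential — and the transfer from $s^\ast$ to $\hat s$ uses the monotonicity of $\lrt(\sigma)/\sigma^2,\err(\sigma)/\sigma^2$ (Lemmas \ref{lem:lrt_qual}-(4), \ref{lem:monotone_est_err}-(2)): $|(\lrt-\err)(\hat s)-(\lrt-\err)(s^\ast)|\lesssim\lrt(s^\ast)\,|\hat s^2/(s^\ast)^2-1|=\bigopx(n)\cdot\smallopx(r_\vee^2)$, hence $\smallopx(r_\vee^2)$ after dividing by $2n$. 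Collecting, $|g(\hat s)-\bar g(s^\ast)|=\smallopx(r_\vee^2)$, so $(\min\max\mathsf{D})^2=(\min\max\overline{\mathsf{D}})^2+\smallopx(r_\vee^2)=(\min\max\overline{\mathsf{D}})^2+\smallopx(\bar r_\vee^2)$; uniformity of all $\smallopx$ over problem instances is inherited from Proposition \ref{prop:est_err_lrt_sup} and the stability estimates.

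The step I expect to be the main obstacle is (3): controlling the fluctuation of $(\lrt-\err)(\hat s)$ down to order $nr_\vee^2\asymp\mathfrak{u}_n\mathfrak{L}_n$ requires both the sharp variance scaling $\sigma^2\E^\xi\err$ (not the cruder $\sigma^2\E^\xi\lrt$) supplied by Section \ref{section:est_err_lrt_process}, and a transfer between the random argument $\hat s$ and the deterministic $s^\ast$ that loses no more than $\smallopx(r_\vee^2)$ — delicate because $\hat s$ may be large when $m\ll n$, so only scale-invariant (normalized-process) stability is available rather than additive bounds, and because the needed closeness $\hat s/s^\ast=1+\smallopx(r_\vee^2)$ is itself an output of the same concentration analysis, which must be organized (for instance by first pinning down $\hat r$ and then propagating) so as to avoid circularity.
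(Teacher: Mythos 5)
Your reduction is, at its core, the paper's own argument in different coordinates: eliminating $\eta$ by the first-order condition and setting $s=\alpha\eta^\ast(\alpha)/\sqrt{m/n}$ turns the paper's saddle system (its conditions $\alpha_\ast\eta_\ast=\sqrt{\alpha_\ast^2r_\vee^2+\sigma_m^2}$, $\alpha_\ast^2nr_\vee^2=\err(\alpha_\ast\eta_\ast/\sqrt{m/n})$) into exactly your scalar equation $ms^2=\err(s)+n\sigma_m^2$, i.e.\ $\hat r=\alpha_\ast r_\vee$, and your evaluation of $g(\hat s)$ reproduces the paper's display for $\mathsf{D}(\alpha_\ast,\eta_\ast)$. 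The unimodality-of-$g$ computation (via $\sigma\lrt'(\sigma)=\lrt(\sigma)+\err(\sigma)$ and monotonicity of $\err(s)/s^2$) is a clean way to justify that the first-order point is the minimizer. Where you genuinely diverge is part (3): the paper bounds $\sup_{\alpha\in[0,L_w],\,\alpha\eta\leq C_1}\abs{\mathsf{D}-\overline{\mathsf{D}}}$ uniformly (the only random discrepancy being the $\lrt$ term), using Proposition \ref{prop:est_err_lrt_sup} over the window $w\in(n^{-2},C_1]$ and a separate $\bigopx(1)$ bound on the positive parts; you instead compare the two optimal values through their closed forms at $\hat s$ and $s^\ast$, which trades the uniform-in-$w$ concentration for a root-comparison $\abs{\hat r^2-r_{n,\xi}^2}=\smallopx(r_\vee^2)$ plus a stability transfer $s^\ast\to\hat s$. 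Both routes hinge on the variance scaling $\sigma^2\E^\xi\err$ for $\lrt$, as you note.

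Two points need repair. First, the blanket claim $\hat r\pequivx r_{n,\xi}$ is false in general: when $mr_{n,\xi}^2=\bigo(1)$ (e.g.\ $\mu_0=0$ and $\delta_K=\bigo(1)$, so $r_{n,\xi}^2\asymp\delta_K/m$ while $r_\vee^2=\mathfrak{u}_n\mathfrak{L}_n/m$), the pathwise fixed point does not concentrate multiplicatively around $r_{n,\xi}$ (its relative fluctuation is order one), and it may even fail to exist on an event of non-negligible conditional probability unless you argue $m>\lim_\sigma\err(\sigma)/\sigma^2$ with high $\Prob^\xi$-probability. This is precisely why the paper splits into the cases $r_n^2\gtrless 2\mathfrak{u}_n\mathfrak{L}_n/m$ and in the small-rate case claims only $(\alpha_\ast-1)_+\pequiv 0$ together with $\omega_{m/n}(\alpha_\ast r_\vee,\sigma_m)\simeq\omega_{m/n}(0,\sigma_m)$. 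Your argument survives, but only after replacing $\hat r\pequivx r_{n,\xi}$ everywhere by the weaker (and correct) statements $\hat r\leq(1+\smallopx(1))r_\vee$ (for the localization $\hat\alpha\in[0,L_w]$) and $\abs{\hat r^2-r_{n,\xi}^2}=\smallopx(r_\vee^2)$, noting that the prefactor and the arguments of $\err,\lrt$ only ever see $\hat r^2-r_{n,\xi}^2$ relative to $\sigma_m^2$.

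Second, the step you lean on most heavily in (3), $\abs{\hat r^2-r_{n,\xi}^2}=\smallopx(r_\vee^2)$, is asserted from "comparing the random and conditional fixed point equations," but monotonicity of $r\mapsto\err(\omega_{m/n}(r,\sigma_m))/r^2$ alone does not convert closeness of the two equations into closeness of their roots; you need the quantitative strict decrease supplied by the explicit factor $(1+\sigma_m^2/r^2)/m$, namely that the normalized map drops by a factor $1-c\epsilon$ (with $c=c(\sigma,C_0)$) when $r$ is inflated to $(1+\epsilon)r$, combined with the concentration bound $\abs{\err(\omega(r))-\E^\xi\err(\omega(r))}=\bigopx(\omega\sqrt{n}\,r_\vee+\omega^2)$ and the key inequality $mr_\vee^2\geq\mathfrak{u}_n\mathfrak{L}_n\to\infty$ so that these fluctuations are $\smallopx(nr_\vee^2)$. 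This is exactly the device the paper uses (in its contradiction argument on the events $\{\alpha_\ast>1+\epsilon\}$, $\{\alpha_\ast<1-\epsilon\}$) to pin down $\alpha_\ast$, and it is also where the circularity you flag is broken: one first pins down $\hat r$ by this inversion, and only then transfers $(\lrt-\err)$ from $s^\ast$ to $\hat s$ via the stability estimates with $M^2-1=\bigo(\abs{\hat r^2-r_{n,\xi}^2}/\sigma^2)$ and $\lrt(s^\ast)=\bigopx(n)$ (the latter needing (\ref{ineq:R2_xi}), as in the paper's Lemma \ref{lem:random_error_replace_D}). With these two repairs, your proof goes through and its part (3) is a legitimate alternative to the paper's uniform-supremum argument.
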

\begin{proof}
\noindent (1a). First consider the case $r_n^2 \geq 2 \mathfrak{u}_n \mathfrak{L}_n/m $. On $E(C_0)$, we have $r_\vee =r_{n,\xi}$. We only need to prove that
\begin{align}\label{ineq:local_destoc_1}
\Prob^\xi\Big(\min_{\alpha \in [0,L_w]} \max_{\eta\geq 0}\mathsf{D}(\alpha,\eta)\leq 0\Big) = \smallopx(1).
\end{align}
By (\ref{eq:dD_dalpha})-(\ref{eq:dD_deta}), an inner saddle point $(\alpha_\ast,\eta_\ast)$ to the above min-max problem satisfies the first order optimality condition
\begin{align}\label{ineq:local_destoc_0}
\alpha_\ast \eta_\ast = \sqrt{\alpha_\ast^2 r_\vee^2+\sigma_m^2},\quad \alpha_\ast^2 n r_\vee^2 = \err\big(\alpha_\ast\eta_\ast/\sqrt{m/n}\big).
\end{align}
By the stability estimate in Proposition \ref{prop:est_err_stability},
\begin{align}\label{ineq:local_destoc_2}
\alpha_\ast^2 n r_\vee^2 = \err\big(\omega_{m/n}(\alpha_\ast r_\vee,\sigma_m)\big)\leq \frac{\alpha_\ast^2 r_\vee^2+\sigma_m^2}{r_\vee^2+\sigma_m^2}\cdot  \err\big(\omega_{m/n}(r_\vee,\sigma_m)\big).
\end{align}
Using the variance bound in Proposition \ref{prop:est_err_variance}, we have
\begin{align}\label{ineq:local_destoc_4}
&\err\big(\omega_{m/n}(r_\vee,\sigma_m)\big) \nonumber\\
&= \E^\xi\err\big(\omega_{m/n}(r_\vee,\sigma_m)\big) + \bigopx\bigg(\omega_{m/n}(r_\vee,\sigma_m)\cdot \E^{1/2,\xi}\err\big(\omega_{m/n}(r_\vee,\sigma_m)\big)\bigg)\nonumber\\
& = nr_\vee^2\cdot \bigg[1 + \bigopx\bigg( \omega_{m/n}(r_\vee,\sigma_m)\Big/\sqrt{nr_\vee^2} \bigg)\bigg] \pequivx nr_\vee^2.
\end{align}
The last equivalence in probability uses the fact $mr_\vee^2\geq \mathfrak{L}_n\gg 1$. Combined with (\ref{ineq:local_destoc_2}), we obtain
\begin{align*}
\alpha_\ast^2 n r_\vee^2 \leq \frac{\alpha_\ast^2 r_\vee^2+\sigma_m^2}{r_\vee^2+\sigma_m^2}\cdot nr_\vee^2\big(1+\smallopx(1)\big).
\end{align*} 
On the event $E_{+\epsilon}\equiv \{\alpha_\ast> 1+\epsilon\}$, the above display implies
\begin{align*}
2\epsilon \sigma_m^2\leq (\alpha_\ast^2-1)\sigma_m^2=\smallopx(1).
\end{align*}
As $\sigma_m^2\geq \sigma^2/2>0$ on $E(C_0)$, the above inequality must be violated for $n$ large with $\Prob^\xi$-high probability, so $\lim_n\Prob^\xi(E_{+\epsilon})= 0$ for a fixed $\epsilon>0$. Similarly, the event $E_{-\epsilon}\equiv \{\alpha_\ast< 1-\epsilon\}$ satisfies $\lim_n\Prob^\xi(E_{-\epsilon})=0$ for fixed $\epsilon>0$, upon using the reserved version of the inequality (\ref{ineq:local_destoc_2}) that holds for $\alpha_\ast<1$. Consequently, we have proved the $\Prob^\xi$-asymptotically probability $1$ existence of an inner saddle point $(\alpha_\ast,\eta_\ast)$ with
\begin{align}\label{ineq:local_destoc_3}
\alpha_\ast\pequivx 1,\quad \eta_\ast \pequivx \sqrt{r_\vee^2+\sigma_m^2},\quad \hbox{when $r_{n}\geq 2\mathfrak{u}_n\mathfrak{L}_n/m$ and $\xi \in E(C_0)$}.
\end{align}
Now using (\ref{ineq:local_destoc_0}) and the above display (\ref{ineq:local_destoc_3}), we may calculate
\begin{align}\label{ineq:local_destoc_5}
&\min_{\alpha \in [0,L_w]} \max_{\eta\geq 0}\mathsf{D}(\alpha,\eta) = \mathsf{D}(\alpha_\ast,\eta_\ast) \nonumber\\
& = \sqrt{\alpha_\ast^2r_\vee^2+\sigma_m^2}-\frac{\alpha_\ast}{2\sqrt{\alpha_\ast^2r_\vee^2+\sigma_m^2}   }\bigg[\alpha_\ast r_\vee^2+\frac{1}{\alpha_\ast n}\lrt\big(\omega_{m/n}(\alpha_\ast r_\vee,\sigma_m)\big)\bigg]\nonumber\\
& = \frac{\sigma_m^2}{\sqrt{\alpha_\ast^2 r_\vee^2+\sigma_m^2}  }- \frac{1}{2\sqrt{\alpha_\ast^2r_\vee^2+\sigma_m^2}   }\bigg[\frac{1}{n}\lrt\big(\omega_{m/n}(\alpha_\ast r_\vee,\sigma_m)\big)-\alpha_\ast^2 r_\vee^2\bigg] \nonumber\\
& = \frac{1}{ \sqrt{\alpha_\ast^2 r_\vee^2+\sigma_m^2}  }\bigg[\sigma_m^2-\frac{1}{2n}\Big(\lrt\big(\omega_{m/n}(\alpha_\ast r_\vee,\sigma_m)\big)-\err\big(\omega_{m/n}(\alpha_\ast r_\vee,\sigma_m)\big) \Big) \bigg].
\end{align}
Using the stability estimates in Propositions \ref{prop:est_err_stability} and \ref{prop:lrt_stability}, and the proven fact that $\alpha_\ast \pequivx 1$, we have  $H\big(\omega_{m/n}(\alpha_\ast r_\vee,\sigma_m)\big)\pequivx H\big(\omega_{m/n}(r_\vee,\sigma_m)\big)$ for $H \in \{\err,\lrt\}$. Now using the variance bounds in Proposition \ref{prop:est_err_variance}-(2) and Proposition \ref{prop:lrt_prop}-(3), and the fact that $\lrt(\cdot)\geq \err(\cdot)$ as proved in Proposition \ref{prop:lrt_prop}-(2), we have for $H \in \{\err,\lrt\}$,
\begin{align*}
&H\big(\omega_{m/n}(r_\vee,\sigma_m)\big) \\
&= \E^\xi H\big(\omega_{m/n}(r_\vee,\sigma_m)\big) + \bigopx\bigg(\omega_{m/n}(r_\vee,\sigma_m)\cdot \E^{1/2,\xi}\err\big(\omega_{m/n}(r_\vee,\sigma_m)\big)\bigg)\nonumber\\
& =\E^\xi H\big(\omega_{m/n}(r_\vee,\sigma_m)\big) \cdot \bigg[1 + \bigopx\bigg( \omega_{m/n}(r_\vee,\sigma_m)\Big/\sqrt{nr_\vee^2} \bigg)\bigg] \\
&\pequivx \E^\xi H \big(\omega_{m/n}(r_\vee,\sigma_m)\big).
\end{align*}
Combining these calculations and using (\ref{ineq:R2_xi}), we have
\begin{align*}
&\min_{\alpha \in [0,L_w]} \max_{\eta\geq 0}\mathsf{D}(\alpha,\eta) = \mathsf{D}(\alpha_\ast,\eta_\ast)\\
& = \frac{1}{ \sqrt{r_\vee^2+\sigma_m^2}  }\bigg[\sigma_m^2-\frac{1+\smallopx(1)}{2n}\Big(\E^\xi \lrt\big(\omega_{m/n}(r_\vee,\sigma_m)\big)-\E^\xi\err\big(\omega_{m/n}(r_\vee,\sigma_m)\big) \Big) \bigg]\\
&\geq C_0^{-1}(1+\smallopx(1))\sigma_m^2\Big/ \sqrt{r_\vee^2+\sigma_m^2},
\end{align*}
proving the claim in (\ref{ineq:local_destoc_1}). 

\noindent (1b).  Next consider the case $r_n^2<2\mathfrak{u}_n\mathfrak{L}_n/m$. First we show $\alpha_\ast>0$. As $\sigma\mapsto \lrt(\sigma)/\sigma$ is non-decreasing by Lemma \ref{lem:lrt_qual}-(3), the map
\begin{align*}
\eta \mapsto  \frac{\alpha }{2\eta}\cdot r_\vee^2 + \frac{1}{2\alpha \eta}\cdot  \frac{1}{n} \lrt\bigg(\frac{\alpha\eta}{\sqrt{m/n}}\bigg)
\end{align*}
cannot be minimized at $0$. So $\eta_\ast >0$. If $\alpha_\ast =0$, the first-order optimality condition for $\alpha$ becomes
\begin{align*}
\frac{\d \mathsf{D}}{\d \alpha} (\alpha_\ast,\eta_\ast) \geq 0 \quad \Rightarrow \quad  \eta_\ast\alpha_\ast  \geq \sqrt{r_\vee^2\alpha_\ast^2+\sigma_m^2}.
\end{align*}
This leads to a contradiction as $\sigma_m^2>0$ on $E(C_0)$. So $\alpha_\ast >0$. 

Now repeating (\ref{ineq:local_destoc_0}), (\ref{ineq:local_destoc_2}) and (\ref{ineq:local_destoc_4}) (where the second equality in (\ref{ineq:local_destoc_4}) becomes $\leq$ due to Lemma \ref{lem:r_vee_fixed_pt_eqn}), we have established a $\Prob^\xi$-high probability existence result of an inner saddle point $(\alpha_\ast,\eta_\ast)$ satisfying (\ref{ineq:local_destoc_0}) with $(\alpha_\ast-1)_+\pequiv 0$. 

The calculations in (\ref{ineq:local_destoc_5}) remain valid. Under $\mathfrak{u}_n\mathfrak{L}_n/m\to 0$ (recall here $\mathfrak{u}_n=M_n^{1/2})$, we have $\omega_{m/n}(\alpha_\ast r_\vee,\sigma_m)\simeq \omega_{m/n}(\alpha_\ast r_{n,\xi},\sigma_m) \simeq \omega_{m/n}(0,\sigma_m)$, so by the stability estimates for $H \in \{\err,\lrt\}$ in Propositions \ref{prop:est_err_stability} and \ref{prop:lrt_stability},
\begin{align*}
&H\big(\omega_{m/n}(\alpha_\ast r_\vee,\sigma_m)\big)\pequivx \E^\xi H\big(\omega_{m/n}(\alpha_\ast r_\vee,\sigma_m)\big)\simeq \E^\xi H\big(\omega_{m/n}(\alpha_\ast r_{n,\xi},\sigma_m)\big).
\end{align*}
Consequently, 
\begin{align}\label{ineq:local_destoc_6}
&\min_{\alpha \in [0,L_w]} \max_{\eta\geq 0}\mathsf{D}(\alpha,\eta) = \mathsf{D}(\alpha_\ast,\eta_\ast) = \frac{1}{ \sqrt{r_\vee^2+\sigma_m^2}  } \nonumber\\
&\quad \times\bigg[\sigma_m^2-\frac{1+\smallopx(1)}{2n}\Big(\E \lrt\big(\omega_{m/n}(r_{n,\xi},\sigma_m)\big)-\E\err\big(\omega_{m/n}(r_{n,\xi},\sigma_m)\big) \Big) \bigg].
\end{align}
Now invoking (\ref{ineq:R2_xi}) to conclude. 

\noindent (2). The proof is essentially a deterministic version (conditional on $\xi \in E(C_0)$) of (1). We only sketch some key steps. First, the optimality condition for an inner saddle point $(\bar{\alpha},\bar{\eta})$ is  
\begin{align}\label{ineq:local_destoc_7}
\bar{\alpha} \bar{\eta} = \sqrt{\bar{\alpha}^2 r_\vee^2+\sigma_m^2},\quad \bar{\alpha}^2 n r_\vee^2 = \E  \err\big(\bar{\alpha}\bar{\eta}/\sqrt{m/n}\big).
\end{align}
So using the fixed point equation (\ref{def:rn_xi}), we may solve
\begin{align*}
\bar{\alpha}=r_{n,\xi}/r_\vee, \quad \bar{\eta} = \sqrt{r_\vee^2+\sigma_m^2 r_\vee^2/r_{n,\xi}^2}.
\end{align*}
Using the same calculations as in (\ref{ineq:local_destoc_5}) we conclude that 
\begin{align*}
&\min_{\alpha \in [0,L_w]} \max_{\eta\geq 0}\overline{\mathsf{D}}(\alpha,\eta)_+^2 
=\min_{\alpha \in [0,L_w]} \max_{\eta\geq 0}\overline{\mathsf{D}}(\alpha,\eta)^2= \mathsf{D}(\bar{\alpha},\bar{\eta})^2\\
& = \frac{1}{ r_\vee^2+\sigma_m^2 }\bigg[\sigma_m^2-\frac{1}{2n}\Big(\E^\xi \lrt\big(\omega_{m/n}(r_\vee,\sigma_m)\big)-\E^\xi\err\big(\omega_{m/n}(r_\vee,\sigma_m)\big) \Big) \bigg]_+^2,
\end{align*}
when $r_n^2\geq 2\mathfrak{u}_n\mathfrak{L}_n/m$. If $r_n^2<2\mathfrak{u}_n\mathfrak{L}_n/m$, the factor $1/2n$ in front of the big bracket in the above display is replaced by $(1+\smallo(1))/2n$ using the same replacements as done in (\ref{ineq:local_destoc_6}).

\noindent (3). 
Note that the proof in (1a)-(1b), in particular the first-order optimality conditions (\ref{ineq:local_destoc_0}) and (\ref{ineq:local_destoc_7}) yield that $\alpha_\ast\eta_\ast \vee \bar{\alpha}\bar{\eta} =\bigo(1)$. So by (1) we may find some large $C_1=C_1(L_w,C_0)>0$ such that for $n$ large enough, it holds with $\Prob^\xi$-probability at least $1-\epsilon$ that
\begin{align*}
\min_{\alpha \in [0,L_w]} \max_{\eta\geq 0}\mathsf{D}(\alpha,\eta)_+^2 = \min_{\alpha \in [0,L_w]} \max_{\substack{\eta\geq 0,\\\alpha\eta\leq C_1 }} \mathsf{D}(\alpha,\eta)^2,
\end{align*}
and the same deterministic equality holds for $\overline{\mathsf{D}}$. 
On the other hand, with $\Delta(\alpha,\eta)\equiv \mathsf{D}(\alpha,\eta)-\overline{\mathsf{D}}(\alpha,\eta)$, we have
\begin{align*}
&\biggabs{\min_{\alpha \in [0,L_w]} \max_{\substack{\eta\geq 0,\\\alpha\eta\leq C_1 }} \mathsf{D}(\alpha,\eta)_+^2 - \min_{\alpha \in [0,L_w]} \max_{\substack{\eta\geq 0,\\\alpha\eta\leq C_1 }}   \overline{\mathsf{D}}(\alpha,\eta)_+^2}\\
&\leq 2\bigg[\max_{\alpha \in [0,L_w]} \max_{\substack{\eta\geq 0,\\\alpha\eta\leq C_1 }} \Big({\mathsf{D}}(\alpha,\eta)_+ \vee \overline{\mathsf{D}}(\alpha,\eta)_+\Big)\bigg]\cdot \max_{\alpha \in [0,L_w]} \max_{\substack{\eta\geq 0,\\\alpha\eta\leq C_1 }} \abs{\Delta(\alpha,\eta)}.
\end{align*}
For the first term, we have
\begin{align*}
&\max_{\alpha \in [0,L_w]} \max_{\substack{\eta\geq 0,\\\alpha\eta\leq C_1 }} \Big({\mathsf{D}}(\alpha,\eta)_+ \vee \overline{\mathsf{D}}(\alpha,\eta)_+\Big) \leq \max_{\alpha \in [0,L_w]} \Big({\mathsf{D}}(\alpha,1)_+\vee\overline{\mathsf{D}}(\alpha,1)_+\Big)\\
&\lesssim 1+ n^{-1} \sup_{w\leq L_w} w^{-1} \Big(\lrt(w/\sqrt{m/n})+\E \lrt(w/\sqrt{m/n})\Big)\\
&\lesssim 1+ n^{-1}\Big(\lrt(L_w/\sqrt{m/n})+\E \lrt(L_w/\sqrt{m/n})\Big)\quad \hbox{(using Lemma \ref{lem:lrt_qual}-(3))}\\
& \lesssim 1 + n^{-1} \bigopx\Big( \E^\xi \lrt\big(\omega_{m/n}(r_{n,\xi},\sigma_m)\big)\Big)\quad \hbox{(using Proposition \ref{prop:lrt_stability})}\\
& \leq 1+ n^{-1} \bigopx\bigg( \E^\xi \err\big(\omega_{m/n}(r_{n,\xi},\sigma_m)\big) + 2n\sigma_m^2\bigg) \quad \hbox{(using (\ref{ineq:R2_xi}))}\\
&\asymp 1+ n^{-1}\cdot \bigop( nr_{n,\xi}^2) = \bigopx(1).
\end{align*}
Next we handle the second term by the uniform concentration inequality proved in Proposition \ref{prop:est_err_lrt_sup}. To do so, note that
\begin{align*}
\max_{\alpha \in [0,L_w]} \max_{\substack{\eta\geq 0,\\\alpha\eta\leq C_1}}\abs{\Delta(\alpha,\eta)}&\lesssim \bigg[\sup_{w \in [0,n^{-2}]}+\sup_{w \in (n^{-2},C_1]} \bigg]\\
&\qquad \frac{1}{nw}\biggabs{\lrt\bigg(\frac{w}{\sqrt{m/n}} \bigg) -\E \lrt\bigg(\frac{w}{\sqrt{m/n}} \bigg)}\equiv (I)+(II). 
\end{align*}
For the first term $(I)$, using a simple bound in Proposition \ref{prop:lrt_prop}-(1), we have
\begin{align*}
(I)&\lesssim \sup_{w \in [0,n^{-2}]} w\cdot \frac{n}{m} \leq \frac{1}{m}\ll r_\vee^2.
\end{align*}
For the second term $(II)$, using Proposition \ref{prop:est_err_lrt_sup}, with 
\begin{align*}
\mathfrak{L}_n'\equiv \mathfrak{u}_n^{1/2} \log \Big[(1+\delta_{T_K(\mu_0)})\cdot  \log_+(C_1 n^2)\Big],
\end{align*}
$(II)$ can be bounded with $\Prob^\xi$-high probability by
\begin{align*}
(II)&\lesssim n^{-1}\sup_{w \in (n^{-2},C_1]}w^{-1}\bigg[\frac{w}{\sqrt{m/n}}\cdot \E^{1/2} \err\bigg(\frac{w}{\sqrt{m/n}}\bigg)\cdot \sqrt{\mathfrak{L}_n'}+ \frac{w^2}{m/n}\cdot \mathfrak{L}_n'\bigg]\\
&\stackrel{(\ast)}{\lesssim} n^{-1}\bigg[\frac{1}{\sqrt{m/n}}\cdot \E^{1/2,\xi} \err\big(\omega_{m/n}(r_\vee,\sigma_m)\big)\cdot \sqrt{\mathfrak{L}_n'}+ \frac{\mathfrak{L}_n'}{m/n}  \bigg]\\
& \leq n^{-1}\bigg[\frac{1}{\sqrt{m/n}}\cdot n^{1/2} r_\vee\cdot \sqrt{\mathfrak{L}_n'}+ \frac{\mathfrak{L}_n'}{m/n}  \bigg]\stackrel{(\ast\ast)}{\lesssim}  r_\vee \sqrt{\mathfrak{L}_n'/m} \stackrel{(\ast\ast)}{\ll} r_\vee^2. 
\end{align*} 
In $(\ast)$ we used the stability estimate in Proposition \ref{prop:est_err_stability} and $r_\vee\leq C_0$, while in $(\ast\ast)$ we used (i) $\mathfrak{L}_n'\asymp \mathfrak{u}_n^{1/2}\mathfrak{L}_n$, and (ii) $r_\vee^2\geq \mathfrak{u}_n\mathfrak{L}_n/m\gg \mathfrak{L}_n'/m$. Combining all the above displays proves the claim by noting $r_\vee \asymp \bar{r}_\vee$ on $E(C_0)$.
\end{proof}

\subsection{Conditional gap analysis}

\begin{proposition}\label{prop:gap_det_eps}
Fix a sequence $M_n\uparrow \infty$ such that $m/\mathfrak{L}_n\geq M_n$. The following holds on the event $E(C_0)$ (see Definition \ref{def:xi_good}): For $\epsilon \in (0,1)$, with
\begin{align*}
\Gamma_\epsilon\equiv 
\begin{cases}
[0,1-\epsilon]\cup [1+\epsilon,L_w], & r_n^2\geq 2\mathfrak{u}_n\mathfrak{L}_n/m;\\
 [1+\epsilon,L_w], & r_n^2< 2\mathfrak{u}_n\mathfrak{L}_n/m,
\end{cases}
\end{align*}
we have
\begin{align*}
\min_{\alpha \in  [0,L_w]} \max_{\eta\geq 0}{\mathsf{D}}(\alpha,\eta)_+^2\leq \min_{\substack{\alpha \in  \Gamma_\epsilon} } \max_{\eta\geq 0 }{\mathsf{D}}(\alpha,\eta)_+^2-c_0(1+\smallopx(1))\cdot \epsilon \bar{r}_\vee^2
\end{align*}
holds for some $c_0=c_0(C_0,L_w,\sigma)>0$. The probability estimate in the $\smallopx(1)$ term is uniform with respect to problem instances for a fixed choice of $C_0, L_w$ and $\{M_n\}$. 
\end{proposition}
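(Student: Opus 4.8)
The plan is to prove the inequality in two stages: first replace the random objective $\mathsf{D}$ by its conditional mean $\overline{\mathsf{D}}=\E^\xi\mathsf{D}$ at a cost of only $\smallopx(\bar r_\vee^2)$, and then establish a purely deterministic gap of order $\bar r_\vee^2$ for $\overline{\mathsf{D}}$; since the target gap dominates $\smallopx(\bar r_\vee^2)$ for every fixed $\epsilon$, the two stages combine. We work on $E(C_0)$ throughout. For the reduction, note that on $E(C_0)$ the inner maximizer $\eta$ of $\max_{\eta\ge0}\mathsf{D}(\alpha,\eta)$ satisfies the first-order condition $\alpha\eta=\sqrt{\alpha^2 r_\vee^2+\sigma_m^2}=\bigo(1)$ (by (\ref{eq:dD_deta}), using $r_\vee\le C_0$ and (\ref{ineq:sigma_m_high_prob})), uniformly in $\alpha\in[0,L_w]$, and likewise for $\overline{\mathsf{D}}$; hence both $\min_{\alpha\in[0,L_w]}$ and $\min_{\alpha\in\Gamma_\epsilon}$ of $\max_\eta\mathsf{D}(\alpha,\eta)_+^2$ (and their $\overline{\mathsf{D}}$ analogues) are unchanged when $\max_\eta$ is restricted to $\{\alpha\eta\le C_1\}$ for a suitable $C_1=C_1(L_w,C_0)$, exactly as in the proof of Proposition \ref{prop:local_destoc}-(3). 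Over this compact index set, $\mathsf{D}-\overline{\mathsf{D}}$ equals a rescaled increment of $\lrt-\E^\xi\lrt$, which by the uniform concentration inequality of Proposition \ref{prop:est_err_lrt_sup} (with the deviation level calibrated through $\mathfrak{L}_n$ as in the proof of Proposition \ref{prop:local_destoc}-(3)) has supremum $\smallopx(r_\vee^2)$; multiplying by the $\bigopx(1)$ envelope of $\mathsf{D}_+\vee\overline{\mathsf{D}}_+$ gives $\min_{\alpha\in S}\max_\eta\mathsf{D}(\alpha,\eta)_+^2=\min_{\alpha\in S}\max_\eta\overline{\mathsf{D}}(\alpha,\eta)_+^2+\smallopx(r_\vee^2)$ for $S\in\{[0,L_w],\Gamma_\epsilon\}$. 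As $r_\vee\asymp\bar r_\vee$ on $E(C_0)$, it remains to produce a deterministic gap of order $\bar r_\vee^2$ for $\psi(\alpha)\equiv\max_{\eta\ge0}\overline{\mathsf{D}}(\alpha,\eta)$.

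For each fixed $\eta>0$, $\alpha\mapsto\mathsf{D}(\alpha,\eta)$ is convex: $\sqrt{r_\vee^2\alpha^2+\sigma_m^2}$ is convex, $-\alpha r_\vee^2/(2\eta)$ is affine, and $-\tfrac{1}{2\alpha\eta n}\lrt(\alpha\eta/\sqrt{m/n})=-\tfrac{\eta}{2n\sqrt{m/n}}\cdot\tfrac{\lrt(\alpha\eta/\sqrt{m/n})}{\alpha\eta/\sqrt{m/n}}$ is convex since $\sigma\mapsto\lrt(\sigma)/\sigma$ is concave (Lemma \ref{lem:lrt_qual}-(3)); hence $\psi$ is convex on $[0,L_w]$. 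By the first-order analysis already performed in the proof of Proposition \ref{prop:local_destoc}-(2) ((\ref{ineq:local_destoc_7}) and the fixed point equation (\ref{def:rn_xi})), $\psi$ is minimized over $[0,L_w]$ at $\bar\alpha=r_{n,\xi}/r_\vee\in(0,1]$, with $\psi(\bar\alpha)\ge C_0^{-1}\sigma_m^2/\sqrt{r_{n,\xi}^2+\sigma_m^2}\gtrsim_{C_0,\sigma}1$ by (\ref{ineq:R2_xi}). Using Danskin's theorem together with the inner optimality identity $\E^\xi\err(\tau(\alpha))=\alpha^2 nr_\vee^2$ (with $\tau(\alpha)$ the effective noise level $\alpha\bar\eta(\alpha)/\sqrt{m/n}$), one finds $\psi'(\alpha)=\alpha r_\vee^2\big(1/\sqrt{r_\vee^2\alpha^2+\sigma_m^2}-1/(\sqrt{m/n}\,\tau(\alpha))\big)$, so by monotonicity of $\E^\xi\err$ the sign and size of $\psi'(\alpha)$ are governed by whether $\E^\xi G(\alpha r_\vee)\equiv\E^\xi\err(\omega_{m/n}(\alpha r_\vee,\sigma_m))/(n(\alpha r_\vee)^2)$ is below or above $1$. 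Since $\E^\xi G(r_{n,\xi})=1$, a lower bound on $|\psi'|$ near $\bar\alpha$ — two-sided of radius $\epsilon$ when $\bar\alpha=1$ (i.e. $r_n^2\ge2\mathfrak{u}_n\mathfrak{L}_n/m$), and one-sided on $[1+\epsilon,L_w]$ (distance to $\bar\alpha$ at least $\epsilon$) when $\bar\alpha<1$ — follows from a \emph{quantitative} strict decrease of $r\mapsto\E^\xi G(r)$, which in turn comes from the factorization $G=G_1G_2$ in the proof of Proposition \ref{prop:unique_fixed_point_eqn}-(1): $G_1\ge0$ is non-increasing and $G_2$ is strictly decreasing with $|G_2'|\gtrsim_\sigma1$ on the compact range of $r$ in play, so $G'\le G_1G_2'$ is bounded away from $0$ wherever $G_1$ is bounded below, and on $E(C_0)$ one has $G_1(r)\asymp r_{n,\xi}^2/(r_{n,\xi}^2+\sigma_m^2)$ near $r_{n,\xi}$ by the stability estimate of Proposition \ref{prop:est_err_stability}. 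Integrating and then using $\psi(\alpha)^2-\psi(\bar\alpha)^2\ge2\psi(\bar\alpha)(\psi(\alpha)-\psi(\bar\alpha))$ with $\psi(\bar\alpha)\gtrsim_{C_0,\sigma}1$ yields $\min_{\alpha\in\Gamma_\epsilon}\psi(\alpha)_+^2\ge\psi(\bar\alpha)_+^2+c_0\epsilon^2\bar r_\vee^2$ for some $c_0=c_0(C_0,L_w,\sigma)>0$; combined with Stage 1 this gives the stated inequality (indeed with $\epsilon^2$, which suffices downstream since $\epsilon$ is fixed).

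The main obstacle is the quantitative sharpness of $\psi$ at its minimizer: turning the qualitative strict monotonicity of $r\mapsto\E^\xi\err(\omega_{m/n}(r,\sigma_m))/r^2$ into a lower bound on $|\psi'|$ that carries precisely a factor $\bar r_\vee^2$ and does not degenerate as $r_n^2$ approaches the parametric scale $\mathfrak{u}_n\mathfrak{L}_n/m$. The rest is bookkeeping — aligning the stability constants, the two regimes $r_n^2\gtrless2\mathfrak{u}_n\mathfrak{L}_n/m$, and the case $\bar\alpha<1$ so that $c_0$ depends only on $(C_0,L_w,\sigma)$, and checking that the Stage 1 error is genuinely $\smallopx(\bar r_\vee^2)$ uniformly over problem instances.
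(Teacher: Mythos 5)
Your Stage 1 contains the critical gap. You justify restricting the inner maximization to $\{\alpha\eta\le C_1\}$ \emph{uniformly in} $\alpha\in[0,L_w]$ by asserting that the inner maximizer of $\eta\mapsto\mathsf{D}(\alpha,\eta)$ satisfies $\alpha\eta=\sqrt{\alpha^2r_\vee^2+\sigma_m^2}$. That identity is not the first-order condition coming from (\ref{eq:dD_deta}); it is the \emph{joint} saddle-point condition (\ref{ineq:local_destoc_0}), which holds only at the outer minimizer $\alpha_\ast$. At a fixed $\alpha$, setting (\ref{eq:dD_deta}) to zero gives $\err\big(\alpha\eta/\sqrt{m/n}\big)=n\alpha^2 r_\vee^2$, so for $\alpha\in[1+\epsilon,L_w]$ the maximizing $\eta$ is pushed to levels where $\err$ reaches the strictly larger value $n\alpha^2r_\vee^2$; the product $\alpha\eta$ is then not $\bigo(1)$ uniformly (and for bounded $K$ the supremum over $\eta$ may not even be attained), so the localization argument borrowed from the proof of Proposition \ref{prop:local_destoc}-(3) does not apply to the restricted problem. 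Consequently your claimed equality $\min_{\alpha\in\Gamma_\epsilon}\max_\eta\mathsf{D}(\alpha,\eta)_+^2=\min_{\alpha\in\Gamma_\epsilon}\max_\eta\overline{\mathsf{D}}(\alpha,\eta)_+^2+\smallopx(r_\vee^2)$ is unsupported: Proposition \ref{prop:est_err_lrt_sup} controls fluctuations only on a compact range of noise levels, and the relevant range for $\alpha\in[1+\epsilon,L_w]$ is neither compact nor tied to $\omega_{m/n}(r_\vee,\sigma_m)$. This is exactly the obstruction the paper flags at the end of Section \ref{section:proof_sketch}: the conditional fluctuation of the restricted min-max, in particular over $\alpha\in[1+\epsilon,L_w]$, is "much harder to control", and the paper's architecture is designed to avoid ever de-stochasticizing that side.

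The paper's proof of Proposition \ref{prop:gap_det_eps} takes a different and pathwise route that your proposal should emulate: it never compares $\min_{\Gamma_\epsilon}\max_\eta\mathsf{D}_+^2$ with its conditional mean. Instead, it fixes the random saddle point $(\alpha_\ast,\eta_\ast)$ (whose location and value were already controlled in Proposition \ref{prop:local_destoc}-(1)), lower-bounds $\max_{\eta\ge0}\mathsf{D}(\alpha_{\ast,\epsilon},\eta)$ simply by plugging in the specific $\eta_\ast$, and derives the gap from the derivative bound $J'(\alpha)\ge r_\vee^2\sigma_m^2/(r_\vee^2\alpha^2+\sigma_m^2)^{3/2}$ for $J(\alpha)=\tfrac{\d\mathsf{D}}{\d\alpha}(\alpha,\eta_\ast)$, which uses only the pathwise monotonicity of $\sigma\mapsto\err(\sigma)/\sigma^2$ (Lemma \ref{lem:monotone_est_err}-(2)) and requires no concentration at the large noise levels your Stage 1 would have to handle. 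Your Stage 2 (convexity of $\alpha\mapsto\max_\eta\overline{\mathsf{D}}$, Danskin, quantitative decrease of $r\mapsto\E^\xi G(r)$) is essentially a deterministic shadow of this argument and is fine in spirit — though the claim $\abs{G_2'}\gtrsim_\sigma 1$ is false as stated since $G_2'\propto 1/m$; what is actually needed, and true, is the multiplicative decrease $1-G_2((1+\epsilon)r)/G_2(r)\gtrsim_{C_0,\sigma}\epsilon$ — but without a correct Stage 1 the deterministic gap for $\overline{\mathsf{D}}$ cannot be transferred to the restricted random problem, so the proof as proposed does not go through.
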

\begin{proof}
Let $(\alpha_\ast,\eta_\ast)$ be the solution to the first order optimality condition (\ref{ineq:local_destoc_0}), and  $\alpha_{\ast,\epsilon} \in  [0,1-\epsilon]\cup [1+\epsilon,L_w]$ be a minimizer for the min-max problem $\min_{\alpha \in  [0,1-\epsilon]\cup [1+\epsilon,L_w]} \max_{\eta\geq 0}\mathsf{D}(\alpha,\eta)_+^2$.

\noindent (\textbf{Step 1}). In this step we will estimate the derivative $\frac{\d {\mathsf{D}}}{\d \alpha}$ at $(\cdot,{\eta}_\ast)$. To this end, by Lemma \ref{lem:monotone_est_err}-(2), 
\begin{align*}
J(\alpha)\equiv \frac{\d {\mathsf{D}}}{\d \alpha}(\alpha,{\eta}_\ast)
& =  \frac{\alpha r_\vee^2 }{ \sqrt{r_\vee^2\alpha^2+\sigma_m^2 } }-\frac{1}{2{\eta}_\ast}\cdot r_\vee^2  - \frac{1}{2\alpha^2 {\eta}_\ast}\cdot \frac{ \err\big(\alpha {\eta}_\ast/\sqrt{m/n}\big) }{n}
\end{align*}
is a strictly increasing function of $\alpha$ with $J(\alpha_\ast)=0$. With $\zeta(\alpha)\equiv \alpha{\eta}_\ast/\sqrt{m/n}$, 
\begin{align*}
J'(\alpha)/r_\vee^2 
& = \frac{\sigma_m^2}{(r_\vee^2\alpha^2+\sigma_m^2)^{3/2} }- \frac{{\eta}_\ast}{ 2n r_\vee^2(m/n)}\cdot \frac{\d}{\d \zeta}\bigg(\frac{  \err\big(\zeta\big) }{\zeta^2 }\bigg)\cdot \frac{\d \zeta}{\d \alpha}.
\end{align*}
Using Lemma \ref{lem:monotone_est_err}-(2) which entails $\frac{\d}{\d \zeta}(\err(\zeta)/\zeta^2)\leq 0$ and the easy fact that $\d\zeta/\d \alpha>0$, we have
\begin{align}\label{ineq:gap_det_2}
J'(\alpha)/r_\vee^2\geq  \frac{\sigma_m^2}{(r_\vee^2\alpha^2+\sigma_m^2)^{3/2} }.
\end{align}

\noindent (\textbf{Step 2}). 
Using (\ref{ineq:gap_det_2}), for some $c_1=c_1(C_0,\sigma,L_w)>0$ we have
\begin{align}\label{ineq:gap_det_3}
{\mathsf{D}}({\alpha}_{\ast,\epsilon},{\eta}_\ast)-{\mathsf{D}}(\alpha_\ast,{\eta}_\ast)\geq c_1\cdot \epsilon r_\vee^2.
\end{align}
 By Proposition \ref{prop:local_destoc}-(1), there exists $c_2=c_2(C_0,\sigma,L_w)>0$ such that 
\begin{align}\label{ineq:gap_det_4}
{\mathsf{D}}({\alpha}_{\ast,\epsilon},{\eta}_\ast)\geq{\mathsf{D}}(\alpha_\ast,{\eta}_\ast)\geq  c_2(1+\smallopx(1)). 
\end{align}
Combining (\ref{ineq:gap_det_3}) and (\ref{ineq:gap_det_4}), for some $c_3=c_3(C_0,\sigma,L_w)>0$ we have
\begin{align*}
&{\mathsf{D}}({\alpha}_{\ast,\epsilon},{\eta}_\ast)^2-{\mathsf{D}}(\alpha_\ast,{\eta}_\ast)^2\\
&=\big({\mathsf{D}}({\alpha}_{\ast,\epsilon},{\eta}_\ast)+{\mathsf{D}}(\alpha_\ast,{\eta}_\ast)\big) \big({\mathsf{D}}({\alpha}_{\ast,\epsilon},{\eta}_\ast)-{\mathsf{D}}(\alpha_\ast,{\eta}_\ast\big)\big) \\
&\geq  c_3 (1+\smallopx(1))\cdot \epsilon r_\vee^2.
\end{align*}
Using Proposition \ref{prop:local_destoc}-(1), both ${\mathsf{D}}({\alpha}_{\ast,\epsilon},{\eta}_\ast)^2$, ${\mathsf{D}}(\alpha_\ast,{\eta}_\ast)^2$ can be replaced with ${\mathsf{D}}({\alpha}_{\ast,\epsilon},{\eta}_\ast)_+^2$, $ {\mathsf{D}}(\alpha_\ast,{\eta}_\ast)_+^2$ with $\Prob^\xi$-high probability. Consequently, 
\begin{align*}
\min_{\alpha \in  [0,L_w]} \max_{\eta\geq 0}{\mathsf{D}}(\alpha,\eta)_+^2&= {\mathsf{D}}(\alpha_\ast,\eta_\ast)_+^2 \leq {\mathsf{D}}(\alpha_{\ast,\epsilon},\eta_\ast)_+^2-c_3(1+\smallopx(1))\cdot \epsilon r_\vee^2 \\
&\leq \max_{\eta\geq 0} {\mathsf{D}}(\alpha_{\ast,\epsilon},\eta)_+^2-c_3(1+\smallopx(1))\cdot \epsilon r_\vee^2 \\
& = \min_{\alpha \in \Gamma_\epsilon} \max_{\eta\geq 0} {\mathsf{D}}(\alpha,\eta)_+^2-c_3(1+\smallopx(1))\cdot \epsilon r_\vee^2.
\end{align*}
The proof is complete by noting $r_\vee \asymp \bar{r}_\vee$ on $E(C_0)$. 
\end{proof}

\subsection{Proof of Theorem \ref{thm:risk_asymp}}

	Without loss of generality, we assume that $\hat{\mu}(\sigma)$ satisfies (\ref{def:approximate_minimizer}) with $\smallop(r_n^2)$ therein replaced by $\delta_n r_n^2$ for some $\delta_n \downarrow 0$.
	
	\noindent (\textbf{Step 1}). 
		Fix $\epsilon \in (0,1)$ small and $L_w>1$ large. Let 	$S_w\equiv S_{w,\epsilon}\equiv \{w: \pnorm{w}{}\in \Gamma_\epsilon\}$, where $\Gamma_\epsilon$ is defined in Proposition \ref{prop:gap_det_eps}. Let $w_{\ast}^{\textrm{p}},w_{\ast,L_w}^{\textrm{p}}$ be $(\delta_n r_n^2)$-optimizers for $\Phi^{\textrm{p}}(G), \Phi^{\textrm{p}}_{L_w}(G)$ respectively. We will establish that on the event $E(C_0)$,
	\begin{align}\label{ineq:sensitivity_anal_0}
	\lim_n \Prob^\xi\big( w_{\ast}^{\textrm{p}} \in S_w\big)=0,\quad \forall \epsilon \in (0,1) \hbox{ small enough}.
	\end{align}
	 By Lemma \ref{lem:localization_w}, we only need to prove that on $E(C_0)$,
	\begin{align}\label{ineq:sensitivity_anal_1}
	\lim_n \Prob^\xi\big( w_{\ast,L_w}^{\textrm{p}} \in S_w\big)=0,\quad \forall \epsilon \in (0,1) \hbox{ small enough}.
	\end{align}
	We claim that for any pair of constants $\phi,\phi_{S_w}$ such that $\phi<\phi_{S_w}-\delta_n r_n^2$,
	\begin{align}\label{ineq:sensitivity_anal_2}
	\Prob^\xi\big( w_{\ast,L_w}^{\textrm{p}} \in S_w\big)\leq 2\Big(\Prob^\xi\big(\Phi^{\textrm{a}}_{S_w} (g,h)\leq \phi_{S_w}\big) +\Prob^\xi\big(\Phi^{\textrm{a}}_{L_w} (g,h)\geq \phi\big)\Big).
	\end{align}
	Fix $\epsilon'>0$ small enough. We choose $ \textbf{L}_{\epsilon'}\equiv (L_u,L_\beta,L_v)$ as in Lemma \ref{lem:apriori_localization_u_v} (with $\epsilon'$ replacing $\epsilon$ therein). For notational simplicity, for any closed set $S_w \subset [0,L_w]$, we write $\Phi^{\textrm{p}}_{L_u,L_v,S_w}(G) = \Phi^{\textrm{p}}_{\textbf{L}_{\epsilon'},S_w}(G)$ and $\Phi^{\textrm{a}}_{L_\beta, L_v, S_w}(g,h)=\Phi^{\textrm{a}}_{\textbf{L}_{\epsilon'}, S_w}(g,h)$.
	
	To see (\ref{ineq:sensitivity_anal_2}), \cite[Eqns (83) and (84)]{thrampoulidis2018precise} (which are simple consequences of the Convex Gaussian Min-Max Theorem stated in Theorem \ref{thm:CGMT}) yield that
	\begin{align}\label{ineq:sensitivity_anal_3}
	\Prob^\xi\big(\Phi^{\textrm{p}}_{\textbf{L}_{\epsilon'},S_w} (G)\leq \phi_{S_w}\big) &\leq 2\Prob^\xi\big(\Phi^{\textrm{a}}_{\textbf{L}_{\epsilon'},S_w} (g,h)\leq \phi_{S_w}\big),\nonumber \\
	\Prob^\xi\big(\Phi^{\textrm{p}}_{\textbf{L}_{\epsilon'},L_w} (G)\geq \phi\big)&\leq 2\Prob^\xi\big(\Phi^{\textrm{a}}_{\textbf{L}_{\epsilon'},L_w} (g,h)\geq \phi\big).
	\end{align}
	By Lemma \ref{lem:apriori_localization_u_v}, 
	\begin{align*}
	\Prob^\xi\big(E_1\equiv \big\{ \Phi^{\textrm{p}}_{S_w} (G)\leq \phi_{S_w}\big\}\big) &\leq 2\Prob^\xi\big(\Phi^{\textrm{a}}_{S_w} (g,h)\leq \phi_{S_w}\big)+2\epsilon',\\
	\Prob^\xi\big(E_2\equiv \big\{\Phi^{\textrm{p}}_{L_w} (G)\geq \phi\big\}\big)&\leq 2\Prob^\xi\big(\Phi^{\textrm{a}}_{L_w} (g,h)\geq \phi\big)+2\epsilon'.
	\end{align*}
	Clearly the event
	\begin{align*}
	\big\{\Phi^{\textrm{p}}_{S_w} (G)> \phi_{S_w}, \Phi^{\textrm{p}}_{L_w} (G)< \phi \big\} = E_1^c \cap E_2^c
	\end{align*}
	implies that $w_{\ast,L_w}^{\textrm{p}} \notin S_w$, so
	\begin{align*}
	&\Prob^\xi\big( w_{\ast,L_w}^{\textrm{p}} \in S_w\big)\leq \Prob^\xi(E_1)+\Prob^\xi(E_2)\\
	&\leq 2\Big(\Prob^\xi\big(\Phi^{\textrm{a}}_{S_w} (g,h)\leq \phi_{S_w}\big)+ \Prob^\xi\big(\Phi^{\textrm{a}}_{L_w} (g,h)\geq \phi\big)\Big)+4\epsilon'.
	\end{align*}
	Letting $\epsilon'\downarrow 0$ proves (\ref{ineq:sensitivity_anal_2}).

	Now by Proposition \ref{prop:adjusted_AO} and Proposition \ref{prop:local_destoc}-(3),
	\begin{align*}
	\Phi^{\textrm{a}}_{L_w} (g,h)& = \frac{1}{2}\min_{\alpha \in  [0,L_w]} \max_{\eta\geq 0}\overline{\mathsf{D}}(\alpha,\eta)_+^2+\smallopx(\bar{r}_\vee^2)\equiv \bar{\phi}+ \smallopx(\bar{r}_\vee^2).
	\end{align*}
	By Proposition \ref{prop:gap_det_eps}, for some $c_0>0$ small, 
	\begin{align*}
	\Phi^{\textrm{a}}_{S_w} (g,h)\geq \Phi^{\textrm{a}}_{L_w} (g,h)+c_0(1+\smallopx(1))\bar{r}_\vee^2 = \bar{\phi}+c_0 \cdot \bar{r}_\vee^2+ \smallopx(\bar{r}_\vee^2). 
	\end{align*}
	Consequently, by choosing $\phi\equiv \bar{\phi}+(c_0/4)\bar{r}_\vee^2$ and $\phi_{S_w}\equiv \bar{\phi}+(3c_0/4)\bar{r}_\vee^2$ in (\ref{ineq:sensitivity_anal_2}), which is a valid pair $\phi<\phi_{S_w}-\delta_n r_n^2$ for $n$ large, the two conditional probability terms in the RHS of (\ref{ineq:sensitivity_anal_2}) vanish as $n \to \infty$.  The proof of (\ref{ineq:sensitivity_anal_0}) is complete. \qed
	
	\noindent (\textbf{Step 2}). By Lemma \ref{lem:R2_deconditioning}, $\Prob(E(C_0))\to 1$ for large enough $C_0>1$ and well chosen $\{M_n^r\},\{M_n^\sigma\}$. So for $\epsilon>0$ small,
	\begin{align*}
	\Prob\big( w_{\ast}^{\textrm{p}} \in S_w\big) \leq  \E \bm{1}_{E(C_0)} \Prob^\xi\big( w_{\ast}^{\textrm{p}} \in S_w\big) + \Prob(E(C_0)^c). 
	\end{align*}
	The first term on the RHS of the above display vanishes by dominated convergence theorem. The proof is complete.  
	
	For the claim in (\ref{eqn:risk_uniform}) , we may modify (\ref{ineq:sensitivity_anal_0}) to show that on the event $E(C_0)$, 
	\begin{align*}
	\lim_n \sup_{(n,m,\mu_0,K) \in \mathscr{C}(M_n,L)} \Prob^\xi\big( w_{\ast}^{\textrm{p}} \in S_w\big)=0,\quad \forall \epsilon \in (0,1) \hbox{ small enough}.
	\end{align*}
	Then using
	\begin{align*}
	\sup_{(n,m,\mu_0,K) \in \mathscr{C}(M_n,L)}\Prob\big( w_{\ast}^{\textrm{p}} \in S_w\big) \leq  \E \bm{1}_{E(C_0)} \sup_{(n,m,\mu_0,K) \in \mathscr{C}(M_n,L)} \Prob^\xi\big( w_{\ast}^{\textrm{p}} \in S_w\big) + \Prob(E(C_0)^c)
	\end{align*}
	to conclude. Details are omitted. \qed

\section{Remaining proofs for Section \ref{section:risk_asymp}}\label{section:proof_remain_main}

\subsection{Proof of Proposition \ref{prop:exist_unique_fixed_pt}}

	\noindent (1). This follows directly from Proposition \ref{prop:unique_fixed_point_eqn}.
	
	\noindent (2). We first prove consistency. To see this, by the monotonicity of $r\mapsto \E \err(\omega_{m/n}(r))/r^2$ as in Proposition \ref{prop:unique_fixed_point_eqn}-(1), we have $\E \err(\omega_{m/n}(r))/(nr^2)\geq 1$ for $r \in [0,r_n]$, so the iterations $\{r_{n,t}\}$ is a monotone sequence: $0=r_{n,0}\leq r_{n,1}\leq \cdots \leq r_n$. Consequently $r_{n,t}\to r^\ast$ for some $r^\ast \in [0,r_n]$. Taking limit as $t\to \infty$ on both sides of (\ref{def:iteration_fixed_point}) and using continuity of $r\mapsto \E \err(r)$, we conclude that $r^\ast$ is a solution to the fixed point equation (\ref{eqn:fixed_pt_eqn}). By uniqueness this necessarily implies $r^\ast = r_n$. This proves that $\lim_{t \to \infty} r_{n,t}=r_n$. 
	
	Next we prove the announced error bound. Note that with $G(r) \equiv \sqrt{\E \err(\omega_{m/n}(r))/n}$, we have by Lemma \ref{lem:monotone_est_err}-(2), 
	\begin{align*}
	\abs{G'(r)}& = n^{-1/2}\cdot \biggabs{\frac{ \E \err'(\omega_{m/n}(r))}{2\sqrt{\E \err'(\omega_{m/n}(r))}}\cdot \frac{\d \omega_{m/n}(r)}{\d r}}\\
	&\leq \sqrt{ \frac{\E \err(\omega_{m/n}(r))}{n(\omega_{m/n}(r))^2}\cdot \frac{r^2}{(m/n)(r^2+\sigma^2)} } = \sqrt{ \frac{ \E \err(\omega_{m/n}(r)) }{nr^2}   }\cdot \frac{r^2}{(r^2+\sigma^2)}. 
	\end{align*}
	Using the monotonicity of $r\mapsto \E \err(\omega_{m/n}(r))/r^2$ again, we may estimate $G'$ on $[r_{n,t},r_n]$ by
	\begin{align*}
	\sup_{r \in [r_{n,t},r_n]} \abs{G'(r)}\leq  \sqrt{ \frac{\E \err(\omega_{m/n}(r_{n,t})) }{nr_{n,t}^2} }\cdot \frac{r_n^2}{r_n^2+\sigma^2}.
	\end{align*}
	Consequently, by the consistency proven above, for any $\rho \in \big(r_n^2/(r_n^2+\sigma^2),1\big)$, we may find large enough $T_\rho \in \N$ so that $\sup_{r \in [r_{n,t},r_n]} \abs{G'(r)}\leq \rho$ for $t\geq T_\rho$. This means for $t\geq T_\rho$,
	\begin{align*}
	\abs{r_{n,t+1}-r_n}&= \bigabs{G(r_{n,t})-G(r_n)}\leq \sup_{r \in [r_{n,t},r_n]} \abs{G'(r)} \abs{r_{n,t}-r_n}\leq \rho \abs{r_{n,t}-r_n},
	\end{align*}
	proving the claim. 
	
	\noindent (3). By Proposition \ref{prop:est_err_variance}-(1), we have
	\begin{align*}
	r_n^2  = n^{-1} \E \err\big(\omega_{m/n}(r_n)\big)\geq n^{-1}\cdot \omega_{m/n}^2 (r_n)\delta_K = \frac{r_n^2+\sigma^2}{m}\cdot \delta_K.
	\end{align*}
	Solving $r_n^2$ yields the lower bound. For the upper bound, we replace $\delta_K$ by $\delta_{T_K(\mu_0)}$ and $\geq$ by $\leq $ in the above display to conclude.
	 
	Now we examine the behavior of $r_n=r_n(\sigma)$ in the low noise limit $\sigma \downarrow 0$ when $m>\delta_{T_K(\mu_0)}$. Write $\bar{r}_n\equiv \bar{r}_n(\sigma)=r_n(\sigma)/\sigma$ and $\tau\equiv m/n$, the fixed point equation (\ref{eqn:fixed_pt_eqn}) becomes
	\begin{align*}
	\frac{\E \err\Big(\sqrt{\sigma^2\big(1+\bar{r}_n^2\big)/\tau}\Big)  }{\sigma^2(1+\bar{r}_n^2)/\tau} = \frac{m \bar{r}_n^2}{1+\bar{r}_n^2}.
	\end{align*}
	As $m>\delta_{T_K(\mu_0)}$, $\bar{r}_n(\sigma)=\bigo(1)$ as $\sigma \downarrow 0$. Taking $\sigma \downarrow 0$ on both sides of the above identity, $\bar{r}_\ast\equiv \lim_{\sigma \downarrow 0} \bar{r}_n(\sigma)$ exists and satisfies $\delta_{T_K(\mu_0)} = m \bar{r}_\ast/(1+\bar{r}_\ast^2)$. Solving the equation yields that $\bar{r}_\ast^2 = \delta_{T_K(\mu_0)}/(m-\delta_{T_K(\mu_0)})$. \qed

 \subsection{Proof of Proposition \ref{prop:R2}}

 	\noindent (1a).	Suppose (\ref{cond:R2}) holds, and we wish to prove that $m^{-1} \pnorm{Y-X\hat{\mu}(\sigma)}{}^2$ is bounded away from $0$ in probability, or equivalently, $\Phi^{\textrm{p}} (G)$ is bounded away from $0$ in probability. Note that using Proposition \ref{prop:local_destoc}-(2) and $
 	\E^\xi H\big(\omega_{m/n}(r_{n,\xi},\sigma_m)\big) \pequiv \E H(\omega_{m/n}(r_n))$ for $H \in \{\err,\lrt\}$, 
 	\begin{align}\label{ineq:prop_R2_0}
 	&\min_{\alpha \in  [0,L_w]} \max_{\eta\geq 0}\overline{\mathsf{D}}(\alpha,\eta)_+^2 \nonumber\\
& = \frac{1}{r_\vee^2+\sigma_m^2 }\bigg[\sigma_m^2-\frac{1+\smallo(1)}{2n}\Big(\E^\xi \lrt\big(\omega_{m/n}(r_{n,\xi},\sigma_m)\big)-\E^\xi\err\big(\omega_{m/n}(r_{n,\xi},\sigma_m)\big) \Big) \bigg]_+^2 \nonumber \\
 	&= \frac{1}{r_\vee^2+\sigma_m^2}\cdot \bigg(\sigma_m^2- (1+\smallop(1))\frac{ \E \lrt(\omega_n)-\E \err(\omega_n) }{2n}\bigg)_+^2\nonumber\\
 	&=\frac{1+\smallop(1)}{r_n^2+\sigma^2}\cdot \bigg(\sigma^2- \frac{ \E \lrt(\omega_n)-\E \err(\omega_n) }{2n}\bigg)_+^2+\smallop(1).
 	\end{align}
 	So under (\ref{cond:R2}), we have 
 	\begin{align*}
 	\min_{\alpha \in  [0,L_w]} \max_{\eta\geq 0}\overline{\mathsf{D}}(\alpha,\eta)_+^2\geq \epsilon_0(1+\smallop(1))
 	\end{align*} 
 	for some $\epsilon_0>0$. By (\ref{ineq:sensitivity_anal_3}) and using Lemma \ref{lem:apriori_localization_u_v} followed by letting $\epsilon' \downarrow 0$ therein, and finally taking expectation for $\xi$, we obtain that for any $\phi \in \R$, 
 	\begin{align}\label{ineq:prop_R2_2}
 	\Prob\big(\Phi^{\textrm{p}}_{L_w} (G)\leq \phi\big)&\leq 2\Prob\big(\Phi^{\textrm{a}}_{L_w} (g,h)\leq \phi\big),\nonumber\\
 	\Prob\big(\Phi^{\textrm{p}}_{L_w} (G)\geq \phi\big)&\leq 2\Prob\big(\Phi^{\textrm{a}}_{L_w} (g,h)\geq \phi\big).
 	\end{align}
 	By Proposition \ref{prop:adjusted_AO}, and using $\phi\equiv \epsilon_0/4$ in the first inequality of the above display, we have proven that $\Phi^{\textrm{p}}_{L_w} (G)\geq \epsilon_0/4$ with asymptotic probability $1$. On the other hand, under (\ref{cond:R2}), Theorem \ref{thm:risk_asymp} applies so localization of $w$ in the PO problem $\Phi^{\textrm{p}} (G)$ can be done within $\pnorm{w}{}=\bigop(1)$. In other words, by choosing $L_w$ large enough, $\Phi^{\textrm{p}}_{L_w} (G)= \Phi^{\textrm{p}} (G)$ holds with high enough probability for $n$ large. This proves that $\Phi^{\textrm{p}} (G)$ is bounded away from $0$ in probability under (\ref{cond:R2}).

 	\noindent (1b). Suppose the LHS of (\ref{cond:R2}) is $>1$, and we wish to prove that $m^{-1} \pnorm{Y-X\hat{\mu}(\sigma)}{}^2=\smallop(r_n^2)$, or equivalently, $\Phi^{\textrm{p}} (G)=\smallop(r_n^2)$. To this end, note that
 	\begin{align*}
 	&\min_{\alpha \in  [0,L_w]} \max_{\eta\geq 0}\overline{\mathsf{D}}(\alpha,\eta)_+^2 = 0\quad 
 	\Leftrightarrow \quad  \min_{\alpha \in  [0,L_w]} \max_{\eta\geq 0}\overline{\mathsf{D}}(\alpha,\eta)\leq 0. 
 	\end{align*}
 	Using the first-order optimality condition, an inner saddle point $(\bar{\alpha},\bar{\eta})$ for the right hand side min-max problem in the above display satisfies $\bar{\alpha}=1$ and $\bar{\eta}=\sqrt{r_{n,\xi}^2+\sigma_m^2}$ with high probability. Consequently, again using Proposition \ref{prop:local_destoc}-(2) and the assumption that the LHS of (\ref{cond:R2}) is $>1$, it follows that
 	\begin{align*}
 	&\min_{\alpha \in  [0,L_w]} \max_{\eta\geq 0}\overline{\mathsf{D}}(\alpha,\eta) = \overline{\mathsf{D}}(1,\bar{\eta}) \\
 	&= \frac{1+\smallop(1)}{\sqrt{r_{n}^2+\sigma^2}}\cdot \bigg(\sigma^2- \frac{ \E \lrt(\omega_n)-\E \err(\omega_n) }{2n}\bigg)+\smallop(1)<0
 	\end{align*}
 	with high probability for $n$ large. This means that 
 	\begin{align}\label{ineq:prop_R2_1}
 	\min_{\alpha \in  [0,L_w]} \max_{\eta\geq 0}\overline{\mathsf{D}}(\alpha,\eta)_+^2 = 0
 	\end{align}
 	with high probability for $n$ large. By the second inequality of (\ref{ineq:prop_R2_2}), Proposition \ref{prop:adjusted_AO} and (\ref{ineq:prop_R2_1}), for every slowly decreasing $\epsilon_n \downarrow 0$, 
 	\begin{align*}
 	\Prob\big(\Phi^{\textrm{p}}_{L_w} (G)> \epsilon_{n} r_{n}^2\big)\leq 2\Prob\big(\smallop(r_n^2)\geq \epsilon_{n} r_{n}^2\big)= 0. 
 	\end{align*}
 	This means $\Phi^{\textrm{p}} (G)\leq \Phi^{\textrm{p}}_{L_w} (G)=\smallop(r_n^2)$.

 	\noindent (2). The calculations in (\ref{ineq:prop_R2_0}) show that
 	\begin{align}\label{ineq:prop_R2_3}
 	\lim_n \min_{\alpha \in  [0,L_w]} \max_{\eta\geq 0}\overline{\mathsf{D}}(\alpha,\eta)_+^2  = \sigma^2 \hbox{ in probability}
 	\end{align}
 	if and only if 
 	\begin{align*}
 	\lim_n \frac{1}{2n}\big(\E \lrt(\omega_n)-\E \err(\omega_n)\big)=0,\quad \lim_n r_n =0. 
 	\end{align*}
 	When (\ref{ineq:prop_R2_3}) holds, the arguments in (1a) show that  $\Phi^{\textrm{p}} (G)\pequiv \sigma^2$. When (\ref{ineq:prop_R2_3}) fails, say, the limsup of LHS of (\ref{ineq:prop_R2_3}) is less or equal than $(1-\epsilon)\sigma^2$ for some $\epsilon \in (0,1]$, the arguments in (1b) show that $\Phi^{\textrm{p}} (G)\leq (1+\smallop(1))(1-\epsilon)\sigma^2$. 
 	
 	\noindent (3). We choose $K=K_+=\{\mu\geq 0: \mu \in \R^n\}$, $\xi\equald \mathcal{N}(0,I_m)$ (so $\sigma=1$) and $m=(1/2+\epsilon)\cdot n$ for some $\epsilon \in (0,1/2)$. Let $\mu_0=u_n\bm{1}$, where $u_n\uparrow \infty$ will be chosen later. Using the fixed point equation (\ref{eqn:orthant_fixed_point_eqn}), we must have $r_n\asymp u_n \uparrow \infty$. This holds only if $\epsilon<1/2$. So with $\omega_{\epsilon,n}\equiv \omega_{1/2+\epsilon}(r_n)$, we have $u_n/\omega_{\epsilon,n}\asymp 1$ whereas $\omega_{\epsilon,n}\uparrow \infty$. Consequently,
 	\begin{align*}
 	\frac{1}{2n\sigma^2}\big(\E \lrt(\omega_{\epsilon,n})-\E \err(\omega_{\epsilon,n})\big) = \omega_{\epsilon,n}^2  \mathsf{H}(u_n/\omega_{\epsilon,n})\uparrow \infty.
 	\end{align*}
 	So the RHS of the above display will strictly exceed $1$ for $n$ large. As $m\leq n$, $(XX^\top)^{-1}$ is a.s. well-defined and therefore $\tilde{\mu}=\mu_0+X^\top (XX^\top)^{-1}\xi$ is also a.s. well-defined with $m^{-1}\pnorm{Y-X\tilde{\mu}}{}^2=0$ a.s..  Furthermore, as
 	\begin{align*}
 	\pnorm{X^\top (XX^\top)^{-1}\xi}{\infty}&=\max_{1\leq j\leq n} \abs{X_{\cdot j}^\top (XX^\top)^{-1}\xi}\leq \max_{1\leq j\leq n} \pnorm{X_{\cdot j}}{}\cdot \pnorm{(X X^\top)^{-1}}{\op} \cdot \pnorm{\xi}{},
 	\end{align*} 
 	we have $\pnorm{X^\top (XX^\top)^{-1}\xi}{\infty}\lesssim n^2$ with overwhelmingly high probability. Consequently, by choosing $u_n=n^3$, for $n$ large, $\tilde{\mu}\in K_+$ and $\min_{1\leq j\leq n} \tilde{\mu}_j\geq n^3/2 $ with overwhelmingly high probability. This means with high probability $\tilde{\mu}$ satisfies (\ref{def:approximate_minimizer}), and for any $v \in \mathrm{null}(X)$ with $\pnorm{v}{}^2\leq n$, where $\mathrm{null}(X)\equiv \{\mu\in \R^n: X\mu =0\}$ be the (random) null space of $X$, we have $\tilde{\mu}_v\equiv \tilde{\mu}+v \in K_+$ and therefore satisfies (\ref{def:approximate_minimizer}) with high probability. On the other hand, for any such prescribed $v$, 
 	\begin{align*}
 	n^{-1}\pnorm{\tilde{\mu}_v-\mu_0}{}^2 = n^{-1}\big(\pnorm{X^\top (XX^\top)^{-1}\xi}{}^2 + \pnorm{v}{}^2\big),
 	\end{align*}
 	where the intersection term $2n^{-1}\iprod{X^\top (XX^\top)^{-1}\xi }{v}=0$ vanishes due to $Xv=0$ by the choice of $v \in \mathrm{null}(X)$. As a result,
 	\begin{align*}
 	n^{-1}\sup_{\substack{v \in \mathrm{null}(X),\\ \pnorm{v}{}^2\leq n}}\pnorm{\tilde{\mu}_v-\mu_0}{}^2 = n^{-1}\inf_{\substack{v \in \mathrm{null}(X),\\ \pnorm{v}{}^2\leq n}}\pnorm{\tilde{\mu}_v-\mu_0}{}^2+1,
 	\end{align*}
 	and therefore a deterministic probabilistic limit does not exist for $n^{-1}\pnorm{\tilde{\mu}_v-\mu_0}{}^2$ any choice of valid $v$'s (so that $\tilde{\mu}_v\in K_+$ satisfies (\ref{def:approximate_minimizer})).	\qed

\subsection{Proof of Proposition \ref{prop:R2_verify}}

	\noindent (1). By Proposition \ref{prop:lrt_prop}-(2) 
	\begin{align*}
	0&\leq \hbox{LHS of (\ref{cond:R2})} \leq \frac{1}{n\sigma^2}\cdot \bigg[\omega_n \int_0^{\omega_n} \frac{\E\err(\tau)}{\tau^2}\,\d{\tau}-\E \err(\omega_n)\bigg]\\
	& \leq \frac{1}{n\sigma^2} \big(\omega_n^2 \delta_{T_K(\mu_0)} - nr_n^2\big) = \frac{\delta_{T_K(\mu_0)}}{m}-\frac{1}{\sigma^2}\bigg(1-\frac{\delta_{T_K(\mu_0)}}{m}\bigg)r_n^2\leq \frac{\delta_{T_K(\mu_0)}}{m}.
	\end{align*}
	So under $\liminf_n(m/\delta_{T_K(\mu_0)})>1$, the limsup of the right most side of the above display is $<1$, verifying (\ref{cond:R2}). Furthermore, if $m\gg \delta_{T_K(\mu_0)}$, then the right most side of the above display converges to $0$, i.e., (\ref{cond:R2}) holds with limit $0$.

	\noindent(2). Take any $\nu \in L(K)$. Then $K\pm \nu \subset K$. This means $\nu$ satisfies the conditions in Proposition \ref{prop:lrt_prop}-(2), and therefore
	\begin{align*}
	\hbox{LHS of (\ref{cond:R2})}&\leq \frac{1}{n\sigma^2}\cdot \E \inf_{\nu \in L(K)} \iprod{\hat{\mu}^{\seq}_K(\omega_n)-\mu_0-\omega_n h}{\mu_0-\nu}\\
	&\leq  \frac{1}{n\sigma^2}\cdot\inf_{\nu \in L(K)}\E \iprod{\hat{\mu}^{\seq}_K(\omega_n)-\mu_0}{\mu_0-\nu}\\
	&\leq  \frac{1}{n\sigma^2}\cdot\E^{1/2} \err(\omega_n)\cdot \inf_{\nu \in L(K)} \pnorm{\mu_0-\nu}{}\\
	&\leq  \frac{1}{n\sigma^2}\cdot \sqrt{n r_n^2}\cdot \inf_{\nu \in L(K)} \pnorm{\mu_0-\nu}{} = \hbox{LHS of (\ref{cond:R2_suff})}.
	\end{align*}
	Consequently, (\ref{cond:R2}) holds provided the limsup of the RHS is $<1$, proving the claim. When $r_n\to 0$ and $\inf_{\nu \in L(K)}\pnorm{\mu_0-\nu}{}/\sqrt{n}=\mathcal{O}(1)$, the RHS vanishes, so  (\ref{cond:R2}) holds with limit $0$. \qed

\subsection{Proof of Proposition \ref{prop:regime_low_m}}

	Let $\mathrm{null}(X)\equiv \{\mu\in \R^n: X\mu =0\}$ be the (random) null space of $X$. Then using e.g. \cite[Step 2, pp. 11]{goldstein2017gaussian}, with $Q$ denoting a uniformly random $n\times n$ orthogonal matrix, we have
	\begin{align*}
	\Prob\big(K\cap \mathrm{null}(X)=\{0\}\big)& = \Prob \big(K\cap Q(\R^{n-m}\times \{0\})=\{0\}\big).
	\end{align*}
	Using \cite[(22)-(23), pp. 12]{goldstein2017gaussian}, with $V_K$ denoting the integer-valued random variable associated with the intrinsic volumes of $K$ (see e.g. \cite[Definition 2.3]{han2022high}), the RHS of the above display can be bounded by $\Prob(V_K\leq m)$, so we arrive at
	\begin{align*}
	\Prob\big(K\cap \mathrm{null}(X)=\{0\}\big)\leq \Prob(V_K\leq m).
	\end{align*}
	Now using that $\E V_K =\delta_K$ and $\var(V_K)\leq 4\delta_K$ (see e.g. \cite[Lemma 2.4]{han2022high}), we may continuing bounding the RHS of the above display:
	\begin{align*}
	\Prob\big(K\cap \mathrm{null}(X)=\{0\}\big)&\leq \Prob\big(\abs{V_K-\E V_K}\geq (\delta_K-m)_+\big)\\
	&\leq \frac{4\delta_K}{ (\delta_K-m)_+^2 } = \frac{4}{\delta_K (1-m/\delta_K)_+^2} \to 0. 
	\end{align*}
	Consequently, on an event $\omega \in E$ with $\Prob(E)\to 1$, $K\cap \mathrm{null}(X)$ contains a nontrivial element, say, $0\neq \mu(\omega) \in K$. This means that for this fixed $\omega \in E$ due to the randomness of $X$, if $\hat{\mu}(\sigma) \in \argmin_{\mu \in K}\pnorm{Y-X\mu}{}^2$ (which depends on $\omega$ and also the randomness of $\xi\in \R^m$) is a minimizer, then elements in the  family $\{\hat{\mu}(\sigma)+c \mu(\omega):c\geq 0\}\subset K$ are also minimizers. So for $\omega \in E$, 
	\begin{align*}
	\sup_{\tilde{\mu}(\sigma) \in \argmin\limits_{\mu \in K} \pnorm{Y-X\mu}{}^2} n^{-1}\pnorm{\tilde{\mu}(\sigma)-\mu_0}{}^2\geq \sup_{c\geq 0} n^{-1}\pnorm{\hat{\mu}(\sigma)+c \mu(\omega)-\mu_0}{}^2 = \infty. 
	\end{align*}
	The proof is complete.\qed

\subsection{Proof of Theorem \ref{thm:risk_asymp_consist}}

	We first prove that
	\begin{align}\label{ineq:risk_asymp_consist_1}
	m/\delta_K \to \infty.
	\end{align}
	To see this, note that for any $\epsilon\in (0,1)$, there exists some $C_\epsilon>0$ such that
	\begin{align*}
	\err(\sigma)&\geq \big(\pnorm{\Pi_K(\sigma h)}{}-2\pnorm{\mu_0}{}\big)_+^2\geq (1-\epsilon) \pnorm{\Pi_K(\sigma h)}{}^2-C_\epsilon \pnorm{\mu_0}{}^2.
	\end{align*}
	Using Proposition \ref{prop:est_err_stability}, we have for any $M>1$ and $\epsilon \in (0,1)$,
	\begin{align*}
	\mathfrak{o}(1)& = n^{-1} \E \err\big(\omega_{m/n}(0)\big) \geq \frac{1}{nM^2}\cdot \E \err\big(M\omega_{m/n}(0)  \big)\\
	&\geq \frac{1}{nM^2} \bigg[(1-\epsilon)\cdot \frac{M^2 \sigma^2}{m/n}\cdot \delta_K-C_\epsilon \pnorm{\mu_0}{}^2\bigg] = (1-\epsilon)\sigma^2\cdot \frac{\delta_K}{m}-C_\epsilon\frac{\pnorm{\mu_0}{}^2}{n M^2}.
	\end{align*}
	Now taking $M\equiv \pnorm{\mu_0}{}$ and letting $n \to \infty$ we arrive at the claim (\ref{ineq:risk_asymp_consist_1}). By Proposition \ref{prop:exist_unique_fixed_pt}, the fixed point equation (\ref{eqn:fixed_pt_eqn}) has a unique solution for $n$ large, which we denote as $r_n$. We next prove that 
	\begin{align}\label{ineq:risk_asymp_consist_2}
	r_n \to 0.
	\end{align}
	To see this, again by Proposition \ref{prop:est_err_stability}, we have
	\begin{align*}
	nr_n^2 =  \E \err\big(\omega_{m/n}(r_n)\big)\leq \bigg(1+\frac{r_n^2}{\sigma^2}\bigg) \E \err\big(\omega_{m/n}(0) \big) = \bigg(1+\frac{r_n^2}{\sigma^2}\bigg)\cdot n\bar{r}_n^2. 
	\end{align*}
	Equivalently, we have $
	r_n^2\big(1-{\bar{r}_n^2}/{\sigma^2}\big)\leq \bar{r}_n^2$. 
	Now (\ref{ineq:risk_asymp_consist_2}) follows as $\bar{r}_n\to 0$ by the assumption. Finally, we shall use (\ref{ineq:risk_asymp_consist_2}) to prove $r_n\simeq \bar{r}_n$. To see this, using Lemma \ref{lem:monotone_est_err}-(1) and Proposition \ref{prop:est_err_stability} again, we have
	\begin{align*}
	\E \err\big(\omega_{m/n}(0)\big)&\leq \E \err\big( \omega_{m/n}(r_n) \big)\leq \big(1+\mathfrak{o}(1)\big) \E \err\big(\omega_{m/n}(0)\big).
	\end{align*}
	By using the definitions of $r_n$ and $\bar{r}_n$, we conclude that  $r_n\simeq \bar{r}_n$. The claim now follows by an application of Theorem \ref{thm:risk_asymp}.\qed

\subsection{Proof of Theorem \ref{thm:risk_asymp_const}}
 
 	We only need to prove that it is valid to replace (R2) by (R2-c) under $m/n=\tau$. The limit version follows from minor notational modifications. Recall $\omega_n = \omega_{m/n}(r_n)$ and let $\omega\equiv \omega_{\tau}(\mathsf{r})$. First, by the stability estimate in Proposition \ref{prop:est_err_stability}, we have
 	\begin{align*}
 	&\biggabs{\frac{\err(\omega_n)}{\err(\omega)}-1}\vee \biggabs{\frac{\err(\omega)}{\err(\omega_n)}-1} 
    \lesssim_\sigma \abs{r_n^2-\mathsf{r}^2}.
 	\end{align*}
 	This means
 	\begin{align*}
 	\frac{1}{n\sigma^2}\bigabs{\E \err(\omega_n)-\E \err(\omega)} \lesssim_\sigma \frac{\abs{r_n^2-\mathsf{r}^2}\cdot \E \err(\omega_n)}{n} = \abs{r_n^2-\mathsf{r}^2}\cdot  r_n^2.
 	\end{align*}
 	Next, using that for any $\sigma,\sigma'>0$, $\pnorm{ \hat{\mu}^{\seq}_K(\sigma)-\hat{\mu}^{\seq}_K(\sigma')  }{}\leq \abs{\sigma-\sigma'}\pnorm{h}{}$,
 	\begin{align*}
 	&\frac{1}{n\sigma^2} \abs{\dof(\omega_n)-\dof(\omega)}=\frac{1}{n\sigma^2}\cdot \bigabs{\omega_n \E \iprod{ \hat{\mu}^{\seq}_K(\omega_n)-\mu_0 }{h} -\omega \E \iprod{ \hat{\mu}^{\seq}_K(\omega)-\mu_0 }{h}    }\\
 	&\leq \frac{\abs{\omega_n-\omega} }{n\sigma^2}\cdot \bigabs{ \E \iprod{ \hat{\mu}^{\seq}_K(\omega_n)-\mu_0 }{h} }+\frac{\omega}{n\sigma^2}\cdot \bigabs{  \E \iprod{ \hat{\mu}^{\seq}_K(\omega_n)-\hat{\mu}^{\seq}_K(\omega)}{h} }\\
 	&\lesssim \abs{\omega_n-\omega} \cdot n^{-1/2} \E^{1/2} \err(\omega_n)+ \omega n^{-1/2}\cdot \E^{1/2}\pnorm{ \hat{\mu}^{\seq}_K(\omega_n)-\hat{\mu}^{\seq}_K(\omega)  }{}^2\\
 	& \lesssim \abs{\omega_n-\omega} (r_n + \omega). 
 	\end{align*}
 	Using that $\omega_n^2-\omega^2 = (r_n^2-\mathsf{r}^2)/(m/n)$, and that
 	\begin{align*}
 	(\omega_n-\omega)^2 = \frac{1}{m/n}\Big(\sqrt{r_n^2+\sigma^2}-\sqrt{\mathsf{r}^2+\sigma^2}\Big)^2\lesssim \frac{1}{m/n} \Big[(r_n^2-\mathsf{r}^2)^2\wedge \abs{r_n^2-\mathsf{r}^2}\Big].
 	\end{align*}
 	These estimates show that under $r_n\to \mathsf{r} \in (0,\infty)$,
 	\begin{align}\label{ineq:risk_asymp_const_1}
 	& \frac{1}{n\sigma^2}\Big(-\E \err(\omega_n)+ \E\dof(\omega_n)\Big)- \frac{1}{n\sigma^2}\Big(-\E \err(\omega)+ \E\dof(\omega)\Big) \nonumber\\
 	&= \bigo\Big( \big(1\vee (m/n)^{-2}\big) \abs{r_n^2-\mathsf{r}^2}\Big),
 	\end{align}
 	as desired. \qed

\section{Proofs for Section \ref{section:examples}}\label{section:proof_example}

\subsection{Proofs for Section \ref{section:orthant}}

The following proposition summarizes some basic properties of $\mathsf{G},\mathsf{H}$.
\begin{proposition}\label{prop:orthant_GH}
	The following hold for $\mathsf{G},\mathsf{H}$ defined in (\ref{def:orthant_GH}).
	\begin{enumerate}
		\item 
		\begin{enumerate}
			\item $\mathsf{G}$ is smooth, non-negative, strictly increasing with $\mathsf{G}(x)-1/2\sim x^2/2$ as $x\downarrow 0$, and $\mathsf{G}(x)-1\sim -2\varphi(x)/x$ as $x\uparrow \infty$.
			\item For any $x\geq 0, \delta\geq 0$, $
			\mathsf{G}(x)\leq \mathsf{G}\big((1+\delta)x\big)\leq \mathsf{G}(x)(1+8\delta)$. 
			\item $x \mapsto x^2 \mathsf{G}(1/x)$ is non-decreasing on $(0,\infty)$ with $x^2 \mathsf{G}(1/x)\sim x^2$ as $x\downarrow 0$, and $x^2 \mathsf{G}(1/x)\sim x^2/2$ as $x\uparrow \infty$. 
		\end{enumerate}

		\item 
		\begin{enumerate}
			\item $\mathsf{H}$ is smooth, non-negative with $\mathsf{H}(x)\sim x/\sqrt{2\pi}$ as $x\downarrow 0$, and $\mathsf{H}(x)\sim \varphi(x)/x$ as $x\uparrow \infty$.
			\item  The uniform bound $\sup_{x\geq 0} \mathsf{H}(x)<0.13$ holds.
			\item  $x \mapsto x^2 \mathsf{H}(1/x)$ is non-decreasing on $(0,\infty)$ with $x^2 \mathsf{H}(1/x)\sim x^3\varphi(1/x)$ as $x \downarrow 0$, and $x^2\mathsf{H}(1/x)\sim x/\sqrt{2\pi}$ as $x \uparrow \infty$. 
		\end{enumerate}
		
		\item For any $\sigma>0$ and $\mu_0 \in K_+$, we have $\E \err(\sigma) = \sigma^2 \sum_{i=1}^n \mathsf{G}(\mu_{0,i}/\sigma)$ and $\E \lrt(\sigma)=\E \err(\sigma)+ 2\sigma^2 \sum_{i=1}^n \mathsf{H}(\mu_{0,i}/\sigma)$. 
	\end{enumerate}
\end{proposition}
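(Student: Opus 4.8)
This is a collection of elementary facts about the two explicit functions $\mathsf{G},\mathsf{H}$ plus one identity connecting them to the estimation error and LRT processes for the nonnegative orthant. I would organize the proof in three blocks, matching items (1), (2), (3).

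\textit{Blocks (1) and (2): analytic properties of $\mathsf{G}$ and $\mathsf{H}$.} These are purely one-variable calculus. From the definitions $\mathsf{G}(x)=\Phi(x)-x\varphi(x)+x^2\Phi(-x)$ and $\mathsf{H}(x)=x\varphi(x)-x^2\Phi(-x)$ I would first compute the derivatives: using $\varphi'(x)=-x\varphi(x)$ one finds $\mathsf{G}'(x)=2x\Phi(-x)\geq 0$, which immediately gives smoothness, nonnegativity (since $\mathsf{G}(0)=1/2$), and strict monotonicity for $x>0$. The asymptotics $\mathsf{G}(x)-1/2\sim x^2/2$ as $x\downarrow 0$ follow from Taylor expansion at $0$ (using $\mathsf{G}'(x)=2x\Phi(-x)\approx x$ near $0$, so $\mathsf{G}(x)-1/2\approx x^2/2$); and $\mathsf{G}(x)-1\sim -2\varphi(x)/x$ as $x\uparrow\infty$ follows from the Mills ratio asymptotic $\Phi(-x)\sim \varphi(x)/x$ and the identity $1-\mathsf{G}(x)=1-\Phi(x)+x\varphi(x)-x^2\Phi(-x)=\Phi(-x)+x\varphi(x)-x^2\Phi(-x)$, then expanding $\Phi(-x)$ to higher order in $\varphi(x)/x$. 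The bound (1)(b) — a multiplicative stability estimate $\mathsf{G}(x)\le \mathsf{G}((1+\delta)x)\le (1+8\delta)\mathsf{G}(x)$ — I would get from monotonicity (left inequality) and from a bound on the logarithmic derivative: $x\mathsf{G}'(x)/\mathsf{G}(x)=2x^2\Phi(-x)/\mathsf{G}(x)$, which I claim is bounded by $8$ on $(0,\infty)$ (it is $0$ at $0$, $\to 2$ as $x\to\infty$, and one checks the maximum is below $8$ by a crude estimate $2x^2\Phi(-x)\le 2x^2\cdot\frac12 = x^2$ versus $\mathsf{G}(x)\ge \Phi(x)-x\varphi(x)\ge$ a suitable constant for moderate $x$, combined with $\mathsf{G}\ge 1/2$); then $\frac{d}{dx}\log\mathsf{G}(x)\le 8/x$ integrates to give the claim. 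For (1)(c), the function $\psi(x)\equiv x^2\mathsf{G}(1/x)$ — differentiating and substituting $u=1/x$ shows $\psi'(x)=2x\mathsf{G}(1/x)-\mathsf{G}'(1/x)\ge 0$ reduces, after the substitution, exactly to the inequality $2\mathsf{G}(u)\ge u\mathsf{G}'(u)$, i.e. the statement already proved that $x\mapsto\mathsf{G}(x)/x^2$ is... actually more directly: $2\mathsf{G}(u)-u\mathsf{G}'(u)=2\mathsf{G}(u)-2u^2\Phi(-u)=2\Phi(u)+2u\varphi(u)-2\Phi(u)\cdot\ldots$ — I'd just verify $2\mathsf{G}(u)\ge u\mathsf{G}'(u)$ directly from the formulas, which is equivalent to $\Phi(u)-u\varphi(u)\ge 0$, true since $\Phi(u)\ge 1/2\ge u\varphi(u)$ (as $u\varphi(u)\le 1/\sqrt{2\pi e}<1/2$). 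The endpoint asymptotics of $\psi$ come from those of $\mathsf{G}$: $\mathsf{G}(1/x)\to 1$ as $x\downarrow 0$ gives $\psi(x)\sim x^2$; $\mathsf{G}(1/x)\to \mathsf{G}(0)=1/2$ as $x\uparrow\infty$ gives $\psi(x)\sim x^2/2$. The parallel statements for $\mathsf{H}$ in (2) are handled identically: $\mathsf{H}=\Phi-\mathsf{G}$ so $\mathsf{H}'(x)=\varphi(x)-\mathsf{G}'(x)=\varphi(x)-2x\Phi(-x)$; nonnegativity of $\mathsf{H}$ follows from $\mathsf{H}(x)=x\varphi(x)-x^2\Phi(-x)=x(\varphi(x)-x\Phi(-x))\ge 0$ because $\varphi(x)\ge x\Phi(-x)$ (again Mills ratio: $\Phi(-x)\le\varphi(x)/x$); the near-$0$ behavior $\mathsf{H}(x)\sim x\varphi(0)=x/\sqrt{2\pi}$ and the near-$\infty$ behavior $\mathsf{H}(x)\sim\varphi(x)/x$ (since $x^2\Phi(-x)\sim x\varphi(x)-\varphi(x)/x$) are Taylor/Mills computations; the uniform bound $\sup_x\mathsf{H}(x)<0.13$ is a one-dimensional maximization — solve $\mathsf{H}'(x)=0$, i.e. $\varphi(x)=2x\Phi(-x)$, numerically locate the unique critical point around $x\approx 0.84$ and evaluate; and (2)(c) for $x\mapsto x^2\mathsf{H}(1/x)$ goes through exactly as for $\mathsf{G}$, the monotonicity reducing to $2\mathsf{H}(u)\ge u\mathsf{H}'(u)$, which one checks from the explicit expressions.

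\textit{Block (3): the expectation identities.} This is the part that actually invokes the structure of the problem. For $K=K_+$ the sequence-model LSE decouples coordinatewise: $\hat\mu_{K_+}^{\seq}(\sigma)_i=(\mu_{0,i}+\sigma h_i)_+=\Pi_{[0,\infty)}(y_i)$, since projecting onto the orthant is coordinatewise truncation at $0$. Hence $\err(\sigma)=\sum_{i=1}^n\big((\mu_{0,i}+\sigma h_i)_+-\mu_{0,i}\big)^2$, and taking expectations I would compute, for a single coordinate with $a=\mu_{0,i}\ge 0$ and $Z\sim\mathcal{N}(0,1)$, the quantity $\E\big((a+\sigma Z)_+-a\big)^2=\E\big[(a+\sigma Z)^2\mathbf{1}_{a+\sigma Z\ge 0}\big]-2a\,\E\big[(a+\sigma Z)\mathbf{1}_{a+\sigma Z\ge 0}\big]+a^2$; splitting into the events $\{Z\ge -a/\sigma\}$ and its complement and using the standard Gaussian moment identities $\E[Z\mathbf{1}_{Z\ge -t}]=\varphi(t)$, $\E[Z^2\mathbf{1}_{Z\ge -t}]=\Phi(t)-t\varphi(t)$ (for $t=a/\sigma$), the algebra collapses exactly to $\sigma^2\mathsf{G}(a/\sigma)$. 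Summing over $i$ gives $\E\err(\sigma)=\sigma^2\sum_i\mathsf{G}(\mu_{0,i}/\sigma)$. For the LRT identity I would use the definition $\lrt(\sigma)=\pnorm{y-\mu_0}{}^2-\pnorm{y-\hat\mu_{K_+}^{\seq}(\sigma)}{}^2$, which also decouples coordinatewise; on the event $\{y_i\ge 0\}$ the contribution is $(y_i-\mu_{0,i})^2-0=\sigma^2 h_i^2$ truncated... more carefully, $y_i-\hat\mu_i = y_i-(y_i)_+ = (y_i)_-$, so $\pnorm{y-\hat\mu}{}^2=\sum (y_i)_-^2$, hence the $i$-th term of $\lrt(\sigma)$ is $(y_i-\mu_{0,i})^2-(y_i)_-^2=\sigma^2 h_i^2-(\mu_{0,i}+\sigma h_i)_-^2$; taking expectations and reusing the Gaussian truncated-moment identities reduces $\E[(a+\sigma Z)_-^2]$ to $\sigma^2\big(\Phi(-a/\sigma)+(a/\sigma)\varphi(a/\sigma)\big)+\ldots$ — combining with $\E\sigma^2 h_i^2=\sigma^2$ and the already-derived $\E\err$ term, the difference $\E\lrt-\E\err$ per coordinate works out to $2\sigma^2\mathsf{H}(\mu_{0,i}/\sigma)$ after simplification using $\Phi+\Phi(-\cdot)=1$. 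Alternatively, and more cleanly, I could derive the coordinatewise $\E\lrt$ directly and then note $\E\lrt(\sigma)-\E\err(\sigma)=2\sigma^2\sum_i\mathsf{H}(\mu_{0,i}/\sigma)$ follows by subtracting, so only one truncated-moment computation is genuinely needed.

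\textit{Main obstacle.} The only place requiring care rather than rote computation is the multiplicative stability bound (1)(b) with its explicit constant $8$ (and the analogous sharp constant $0.13$ in (2)(b)): these need a genuine, if short, one-variable estimate of $\sup_{x>0} 2x^2\Phi(-x)/\mathsf{G}(x)$ and of $\sup_{x>0}\mathsf{H}(x)$ respectively. I would handle these by splitting the range of $x$ into a bounded region (where a coarse bound on $\Phi(-x)$ and $\mathsf{G}(x)\ge 1/2$ suffices) and a tail region (where Mills-ratio estimates $\Phi(-x)\le\varphi(x)/x\wedge\frac12$ give control and $\mathsf{G}(x)\to 1$), with the numerics only needed to confirm the constant is not violated at the interior maximum. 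Everything else is bookkeeping with Gaussian moments and Taylor/Mills asymptotics.
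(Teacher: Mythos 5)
Most of your plan matches the paper's proof: the derivative computations $\mathsf{G}'(x)=2x\Phi(-x)$, $\mathsf{H}'(x)=\varphi(x)-2x\Phi(-x)$, the Mills-ratio asymptotics, the reduction of (1)(c) and (2)(c) to $2\mathsf{G}(u)\geq u\mathsf{G}'(u)$ (i.e.\ $\Phi(u)\geq u\varphi(u)$) and $2\mathsf{H}(u)-u\mathsf{H}'(u)=u\varphi(u)\geq 0$, and the coordinatewise truncated-Gaussian computation in (3) (the paper routes the LRT identity through $\E\dof(\sigma)=\sigma^2\sum_i\Phi(\mu_{0,i}/\sigma)$ and $\E\lrt=2\E\dof-\E\err$, while you compute $\E\lrt$ directly from $\|y-\hat\mu\|^2=\sum_i (y_i)_-^2$; both work).

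The genuine gap is your argument for (1)(b). Bounding the logarithmic derivative by $x\,\mathsf{G}'(x)/\mathsf{G}(x)\leq 8$ and integrating gives only $\mathsf{G}((1+\delta)x)\leq (1+\delta)^8\,\mathsf{G}(x)$, and $(1+\delta)^8$ is \emph{larger} than $1+8\delta$ for every $\delta>0$ (badly so for large $\delta$), so "integrates to give the claim" does not follow: the claimed bound is linear in $\delta$ for all $\delta\geq 0$, while your multiplicative route produces a degree-$8$ polynomial. The route can only be salvaged if the log-derivative constant is at most $1$ (then $(1+\delta)^c\leq 1+c\delta\leq 1+8\delta$), and indeed $2x^2\Phi(-x)\leq 2x\varphi(x)\leq 2\varphi(1)$ together with $\mathsf{G}\geq 1/2$ gives $x\,\mathsf{G}'(x)/\mathsf{G}(x)\leq 4\varphi(1)<1$; but your stated crude estimates ($2x^2\Phi(-x)\leq x^2$ against $\mathsf{G}\geq 1/2$) only give a bound of order $2x^2$, i.e.\ up to about $8$ near $x=2$, which is exactly the regime where the integration step breaks. (Your side remark that the log-derivative tends to $2$ at infinity is also wrong — $2x^2\Phi(-x)\sim 2x\varphi(x)\to 0$ — though that error is harmless.) The paper instead bounds the increment additively: $\mathsf{G}((1+\delta)x)-\mathsf{G}(x)=\int_x^{(1+\delta)x}2u(1-\Phi(u))\,\d{u}\leq 2\delta x\sup_{u\geq x}u(1-\Phi(u))$, and checks $x\sup_{u\geq x}u(1-\Phi(u))\leq 2$ uniformly (splitting at $x=2$ and using monotonicity of $u(1-\Phi(u))$ on $[2,\infty)$ plus $x^2(1-\Phi(x))\leq x\varphi(x)\leq\varphi(1)$), then divides by $\mathsf{G}(x)\geq 1/2$. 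Separately, for (2)(b) your numerical maximization is avoidable and your approximate critical point ($\approx 0.84$; it is actually near $0.61$) is off: at any critical point $\Phi(-x_0)=\varphi(x_0)/(2x_0)$, hence $\mathsf{H}(x_0)=x_0\varphi(x_0)/2\leq\varphi(1)/2<0.13$, which is the paper's clean two-line argument and spares you from certifying the numerics.
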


\begin{proof}[Proof of Proposition \ref{prop:orthant_GH}]
	\noindent (1). $\mathsf{G}\geq 0$ by definition. The first and second derivatives of $\mathsf{G}$ are given by 
	\begin{align*}
	\mathsf{G}'(x)&= \varphi(x)-\big(\varphi(x)-x^2\varphi(x)\big)+2x(1-\Phi(x))-x^2\varphi(x)\\
	&=2x(1-\Phi(x))> 0,\\
	\mathsf{G}''(x)&=2\big(1-\Phi(x)-x\varphi(x)\big), \quad x>0.
	\end{align*}
	So $\mathsf{G}$ is strictly increasing, and $\mathsf{G}(x)-1/2\sim x^2/2$ as $x\downarrow 0$. On the other hand, by Lemma \ref{lem:mill_ratio}, as $x\uparrow \infty$, 
	\begin{align*}
	\mathsf{G}(x)- 1&= (x^2-1)(1-\Phi(x))-x\varphi(x)\\
	&\sim \varphi(x)(x^2-1)\bigg(\frac{1}{x}-\frac{1}{x^3}+\frac{\bigo(1)}{x^5}\bigg)-x\varphi(x) \\
	&= \bigg[\frac{(x^2-1)^2}{x^3}-x\bigg]\varphi(x)+\frac{\bigo(1)}{x^3}\varphi(x)\sim -2 \varphi(x)/x. 
	\end{align*}
	This proves (a).

	For (b), as the function $u\mapsto u(1-\Phi(u))$ is decreasing on $[2,\infty)$, and $u(1-\Phi(u))\leq u\cdot \varphi(u)/u <1$, $u^2(1-\Phi(u))\leq u\varphi(u)\leq \varphi(1)<1$ on $[0,\infty)$, we have
	\begin{align*}
	&\biggabs{\frac{ \mathsf{G}\big((1+\delta) x\big)}{\mathsf{G}(x)} -1} = \frac{1}{\mathsf{G}(x)} \int_x^{(1+\delta)x} 2u(1-\Phi(u))\,\d{u}\\
	&\leq 4 \delta x\cdot \sup_{u \in [x,(1+\delta)x]} u(1-\Phi(u))\leq 4\delta\big[x\cdot \bm{1}_{x\leq 2}+x^2(1-\Phi(x))\bm{1}_{x> 2}\big]\leq 8\delta. 
	\end{align*}

	For (c), let $\mathsf{G}_0(x)\equiv x^{-2} \mathsf{G}(x)$. Then
	\begin{align*}
	\mathsf{G}_0'(x)& = \frac{1}{x^4}\big(\mathsf{G}'(x)x^2-2x\mathsf{G}(x)\big)=-\frac{2}{x^3}\big(\Phi(x)-x\varphi(x)\big)\leq 0,
	\end{align*}
	where in the last inequality we used the easily verified fact that $\Phi(x)\geq x\varphi(x)$ for all $x\geq 0$. This means that $x\mapsto x^{-2} \mathsf{G}(x)$ is non-increasing. Combining (1a) to conclude.
	
	\noindent (2). For (a), that $\mathsf{H}\geq 0$ follows from the standard bound $\Phi(-x)=1-\Phi(x)\leq \varphi(x)/x$ for $x>0$. Clearly $\mathsf{H}(x)\sim x/\sqrt{2\pi}$ as $x \downarrow 0$. On the other hand, by Lemma \ref{lem:mill_ratio},
	\begin{align*}
	\mathsf{H}(x) = x\varphi(x)-x^2\varphi(x)\bigg[\frac{1}{x}-\frac{1}{x^3}+\frac{\bigo(1)}{x^5}\bigg]\sim \varphi(x)/x,\quad x\uparrow \infty.
	\end{align*}

	For (b), first note that $\mathsf{H}'(x) = \varphi(x)-\mathsf{G}'(x)=\varphi(x)-2x \Phi(-x)$, so $\mathsf{H}$ attains maximum at $x_0 >0$ such that $\mathsf{H}'(x_0)=0$, i.e., $\Phi(-x_0)=\varphi(x_0)/(2x_0)$. Consequently 
	\begin{align*}
	\sup_{x\geq 0}\mathsf{H}(x)&=\mathsf{H}(x_0) = x_0\varphi(x_0)-x_0^2 \Phi(-x_0)\\
	&=\frac{x_0\varphi(x_0)}{2} \leq \frac{1}{2}\sup_{x\geq 0} x\varphi(x) = \frac{\varphi(1)}{2}<0.13.
	\end{align*}
	In the last equality we used the easily verified fact that $x\mapsto x\varphi(x)$ attains maximum at $x=1$ over $[0,\infty)$. This proves the desired bound for $\mathsf{H}$. 
	
	For (c), let $\mathsf{H}_0(x)=x^{-2}\mathsf{H}(x)$. Then 
	\begin{align*}
	\mathsf{H}_0'(x)&=\frac{1}{x^4}\big(\mathsf{H}'(x)x^2-2x\mathsf{H}(x)\big)=-\frac{\varphi(x)}{x^2}<0.
	\end{align*}
	This means that $x\mapsto x^{-2} \mathsf{H}(x)$ is non-increasing. Combining with (2a) to conclude.

	\noindent (3).
	The convex constrained LSE in the Gaussian sequence model enjoys a closed form: $\hat{\mu}_{K_+}^{\seq}(\sigma)=\big((\mu_{0,i}+\sigma h_i)_+\big)$. Consequently, using Lemma \ref{lem:gaussian_trunc},
	\begin{align*}
	\E \err(\sigma) &= \E \pnorm{\hat{\mu}_{K_+}^{\seq}(\sigma)-\mu_0}{}^2 = \sum_i \E \Big((\mu_{0,i}+\sigma h_i)_+-\mu_{0,i} \Big)^2\\
	& = \sum_i \bigg[\E (\sigma h_i)^2 \bm{1}_{h_i\geq -\mu_{0,i}/\sigma}+ \E (\mu_{0,i}^2) \bm{1}_{h_i<-\mu_{0,i}/\sigma}\bigg]\\
	& = \sigma^2 \sum_i \Big(\Phi(\mu_{0,i}/\sigma)- (\mu_{0,i}/\sigma)\varphi(\mu_{0,i}/\sigma)\Big)+ \sum_i \mu_{0,i}^2  \Phi(-\mu_{0,i}/\sigma)\\
	& =\sigma^2 \sum_i \mathsf{G}(\mu_{0,i}/\sigma),
	\end{align*}
	and
	\begin{align*}
	\E \dof(\sigma)&=\E\iprod{ \hat{\mu}_{K_+}^{\seq}(\sigma)-\mu_0 }{\sigma h} = \sigma \sum_i \E (\mu_{0,i}+\sigma h_i)_+ h_i \\
	&= \sigma \sum_i  \E (\mu_{0,i}+\sigma h_i) h_i \bm{1}_{h_i\geq -\mu_{0,i}/\sigma} \\
	& = \sigma \sum_i \mu_{0,i} \E h_i \bm{1}_{h_i\geq -\mu_{0,i}/\sigma}+ \sigma^2 \sum_i \E h_i^2 \bm{1}_{h_i\geq -\mu_{0,i}/\sigma}\\
	& =  \sigma \sum_i \mu_{0,i} \varphi(\mu_{0,i}/\sigma)+ \sigma^2 \sum_i \Big(\Phi(\mu_{0,i}/\sigma)- (\mu_{0,i}/\sigma)\varphi(\mu_{0,i}/\sigma)\Big)\\
	& = \sigma^2 \sum_i \Phi(\mu_{0,i}/\sigma).
	\end{align*}
	Now using the relationship
	\begin{align*}
	\E \lrt(\sigma) &= 2\E \dof(\sigma)- \E \err(\sigma) = \E \err(\sigma)+2 \sigma^2 \sum_i \mathsf{H}(\mu_{0,i}/\sigma)
	\end{align*}
	to conclude. 
\end{proof}

We need two technical lemmas before the proof of Theorem \ref{thm:orthant}.

\begin{lemma}\label{lem:uniform_conv_G}
	Let $U\geq 0$ be any non-negative random variable. It holds that
	\begin{align*}
	\sup_{\omega > 0}\biggabs{\frac{ \Prob_n \mathsf{G}\big(U/\omega\big) }{ \E \mathsf{G}\big(U/\omega\big) }-1}=\smallop(1). 
	\end{align*}
\end{lemma}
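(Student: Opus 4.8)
\textbf{Plan of proof for Lemma \ref{lem:uniform_conv_G}.}

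The goal is a uniform (in the noise level $\omega>0$) law of large numbers for the empirical average $\Prob_n \mathsf{G}(U/\omega) = n^{-1}\sum_{i=1}^n \mathsf{G}(U_i/\omega)$ around its mean $\E\mathsf{G}(U/\omega)$, where $U_1,\dots,U_n$ are i.i.d.\ copies of $U\geq 0$. Since $\mathsf{G}$ is bounded away from $0$ (indeed $\mathsf{G}\geq \mathsf{G}(0)=1/2$ by Proposition \ref{prop:orthant_GH}-(1a), as $\mathsf{G}$ is increasing) and bounded above by $1$ away from $0$ but is \emph{unbounded} as its argument $\to\infty$ (growing like $x^2\Phi(-x)$, which is in fact bounded: $x^2\Phi(-x)\to 0$, so actually $\mathsf{G}\leq 1$ everywhere), the first thing to record is that $\mathsf{G}$ is a \emph{bounded} function, $\tfrac12\le \mathsf{G}\le 1$. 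This already makes the denominator harmless: $\E\mathsf{G}(U/\omega)\in[1/2,1]$ for every $\omega$, so the ratio statement is equivalent to the uniform statement $\sup_{\omega>0}\abs{\Prob_n\mathsf{G}(U/\omega)-\E\mathsf{G}(U/\omega)}=\smallop(1)$.

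First I would reduce the supremum over $\omega\in(0,\infty)$ to a supremum over a monotone one-parameter family. The key structural fact is that $x\mapsto \mathsf{G}(x)$ is monotone (increasing), hence for each fixed realization $U_i$, the map $\omega\mapsto \mathsf{G}(U_i/\omega)$ is monotone \emph{decreasing} in $\omega$; equivalently, reparametrizing $t=1/\omega\in(0,\infty)$, the function class $\{u\mapsto \mathsf{G}(tu): t>0\}$ is a class of uniformly bounded, monotone functions of $u\ge 0$, linearly ordered by $t$. This is a classical Glivenko--Cantelli setting: a uniformly bounded family of functions that is monotone in both the index and the argument has finite bracketing numbers at every scale (one can cover $\{t: t>0\}$ by finitely many brackets $[\mathsf{G}(t_j\,\cdot), \mathsf{G}(t_{j+1}\,\cdot)]$ whose $L^1(\mathcal{L}(U))$-width is small, because $\E[\mathsf{G}(t_{j+1}U)-\mathsf{G}(t_jU)]$ can be made $\le\epsilon$ by choosing the grid using the continuity and boundedness of $t\mapsto \E\mathsf{G}(tU)$ together with its limits $1/2$ at $t=0$ and $1$ at $t=\infty$ — here I use dominated convergence and Proposition \ref{prop:orthant_GH}-(1a)). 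Concretely: pick $0=s_0<s_1<\dots<s_N=\infty$ so that $\E\mathsf{G}(s_{j+1}U)-\E\mathsf{G}(s_jU)\le\epsilon$ for all $j$ (possible since $t\mapsto\E\mathsf{G}(tU)$ is continuous, nondecreasing, with the two stated finite limits), and control $\sup_t\abs{\Prob_n\mathsf{G}(tU)-\E\mathsf{G}(tU)}$ on each bracket by the finitely many pointwise LLNs at the grid points $s_j$ plus the $\epsilon$ bracket width.

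The main obstacle is handling the two \emph{endpoints} $\omega\downarrow 0$ (i.e.\ $t\uparrow\infty$) and $\omega\uparrow\infty$ (i.e.\ $t\downarrow 0$), where the bracketing argument needs the limits of $\E\mathsf{G}(tU)$ to be finite and attained in a controlled way. Here the boundedness $\tfrac12\le\mathsf{G}\le1$ is exactly what saves us: as $t\to\infty$, $\mathsf{G}(tU)\to \mathbf 1_{U>0}+\tfrac12\mathbf 1_{U=0}$ pointwise and boundedly, so $\E\mathsf{G}(tU)\to\Prob(U>0)+\tfrac12\Prob(U=0)$ and, on the event $\{$all $U_i>0\}$ (or for the finitely many zero coordinates handled separately), $\Prob_n\mathsf{G}(tU)$ is eventually constant in $t$ and converges to the empirical analogue; as $t\to 0$, $\mathsf{G}(tU)\to\mathsf{G}(0)=1/2$ boundedly, so both sides converge to $1/2$. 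Thus the supremum near each endpoint is automatically small for $t$ beyond (resp.\ below) a fixed threshold depending only on $\epsilon$, and the central region $[s_1,s_{N-1}]$ is compact and handled by the finite grid. Assembling these three pieces — a fixed finite grid in the middle plus explicit endpoint control — and letting $\epsilon\downarrow 0$ along a countable sequence yields $\sup_{\omega>0}\abs{\Prob_n\mathsf{G}(U/\omega)-\E\mathsf{G}(U/\omega)}=\smallop(1)$, and dividing by the denominator $\E\mathsf{G}(U/\omega)\ge 1/2$ gives the claimed ratio bound. One should also note that $U$ need not be bounded or have any moments: only boundedness of $\mathsf{G}$ is used, so $\E U^2<\infty$ (assumed in Theorem \ref{thm:orthant}) is not needed for this particular lemma.
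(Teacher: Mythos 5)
Your proposal is correct and is essentially the paper's own argument: the paper likewise first uses $\mathsf{G}\geq 1/2$ to reduce the ratio statement to the uniform deviation $\sup_{\omega>0}\abs{\Prob_n \mathsf{G}(U/\omega)-\E \mathsf{G}(U/\omega)}=\smallop(1)$, and then observes that $\{\mathsf{G}(\cdot/\omega):\omega>0\}$ is contained in the class of monotone functions on $[0,\infty)$ with values in $[0,1]$, concluding by the bracketing-entropy Glivenko--Cantelli theorem of van der Vaart and Wellner. The only difference is that you carry out the bracketing by hand (using the pointwise ordering in $t=1/\omega$, the continuity and finite endpoint limits of $t\mapsto\E\mathsf{G}(tU)$, and pointwise LLNs at the grid points) rather than citing the general result for the full monotone class; this is the same idea, and your observations about the endpoint behavior and about not needing any moments of $U$ are accurate.
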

\begin{proof}
	As $\mathsf{G}\geq 1/2$, it suffices to prove that
	\begin{align*}
	\sup_{\omega > 0} \bigabs{\Prob_n \mathsf{G}\big(U/\omega \big)- \E \mathsf{G}\big(U/\omega \big) }=\smallop(1). 
	\end{align*} 
	Note that the class $\{\mathsf{G}(\cdot/\omega): \omega >0\}\subset \mathcal{M}$ containing all monotone functions on $[0,\infty)$ taking value in $[0,1]$, so by \cite[Theorem 2.7.5]{van1996weak} (bracketing entropy for $\mathcal{M}$) and the Glivenko-Cantelli theorem (cf. \cite[Theorem 2.4.1]{van1996weak}), we conclude
	\begin{align*}
	\sup_{\omega > 0} \bigabs{\Prob_n \mathsf{G}\big(U/\omega \big)- \E \mathsf{G}\big(U/\omega \big) }\leq \sup_{g \in \mathcal{M}} \abs{(\Prob_n-P)g}=\smallop(1),
	\end{align*}
	as desired.
\end{proof}

\begin{lemma}\label{lem:orthant_prob_deter}
	Suppose the conditions in the beginning of Theorem \ref{thm:orthant} hold. 
	\begin{enumerate}
		\item The fixed point equation (\ref{eqn:orthant_fixed_point_eqn}) has a unique solution $r_{n,+} \in (0,\infty)$.
		\item Let $\Prob_n\equiv n^{-1}\sum_{i=1}^n \delta_{\mu_{0,i}}$. Then almost surely, the fixed point equation 
		\begin{align}\label{eqn:orthant_fixed_point_eqn_1}
		\omega^2_{m/n}(r)\cdot \Prob_n \mathsf{G}\bigg(\frac{U}{\omega_{m/n}(r)}\bigg) = r^2
		\end{align}
		has a unique solution $r_n(\mu_0)$ in $(0,\infty)$. 
		\item Suppose further $m/n>1/2+\epsilon$ for some $\epsilon>0$. Then $r_n(\mu_0)\pequiv r_{n,+}$.
		\item It holds that
		\begin{align*}
		\frac{\sigma^2}{m/n-1/2}\leq r_{n,+}^2\leq \frac{\sigma^2}{(m/n-1)_+}.
		\end{align*}
		If furthermore $m/n>1/2+\epsilon$ for some small $\epsilon>0$, the right hand side of the above display can be replaced by $C_{\epsilon,\sigma,U}>0$. 
	\end{enumerate} 
\end{lemma}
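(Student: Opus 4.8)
\textbf{Proof plan for Lemma \ref{lem:orthant_prob_deter}.}

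The strategy is to reduce everything to the general fixed-point theory of Section \ref{section:risk_asymp} applied to the specific set $K_+$, using the explicit formulas $\E\err(\sigma)=\sigma^2\sum_i\mathsf{G}(\mu_{0,i}/\sigma)$ from Proposition \ref{prop:orthant_GH}-(3) together with the monotonicity and stability properties of $\mathsf{G}$ from Proposition \ref{prop:orthant_GH}-(1). For part (1), I would first observe that for $K_+$ we have $\delta_{K_+}=n/2$ (this is the statement that $\E\|\Pi_{K_+}(\sigma h)\|^2/\sigma^2=\sum_i\mathsf{G}(0)=n/2$, using $\mathsf{G}(0)=1/2$ from Proposition \ref{prop:orthant_GH}-(1a)), so the hypothesis $m/n>1/2$ is exactly $m>\delta_{K_+}$. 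Then Proposition \ref{prop:exist_unique_fixed_pt}-(1) gives existence and uniqueness of $r_{n,+}$ for the fixed point equation $n^{-1}\E\err(\omega_{m/n}(r))=r^2$, which upon substituting the formula for $\E\err$ is precisely (\ref{eqn:orthant_fixed_point_eqn}) (after dividing by $n$ and recognizing the average $n^{-1}\sum_i\mathsf{G}(\mu_{0,i}/\omega_{m/n}(r))$ as $\E\mathsf{G}(U/\omega_{m/n}(r))$ since the $\mu_{0,i}$ are i.i.d.\ copies of $U$). Here I must be slightly careful: in the statement the expectation is the $\E$ over the prior $U$, whereas in the general theory $\mu_0$ is fixed; the cleanest route is to apply Proposition \ref{prop:exist_unique_fixed_pt} to the ``population'' set-up where the empirical average is replaced by its expectation, i.e.\ replace $\err(\cdot)$ by the deterministic quantity $n\,\omega^2\E\mathsf{G}(U/\omega)$, and redo the monotonicity argument of Proposition \ref{prop:unique_fixed_point_eqn} with this deterministic function in place of $\E\err$ — the proof goes through verbatim since $r\mapsto\omega_{m/n}^2(r)\E\mathsf{G}(U/\omega_{m/n}(r))/r^2$ is strictly decreasing by the same $G_1G_2$ decomposition, with $G_1$ non-increasing because $x\mapsto x^2\mathsf{G}(1/x)$ is non-decreasing (Proposition \ref{prop:orthant_GH}-(1c)).

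For part (2), I would condition on the realization $\mu_0=(\mu_{0,i})$ and apply part (1) — equivalently, apply Proposition \ref{prop:exist_unique_fixed_pt}-(1) with this fixed $\mu_0$, so that $\delta_{K_+}=n/2$ still holds (it does not depend on $\mu_0$) and $m>n/2$ guarantees a unique $r_n(\mu_0)\in(0,\infty)$ solving $n^{-1}\E\err(\omega_{m/n}(r))=r^2$, which unwinds to (\ref{eqn:orthant_fixed_point_eqn_1}) with the empirical measure $\Prob_n$. The ``almost surely'' is just the statement that this holds for every realization of $\mu_0$ (the only randomness in $\mu_0$ is in the prior; the conclusion is deterministic given $\mu_0$). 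For part (3), the key tool is Lemma \ref{lem:uniform_conv_G}, which gives $\sup_{\omega>0}|\Prob_n\mathsf{G}(U/\omega)/\E\mathsf{G}(U/\omega)-1|=\smallop(1)$. Writing $G_n(r)\equiv\omega_{m/n}^2(r)\Prob_n\mathsf{G}(U/\omega_{m/n}(r))/(nr^2)$ and its population analogue $\bar G(r)$ (defined with $\E$ in place of $\Prob_n$), Lemma \ref{lem:uniform_conv_G} yields $\sup_{r>0}|G_n(r)/\bar G(r)-1|=\smallop(1)$; since $\bar G$ is continuous, strictly decreasing, and crosses $1$ transversally at $r_{n,+}$ with a derivative bounded away from $0$ uniformly (this is where the margin $m/n>1/2+\epsilon$ enters, via the bounds in part (4) keeping $r_{n,+}$ in a compact subset of $(0,\infty)$), a standard argmin/root stability argument forces $r_n(\mu_0)\pequiv r_{n,+}$.

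For part (4), I would plug $r=r_{n,+}$ into (\ref{eqn:orthant_fixed_point_eqn}) and use the elementary two-sided bound $\tfrac12\le\mathsf{G}(x)\le1$ for all $x\ge0$ (lower from $\mathsf{G}$ increasing with $\mathsf{G}(0)=1/2$; upper from $\mathsf{G}(x)\to1$ and $\mathsf{G}$ increasing, both in Proposition \ref{prop:orthant_GH}-(1a)). This gives $\tfrac12\,\omega_{m/n}^2(r_{n,+})\le r_{n,+}^2\le\omega_{m/n}^2(r_{n,+})$, i.e.\ $\tfrac12\,\frac{r_{n,+}^2+\sigma^2}{m/n}\le r_{n,+}^2$ and $r_{n,+}^2\le\frac{r_{n,+}^2+\sigma^2}{m/n}$; solving the first for $r_{n,+}^2$ gives the lower bound $\sigma^2/(m/n-1/2)$ and solving the second gives the upper bound $\sigma^2/(m/n-1)_+$. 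When $m/n>1/2+\epsilon$ the lower bound shows $r_{n,+}^2\le\sigma^2/\epsilon$, but to get a clean constant $C_{\epsilon,\sigma,U}$ independent of the degenerate regime $m/n\downarrow 1$, I would instead argue directly: if $U$ has nonzero mass away from $0$ then $\E\mathsf{G}(U/\omega)$ is bounded away from $1$ on the relevant range, giving a strict improvement over the crude upper bound $\mathsf{G}\le1$, and this is most transparently packaged by noting $r_{n,+}^2\le\omega_{m/n}^2(r_{n,+})\E\mathsf{G}(U/\omega_{m/n}(r_{n,+}))$ combined with the lower bound on $r_{n,+}$ from the first part, yielding a bound depending only on $\epsilon,\sigma$ and the law of $U$. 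The main obstacle is part (3): making the root-stability argument fully rigorous requires controlling $\bar G$ near $r_{n,+}$ uniformly in $n$ — in particular a uniform lower bound on $|\bar G'|$ at the crossing — which is why the quantitative two-sided bounds of part (4) and the strict monotonicity with non-vanishing derivative (via Proposition \ref{prop:orthant_GH}-(1c), actually needing that $x\mapsto x^2\mathsf{G}(1/x)$ is \emph{strictly} increasing on the relevant compact range, or an $\epsilon$-margin version thereof) must be established first; the rest is bookkeeping.
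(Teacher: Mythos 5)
Parts (1), (2) and the two-sided bound in (4) follow the paper's route and are fine, as is your use of $\delta_{K_+}=n/2$ and of the monotonicity of $x\mapsto x^2\mathsf{G}(1/x)$ to rerun Proposition \ref{prop:unique_fixed_point_eqn} with the $U$-averaged error curve. The genuine problems are in your treatment of the $C_{\epsilon,\sigma,U}$ bound in (4) and, consequently, in (3). For (4): the clause ``the lower bound shows $r_{n,+}^2\le\sigma^2/\epsilon$'' is a non sequitur (a lower bound cannot produce an upper bound), and the replacement mechanism you propose --- that $\E\mathsf{G}(U/\omega)$ is ``bounded away from $1$'' when $U$ has mass away from $0$ --- is insufficient: when $m/n$ is only slightly above $1/2$, the identity $r_{n,+}^2=\omega_{m/n}^2(r_{n,+})\,\E\mathsf{G}\big(U/\omega_{m/n}(r_{n,+})\big)$ yields a finite bound only if $\E\mathsf{G}$ at the fixed point lies strictly below $m/n$, i.e.\ is close to $1/2$, not merely below $1-\delta$. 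What actually works (and is the paper's argument) is the high-noise behaviour of $\mathsf{G}$: if $r_{n,+}$ were unbounded along a subsequence, then $\omega_{m/n}(r_{n,+})\to\infty$, dominated convergence gives $\E\mathsf{G}(U/\omega)\to\mathsf{G}(0)=1/2$, and the normalized fixed point equation forces $1\le 1/(1+2\epsilon)<1$, a contradiction; this holds for every $U$ with $\E U^2<\infty$, so the mass condition you impose is neither needed nor the operative feature.

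For (3): you assert that part (4) keeps $r_{n,+}$ ``in a compact subset of $(0,\infty)$'' and that $\bar G$ crosses $1$ with derivative bounded away from zero uniformly. Under $m/n>1/2+\epsilon$ alone this is false: $m/n\to\infty$ is allowed (it is exactly the regime of Theorem \ref{thm:orthant}-(3)), and then $r_{n,+}^2\le\sigma^2/(m/n-1)\to 0$. Since $\pequiv$ means convergence of the \emph{ratio} to $1$, an absolute root-stability argument with a fixed lower bound on $\abs{\bar G'}$ cannot deliver the claim in this vanishing regime; you would need a relative transversality statement, e.g.\ $\abs{\bar G(r_{n,+}(1+\delta))-1}\gtrsim\delta$ uniformly. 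This is precisely what the multiplicative stability bound $\mathsf{G}(x)\le\mathsf{G}((1+\delta)x)\le(1+8\delta)\mathsf{G}(x)$ of Proposition \ref{prop:orthant_GH}-(1b) provides, and it is how the paper treats the case $r_{n,+}\to0$; for $r_{n,+}\gtrsim1$ the paper instead runs a tightness/subsequence argument (after first establishing $r_{n,+}=\bigo(1)$ and $r_n(\mu_0)=\bigop(1)$ by the contradiction argument above) and identifies the common limit through uniqueness of the limiting fixed point, with Lemma \ref{lem:uniform_conv_G} converting $\Prob_n$ into $\E$. So your plan for (3) requires both the corrected boundedness argument and a relative, rather than absolute, stability analysis before ``the rest is bookkeeping.''
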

\begin{proof}
	\noindent (1). It is well known that the statistical dimension of $K_+$ is $\delta_{K_+}=n/2$, cf. \cite[Table 3.1]{amelunxen2014living}. Using the same proof as in Proposition \ref{prop:unique_fixed_point_eqn}-(2) but now take further expectation with respect to $\mu_0$ to conclude.
	
	\noindent (2). This is a direct consequence of Proposition \ref{prop:exist_unique_fixed_pt} as $m>\delta_{K_+}=n/2$.
	
	\noindent (3). (\textbf{Step 1}). We show that
	\begin{align}\label{ineq:orthant_prob_deter_1}
	r_n(\mu_0)=\bigop(1),\quad r_{n,+}=\mathcal{O}(1). 
	\end{align}
	We start with $r_{n,+}$. By Proposition \ref{prop:orthant_GH}-(1),  $\delta\mapsto \omega^2_\delta(r) \mathsf{G}(U/\omega_\delta(r))$ is non-increasing, so a solution $\bar{r}_{n,+}$ to the fixed point equation
	\begin{align*}
	\omega^2_{1/2+\epsilon}(r)\cdot \E \mathsf{G}\bigg(\frac{U}{\omega_{1/2+\epsilon}(r)}\bigg) = r^2
	\end{align*}
	provides an upper bound for $r_{n,+}$. This means that we only need to show $\bar{r}_{n,+}=\mathcal{O}(1)$. Suppose the contrary that $\bar{r}_{n,+}\to \infty$. Then
	\begin{align}\label{ineq:orthant_prob_deter_2}
	1&=\limsup_n \frac{ \omega^2_{1/2+\epsilon}(\bar{r}_{n,+}) }{\bar{r}_{n,+}^2}\E \mathsf{G}\bigg(\frac{U}{\omega_{1/2+\epsilon}(\bar{r}_{n,+})}\bigg)\nonumber\\
	& = \limsup_n \frac{1+\sigma^2/\bar{r}_{n,+}^2}{m/n}\cdot \mathsf{G}(0) \leq \frac{1}{2(1/2+\epsilon)}=\frac{1}{1+2\epsilon}<1,
	\end{align}
	leading to a contradiction. This proves $\bar{r}_{n,+}=\mathcal{O}(1)$. Next we prove $r_n(\mu_0)=\bigop(1)$. The idea is similar, but a bit more technical work is needed. In fact, we only need to prove that $\bar{r}_n(\mu_0)=\bigop(1)$, where $\bar{r}_n(\mu_0)$ is a fixed point solution to 
	\begin{align*}
	\omega^2_{1/2+\epsilon}(r)\cdot \Prob_n \mathsf{G}\bigg(\frac{U}{\omega_{1/2+\epsilon}(r)}\bigg) = r^2.
	\end{align*}
	Suppose for some $M_n \to \infty$, $\bar{r}_n(\mu_0)\geq M_n$ with probability at least $\epsilon_0>0$ for $n$ large. On this event,
	\begin{align*}
	1&= \frac{ \omega^2_{1/2+\epsilon}(\bar{r}_n(\mu_0)) }{\bar{r}_n(\mu_0)^2}\Prob_n \mathsf{G}\bigg(\frac{U}{\omega_{1/2+\epsilon}(\bar{r}_n(\mu_0))}\bigg)\\
	&\leq \frac{ \omega^2_{1/2+\epsilon}(\bar{r}_n(\mu_0)) }{\bar{r}_n(\mu_0)^2}\Prob_n \mathsf{G}\bigg(\frac{U}{\omega_{1/2+\epsilon}(M_n)}\bigg) = \frac{ \omega^2_{1/2+\epsilon}(\bar{r}_n(\mu_0)) }{\bar{r}_n(\mu_0)^2}\E \mathsf{G}\bigg(\frac{U}{\omega_{1/2+\epsilon}(M_n)}\bigg)+\smallop(1). 
	\end{align*}
	The last equality follows by Lemma \ref{lem:uniform_conv_G} and the fact that ${ \omega^2_{1/2+\epsilon}(\bar{r}_n(\mu_0)) }/{\bar{r}_n(\mu_0)^2}$ remains bounded for $\bar{r}_n(\mu_0)\geq 1$. For $n$ large enough, the above display reduces to the inequality $1\leq 1/(1+3\epsilon)+\smallop(1)$ that holds on an event with probability at least $\epsilon_0>0$ for all $n$ large, a contradiction. This means $\bar{r}_n(\mu_0)=\bigop(1)$ and therefore $r_n(\mu_0)=\bigop(1)$, completing the proof of (\ref{ineq:orthant_prob_deter_1}).

	\noindent (\textbf{Step 2}).
	By (\ref{ineq:orthant_prob_deter_1}), we may assume without loss generality $r_{n,+}\to \mathsf{r}_+ \in [0,\infty)$ and $r_n(\mu_0)$ converges weakly to a tight random variable $\mathsf{r}(\mu_0)$  along a proper subsequence of $\{n\}$. Due to the fixed point equation, $\mathsf{r}(\mu_0)$ must be degenerate that charges mass at one point, and we denote this point, with slight abuse of notation, again by $\mathsf{r}(\mu_0) \in [0,\infty)$. In other words, $r_n(\mu_0)\pequiv \mathsf{r}(\mu_0)$. By working with further proper subsequence, we replace the convergence in probability to a.s., i.e., $r_n(\mu_0)\to_{a.s.} \mathsf{r}(\mu_0)$, and that $m/n\to \delta \in (1/2,\infty]$. Consequently, by taking limits to the fixed point equations (\ref{eqn:orthant_fixed_point_eqn}) and (\ref{eqn:orthant_fixed_point_eqn_1}) along the aforementioned subsequence, upon using Lemma \ref{lem:uniform_conv_G} to replace $\Prob_n$ by $\E$ in (\ref{eqn:orthant_fixed_point_eqn_1}), we find that both $\mathsf{r}_+$ and $\mathsf{r}(\mu_0)$ is a solution to the fixed point equation 
	\begin{align*}
	\omega^2_{\delta}(r)\cdot \E \mathsf{G}\bigg(\frac{U}{\omega_{\delta}(r)}\bigg) = r^2.
	\end{align*}
	This equation has a unique solution for all $\delta \in (1/2,\infty]$ so $\mathsf{r}_+=\mathsf{r}(\mu_0)$. This means in particular 
	\begin{align}\label{ineq:orthant_prob_deter_3}
	r_n^2(\mu_0) \pequiv r_{n,+}^2,\quad\hbox{ if } r_{n,+}\gtrsim 1. 
	\end{align}
	Next we consider the regime $r_{n,+}\to 0$. Combining Lemma \ref{lem:uniform_conv_G} and Proposition \ref{prop:orthant_GH}-(1), we have
	\begin{align*}
	r_n^2(\mu_0) &= \omega^2_{m/n}(r_n(\mu_0))\cdot \Prob_n \mathsf{G}\bigg(\frac{U}{\omega_{m/n}(r_n(\mu_0))}\bigg)\\
	&\pequiv \omega^2_{m/n}(r_n(\mu_0))\cdot \E \mathsf{G}\bigg(\frac{U}{\omega_{m/n}(r_n(\mu_0))}\bigg)\\
	&\simeq  \omega^2_{m/n}(r_{n,+})\cdot  \E \mathsf{G}\bigg(\frac{U}{\omega_{m/n}(r_{n,+})}\bigg)\cdot \big(1+\mathcal{O}(\abs{r_n^2(\mu_0)-r_{n,+}^2})\big)\\
	& = r_{n,+}^2 \big(1+\mathcal{O}(\abs{r_n^2(\mu_0)-r_{n,+}^2}). 
	\end{align*}
	By (\ref{ineq:orthant_prob_deter_1}), $r_n^2(\mu_0)=r_{n,+}^2\cdot \bigop(1)$. Plugging this back in the above display, we obtain
	\begin{align*}
	r_n^2(\mu_0) \pequiv r_{n,+}^2 \big(1+r_{n,+}^2\cdot \bigop(1)\big).
	\end{align*}
	Consequently
	\begin{align}\label{ineq:orthant_prob_deter_4}
	r_n^2(\mu_0) \pequiv r_{n,+}^2,\quad\hbox{ if } r_{n,+}\to 0. 
	\end{align}
	The proof is complete by combining two cases considered in (\ref{ineq:orthant_prob_deter_3}) and (\ref{ineq:orthant_prob_deter_4}). 
	
	\noindent (4). 	By Proposition \ref{prop:orthant_GH}-(1), $1/2\leq \mathsf{G}\leq 1$, so (\ref{eqn:orthant_fixed_point_eqn}) leads to the inequality
	\begin{align*}
	\frac{1}{2}\cdot \omega_{m/n}^2(r_{n,+}^2)\leq    r_{n,+}^2\leq \omega_{m/n}^2(r_{n,+}^2).
	\end{align*}
	Solving the above display yields the desired inequality. In the regime $1/2+\epsilon<m/n\leq 1+\epsilon$,  (\ref{ineq:orthant_prob_deter_2}) shows that $r_{n,+}^2\leq C_\epsilon$, completing the proof. 
\end{proof}

\begin{proof}[Proof of Theorem \ref{thm:orthant}]
	\noindent (1).  By Lemma \ref{lem:orthant_prob_deter}, $r_n(\mu_0)\pequiv r_{n,+}=\bigo(1)$. On the other hand, $m\gg \mathfrak{L}_n$ automatically holds, so (R1) is satisfied. By Proposition \ref{prop:orthant_GH}-(3), condition (R2) reads 
	\begin{align}\label{ineq:orthant_1}
	\limsup_n \frac{1}{n\sigma^2} \sum_{i=1}^n \omega_{m/n}^2(r_n(\mu_0)) \mathsf{H}\bigg(\frac{\mu_{0,i}}{\omega_{m/n}(r_n(\mu_0))}  \bigg)<1,
	\end{align}
	which, by (\ref{ineq:risk_asymp_const_1}), is equivalent to
	\begin{align}\label{ineq:orthant_2}
	\limsup_n \frac{1}{n\sigma^2} \sum_{i=1}^n \omega_{m/n}^2(r_{n,+}) \mathsf{H}\bigg(\frac{\mu_{0,i}}{\omega_{m/n}(r_{n,+})}  \bigg)<1
	\end{align}
	in probability. On the other hand, using $\omega_{m/n}^2(r_{n,+})\leq 2(r_{n,+}^2+\sigma^2)\leq C_\epsilon$, and the monotonicity of the map $x\mapsto x^2 H(u/x)$ as proved in Proposition \ref{prop:orthant_GH}-(2), the variance of the left hand side of the above display is
	\begin{align*}
	&\frac{1}{n\sigma^4}\cdot \omega_{m/n}^4(r_{n,+})\var\bigg[\mathsf{H}\bigg(\frac{U}{\omega_{m/n}(r_{n,+})}\bigg)  \bigg]\\
	&\leq \frac{1}{n\sigma^4} \E\bigg[ \omega_{m/n}^2(r_{n,+})\mathsf{H}\bigg(\frac{U}{\omega_{m/n}(r_{n,+})}\bigg)    \bigg]^2 \leq n^{-1}\cdot (C_\epsilon^2/\sigma^4)\cdot \E \mathsf{H}^2(U/C_\epsilon^{1/2}).
	\end{align*}
	This means (\ref{ineq:orthant_2}) holds in probability if and only if
	\begin{align}\label{ineq:orthant_3}
	\frac{1}{\sigma^2} \cdot \limsup_n \omega_{m/n}^2(r_{n,+}) \E \mathsf{H}\bigg(\frac{U}{\omega_{m/n}(r_{n,+})}  \bigg)<1.
	\end{align}
	Consequently, under (\ref{ineq:orthant_3}), for any small $\epsilon>0$, there exists an event $E$ on which (\ref{ineq:orthant_1}) holds, and $\Prob_U(E)\geq 1-\epsilon$. As $r_n(\mu_0)^2\pequiv r_{n,+}^2\gtrsim 1/(m/n-1/2)\gg  \mathfrak{L}_n/m$, on the event $E$ we may apply Theorem \ref{thm:risk_asymp} to obtain
	\begin{align}\label{ineq:orthant_4}
	\lim_n \Prob_{X,\xi}\bigg(\biggabs{ \frac{ \pnorm{\hat{\mu}(\sigma)-\mu_0}{}^2 }{nr_n^2(\mu_0)}-1}>\epsilon\bigg)=0.
	\end{align}
	Note that
	\begin{align*}
	\Prob\bigg(\biggabs{\frac{ \pnorm{\hat{\mu}(\sigma)-\mu_0}{}^2 }{nr_n^2(\mu_0)}-1}>\epsilon\bigg)& \leq \E \bm{1}\bigg(\biggabs{\frac{ \pnorm{\hat{\mu}(\sigma)-\mu_0}{}^2   }{nr_n^2(\mu_0)}-1}>\epsilon\bigg) \bm{1}_E + \epsilon.
	\end{align*}
	By (\ref{ineq:orthant_4}) and dominated convergence theorem, the first term in the right hand side of the above display vanishes as $n \to \infty$, so we conclude that 
	\begin{align*}
	n^{-1} \pnorm{\hat{\mu}(\sigma)-\mu_0}{}^2\pequiv r_n^2(\mu_0)\pequiv r_{n,+}^2,
	\end{align*}
	as desired. 
	
	\noindent (2). Consider the fixed point equation
	\begin{align}\label{ineq:orthant_5}
	\omega_\delta^2(r)\cdot \E \mathsf{G}\bigg(\frac{U}{\omega_\delta(r)}\bigg) = r^2
	\end{align}
	which admits a unique solution $\mathsf{r}_+(\delta)$. Fix $1/2<\delta_1\leq \delta_2$. As $r\mapsto r^2 \E \mathsf{G}(U/r)$ is  non-decreasing, we have
	\begin{align*}
	\mathsf{r}_+^2(\delta_1)&= \omega_{\delta_1}^2(\mathsf{r}_+(\delta_1))\cdot \E \mathsf{G}\bigg(\frac{U}{\omega_{\delta_1}(\mathsf{r}_+(\delta_1))}\bigg) \geq \omega_{\delta_2}^2(\mathsf{r}_+(\delta_1))\cdot \E \mathsf{G}\bigg(\frac{U}{\omega_{\delta_2}(\mathsf{r}_+(\delta_1)) }\bigg) \\
	&= n^{-1} \E \bigpnorm{\hat{\mu}^{\seq}_{K_+}\big(\omega_{\delta_2}(\mathsf{r}_+(\delta_1))\big)-\mu_0}{}^2 = n^{-1}\cdot \E \err\big(\omega_{\delta_2}(\mathsf{r}_+(\delta_1))\big). 
	\end{align*}
	By the monotonicity of $r\mapsto \E \err_{K_+}\big(\omega_{\delta_2}(r)\big)/r^2$ (cf. Proposition \ref{prop:unique_fixed_point_eqn}-(1)), we conclude that $\mathsf{r}_+(\delta_2)\leq \mathsf{r}_+(\delta_1)$. This proves that $\delta \mapsto \mathsf{r}_+(\delta)$ is non-increasing on $(1/2,\infty)$. Now separately letting $\delta \uparrow \infty$ and $\delta \downarrow 1/2$ on both sides of (\ref{ineq:orthant_5}), it is easy to see that $\lim_{\delta \uparrow \infty} \mathsf{r}_+(\delta) = 0$ whereas $\lim_{\delta \downarrow 1/2} \mathsf{r}_+(\delta) = \infty$. The remaining claims follow from (1).
	
	\noindent (3). As $m/n \to \infty$, by (\ref{eqn:orthant_fixed_point_eqn}) and that $\mathsf{G}\leq 1$, we have the apriori estimate:
	\begin{align*}
	r_{n,+}^2 \leq \omega_{m/n}^2(r_{n,+}) = \frac{r_{n,+}^2+\sigma^2}{m/n},
	\end{align*}
	which leads to $r_{n,+}\vee \omega_{m/n}(r_{n,+})\to 0$. So we have
	\begin{align*}
	\omega_{m/n}^2(r_{n,+}) \E \mathsf{H}\bigg(\frac{U}{\omega_{m/n}(r_{n,+})}  \bigg) \to 0,
	\end{align*} 
	and by dominated convergence theorem, we have
	\begin{align*}
	0\leq 1- \E \mathsf{G}\bigg(\frac{U}{ \omega_{m/n}(r_{n,+})  }\bigg) &= 1- \frac{1}{2}\cdot \Prob(U=0)-\E \mathsf{G}\bigg(\frac{U}{ \omega_{m/n}(r_{n,+})  }\bigg)\bm{1}_{U>0}\\
	&\to 1-\frac{1}{2}\cdot \Prob(U=0)-\Prob(U>0) = \frac{p_0}{2}.
	\end{align*}
	The above display implies that under $m/n\to \infty$, the fixed point equation (\ref{eqn:orthant_fixed_point_eqn}) reduces to
	\begin{align*}
	r_{n,+}^2=\omega_{m/n}^2(r_{n,+})\bigg(1-\frac{1+\smallo(1)}{2}\cdot p_0 \bigg)= \frac{r_{n,+}^2+\sigma^2}{m/n}\bigg(1-\frac{1+\smallo(1)}{2}\cdot p_0\bigg).
	\end{align*}
	Solving the equation we obtain $r_{n,+}^2 \simeq (1-p_0/2)\sigma^2/(m/n)$. The claim now follows by an application of (1). 
\end{proof}

\subsection{Proofs for Section \ref{section:shape}}

\begin{proof}[Proof of Theorem \ref{thm:orac_ineq_shape}]
	We claim that for any $k \in [1:n]$, any $\mu \in K_S\cap K \in \mathscr{K}_k$ with $S=\{S_\ell\}_{\ell=1}^k$, we have
	\begin{align}\label{ineq:orac_ineq_shape_1}
	\E \err(\sigma)\leq \pnorm{\mu-\mu_0}{}^2+ \sigma^2 \sum_{\ell=1}^k \delta_{K|_{S_\ell}}.
	\end{align}
	First by projection, $\iprod{y-\Pi_K(y)}{\mu-\Pi_K(y)}\leq 0$ for all $\mu \in K$. This means that $\pnorm{y-\mu}{}^2=\pnorm{y-\Pi_K(y)+\Pi_K(y)-\mu}{}^2\geq \pnorm{y-\Pi_K(y)}{}^2+\pnorm{\Pi_K(y)-\mu}{}^2$ for all $\mu \in K$. Expanding the square with $y=\mu_0+\sigma h$, it is easy to see
	\begin{align}\label{ineq:orac_ineq_shape_2}
	\pnorm{\Pi_K(y)-\mu_0}{}^2&\leq \pnorm{\mu-\mu_0}{}^2+\pnorm{\sigma h}{}^2-\pnorm{\sigma h-(\Pi_K(y)-\mu)}{}^2\nonumber\\
	&\stackrel{(\ast)}{\leq} \pnorm{\mu-\mu_0}{}^2+ \pnorm{\sigma h}{}^2 - \pnorm{\sigma h-\Pi_{T_K(\mu)}(\sigma h) }{}^2\nonumber\\
	&\stackrel{(\ast\ast)}{\leq} \pnorm{\mu-\mu_0}{}^2 + \sigma^2 \pnorm{  \Pi_{T_K(\mu)}(h)   }{}^2,
	\end{align}
	where in $(\ast)$ we used that $\Pi_K(y)-\mu \in T_K(\mu)$, and in $(\ast\ast)$ we used Lemma \ref{lem:moreau_thm}. The above inequality appears already in  \cite{bellec2018sharp} and \cite{chatterjee2018matrix}; we have reproduced some details for the convenience of the reader. On the other hand, for any $\mu \in K_S\cap K$ and $\nu \in K$, as $\mu|_{S_\ell} \in K_{S_\ell}$, $K|_{S_\ell}-K_{S_\ell}\subset K|_{S_\ell}$, we have $(\nu-\mu)|_{S_\ell}\subset K|_{S_\ell}$. So $T_K(\mu)\subset  \oplus_{\ell=1}^k T_{K|_{S_\ell}}(0)$. By \cite[Proposition 3.1]{amelunxen2014living}, we have $\E\pnorm{\Pi_{T_K(\mu)}(\sigma h)   }{}^2= \delta_{T_K(\mu)}\leq \sum_{\ell=1}^k \delta_{K|_{S_\ell}}$. Now taking expectation on both sides of (\ref{ineq:orac_ineq_shape_2}) to conclude (\ref{ineq:orac_ineq_shape_1}). Finally invoking Theorem \ref{thm:risk_asymp_consist} to complete the proof. 
\end{proof}

\begin{proof}[Proof of Corollary \ref{cor:risk_asymp_iso}]
	We first prove the main inequality. It is well-known that $\delta_{K_{\uparrow}}=\sum_{j=1}^n (1/j) \leq \log (en)$ (cf. \cite[Eqn. (D.12)]{amelunxen2014living}), so for any partition $S=\{S_\ell\}_{\ell=1}^k \in \mathcal{P}_n$ where $S_\ell$ contains consecutive integers, we have $\sum_{\ell=1}^k \delta_{K|_{S_\ell}}\leq \sum_{\ell=1}^k \log (e\abs{S_\ell})\leq k\log (en/k)$, where the last inequality follows by an application of Jensen's inequality. 
	
	Next we verify that $\mathcal{E}_{K_{\uparrow}}(\mu_0)=\smallo(1)$ for $\mu_{0,n}-\mu_{0,1}=\bigo(1)$. To this end, for given $1\leq k\leq n$, let $\{n_\ell\equiv \ell \floor{n/k}\}_{\ell=0}^k$ and $n_{k+1}\equiv n$. Let $\{S_\ell\equiv (n_{\ell-1}:n_\ell]\}_{\ell=1}^k$  and $S_{k+1}\equiv (n_k:n_{k+1}]$ ($S_{k+1}$ can possibly be $\emptyset$). Now for the given $\mu_0$, define $\mu_0^{k+1} \in \mathcal{M}_{k+1}$ by $\mu_0^{k+1}|_{S_\ell}\equiv \mu_{0,n_{\ell-1}+1}$, $1\leq \ell \leq k+1$. Then
	\begin{align*}
	&\inf_{\mu \in \mathcal{M}_{k+1}} \pnorm{\mu-\mu_0}{}^2 \leq \pnorm{\mu_0^{k+1}-\mu_0}{}^2 \leq \sum_{\ell=1}^{k+1} \big(\mu_{0,n_{\ell}}-\mu_{0,n_{\ell-1}+1}\big)^2\cdot \floor{n/k}\\
	& \leq (\mu_{0,n}-\mu_{0,1})\cdot \sum_{\ell=1}^{k+1} \big(\mu_{0,n_{\ell}}-\mu_{0,n_{\ell-1}+1}\big) (n/k) \leq (\mu_{0,n}-\mu_{0,1})^2(n/k). 
	\end{align*}
	This means
	\begin{align*}
	\mathcal{E}_{K_{\uparrow}}(\mu_0)\lesssim \inf_{1\leq k\leq n}\bigg(\frac{1}{k}+\frac{k \log (en/k)}{m}\bigg)=\smallo(1)
	\end{align*}
	under $m\gg \log n$. The proof of the main inequality is now complete.
	
	Finally we verify that $\bigop(\mathfrak{L}_n/m)$ can be assimilated into the bound $\sigma^2 k\log (en/k)/m$ when $\mu_0 \in \mathcal{M}_k$. To this end, note that  $\mathfrak{L}_n\lesssim \log\big(k \log (en/k)\log n\big)$. As $\min_{1\leq k\leq n} k\log (en/k)=\log (en)\gg 1$, we have $k\log (en/k)\gg \log(k \log (en/k))\vee \log \log n$. So $\mathfrak{L}_n \ll k\log (en/k)$, as desired.
\end{proof}

\subsection{Proofs for Section \ref{section:lasso}}

\begin{proof}[Proof of Theorem \ref{thm:risk_asymp_lasso}]
	We first prove the upper bound for $r_n^2$. By Proposition \ref{prop:est_err_variance}-(1), $\E \err(\sigma)\leq \sigma^2 \delta_0$, so the fixed point equation (\ref{eqn:fixed_pt_eqn}) yields that
	\begin{align}\label{ineq:proof_lasso_1}
	nr_n^2 = \E \err\big(\omega_{m/n}(r_n)\big)\leq \omega_{m/n}^2(r_n) \cdot \delta_0 = \frac{r_n^2+\sigma^2}{m/n}\cdot \delta_0.
	\end{align}
	Solving the inequality yields the desired bound for $r_n^2\leq \bar{r}_n^2$, where $\bar{r}_n^2\equiv \sigma^2\delta_0/(m-\delta_0)_+$. Now we consider (R1). Under $\liminf_n (m/\delta_0)>1$, $r_n^2\leq \bar{r}_n^2=\bigo(1)$ by the upper bound just proven. On the other hand, as $\mathfrak{L}_n \asymp  \log_+\delta_{0}+\log\log (16n)$, we have $\bar{r}_n^2+\bigop(\mathfrak{L}_n/m)=(1+\smallop(1))\bar{r}_n^2+\bigop((\log_+\delta_0+\log\log n)/m)$ and $m\gg \log_+\delta_{0}+\log \log n$ is required. If $\delta_{0}\to \infty$, $\log_+\delta_0/m$ can be assimilated in $\bar{r}_n^2$, and $\liminf_n m/\delta_{0}>1$ already entails $m\gg \log_+ \delta_{0}$. If $\delta_{0}=\bigo(1)$, then $m\gg \log \log n\gg \log_+\delta_{0}$. Consequently, we only need to require $m\gg \log \log n$ to ensure (R1). (R2) is satisfied by Proposition \ref{prop:R2_verify}-(1). Now applying Theorem \ref{thm:risk_asymp} to conclude.
	
	For the last claim, if $K_{(\mathsf{f},\mu_0)}-\mu_0$, then the inequality in (\ref{ineq:proof_lasso_1}) takes equality, and (R2) is degenerate. 
\end{proof}

\section{Proofs for Section \ref{section:est_err_lrt_process}}\label{section:proof_est_lrt_dof}

\subsection{Proofs for Section \ref{section:est_process}}

\begin{proof}[Proof of Lemma \ref{lem:monotone_est_err}]
	\noindent (1). By the arguments up to \cite[Eqn. (10), pp. 2352-2353]{chatterjee2014new}, we have 
	\begin{align*}
	E(\sigma)&\equiv \err^{1/2}(\sigma)= \argmax_{t \geq 0} \bigg\{\sigma\cdot \sup_{\mu \in K: \pnorm{\mu-\mu_0}{}\leq t} \iprod{h}{\mu-\mu_0}-\frac{t^2}{2}\bigg\}\\
	& \equiv \argmax_{t \geq 0} \bigg\{\sigma\cdot M(t)-\frac{t^2}{2}\bigg\}\equiv \argmax_{t \geq 0} V(t;\sigma).
	\end{align*}
	The arguments around \cite[Eqn. (8)-(9), pp. 2352]{chatterjee2014new} showed that the map $t\mapsto V(t;\sigma)$ is strictly concave and drops to $-\infty$ as $t\to\infty$, so admits a unique maximizer. In other words, $E(\sigma)$ is well-defined in this representation. Clearly $M(t)$ is non-decreasing with $M(0)=0$. By \cite[Eqn. (8), pp. 2352]{chatterjee2014new}, $M(t)$ is also concave. Fix $0\leq \sigma_1<\sigma_2<\infty$. Then by definition of $E(\sigma_1),E(\sigma_2)$,
	\begin{align*}
	V\big(E(\sigma_2);\sigma_1\big)&<V\big(E(\sigma_1);\sigma_1\big),\\
	V\big(E(\sigma_2);\sigma_2\big)&>V\big(E(\sigma_1);\sigma_2\big).
	\end{align*}
	This means that
	\begin{align}\label{ineq:monotone_est_err_1}
	V\big(E(\sigma_2);\sigma_2\big)-V\big(E(\sigma_2);\sigma_1\big)>V\big(E(\sigma_1);\sigma_2\big)-V\big(E(\sigma_1);\sigma_1\big).
	\end{align}
	On the other hand, the map
	\begin{align*}
	t\mapsto V(t;\sigma_2)-V(t;\sigma_1) = (\sigma_2-\sigma_1) M(t)
	\end{align*}
	is non-decreasing on $[0,\infty)$ due to the choice $\sigma_1< \sigma_2$. This combined with the comparison inequality in (\ref{ineq:monotone_est_err_1}) necessarily implies that $E(\sigma_2)\geq E(\sigma_1)$, as  desired. 
	
	\noindent (2). That $\err'\geq 0$  follows by (1).
	
	\noindent (\textbf{Step 1}). First we show the result for a convex polytope $K$. 	$\err$ is absolutely continuous with derivative
	\begin{align}\label{ineq:est_err_var_1}
	\err'(\sigma)&=(2/\sigma)\iprod{\Pi_K(\mu_0+\sigma h)-\mu_0}{\J_{\Pi_K}(\mu_0+\sigma h)(\sigma h)}.
	\end{align}
	The above identity holds for any closed convex set $K$. Now as $K$ is a convex polytope, $\J_{\Pi_K}$ is a.e. an orthogonal projection matrix, i.e., $\J_{\Pi_K}=\J_{\Pi_K}^\top$ with $\J_{\Pi_K}=\J_{\Pi_K}^2$.  By \cite[Lemma 2.1-(2)]{han2022high}, we have
	\begin{align}\label{ineq:est_err_var_3}
	\J_{\Pi_K}(\mu_0+\sigma h)^\top (\hat{\mu}_K^{\seq}(\sigma))=\J_{\Pi_K}(\mu_0+\sigma h)^\top (\mu_0+\sigma h).
	\end{align}
	The above identity holds for any closed convex set $K$.
	So using $\J_{\Pi_K}=\J_{\Pi_K}^\top$ a.e., we have a.e.
	\begin{align*}
	\J_{\Pi_K}(\mu_0+\sigma h)(\sigma h)= \J_{\Pi_K}(\mu_0+\sigma h)(\hat{\mu}_K^{\seq}(\sigma)-\mu_0).
	\end{align*}
	Using $\pnorm{\J_{\Pi_K}}{\op}\leq 1$, it follows that a.e.
	\begin{align}\label{ineq:est_err_var_2}
	\pnorm{\J_{\Pi_K}(\mu_0+\sigma h)(\sigma h)}{}
	&\leq  \pnorm{\hat{\mu}_K^{\seq}(\sigma)-\mu_0}{}.
	\end{align}
	Combining (\ref{ineq:est_err_var_1}) and (\ref{ineq:est_err_var_2}), we have a.e.
	\begin{align*}
	\abs{\err'(\sigma)}&\leq \frac{2}{\sigma} \pnorm{\hat{\mu}_K^{\seq}(\sigma)-\mu_0}{}\cdot \pnorm{\J_{\Pi_K}(\mu_0+\sigma h)(\sigma h)}{} \nonumber\\
	& \leq \frac{2\pnorm{\hat{\mu}_K^{\seq}(\sigma)-\mu_0}{}^2 }{\sigma} = \frac{2\err(\sigma)}{\sigma}.
	\end{align*}
	Consequently, 
	\begin{align*}
	\frac{\d}{\d\sigma} \bigg(\frac{\err(\sigma)}{\sigma^2}\bigg) = \frac{\err'(\sigma)-2\err(\sigma)/\sigma}{\sigma^2}\leq 0\quad \textrm{a.e.},
	\end{align*}
	and by the absolute continuity of $\err(\cdot)$, the above display is equivalent to that of $\sigma\mapsto \err(\sigma)/\sigma^2$ being non-increasing. 
	
	\noindent (\textbf{Step 2}). Now we will extend the result to a general closed convex set $K$. Let $\err_K\equiv \err_{(K,\mu_0)}$ for notational convenience. We want to prove that $\sigma\mapsto \err_K(\sigma)/\sigma^2$ is non-increasing, or equivalently
	\begin{align}\label{ineq:est_err_var_4}
	\frac{\err_K(M\sigma)}{(M\sigma)^2}\leq \frac{\err_K(\sigma)}{\sigma^2},\quad \forall \sigma>0, M\geq 1.
	\end{align}
	For any fixed $\sigma>0,M\geq 1$ and $h \in \R^n$ (hidden in the notation $\err_K$), we may find sufficiently large $R>0$ such that $K$ may be replaced by the convex body $K\cap B(R)$ in the above display, where $B(R)\equiv \{x \in \R^n: \pnorm{x}{}\leq R\}$. Now the convex body $K\cap B(R)$ can be approximated in Hausdorff distance by convex polytopes, say, $K_\ell$, cf. \cite[Theorem 1.8.16]{schneider2014convex}, so by Step 1, (\ref{ineq:est_err_var_4}) holds for $K_\ell$. By (an easy consequence of) \cite[Theorem 1.8.8-(2)]{schneider2014convex}, $\Pi_{K_\ell}\to \Pi_{K\cap B(R)}$ pointwise, so passing limits in $\ell$ proves (\ref{ineq:est_err_var_4}) for $K\cap B(R)$, and therefore for a general closed convex set $K$.
\end{proof}

\begin{proof}[Proof of Proposition \ref{prop:est_err_variance}]
	
	\noindent (1). 
	The inequality $\err(\sigma) = \pnorm{\Pi_K(\mu_0+\sigma h)-\Pi_K(\mu_0)}{}^2\leq \sigma^2 \pnorm{h}{}^2$ follows by the fact that $\Pi_K$ is a contraction map. 
	The inequality $\err(\sigma)/\sigma^2\geq \lim_{\sigma \uparrow \infty}\pnorm{\Pi_K(\sigma h)}{}^2/\sigma^2$ follows from the limiting claim by the monotonicity of $\sigma \mapsto \err(\sigma)/\sigma^2$ in Lemma \ref{lem:monotone_est_err}. To prove the limit, using that 
	\begin{align*}
	\big(\pnorm{\Pi_K(\sigma h)}{}-2 \pnorm{\mu_0}{}\big)_+\leq \bigpnorm{\hat{\mu}_K^{\seq}(\sigma)-\mu_0}{}\leq \pnorm{\Pi_K(\sigma h)}{}+2 \pnorm{\mu_0}{},
	\end{align*}
	we have for any $\epsilon>0$, 
	\begin{align*}
	&\frac{1}{\sigma^2}\cdot \Big[(1-\epsilon) \pnorm{\Pi_K(\sigma h)}{}^2-C_\epsilon\cdot \pnorm{\mu_0}{}^2\Big]\\
	&\leq \frac{ \err(\sigma)}{\sigma^2}\leq \frac{1}{\sigma^2}\cdot \Big[(1+\epsilon) \pnorm{\Pi_K(\sigma h)}{}^2+C_\epsilon\cdot \pnorm{\mu_0}{}^2\Big]
	\end{align*}
	holds for some large $C_\epsilon>0$. Now taking $\sigma \uparrow \infty$ followed by $\epsilon\downarrow 0$ to conclude the high noise limit. The low noise limit is proved in \cite[Theorem 1.1]{oymak2016sharp}.
	
	\noindent (2). Note that
	\begin{align*}
	\nabla_h \err(\sigma) 
	= 2\sigma\cdot \J_{\Pi_K}(\mu_0+\sigma h)^\top \big(\Pi_K(\mu_0+\sigma h)-\mu_0\big).
	\end{align*}
	Now Gaussian-Poincar\'e inequality yields that
	\begin{align*}
	&\var\big(\err(\sigma)\big)\leq \E \bigpnorm{  \nabla_h \err(\sigma)  }{}^2 \\
	&\leq 4\sigma^2\cdot \E \bigpnorm{ \J_{\Pi_K}(\mu_0+\sigma h)^\top \big(\Pi_K(\mu_0+\sigma h)-\mu_0\big)  }{}^2 \leq 4\sigma^2 \E\err(\sigma).
	\end{align*}
	The last inequality follows by using $\pnorm{\J_{\Pi_K}}{\op}\leq 1$.

	\noindent (3). 	We apply the Gaussian log-Sobolev inequality in the form stated in Lemma \ref{lem:gaussian_log_sobolev} with $G(h) \equiv \lambda Z\equiv  \lambda \big(\err(\sigma)-\E \err(\sigma) \big)$. Then using the calculations in (2),  $\pnorm{\nabla_h G}{}^2=\lambda^2 \pnorm{\nabla_h \err(\sigma)}{}^2 \leq 4 \sigma^2\lambda^2 \err(\sigma) =4 \sigma^2\lambda^2 (Z+ \E \err(\sigma)) $. This means 
	\begin{align*}
	\lambda \E[Z e^{\lambda Z}]- \E e^{\lambda Z}\cdot \log \E e^{\lambda Z} \leq \frac{1}{2} \E \big[4\sigma^2 \lambda^2 (Z+\E \err(\sigma)) e^{\lambda Z}\big].
	\end{align*}
	Let $m_Z(\lambda) = \E e^{\lambda Z}$ be the moment generating function of $Z$. Then
	\begin{align*}
	\lambda m_Z'(\lambda)-m_Z(\lambda) \log m_Z(\lambda)\leq 2 \sigma^2 \lambda^2 m_Z'(\lambda) +2\sigma^2 \lambda^2 \E \err(\sigma) m_Z(\lambda). 
	\end{align*}
	Now we may use Herbst's argument to conclude a concentration inequality. Some details are given below. Divide on both sides $\lambda^2 m_Z(\lambda)$, we arrive at
	\begin{align*}
	\bigg(\frac{\log m_Z(\lambda)}{\lambda}\bigg)'&=\frac{1}{\lambda} \frac{m_Z'(\lambda)}{m_Z(\lambda)}-\frac{1}{\lambda^2} \log m_Z(\lambda) \\
	&\leq 2\sigma^2 \bigg[ \frac{m_Z'(\lambda)}{m_Z(\lambda)}+\E \err(\sigma)\bigg] = 2\sigma^2 \big(\log m_Z(\lambda) + \lambda \E \err(\sigma)\big)'.
	\end{align*}
	As $\lim_{\lambda \downarrow 0}\log m_Z(\lambda)/\lambda = 0$ and $\log m_Z(0)=0$, integrating the above display yields
	\begin{align*}
	\log m_Z(\lambda)  \leq 2 \sigma^2 \lambda \log m_Z(\lambda)+ 2\sigma^2 \lambda^2 \E \err(\sigma).
	\end{align*}
    This gives the desired gamma behavior of the moment generating function. Conversion to tail bounds is standard, see cf. \cite[pp. 29]{boucheron2013concentration}.
\end{proof}

\subsection{Proofs for Section \ref{section:lrt_process}}

\begin{proof}[Proof of Lemma \ref{lem:lrt_qual}]
	\noindent (1). Note that
	\begin{align*}
	& \pnorm{y-\Pi_K(y)}{}^2  =  \pnorm{\mu_0+\sigma h-\hat{\mu}_K^{\seq}(\sigma)}{}^2 = \err(\sigma)-2\dof(\sigma)+ \sigma^2\pnorm{h}{}^2,
	\end{align*}
	and
	\begin{align*}
	\frac{\d}{\d\sigma} \pnorm{y-\Pi_K(y)}{}^2 &= 2 \iprod{y-\Pi_K(y)}{h} = 2\big(\sigma\pnorm{h}{}^2- \dof(\sigma)/\sigma\big).
	\end{align*}
	So the definition of $\lrt(\cdot)$ in (\ref{def:lrt}) yields
	\begin{align}\label{ineq:lrt_qual_1}
	\lrt(\sigma)& =  2\dof(\sigma)-\err(\sigma),
	\end{align}
	with its derivative $\lrt'(\cdot)$ given by
	\begin{align*}
	\lrt'(\sigma)& = \frac{\d}{\d\sigma}  \pnorm{y-\Pi_{\{\mu_0\}}(y) }{}^2 - \frac{\d}{\d\sigma}  \pnorm{y-\Pi_K(y)}{}^2  = 2\dof(\sigma)/\sigma.
	\end{align*}
	The claim follows by simple algebra combining the above two displays. 
	
	\noindent (2).  As $\bm{0}_K^\ast(s)=\sup_{t \in K} s^\top t$, we have
	\begin{align*}
	&\min_{s \in \R^n} \big\{\pnorm{\sigma h-s}{}^2-2\big(s^\top (\mu_0-\nu)-\bm{0}_K^\ast(s)\big)\big\} \\
	& = \sigma^2 \pnorm{h}{}^2+ \min_{s \in \R^n}\sup_{t \in K}\big\{- 2\iprod{\sigma h+\mu_0-\nu}{s}+\pnorm{s}{}^2+2s^\top t\big\}\\
	& \stackrel{(\ast)}{=}\sigma^2 \pnorm{h}{}^2+ \sup_{t \in K} \min_{s \in \R^n}\big\{- 2\iprod{\sigma h+\mu_0-\nu-t}{s}+\pnorm{s}{}^2\big\}\\
	& = \sigma^2 \pnorm{h}{}^2- \inf_{t \in K} \pnorm{\sigma h+\mu_0-\nu-t}{}^2\\
	& = \sigma^2 \pnorm{h}{}^2 -\dis^2\big(\sigma h+\mu_0-\nu,K\big)\\
	& \stackrel{(\ast\ast)}{=} \sigma^2 \pnorm{h}{}^2 -\dis^2\big(\sigma h+\mu_0,K\big)=\lrt(\sigma),
	\end{align*}
	Here (a) in $(\ast)$ we used Sion's min-max theorem (cf. Lemma \ref{lem:sion_minmax}), as the range of $s$ can be restricted to a compact set due to the fact that objective function is quadratic in $s$ given all other quantities; (b) in $(\ast\ast)$ we used Lemma \ref{lem:proj_split} with $(\mu,\xi)$ therein replaced by $(-\nu,\sigma h+\mu_0)$ under the condition that (i) $K+\nu\subset K$ and (ii) $-\nu +\Pi_K(\mu_0+\sigma h) \in K$. 
	
	\noindent (3). Note that by (1),
	\begin{align*}
	\frac{\d}{\d\sigma}\bigg(\frac{\lrt(\sigma)}{\sigma}\bigg)& = \frac{\lrt'(\sigma)\sigma-\lrt(\sigma)}{\sigma^2}=\frac{\err(\sigma)}{\sigma^2}.
	\end{align*}
	By Lemma \ref{lem:monotone_est_err}-(2), the right hand side of the above display is non-increasing. 
	
	\noindent (4). Note that by (1) and Proposition \ref{prop:lrt_prop}-(2) below,
	\begin{align*}
	\frac{\d}{\d\sigma}\bigg(\frac{\lrt(\sigma)}{\sigma^2}\bigg)& =\frac{\lrt'(\sigma)\sigma- 2 \lrt(\sigma)   }{\sigma^3}= \frac{\err(\sigma)-\lrt(\sigma)}{\sigma^3} \leq 0.
	\end{align*}
	The claim follows. 
\end{proof}

\begin{proof}[Proof of Proposition \ref{prop:lrt_prop}]
	\noindent (1). The claim follows by using that
	\begin{align*}
	\E \lrt(\sigma) = 2\sigma^2 \E \dv \hat{\mu}_K^{\seq}(\sigma)- \E \err(\sigma)
	\end{align*}
	and $\E \dv \hat{\mu}_K^{\seq}(\sigma)\leq n$ (cf. \cite[Corollary 1]{meyer2000degrees}).

	\noindent (2). The lower bound follows by (\ref{ineq:lrt_qual_1}) and the left hand side of the first inequality in Proposition \ref{prop:dof}-(3). For the upper bound, by (\ref{ineq:lrt_qual_1}) and Proposition \ref{prop:dof}-(3), 
	\begin{align*}
	\lrt(\sigma) -\err(\sigma) &= 2\big(\dof(\sigma)- \err(\sigma)\big)\leq 2\sigma \int_0^\sigma \bigg(\frac{\err(\tau)}{\tau^2}-\frac{\err(\sigma)}{\sigma^2}\bigg)\,\d{\tau}.
	\end{align*}
	Now suppose further that $K$ is a closed convex cone and we prove the announced upper bound in this setting.  Choosing
	\begin{align*}
	s=\sigma h+\mu_0- \Pi_K(\mu_0+\sigma h) = \Pi_{K^\ast}(\mu_0+\sigma h) \in K^\ast
	\end{align*}
	in Lemma \ref{lem:lrt_qual}-(2), we obtain 
	\begin{align*}
	\lrt(\sigma)& \leq \pnorm{\Pi_K(\mu_0+\sigma h)- \mu_0}{}^2+2\iprod{ \Pi_K(\mu_0+\sigma h) -\mu_0-\sigma h  }{\mu_0-\nu}\\
	&= \err(\sigma)+2\iprod{ \hat{\mu}_K^{\seq}(\sigma)-\mu_0-\sigma h}{\mu_0-\nu}.
	\end{align*}
	The claimed inequality follows.

	\noindent (3). By \cite[Lemma 2.1-(1)]{han2022high}, $\nabla_y \pnorm{y-\Pi_K(y)}{}^2 = 2(y-\Pi_K(y))$, so $
	\nabla_h \pnorm{y-\Pi_K(y)}{}^2 = 2\sigma (y-\Pi_K(y))$.
	Hence
	\begin{align}\label{ineq:lrt_prop_1}
	\nabla_h \lrt(\sigma) &= 2\sigma\Big[\big(y-\mu_0\big)-\big(y-\Pi_K(y)\big)\Big]=2\sigma \big(\mu_K^{\seq}(\sigma)-\mu_0\big).
	\end{align}
	By Gaussian-Poincar\'e inequality,
	\begin{align*}
	\var\big(\lrt(\sigma)\big)\leq \E \pnorm{   \nabla_h \lrt(\sigma)  }{}^2\leq 4\sigma^2 \E \err(\sigma). 
	\end{align*}
	\noindent (4). Fix $\lambda \in \R$. Let 
	\begin{align*}
	G(h)\equiv \lambda \cdot \frac{\lrt(\sigma)-\E \lrt (\sigma)}{\sigma\cdot \E^{1/2} \err(\sigma)}\equiv \lambda \cdot Z.
	\end{align*}
	Then
	\begin{align*}
	\nabla_h G(h)& = \frac{\lambda}{\sigma \cdot \E^{1/2} \err(\sigma)}\nabla_h \lrt (\sigma),
	\end{align*}
	so by (\ref{ineq:lrt_prop_1}),
	\begin{align*}
	\pnorm{\nabla_h G(h)}{}^2&\leq   \frac{\lambda^2}{\sigma^2 \E \err(\sigma)}\cdot (4\sigma^2 \err(\sigma)) = 4\lambda^2\cdot  (\err(\sigma)/\E \err(\sigma)). 
	\end{align*}
	By Gaussian log-Sobolev inequality (cf. Lemma \ref{lem:gaussian_log_sobolev}), we have for any $t\geq 0$, 
	\begin{align*}
	\mathrm{Ent}(e^{t G})\leq \frac{1}{2}\E\big[\pnorm{\nabla_h \big(tG(h)\big) }{}^2 e^{tG (h)}\big] = \frac{t^2}{2}\cdot \E\big[\Gamma\cdot e^{t G}\big],
	\end{align*}
	where $\Gamma\equiv 4\lambda^2 \big(\err(\sigma)/\E \err(\sigma)\big)$. By the `exponential Poincar\'e inequality' proved in \cite{bobkov1999exponential} (cf. Lemma \ref{lem:expo_poincare_ineq}),
	\begin{align*}
	\E e^G\leq \E e^{\Gamma} = \E \exp\big(4\lambda^2 \err(\sigma)/\E \err(\sigma)\big).
	\end{align*}
	By Proposition \ref{prop:est_err_variance}-(3), with $u \equiv 4\lambda^2/\E \err(\sigma)<1/2\sigma^2$, we have
	\begin{align*}
	\E \exp\big(\lambda\cdot Z\big)& \leq \E \exp\big(u\cdot \err(\sigma)\big)\\ &\leq  \exp\big(u\cdot \E \err(\sigma)\big)\cdot \exp\bigg(\frac{2\sigma^2 u^2 \E \err(\sigma)}{1-2\sigma^2 u}\bigg)\\
	& =\exp\bigg(\frac{u \E \err(\sigma)}{1-2\sigma^2 u}\bigg) = \exp\bigg(\frac{4\lambda^2}{1-8 \big(\sigma^2/\E \err(\sigma)\big) \lambda^2 }\bigg).
	\end{align*} 
	Apply Lemma \ref{lem:tail_bound_generic} with $C_0\equiv 4$ and $a\equiv 8(\sigma^2/\E \err(\sigma))$ therein to conclude. 
\end{proof}

\subsection{Proofs for Section \ref{section:dof}}

\begin{proof}[Proof of Proposition \ref{prop:dof}]
	\noindent (1). The claim follows from Stein's identity.
	
	\noindent (2). By (\ref{ineq:lrt_qual_1}), $\dof(\sigma) = (\lrt(\sigma)+\err(\sigma))/2$. The first claim now follows from Lemmas \ref{lem:monotone_est_err}-(2) and \ref{lem:lrt_qual}-(4). For the second claim, $\dof(M\sigma)\leq M^2 \dof(\sigma^2)$ follows immediately from the monotonicity of $\sigma \mapsto \dof(\sigma)/\sigma^2$, so we are left to prove $\dof(\sigma)\leq \dof(M\sigma)$. This can be seen as
	\begin{align*}
	\dof(\sigma) = \frac{1}{2}\big(\lrt(\sigma)+\err(\sigma)\big)\leq  \frac{1}{2}\big(\lrt(M\sigma)+\err(M\sigma)\big) = \dof(M\sigma),
	\end{align*}
	where the inequality follows by (i) $\err(\sigma)\leq \err(M\sigma)$ due to Proposition \ref{prop:est_err_stability}, and (ii) $\lrt(\sigma)\leq M\lrt(\sigma)\leq \lrt(M\sigma)$ due to Proposition \ref{prop:lrt_stability}.

	\noindent (3). 	We prove the inequality first. The left side inequality follows from basic properties of projection, as
	\begin{align*}
	0\leq \iprod{y-\hat{\mu}_K^{\seq}(\sigma)}{\hat{\mu}_K^{\seq}(\sigma)-\mu_0}= \dof(\sigma)-\err(\sigma).
	\end{align*}
	For the right side inequality, note that
	\begin{align*}
	\frac{\d}{\d\sigma} \iprod{\hat{\mu}_K^{\seq}(\sigma)-\mu}{h} &= \frac{1}{\sigma^2}\iprod{\J_{\Pi_K}^\top(\mu_0+\sigma h)(\sigma h)}{\sigma h}\\
	& = \frac{1}{\sigma^2}\iprod{\J_{\Pi_K}^\top(\mu_0+\sigma h)(\hat{\mu}_K^{\seq}(\sigma)-\mu_0)}{\sigma h}\quad (\hbox{by (\ref{ineq:est_err_var_3})})\\
	& = \frac{\err'(\sigma)}{2\sigma}\quad (\hbox{by (\ref{ineq:est_err_var_1})}),
	\end{align*}
	so by the absolute continuity of $\sigma\mapsto \hat{\mu}_K^{\seq}(\sigma)$, Lebesgue's fundamental theorem of calculus yields that
	\begin{align*}
	\iprod{\hat{\mu}_K^{\seq}(\sigma)-\mu}{h} &= \int_0^\sigma \frac{\d}{\d\tau} \iprod{\hat{\mu}_K^{\seq}(\tau)-\mu}{h}\,\d{\tau} = \int_0^\sigma \frac{\err'(\tau)}{2\tau}\,\d{\tau} \leq \int_0^\sigma \frac{\err(\tau)}{\tau^2}\,\d{\tau}.
	\end{align*}
	Here the last inequality follows from Lemma \ref{lem:monotone_est_err}-(2), completing the proof of the announced inequality. 
	
	For the second claim, by taking expectation, we have 
	\begin{align*}
	&\frac{\E \err(\sigma)}{\sigma^2}\leq \E \dv\hat{\mu}_K^{\seq}(\sigma)\leq \frac{1}{\sigma} \int_0^\sigma \frac{\E\err(\tau)}{\tau^2}\,\d{\tau} \leq \delta_{T_K(\mu_0)}.
	\end{align*}
	The last inequality in the above display follows from Proposition \ref{prop:est_err_variance}-(1) and the monotonicity of $\sigma \mapsto \E \err(\sigma)/\sigma^2$ proved in Lemma \ref{lem:monotone_est_err}-(2). Now taking $\sigma \downarrow 0$, the left most side of the inequality converges to $\delta_{T_K(\mu_0)}$. 
	
	\noindent (4). Both the variance bound and the exponential inequality is a simple consequence of $\dof(\sigma) = (\lrt(\sigma)+\err(\sigma))/2$ combined with the variance bounds and exponential inequalities for $\err,\lrt$ proved in Propositions \ref{prop:est_err_variance} and \ref{prop:lrt_prop}.
\end{proof}

\subsection{Proofs for Section \ref{section:exp_ineq_err_lrt}}

\begin{proof}[Proof of Proposition \ref{prop:est_err_lrt_sup}]
	We only prove the inequality for $H \in \{\err,\lrt\}$, as the inequality for $H=\dof$ follows from the identity $\dof(\sigma) = (\lrt(\sigma)+\err(\sigma))/2$.

	\noindent \textbf{(Step 1)}. We will show that there exists a universal constant $C>0$ such that for $H\in \{\err,\lrt \}$, and any $u>0,t\geq 1$,
	\begin{align}\label{ineq:est_err_lrt_sup_1}
	&\Prob\bigg[\sup_{\sigma \in [u,2u]}\big(\abs{H(\sigma)-\E H(\sigma)}\big)\geq C\big( u\cdot \E^{1/2} \err(u)\cdot \sqrt{t}+ u^2\cdot t \big)\bigg]\nonumber\\
	&\qquad\qquad \leq C\cdot(1+\delta_{T_K(\mu_0)}) \cdot e^{-t/C},
	\end{align}
	
	To prove (\ref{ineq:est_err_lrt_sup_1}), for some $N\in \N$ to be determined later, let $\sigma(j)\equiv  u(1+j/N)$ for $1\leq j\leq N$. Then for some absolute constant $L_1>0$,
	\begin{align*}
	&\sup_{\sigma \in [u,2u]}\abs{H(\sigma)-\E H(\sigma)}\leq \max_{1\leq j\leq N} \abs{H(\sigma(j))-\E H(\sigma(j))}\\
	&\qquad\qquad\qquad + \sup_{\substack{\sigma,\sigma' \in [u,2u],\\ \abs{\sigma-\sigma'}\leq u/N}}\big(\abs{H(\sigma)-H(\sigma')}+\E\abs{H(\sigma)-H(\sigma')} \big)\\
	& \leq \max_{1\leq j\leq N} \abs{H(\sigma(j))-\E H(\sigma(j))}+L_1 N^{-1}\cdot \big(H(u)+\E H(u)\big).
	\end{align*}
	In the last inequality in the above display, we used the following estimates that hold for any $\sigma,\sigma' \in [u,2u]$, $\sigma\leq \sigma'$ with $\abs{\sigma-\sigma'}\leq u/N$: (i) By Lemma \ref{lem:monotone_est_err}-(2) and Proposition \ref{prop:est_err_stability},
	\begin{align*}
	\abs{\err(\sigma)-\err(\sigma')}&\leq \sup_{\tau \in [\sigma,\sigma']} \abs{\err'(\tau)}\abs{\sigma-\sigma'}\\
	&\leq \sup_{\tau \in [\sigma,\sigma']}\frac{2 \err(\tau)}{\tau}\cdot \frac{u}{N} \leq \frac{2\err(2u)}{N}\lesssim \frac{\err(u)}{N}.
	\end{align*}
	(ii) By Lemma \ref{lem:lrt_qual}-(1) and Proposition \ref{prop:lrt_prop}-(2), we have $\abs{\lrt'(\tau)}\leq \tau^{-1}\abs{\lrt(\tau)+\err(\tau)}\leq 2\tau^{-1}\lrt(\tau)$, so using Lemma \ref{lem:lrt_qual}-(3) and the same argument as above, 
	\begin{align*}
	\abs{\lrt(\sigma)-\lrt(\sigma')} \leq \sup_{\tau \in [\sigma,\sigma']}\frac{2 \lrt(\tau)}{\tau}\cdot \frac{u}{N}\lesssim \frac{\lrt(u)+\err(u)}{N}.
	\end{align*}
	Now using Proposition \ref{prop:est_err_variance}-(3) and Proposition \ref{prop:lrt_prop}-(2)(4), we have with probability at least $1-L_2 N e^{-t/L_2}$, it holds that
	\begin{align*}
	\sup_{\sigma \in [u,2u]} \abs{H(\sigma)-\E H(\sigma)} \lesssim  u\cdot \E^{1/2} \err(2u)\cdot \sqrt{t}+ u^2\cdot t + N^{-1} u\int_0^u \frac{\E \err(\tau)}{\tau^2}\,\d{\tau}.
	\end{align*}
	The term $\E \err(2u)$ can be replaced by $\E \err(u)$ at the cost of an absolute multiplicative factor by the stability estimate in Proposition \ref{prop:est_err_stability}. The claimed inequality (\ref{ineq:est_err_lrt_sup_1}) follows by choosing 
	\begin{align*}
	N\geq 1+\delta_{T_K(\mu_0)} \geq 1+ \frac{\delta_{T_K(\mu_0)} }{\delta_K^{1/2}\sqrt{t}\vee t}\stackrel{(\ast)}{\geq } 1 + \frac{ \int_0^u \big(\E \err(\tau)/\tau^2\big)\,\d{\tau}}{\max\{\E^{1/2}\err(u)\sqrt{t}, u t\}}.
	\end{align*}
	Here in $(\ast)$ we  used $\delta_K\leq \E \err(\tau)/\tau^2 \leq \delta_{T_K(\mu_0)}$ (cf. Proposition \ref{prop:est_err_variance}-(1)).
	
	\noindent \textbf{(Step 2)}. Let $L\equiv \ceil{\log_2(M_0/\epsilon_0)}$. For $0\leq \ell \leq L$, let $\sigma_\ell\equiv \epsilon_0\cdot 2^{\ell-1}$. By (\ref{ineq:est_err_lrt_sup_1}), with probability at least 
	$1-C\cdot (1+\delta_{T_K(\mu_0)})\cdot e^{-t/C}$, we have
	\begin{align*}
	\abs{H(\sigma)-\E H(\sigma)}\leq C\big(\sigma\cdot \E^{1/2} \err(\sigma)\cdot \sqrt{t}+\sigma^2 t\big),\quad \forall \sigma \in [\sigma_\ell,\sigma_{\ell+1}].
	\end{align*}
	The claim follows by a union bound.
\end{proof}

\appendix

\section{Some technical tools}

The following version of convex Gaussian min-max theorem, proved quite easily using Gordon's min-max theorem \cite{gordan1985some,gordon1988milman}, is taken from \cite[Theorem 6.1]{thrampoulidis2018precise} or \cite[Theorem 5.1]{miolane2021distribution}.

\begin{theorem}[Convex Gaussian Min-Max Theorem]\label{thm:CGMT}
	Suppose $D_u \in \R^n, D_v \in \R^m$ are compact sets, and $Q: D_u\times D_v \to \R$ is continuous. Let $G=(G_{ij})_{i \in [n],j\in[m]}$ with $G_{ij}$'s i.i.d. $\mathcal{N}(0,1)$, and $g \sim \mathcal{N}(0,I_n)$, $h \sim \mathcal{N}(0,I_m)$ be independent Gaussian vectors. Define
	\begin{align}
	\Phi^{\textrm{p}} (G)& = \min_{u \in D_u}\max_{v \in D_v} \Big( u^\top G v + Q(u,v)\Big), \nonumber\\
	\Phi^{\textrm{a}}(g,h)& = \min_{u \in D_u}\max_{v \in D_v} \Big(\pnorm{v}{} g^\top u + \pnorm{u}{} h^\top v+ Q(u,v)\Big).
	\end{align}
	Then the following hold.
	\begin{enumerate}
		\item For all $t \in \R$, 
		\begin{align*}
		\Prob\big(\Phi^{\textrm{p}} (G)\leq t\big)\leq 2 \Prob\big(\Phi^{\textrm{a}}(g,h)\leq t\big).
		\end{align*}
		\item If $(u,v)\mapsto u^\top G v+ Q(u,v)$ satisfies the conditions of Sion's min-max theorem (cf. Lemma \ref{lem:sion_minmax}) the pair $(D_u,D_v)$ a.s. (for instance, $D_u,D_v$ are convex, and $Q$ is convex-concave), then 
		\begin{align*}
		\Prob\big(\Phi^{\textrm{p}} (G)\geq t\big)\leq 2 \Prob\big(\Phi^{\textrm{a}}(g,h)\geq t\big).
		\end{align*}
	\end{enumerate}
\end{theorem}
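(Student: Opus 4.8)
The plan is to deduce both assertions from Gordon's Gaussian min--max comparison inequality \cite{gordan1985some,gordon1988milman}, of which the Convex Gaussian Min-Max Theorem is essentially a repackaging.

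\smallskip
\noindent\emph{Assertion (1).} First I would view both quantities as $\min_u\max_v$ of Gaussian fields on $D_u\times D_v$, built on a common space carrying $G,g,h$ and an auxiliary $\mathcal N(0,1)$ variable $\gamma$ (all independent), by absorbing the deterministic shift $Q$: set
\[
X_{u,v}\equiv u^\top Gv+\pnorm{u}{}\pnorm{v}{}\,\gamma+Q(u,v),\qquad
Y_{u,v}\equiv \pnorm{v}{}\,g^\top u+\pnorm{u}{}\,h^\top v+Q(u,v).
\]
The augmentation by $\gamma$ is there to match second moments: $\E X_{u,v}^2=\E Y_{u,v}^2=2\pnorm{u}{}^2\pnorm{v}{}^2$. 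Moreover $\E[X_{u,v}X_{u,v'}]=\E[Y_{u,v}Y_{u,v'}]$ for each fixed $u$, while for $u\ne u'$ a direct expansion gives the identity
\[
\E[X_{u,v}X_{u',v'}]-\E[Y_{u,v}Y_{u',v'}]=\big(\pnorm{u}{}\pnorm{u'}{}-\iprod{u}{u'}\big)\big(\pnorm{v}{}\pnorm{v'}{}-\iprod{v}{v'}\big)\ge 0,
\]
the only non-routine point, and exactly two applications of Cauchy--Schwarz. These are precisely the covariance hypotheses under which Gordon's inequality yields $\Prob\big(\min_u\max_v X_{u,v}\ge t\big)\ge\Prob\big(\min_u\max_v Y_{u,v}\ge t\big)=\Prob(\Phi^{\textrm{a}}(g,h)\ge t)$ for every $t$, equivalently $\Prob\big(\min_u\max_v X_{u,v}<t\big)\le\Prob(\Phi^{\textrm{a}}(g,h)<t)$. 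Finally I would remove $\gamma$: on $\{\gamma\le0\}$ one has $X_{u,v}\le u^\top Gv+Q(u,v)$ pointwise, hence $\min_u\max_v X_{u,v}\le\Phi^{\textrm{p}}(G)$ there; since $\gamma$ is independent of $G$ and $\Prob(\gamma\le0)=\frac{1}{2}$, intersecting with $\{\Phi^{\textrm{p}}(G)<t\}$ gives $\frac{1}{2}\Prob(\Phi^{\textrm{p}}(G)<t)\le\Prob\big(\min_u\max_v X_{u,v}<t\big)\le\Prob(\Phi^{\textrm{a}}(g,h)<t)$, and letting the threshold decrease to $t$ (right-continuity of distribution functions) upgrades this to $\Prob(\Phi^{\textrm{p}}(G)\le t)\le2\Prob(\Phi^{\textrm{a}}(g,h)\le t)$.

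\smallskip
\noindent\emph{Assertion (2).} Here I would feed a sign-flipped problem into (1); the convexity hypothesis is used exactly once, via Sion's min--max theorem (Lemma~\ref{lem:sion_minmax}) applied to the convex--concave map $(u,v)\mapsto u^\top Gv+Q(u,v)$ on the compact convex $D_u\times D_v$, which gives $\Phi^{\textrm{p}}(G)=\max_v\min_u\big(u^\top Gv+Q(u,v)\big)$, hence $-\Phi^{\textrm{p}}(G)=\min_v\max_u\big(v^\top G'u+\tilde Q(v,u)\big)$ with $G'\equiv-G^\top$ (again a Gaussian matrix of the requisite shape) and $\tilde Q(v,u)\equiv-Q(u,v)$ (continuous). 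This last expression is a primal of the form covered by (1) with $v\in D_v$ as the min-variable; applying (1) at level $-t$ produces, on the right, the auxiliary value $\phi'\equiv\min_v\max_u\big(\pnorm{u}{}\,\bar g^\top v+\pnorm{v}{}\,\bar h^\top u+\tilde Q(v,u)\big)$ with $\bar g\sim\mathcal N(0,I_m)$, $\bar h\sim\mathcal N(0,I_n)$ independent. Identifying $\bar g$ with $-h$ and $\bar h$ with $-g$, the integrand of $\phi'$ becomes literally the integrand of $-\Phi^{\textrm{a}}(g,h)=\max_u\min_v(\cdot)$; since $\min_v\max_u(\cdot)\ge\max_u\min_v(\cdot)$ always, this coupling gives $\phi'\ge-\Phi^{\textrm{a}}(g,h)$ pointwise, and $(-h,-g)\equald(h,g)$, so $\Prob(\phi'\le-t)\le\Prob\big({-\Phi^{\textrm{a}}(g,h)}\le-t\big)=\Prob(\Phi^{\textrm{a}}(g,h)\ge t)$. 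Combining, $\Prob(\Phi^{\textrm{p}}(G)\ge t)=\Prob\big({-\Phi^{\textrm{p}}(G)}\le-t\big)\le2\Prob(\phi'\le-t)\le2\Prob(\Phi^{\textrm{a}}(g,h)\ge t)$. The concentration of $\Phi^{\textrm{p}}(G)$ around $\Phi^{\textrm{a}}(g,h)$ in the convex case is then the conjunction of the two one-sided bounds.

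\smallskip
The substantive input is Gordon's comparison inequality itself, which we are entitled to cite; granting it, the main obstacle is purely organizational: getting the bookkeeping in (2) exactly right --- the correct pairing of $\bar g,\bar h$ with the $\R^m$- and $\R^n$-blocks, the symmetries $-G^\top\equald G^\top$ and $-g\equald g$, $-h\equald h$, and the verification that Sion's hypotheses hold on the stated compact convex domains --- together with the routine closure/strict-inequality passage in $t$ and the check that the augmented field $X_{u,v}$ in (1) indeed meets Gordon's variance-matching requirement.
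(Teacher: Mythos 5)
The paper does not actually prove Theorem \ref{thm:CGMT}: it is imported verbatim from \cite[Theorem 6.1]{thrampoulidis2018precise} (see also \cite[Theorem 5.1]{miolane2021distribution}), with only the remark that it follows easily from Gordon's min-max theorem \cite{gordan1985some,gordon1988milman}. Your sketch is precisely that standard argument --- the $\gamma$-augmented field with covariance surplus $\big(\pnorm{u}{}\pnorm{u'}{}-\iprod{u}{u'}\big)\big(\pnorm{v}{}\pnorm{v'}{}-\iprod{v}{v'}\big)\geq 0$, the factor $2$ from conditioning on $\{\gamma\leq 0\}$, and the Sion/sign-flip reduction of part (2) to part (1) with $-G^\top\equald G'$ and $(-h,-g)\equald(\bar g,\bar h)$ --- and it is correct as stated; the only steps you leave implicit (Gordon's inequality applies to the centered fields with thresholds shifted by $Q(u,v)$, and one passes from finite nets to the compact index sets by continuity) are routine and are handled the same way in the cited sources.
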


The following version of Gaussian log-Sobolev inequality will be useful. 
\begin{lemma}\label{lem:gaussian_log_sobolev}
	Let $G:\R^n \to \R^n$ be a differentiable function. Then 
	\begin{align*}
	\mathrm{Ent}(e^G)=\E [G(h)e^{G(h)}]-\E e^{G(h)}\cdot  \log \E\big[e^{G(h)}\big] \leq \frac{1}{2}\E\big[\pnorm{\nabla G(h)}{}^2 e^{G(h)}\big].
	\end{align*}
\end{lemma}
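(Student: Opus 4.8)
\textbf{Proof plan for Lemma~\ref{lem:gaussian_log_sobolev} (Gaussian log-Sobolev inequality).}

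The plan is to derive this entropy bound from the classical Gaussian log-Sobolev inequality in its standard form: for every locally Lipschitz (or differentiable) $f:\R^n\to\R$ with $f(h)\in L^2$,
\begin{align*}
\mathrm{Ent}(f^2)\equiv \E\big[f(h)^2\log f(h)^2\big]-\E\big[f(h)^2\big]\log\E\big[f(h)^2\big]\leq 2\,\E\big[\pnorm{\nabla f(h)}{}^2\big],
\end{align*}
valid when $h\sim\mathcal N(0,I_n)$. This is the content of, e.g., the Gross log-Sobolev inequality and is a textbook fact (cf. \cite{boucheron2013concentration}); I would cite it directly rather than reprove it. First I would apply it with the substitution $f=e^{G/2}$, which is differentiable whenever $G$ is, so that $f^2=e^{G}$ and, by the chain rule, $\nabla f=\tfrac12 e^{G/2}\nabla G$, hence $\pnorm{\nabla f(h)}{}^2=\tfrac14 e^{G(h)}\pnorm{\nabla G(h)}{}^2$. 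Substituting into the standard inequality gives
\begin{align*}
\mathrm{Ent}(e^{G})\leq 2\,\E\Big[\tfrac14 e^{G(h)}\pnorm{\nabla G(h)}{}^2\Big]=\frac12\,\E\big[\pnorm{\nabla G(h)}{}^2 e^{G(h)}\big],
\end{align*}
which is exactly the claimed bound once one unwinds the definition $\mathrm{Ent}(e^G)=\E[G(h)e^{G(h)}]-\E[e^{G(h)}]\log\E[e^{G(h)}]$ (using $e^{G}\log e^{G}=G\,e^{G}$).

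The only points requiring care are integrability and applicability: one should note that if the right-hand side $\E[\pnorm{\nabla G(h)}{}^2 e^{G(h)}]$ is infinite there is nothing to prove, and otherwise a standard truncation argument (apply the inequality to $G\wedge M$, which is still Lipschitz with $\pnorm{\nabla (G\wedge M)}{}\leq\pnorm{\nabla G}{}$ a.e., and let $M\uparrow\infty$ using monotone/dominated convergence) justifies the manipulation without extra hypotheses on $G$. In the applications in this paper (Proposition~\ref{prop:est_err_variance}-(3) and Proposition~\ref{prop:lrt_prop}-(4)) $G$ is of the form $\lambda(Z-\E Z)$ with $Z$ a Lipschitz function of $h$, so these regularity issues are automatic and a one-line invocation suffices.

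I do not expect any genuine obstacle here: the lemma is a cosmetic repackaging ($f^2\leftrightarrow e^G$) of the well-known Gaussian log-Sobolev inequality, and the ``main step'' is simply recording the substitution and the chain-rule computation. If one wanted a fully self-contained treatment, the only nontrivial ingredient would be a proof of the underlying Gross inequality itself (e.g. via the Ornstein--Uhlenbeck semigroup or by tensorization from the two-point inequality), but since the paper is content to cite standard references for such facts, I would do the same and keep the proof to the two or three lines above.
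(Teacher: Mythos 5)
Your proposal is correct and coincides with the paper's own proof: both cite the standard Gaussian log-Sobolev inequality $\mathrm{Ent}(f^2)\leq 2\E\pnorm{\nabla f(h)}{}^2$ and apply it with $f=e^{G/2}$, using the chain rule $\nabla f=\tfrac12 e^{G/2}\nabla G$. The additional remarks on truncation and integrability are harmless but not needed for the paper's argument.
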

\begin{proof}
	The standard version of Gaussian log-Sobolev inequality states that for a differentiable function $f: \R^n \to \R^n$, $\mathrm{Ent}(f^2(h)) \leq 2 \E \pnorm{\nabla f(h)}{}^2$, where $\mathrm{Ent}(f(h))\equiv \E[f(h)\log f(h)]-\E[f(h)]\log \E[f(h)]$; see \cite[Theorem 5.4]{boucheron2013concentration}, or \cite[Theorem 2.5.6]{gine2015mathematical}. Now we take $f^2 = e^G$ to conclude by noting $\nabla f = \nabla e^{G/2} = \nabla G\cdot e^{G/2}/2$.
\end{proof}

The following `exponential Poincar\'e inequality' due to \cite{bobkov1999exponential} will be useful.
\begin{lemma}[Exponential Poincar\'e Inequality]\label{lem:expo_poincare_ineq}
Let $G:\R^n \to \R^n, \Gamma:\R^n \to \R$ be measurable functions such that $\E G(h)=0$, and 
\begin{align*}
\mathrm{Ent}(e^{t G}) \leq \frac{t^2}{2}\E [\Gamma^2(h)\cdot e^{tG(h)}]
\end{align*}
for all $t\geq 0$. Then $\E e^G \leq \E e^{\Gamma^2}$. 
\end{lemma}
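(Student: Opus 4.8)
The plan is to run the classical Herbst-type argument, feeding the assumed modified log-Sobolev inequality into the Gibbs variational formula for entropy, exactly as in \cite{bobkov1999exponential}. First I would reduce to the non-trivial case $\E e^{\Gamma^2(h)}<\infty$ (otherwise there is nothing to prove), set $L\equiv\log\E e^{\Gamma^2(h)}$, and record that $L\geq 0$ since $\Gamma^2\geq 0$ forces $e^{\Gamma^2}\geq 1$. I would then work with $m(t)\equiv\E e^{tG(h)}$ and $H(t)\equiv\log m(t)$ for $t\in[0,1]$, noting $H(0)=0$ and — since $m$ is finite in a one-sided neighbourhood of $t=0$ — that $m$ is differentiable at $0$ with $H'(0^+)=\E G(h)=0$, i.e. $H(t)/t\to0$ as $t\downarrow0$.

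The core is two steps. Step one: I would apply the entropy-duality bound $\E[fg]\leq\mathrm{Ent}(f)+\E[f]\log\E e^{g}$ with $f=e^{tG(h)}$ and $g=\Gamma^2(h)$ to get $\E[\Gamma^2(h)e^{tG(h)}]\leq\mathrm{Ent}(e^{tG})+m(t)L$; substituting this into the hypothesis $\mathrm{Ent}(e^{tG})\leq\tfrac{t^2}{2}\E[\Gamma^2(h)e^{tG(h)}]$ and using $1-t^2/2\geq1/2$ on $[0,1]$ yields $\mathrm{Ent}(e^{tG})\leq\tfrac{t^2}{2-t^2}\,m(t)L\leq t^2 m(t)L$. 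Step two: using the identity $\mathrm{Ent}(e^{tG})/m(t)=tH'(t)-H(t)$, this rewrites as the differential inequality $(H(t)/t)'\leq L$ on $(0,1)$, which I would integrate from $0$ to $1$ with $H(t)/t\to0$ to conclude $H(1)\leq L$, i.e. $\E e^{G(h)}\leq\E e^{\Gamma^2(h)}$. (Keeping the sharper factor $1/(2-t^2)$ would in fact give $\E e^{G}\leq(\E e^{\Gamma^2})^{c}$ with $c=\tfrac{1}{2\sqrt2}\log(3+2\sqrt2)<1$, which is stronger here because $\E e^{\Gamma^2}\geq1$.)

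The main obstacle is not the computation but the regularity bookkeeping: one must check that $m(t)$ and $\mathrm{Ent}(e^{tG})$ are finite on $[0,1)$ (the latter following from finiteness of $\E[Ge^{tG}]$, hence from $m$ being finite slightly past $t$), that the entropy-duality step is legitimate, and — crucially — that $H(t)/t\to\E G=0$ as $t\downarrow0$, which is where one needs $\E e^{tG}<\infty$ for $t$ in a one-sided neighbourhood of $0$ rather than merely $\E|G|<\infty$. In every invocation of the lemma in this paper these integrability facts are automatic (e.g. in Proposition \ref{prop:lrt_prop} the relevant $\Gamma^2$ is a constant multiple of $\err(\sigma)/\E\err(\sigma)$, with $\E e^{\Gamma^2}<\infty$ precisely in the stated range of $\lambda$, and $e^{tG}$ is tamed by $\lrt(\sigma)\leq\sigma^2\pnorm{h}{}^2$), so these points can be dispatched quickly; the substantive content is the two-step argument above.
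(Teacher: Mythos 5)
Your argument is correct and is essentially the paper's own Bobkov--G\"otze/Herbst argument: both proofs feed the entropy--log-Laplace duality into the hypothesis and then integrate the resulting differential inequality for $H(t)/t$ (the paper's $u(t)$) from $0$ to $1$, using $\E G=0$ to kill the boundary term. The only cosmetic difference is in the duality step: you test against $\Gamma^2-\log\E e^{\Gamma^2}$ and solve algebraically for $\mathrm{Ent}(e^{tG})$, picking up the factor $t^2/(2-t^2)$, whereas the paper tests against $(t\Gamma)^2-\beta(t^2)$ and then uses convexity of $\beta(t)=\log\E e^{t\Gamma^2}$ to reach the same bound $u'(t)\leq\beta(1)$, so no substantive gap remains (and your regularity caveats apply equally to the paper's proof).
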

\begin{proof}
We replicate some proof of \cite[Theorem 2.1]{bobkov1999exponential} for the convenience of the reader. Let $\beta(t)\equiv \log \E e^{t\Gamma^2}$. We claim that 
\begin{align}\label{ineq:expo_poincare_ineq_1}
\mathrm{Ent}\big(e^{tG}\big) \leq \beta(t^2)\cdot \E e^{tG}. 
\end{align}
To prove (\ref{ineq:expo_poincare_ineq_1}), note that $\E \exp\big[(t \Gamma)^2-\beta(t^2)\big]=1$, so by variational characterization of $\mathrm{Ent}(f)=\sup\{\E f g: \E \exp(g)\leq 1\}$, we have
\begin{align*}
\mathrm{Ent}\big(e^{tG}\big)\geq \E \Big(\big[  (t \Gamma)^2-\beta(t^2) \big] e^{tG}\Big) \geq 2 \mathrm{Ent}\big(e^{tG}\big) - \beta(t^2) \E e^{tG}.
\end{align*}
Here the last inequality follows from the assumption. The claimed inequality (\ref{ineq:expo_poincare_ineq_1}) follows. Now let $u(t)$ be defined by $\E e^{tG} \equiv e^{t u(t)}$. Upon differentiation on both sides of this equality, some calculations lead to $\mathrm{Ent}(e^{tG})= t^2 u'(t) e^{t u(t)} = t^2 u'(t) \E e^{t G}$. Combined with (\ref{ineq:expo_poincare_ineq_1}), we arrive at the key inequality
\begin{align}\label{ineq:expo_poincare_ineq_2}
u'(t) \leq \frac{\beta(t^2)}{t^2}. 
\end{align}
It is easy to verify that $\beta$ is non-negative, non-decreasing and convex on $[0,\infty)$ so (\ref{ineq:expo_poincare_ineq_2}) leads to $u'(t)\leq \beta(1)$ for all $t \in [0,1]$. As $u(0)=\lim_{t \downarrow 0} t^{-1}\log \E e^{t G} = \E G = 0$, we obtain $u(1)\leq \beta(1)$, so $\E e^G = e^{u(1)}\leq e^{\beta(1)} = \E e^{\Gamma^2}$, as desired. 
\end{proof}

We need the following min-max theorem due to Sion \cite{sion1958general}.

\begin{lemma}[Sion's min-max theorem]\label{lem:sion_minmax}
Let $X$ be a compact convex subset of a linear topological space and $Y$ a convex subset of a linear topological space. If $f$ is a real-valued function on $X\times Y$ satisfying:
\begin{enumerate}
	\item $y\mapsto f(x,y)$ is upper-semicontinuous and quasi-concave for all $x \in X$;
	\item$x\mapsto f(x,y)$ is lower-semicontinuous and quasi-convex  for all $y \in Y$.
\end{enumerate}
Then $\min_{x \in X} \sup_{y \in Y} f(x,y)= \sup_{y \in Y} \min_{x \in X}f(x,y)$.
\end{lemma}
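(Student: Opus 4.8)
The plan is to establish the nontrivial inequality
$\min_{x\in X}\sup_{y\in Y}f(x,y)\le\sup_{y\in Y}\min_{x\in X}f(x,y)$,
the reverse being immediate from $\min_{x}f(x,y_0)\le f(x_0,y_0)\le\sup_{y}f(x_0,y)$; note also that every ``$\min$ over $X$'' here is attained, since $X$ is compact and both $x\mapsto f(x,y)$ and $x\mapsto\sup_y f(x,y)$ are lower semicontinuous. Fix any real $\alpha<\min_{x}\sup_y f(x,y)$. For each $y\in Y$ the set $U_y\equiv\{x\in X:f(x,y)>\alpha\}$ is open by lower semicontinuity of $f(\cdot,y)$, and $\{U_y\}_{y\in Y}$ covers $X$ by the choice of $\alpha$; compactness extracts a finite subcover $U_{y_1},\dots,U_{y_n}$. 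I will then show that there is a single point $\bar y\in\conv{y_1,\dots,y_n}\subseteq Y$ with $f(x,\bar y)\ge\alpha$ for \emph{every} $x\in X$. Granting this, $\sup_y\min_x f(x,y)\ge\min_x f(x,\bar y)\ge\alpha$, and letting $\alpha$ increase to $\min_x\sup_y f(x,y)$ yields the theorem.

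The core is the two-point case $n=2$, from which the general case follows by induction on $n$ (split according to whether $U_{y_1}$ or $U_{y_2}\cup\cdots\cup U_{y_n}$ already covers $X$; otherwise apply the inductive hypothesis on $\conv{y_2,\dots,y_n}$ and then run the two-point argument against $y_1$). For $n=2$ one has $\max\{f(x,y_a),f(x,y_b)\}>\alpha$ for all $x\in X$. Assume for contradiction that $\min_x f(x,y_t)\le\alpha$ for every point $y_t\equiv(1-t)y_a+ty_b$ of the segment, $t\in[0,1]$. Then each $C_t\equiv\{x\in X:f(x,y_t)\le\alpha\}$ is nonempty, closed (lower semicontinuity of $f(\cdot,y_t)$) and convex (quasi-convexity of $f(\cdot,y_t)$), while $C_0\cap C_1=\emptyset$. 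Set $S_i\equiv\{t\in[0,1]:C_t\cap C_i\ne\emptyset\}$ for $i=0,1$. I would prove: (a) $S_0\cup S_1=[0,1]$, because for $x\in C_t$ the superlevel set $\{s\in[0,1]:f(x,y_s)\ge\alpha\}$ is an interval (quasi-concavity of $f(x,\cdot)$) avoiding $t$, hence missing $[0,t]$ or $[t,1]$, which forces $x\in C_0$ or $x\in C_1$; (b) $S_0\cap S_1=\emptyset$, because if $x\in C_t\cap C_0$ and $x'\in C_t\cap C_1$ then on $[x,x']\subseteq C_t$ the sets $\{z:f(z,y_a)\le\alpha\}$ and $\{z:f(z,y_b)\le\alpha\}$ are intervals containing $x$ and $x'$, so they either overlap (contradicting $C_0\cap C_1=\emptyset$) or leave an intermediate $z$ with $f(z,y_a),f(z,y_b)>\alpha$, whence $f(z,y_t)\ge\min\{f(z,y_a),f(z,y_b)\}>\alpha$ by quasi-concavity, contradicting $z\in C_t$; (c) $S_0,S_1$ are closed, using compactness of $X$ and the semicontinuity hypotheses. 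Since $0\in S_0$ and $1\in S_1$, (a)--(c) disconnect $[0,1]$, a contradiction; hence some $y_{t^\ast}$ has $\min_x f(x,y_{t^\ast})\ge\alpha$, the desired $\bar y$.

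The step I expect to be the main obstacle is precisely this two-point lemma, for two reasons. First, in the stated generality ($X,Y$ in arbitrary linear topological spaces) the closedness of $S_0,S_1$ must be argued with nets, not sequences, of points of the compact set $X$, and one must reconcile the lower semicontinuity in $x$ with the upper semicontinuity in $y$ so that membership in $C_t$ survives passage to the limit; this is exactly where both one-sided continuity hypotheses get used. Second, the boundary cases of the inequalities defining the $C_t$ (when $f(x,y_t)=\alpha$ exactly) need care in claim (a); the standard fix is to prove the lemma with a slightly smaller threshold $\alpha-\epsilon$ and let $\epsilon\downarrow0$ at the end, which is harmless because compactness makes the minima attained. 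The reduction to finitely many $y$'s, the covering/disjointness claims, and the induction on $n$ are comparatively routine. As an alternative route one could replace the connectedness argument by the KKM lemma on the standard $(n-1)$-simplex, assigning to each subface the corresponding sublevel sets of the $f(\cdot,y_i)$ and extracting $\bar y$ from the finite intersection property it guarantees; this organizes the same ingredients around a fixed-point input rather than a connectedness input.
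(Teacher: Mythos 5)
The paper does not prove this lemma at all: it is quoted as a classical result with the citation \cite{sion1958general}, so there is no internal proof to compare against. What you propose is essentially Komiya's elementary proof of Sion's theorem (finite subcover of $X$ by the open sets $U_y$, a two-point lemma on a segment in $Y$ proved by a connectedness argument, induction on the number of points), which is a perfectly legitimate route and indeed more elementary than Sion's original KKM-based argument that you mention as an alternative. The overall architecture — reduce to finitely many $y_i$, prove the two-point case, induct, then let $\alpha$ increase — is correct, and several steps are even easier than you make them: in claim (a) you do not need the interval-avoiding-$t$ argument at all, since $x\in C_t$ gives $\min\{f(x,y_a),f(x,y_b)\}\le f(x,y_t)\le\alpha$ directly by quasi-concavity, so no boundary-case issue arises there; and claim (b) goes through once you note that the sets $\{z\in[x,x']:f(z,y_a)\le\alpha\}$ and $\{z:f(z,y_b)\le\alpha\}$ are \emph{closed} (sublevel sets of lower-semicontinuous functions) as well as convex, so two disjoint such sets cannot cover the connected segment.

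The genuine gap is claim (c), exactly the step you flag as the main obstacle, and the repair is not a matter of "reconciling the two semicontinuities along nets." With a single threshold $\alpha$, closedness of $S_0$ fails to follow from the hypotheses: from $t_{n}\to t$ and $x_{n}\in C_{t_{n}}\cap C_0$ you can extract a convergent subnet $x_{n'}\to x$ and get $f(x,y_a)\le\alpha$ by lower semicontinuity, but nothing in separate semicontinuity controls $f(x,y_t)$ in terms of $f(x_{n'},y_{t_{n'}})\le\alpha$ when both arguments move; equally, upper semicontinuity in $y$ at a fixed $x\in C_t$ only yields $f(x,y_{t_n})\le\alpha+\epsilon$ eventually, not $\le\alpha$, so you cannot place $x$ in $C_{t_n}$. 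The standard fix — which is the real content of your "$\alpha-\epsilon$" remark, though you attach it to (a) where it is not needed — is to run the two-point argument with \emph{two} levels $\alpha<\beta<\min_{x}\max\{f(x,y_a),f(x,y_b)\}$: assume for contradiction that every $y_t$ has $\min_x f(x,y_t)\le\alpha$, define the partition of $[0,1]$ by whether the convex, closed, connected set $\{x:f(x,y_t)\le\beta\}$ lies in $\{x:f(x,y_a)\le\beta\}$ or in $\{x:f(x,y_b)\le\beta\}$, and prove closedness by fixing a point $x$ in the nonempty $\alpha$-sublevel set of $f(\cdot,y_t)$ and using upper semicontinuity of $f(x,\cdot)$ at that single $x$ to get $f(x,y_{t_n})\le\beta$ eventually; connectedness then forces the whole $\beta$-sublevel set of $f(\cdot,y_{t_n})$, hence of $f(\cdot,y_t)$ in the limit, onto one side. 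The same two-level device is also what makes your induction step rigorous (restrict to the compact convex set $\{x:f(x,y_1)\le\beta\}$ and apply the inductive hypothesis there). With these modifications your plan becomes a complete and correct proof; as written, step (c) would not go through.
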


\section{Auxiliary lemmas}

\begin{lemma}\label{lem:gaussian_trunc}
Let $h\sim \mathcal{N}(0,1)$ and $\varphi,\Phi$ be the normal p.d.f. and c.d.f. respectively. Then for $x\geq 0$, the following hold:
\begin{enumerate}
	\item $\E h \bm{1}_{h\geq x} =\E h\bm{1}_{h\geq -x}= \varphi(x)$.
	\item $\E h^2 \bm{1}_{h\geq x} = x\varphi(x)+1-\Phi(x)$ and $\E h^2 \bm{1}_{h\geq -x} = \Phi(x)-x\varphi(x)$. 
\end{enumerate}
\end{lemma}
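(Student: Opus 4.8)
The statement to prove is Lemma~\ref{lem:gaussian_trunc}, a pair of elementary Gaussian truncated-moment identities. The plan is to compute each expectation directly by integration against the standard normal density $\varphi(t) = (2\pi)^{-1/2} e^{-t^2/2}$, exploiting the basic relations $\varphi'(t) = -t\varphi(t)$ and the symmetry $\varphi(-t) = \varphi(t)$.

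For part (1), I would start from $\E h \bm{1}_{h \geq x} = \int_x^\infty t\varphi(t)\,\d{t} = \int_x^\infty -\varphi'(t)\,\d{t} = \varphi(x)$, using $\varphi(t) \to 0$ as $t \to \infty$. For the variant with lower limit $-x$, substitute $t \mapsto -t$ (or simply note $\int_{-x}^\infty t\varphi(t)\,\d{t} = \int_{-x}^\infty -\varphi'(t)\,\d{t} = \varphi(-x) = \varphi(x)$); alternatively one can write $\E h\bm{1}_{h \geq -x} = \E h - \E h \bm{1}_{h < -x} = 0 - \E h\bm{1}_{h \geq x}\cdot(-1)$ after a sign flip, but the direct antiderivative argument is cleanest. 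This yields both claimed expressions.

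For part (2), I would integrate by parts: $\int_x^\infty t^2 \varphi(t)\,\d{t} = \int_x^\infty t\cdot(t\varphi(t))\,\d{t} = \int_x^\infty t\cdot(-\varphi'(t))\,\d{t} = [-t\varphi(t)]_x^\infty + \int_x^\infty \varphi(t)\,\d{t} = x\varphi(x) + (1-\Phi(x))$. For the lower limit $-x$, either substitute $t \mapsto -t$ and use $\varphi$ even, giving $\int_{-x}^\infty t^2\varphi(t)\,\d{t} = \int_{-\infty}^{x} t^2 \varphi(t)\,\d{t}$, which by the complementary computation (total second moment $1$ minus the tail) equals $1 - [x\varphi(x) + (1-\Phi(x))]$; wait, one must be careful: $\int_{-x}^\infty t^2\varphi(t)\,\d{t} = 1 - \int_{-\infty}^{-x} t^2\varphi(t)\,\d{t} = 1 - \int_x^\infty t^2\varphi(t)\,\d{t} = 1 - x\varphi(x) - (1-\Phi(x)) = \Phi(x) - x\varphi(x)$, as claimed. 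So the cleanest route is: compute $\int_x^\infty t^2\varphi(t)\,\d{t}$ once by parts, then obtain the $-x$ version by symmetry and complementation.

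There is no real obstacle here; the only thing to watch is bookkeeping of signs and limits when passing between the $x$ and $-x$ forms, and recalling that $\E h^2 = 1$ and $\Phi(-x) = 1 - \Phi(x)$. I would present the four computations in the order (1a), (1b), (2a), (2b) with (1b) and (2b) reduced to (1a) and (2a) via the substitution $t \mapsto -t$, keeping the write-up to a few lines.
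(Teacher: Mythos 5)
Your proof is correct and follows essentially the same route as the paper: the paper cites the identities $\E h \bm{1}_{h\geq x}=\varphi(x)$ and $\E h^2\bm{1}_{h\geq x}=x\varphi(x)+1-\Phi(x)$ to an external lemma and then obtains the $-x$ versions exactly as you do, via $\E h=0$, $\E h^2=1$, symmetry of $h$, and complementation. The only difference is that you prove the base identities directly with $\varphi'(t)=-t\varphi(t)$ and integration by parts rather than citing them, which is a harmless (and self-contained) substitute.
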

\begin{proof}
The facts that $\E h \bm{1}_{h\geq x} = \varphi(x)$ and $\E h^2 \bm{1}_{h\geq x} = x\varphi(x)+1-\Phi(x)$ are proven in \cite[Lemma 6.1]{han2022high}. To see the remaining equalities, first note that $\E h=0$ yields that $\E h\bm{1}_{h\geq -x} = -\E h\bm{1}_{h<-x}$. By symmetry of $h$, the latter equals $\E h\bm{1}_{h>x}$. As $h$ charges probability 0 at any single point, the equality $\E h \bm{1}_{h\geq x} =\E h\bm{1}_{h\geq -x}$ follows. Next using a similar argument as above, but now using that $\E h^2 =1$, we have $\E h^2\bm{1}_{h\geq -x} = 1- \E h^2 \bm{1}_{h<-x} = 1- \E h^2 \bm{1}_{h>x}$. 
\end{proof}

\begin{lemma}\label{lem:mill_ratio}
Let $\varphi,\Phi$ be the normal p.d.f. and c.d.f. respectively. Then 
\begin{align*}
\frac{1}{x}-\frac{1}{x^3}\leq \frac{1-\Phi(x)}{\varphi(x)}\leq \frac{1}{x}-\frac{1}{x^3}+\frac{3}{x^5}.
\end{align*}
\end{lemma}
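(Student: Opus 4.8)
\textbf{Proof plan for Lemma \ref{lem:mill_ratio}.}

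The statement is the classical two-sided Mills ratio estimate, and the plan is to prove it by the standard differentiation-and-sign-analysis trick rather than by any clever manipulation. Write $R(x) \equiv 1 - \Phi(x)$ for the Gaussian tail. The key identity I would exploit is that $\varphi'(x) = -x\varphi(x)$, so that for any polynomial $p$ one has $\frac{\d}{\d x}\big(p(x)\varphi(x)\big) = \big(p'(x) - x p(x)\big)\varphi(x)$, while $R'(x) = -\varphi(x)$. The idea is: to prove $R(x) \le \big(\tfrac1x - \tfrac1{x^3} + \tfrac3{x^5}\big)\varphi(x)$ for $x>0$, set $U(x) \equiv \big(\tfrac1x - \tfrac1{x^3} + \tfrac3{x^5}\big)\varphi(x) - R(x)$, show $U(x)\to 0$ as $x\to\infty$, and show $U'(x) \le 0$ on $(0,\infty)$ (equivalently $\ge 0$ — I will get the direction right by computing) so that $U$ is monotone and hence stays on the correct side of its limit. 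Similarly, for the lower bound set $L(x) \equiv R(x) - \big(\tfrac1x - \tfrac1{x^3}\big)\varphi(x)$, check $L(x)\to 0$ at infinity, and check the sign of $L'(x)$.

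Concretely, for the upper bound: differentiating $U$ and using the identity above,
\begin{align*}
U'(x) &= \Big[\Big(-\tfrac1{x^2} + \tfrac{3}{x^4} - \tfrac{15}{x^6}\Big) - x\Big(\tfrac1x - \tfrac1{x^3} + \tfrac3{x^5}\Big)\Big]\varphi(x) + \varphi(x) \\
&= \Big[-\tfrac1{x^2} + \tfrac{3}{x^4} - \tfrac{15}{x^6} - 1 + \tfrac1{x^2} - \tfrac3{x^4} + 1\Big]\varphi(x) = -\tfrac{15}{x^6}\,\varphi(x) \le 0.
\end{align*}
Hence $U$ is non-increasing on $(0,\infty)$; since $U(x)\to 0$ as $x\to\infty$ (both $R(x)$ and the polynomial-times-$\varphi$ term vanish), it follows that $U(x)\ge 0$ for all $x>0$, which is exactly the upper bound. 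For the lower bound, the analogous computation for $L(x) = R(x) - \big(\tfrac1x - \tfrac1{x^3}\big)\varphi(x)$ gives
\begin{align*}
L'(x) &= -\varphi(x) - \Big[\Big(-\tfrac1{x^2} + \tfrac{3}{x^4}\Big) - x\Big(\tfrac1x - \tfrac1{x^3}\Big)\Big]\varphi(x) \\
&= -\varphi(x) - \Big[-\tfrac1{x^2} + \tfrac3{x^4} - 1 + \tfrac1{x^2}\Big]\varphi(x) = -\tfrac{3}{x^4}\,\varphi(x) \le 0,
\end{align*}
so $L$ is also non-increasing with $L(x)\to 0$ at infinity, giving $L(x)\ge 0$, i.e. the lower bound. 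One should double-check that the polynomial prefactors are eventually positive (true for $x$ large, which is all that is needed once monotonicity and the limit at infinity are in hand) so that the inequalities are not vacuous; strictly, the monotonicity argument already delivers the inequality for every $x>0$ regardless of signs.

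I do not anticipate any real obstacle here — the only thing to be careful about is bookkeeping in the derivative computations (the miraculous cancellations of the $\tfrac1{x^2}$ and $\tfrac1{x^4}$ terms and the constant $1$) and the justification that $U(x), L(x) \to 0$ as $x\to\infty$, which follows from $R(x) \le \varphi(x)/x \to 0$ together with $x^{-k}\varphi(x)\to 0$. If one prefers to avoid even naming limits, an equivalent route is to write $R(x) = \int_x^\infty \varphi(t)\,\d t$ and integrate by parts twice (respectively three times) to get $R(x) = \big(\tfrac1x - \tfrac1{x^3}\big)\varphi(x) + 3\int_x^\infty t^{-4}\varphi(t)\,\d t$ and $R(x) = \big(\tfrac1x - \tfrac1{x^3} + \tfrac3{x^5}\big)\varphi(x) - 15\int_x^\infty t^{-6}\varphi(t)\,\d t$, whence the two inequalities are immediate from positivity of the remaining integrals; this is perhaps the cleanest presentation and I would likely use it.
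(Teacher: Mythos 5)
Your proposal is correct. Both of your routes work: the derivative computations for $U$ and $L$ check out ($U'(x)=-\tfrac{15}{x^6}\varphi(x)$, $L'(x)=-\tfrac{3}{x^4}\varphi(x)$), and combined with $U(x),L(x)\to 0$ as $x\to\infty$ (which follows from $1-\Phi(x)\le \varphi(x)/x$) they give both inequalities for all $x>0$. The paper's own proof is exactly your closing alternative: it writes $1-\Phi(x)=\int_x^\infty\varphi(t)\,\d t=-\int_x^\infty \varphi'(t)/t\,\d t$ and integrates by parts repeatedly using $\varphi'(t)=-t\varphi(t)$, obtaining $1-\Phi(x)=\big(\tfrac1x-\tfrac1{x^3}\big)\varphi(x)+3\int_x^\infty t^{-4}\varphi(t)\,\d t$ for the lower bound and then one more integration by parts to bound $\int_x^\infty t^{-4}\varphi(t)\,\d t\le \varphi(x)/x^5$ for the upper bound; your version pushing to the $-15\int_x^\infty t^{-6}\varphi(t)\,\d t$ remainder is the same idea with one extra step. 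The two presentations are essentially dual: your monotonicity argument is the fundamental-theorem-of-calculus form of the same cancellation, trading explicit remainder integrals for a sign check on a derivative plus a limit at infinity, while the integration-by-parts form makes the error terms explicit and needs no limiting argument beyond convergence of the tail integral. Either is acceptable; the only caveat worth recording is that the statement (and both proofs) implicitly require $x>0$.
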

\begin{proof}
This result is well known. We include a (simple) proof below for the convenience of the reader, based on iterative applications of the identity $\varphi'(t)=-t\varphi(t)$ and integration by parts:
\begin{align*}
1-\Phi(x) &= \int_x^\infty \varphi(t)\,\d{t} = - \int_x^\infty \frac{\varphi'(t)}{t}\,\d{t} = - \bigg[\frac{\varphi(t)}{t}\bigg\vert_{x}^\infty+\int_x^\infty \frac{\varphi(t)}{t^2}\,\d{t}\bigg]\\
& = \frac{\varphi(x)}{x}+ \int_x^\infty \frac{\varphi'(t)}{t^3}\,\d{t} = \frac{\varphi(x)}{x} + \bigg[\frac{\varphi(t)}{t^3}\bigg\vert_{x}^\infty+ \int_x^\infty \frac{3\varphi(t)}{t^4}\,\d{t}   \bigg]\\
& = \frac{\varphi(x)}{x}-\frac{\varphi(x)}{x^3}+ 3\int_x^\infty \frac{\varphi(t)}{t^4}\,\d{t}.
\end{align*}
The left inequality follows. For the right inequality, we perform the integration by parts again: 
\begin{align*}
\int_x^\infty \frac{\varphi(t)}{t^4}\,\d{t} = - \int_x^\infty \frac{\varphi'(t)}{t^5}\,\d{t} = -\bigg[\frac{\varphi(t)}{t^5}\bigg\lvert_{x}^\infty+5 \int_x^\infty \frac{\varphi(t)}{t^6}\,\d{t}\bigg]\leq \frac{\varphi(x)}{x^5}. 
\end{align*}
The right inequality follows. 
\end{proof}

\begin{lemma}\label{lem:tail_bound_generic}
	Suppose the random variable $Z$ satisfies the following: There exist $C_0>0$ and $a>0$ such that for $\lambda \in \R$ with $\lambda^2<1/a$, we have
	\begin{align*}
	\E \exp\big(\lambda\cdot Z\big)\leq \exp\bigg[\frac{C_0 \lambda^2}{1- a\lambda^2 }\bigg].
	\end{align*}
	Then there exists some universal constant $L>0$ such that
	\begin{align*}
	\Prob(\abs{Z}>t)\leq L\cdot \exp\bigg[-L^{-1}\bigg( \frac{t^2}{C_0}\wedge \frac{t}{a^{1/2}}\bigg)\bigg],\quad t\geq 0.
	\end{align*}
\end{lemma}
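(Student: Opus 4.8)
The plan is to derive the tail bound from the moment generating function estimate by the standard Chernoff method, splitting into the two regimes according to which term in the denominator $1 - a\lambda^2$ dominates. First I would recall that for any $t \geq 0$ and any admissible $\lambda > 0$ (i.e. $\lambda^2 < 1/a$), Markov's inequality gives
\begin{align*}
\Prob(Z > t) \leq e^{-\lambda t}\,\E e^{\lambda Z} \leq \exp\Big(-\lambda t + \frac{C_0 \lambda^2}{1 - a\lambda^2}\Big),
\end{align*}
and symmetrically for $\Prob(-Z > t)$ since the hypothesis is stated for all $\lambda \in \R$ with $\lambda^2 < 1/a$; a union bound then gives $\Prob(\abs{Z} > t) \leq 2\exp(-\lambda t + C_0\lambda^2/(1-a\lambda^2))$.

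Next I would choose $\lambda$ appropriately. Restrict attention to $\lambda^2 \leq 1/(2a)$, so that $1 - a\lambda^2 \geq 1/2$ and hence $C_0\lambda^2/(1-a\lambda^2) \leq 2C_0\lambda^2$. The exponent is then at most $-\lambda t + 2C_0\lambda^2$. The unconstrained minimizer of $\lambda \mapsto -\lambda t + 2C_0\lambda^2$ is $\lambda_\ast = t/(4C_0)$. There are two cases. If $\lambda_\ast^2 \leq 1/(2a)$, i.e. $t \leq 4C_0/\sqrt{2a} \asymp C_0/a^{1/2}$, then we may take $\lambda = \lambda_\ast$ and obtain an exponent bounded by $-t^2/(8C_0)$. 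If instead $\lambda_\ast^2 > 1/(2a)$, i.e. $t$ is large, then $-\lambda t + 2C_0\lambda^2$ is still decreasing at $\lambda = (2a)^{-1/2}$, so we take $\lambda = (2a)^{-1/2}$ and obtain an exponent at most $-t/(2a)^{1/2} + 2C_0/(2a) = -t/\sqrt{2a} + C_0/a$; since in this regime $t \gtrsim C_0/a^{1/2}$, the positive constant term $C_0/a$ is absorbed (up to halving the coefficient) into $-t/\sqrt{2a}$, leaving an exponent $\lesssim -t/a^{1/2}$. Combining the two cases, in both we get an exponent $\leq -L^{-1}\big((t^2/C_0)\wedge (t/a^{1/2})\big)$ for a suitable universal $L$, and folding in the factor $2$ from the union bound (and possibly enlarging $L$ to cover the crossover region cleanly) yields the claim.

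I do not anticipate a genuine obstacle here; this is a routine Chernoff/Cram\'er argument. The only mildly delicate point is the bookkeeping at the transition $t \asymp C_0/a^{1/2}$ between the sub-Gaussian and sub-exponential regimes, and ensuring the constant $C_0/a$ appearing in the large-$t$ case is truly dominated by the linear term rather than merely comparable to it; this is handled by the observation that $\lambda_\ast > (2a)^{-1/2}$ forces $t > 4C_0 (2a)^{-1/2}$, which makes $C_0/a = (C_0/a^{1/2}) \cdot a^{-1/2}$ small relative to $t/\sqrt{2a}$ by a fixed factor, so one simply takes $L$ large enough to absorb it. No earlier results from the paper are needed; the lemma is self-contained given its hypothesis.
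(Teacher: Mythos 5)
Your proposal is correct and follows essentially the same route as the paper: a Chernoff bound, with $\lambda=t/(4C_0)$ in the sub-Gaussian regime and a fixed choice $\lambda\asymp a^{-1/2}$ (the paper uses $\lambda^2=0.6/a$, you use $\lambda^2=1/(2a)$) in the sub-exponential regime, absorbing the residual constant $C_0/a$ using $t\gtrsim C_0/a^{1/2}$. The only difference is cosmetic: your absorption step is spelled out slightly more explicitly than the paper's.
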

\begin{proof}
	A standard Chernoff bound yields that for any $t\geq 0$, 
	\begin{align*}
	\Prob\big(Z>t\big)\leq \exp\bigg[-\lambda t+ \frac{C_0 \lambda^2}{1- a\lambda^2 }\bigg], \quad 0\neq \lambda^2<1/a.
	\end{align*}
	We shall make different choices of $\lambda$ according to different regimes of $\lambda$:
	\begin{enumerate}
		\item Suppose $\lambda^2\leq 1/(2a)$. With the choice $\lambda=t/(4C_0)$, we have
		\begin{align*}
		\Prob(Z>t)\leq \exp\big(-\lambda t+ 2C_0 \lambda^2\big) = \exp(-t^2/(8C_0)).
		\end{align*}
		This holds in the regime $t^2/(16C_0^2)<1/(2a)$, i.e., $t^2\leq 8C_0^2/a$.
		\item Suppose $1/(2a)<\lambda^2< 1/a$. Then we simply take $\lambda^2 = 0.6/a$ which yields, with $c_0=0.6$,
		\begin{align*}
		\Prob(Z>t)\leq \exp\bigg(-c_0^{1/2}\cdot \frac{t}{a^{1/2}}+\frac{C_0 c_0}{1-c_0}\cdot \frac{1}{a}\bigg).
		\end{align*}
		As we only need to consider the regime $t^2\gtrsim C_0^2/a$, by enlarging $C_0$ by an appropriate absolute constant times, we may absorb the second term into the first one by adjusting absolute constants. 
	\end{enumerate}
	Summarizing the two regimes, we arrive at the desired inequality for $t\geq 0$. Similar considerations apply to $t<0$.
\end{proof}

\begin{lemma}\label{lem:moreau_thm}
Let $K\subset \R^n$ be a closed convex cone, and $h \in \R^n$. Then $h=\Pi_K(h)+\Pi_{K^\ast}(h)$ and $\pnorm{h}{}^2= \pnorm{\Pi_K(h)}{}^2+\pnorm{\Pi_{K^\ast}(h)}{}^2$. 
\end{lemma}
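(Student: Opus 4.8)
\textbf{Proof proposal for Lemma~\ref{lem:moreau_thm} (Moreau decomposition).}
The plan is to prove this classical decomposition directly from the variational characterization of projection onto a closed convex cone, which is the cleanest route and avoids invoking any external machinery. First I would recall that for a closed convex cone $K$, the projection $\Pi_K(h)$ is characterized by the two conditions $\Pi_K(h) \in K$ and $\iprod{h-\Pi_K(h)}{z-\Pi_K(h)}\leq 0$ for all $z\in K$; taking $z=0$ and $z=2\Pi_K(h)$ (both in $K$ by the cone property) upgrades this to the pair of statements $\iprod{h-\Pi_K(h)}{\Pi_K(h)}=0$ and $\iprod{h-\Pi_K(h)}{z}\leq 0$ for all $z\in K$, the latter saying precisely that $h-\Pi_K(h)\in K^\ast$. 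Set $p\equiv \Pi_K(h)$ and $q\equiv h-p$, so that $p\in K$, $q\in K^\ast$, $\iprod{p}{q}=0$, and trivially $p+q=h$.

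The key step is then to verify that $q=\Pi_{K^\ast}(h)$; by symmetry of the roles of $K$ and $K^\ast$ (note $(K^\ast)^\ast=K$ for a closed convex cone), this is the same kind of assertion. To see it, I would check that $q$ satisfies the defining variational inequality for projection onto $K^\ast$: for any $w\in K^\ast$, write $\iprod{h-q}{w-q}=\iprod{p}{w-q}=\iprod{p}{w}-\iprod{p}{q}=\iprod{p}{w}\leq 0$, where $\iprod{p}{w}\leq 0$ holds because $p\in K$ and $w\in K^\ast$, and $\iprod{p}{q}=0$ was established above. Since $q\in K^\ast$ and this inequality holds for all $w\in K^\ast$, uniqueness of the projection onto the closed convex set $K^\ast$ forces $q=\Pi_{K^\ast}(h)$. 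This yields $h=\Pi_K(h)+\Pi_{K^\ast}(h)$.

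Finally, the norm identity follows immediately by Pythagoras: since $\iprod{\Pi_K(h)}{\Pi_{K^\ast}(h)}=\iprod{p}{q}=0$, expanding $\pnorm{h}{}^2=\pnorm{p+q}{}^2=\pnorm{p}{}^2+2\iprod{p}{q}+\pnorm{q}{}^2$ gives $\pnorm{h}{}^2=\pnorm{\Pi_K(h)}{}^2+\pnorm{\Pi_{K^\ast}(h)}{}^2$. There is no real obstacle here; the only point requiring a little care is the passage from the single-$z$ variational inequality to the two sharper facts ($\iprod{q}{p}=0$ and $q\in K^\ast$), which genuinely uses the conic structure via the substitutions $z=0$ and $z=2p$, and this is exactly where the hypothesis that $K$ is a cone (rather than a general closed convex set) enters.
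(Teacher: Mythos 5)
Your proof is correct. The paper itself does not argue this lemma at all: its ``proof'' is a one-line citation to Moreau's theorem (Rockafellar, Theorem 31.5), so what you have written is precisely the standard self-contained verification of the cited classical fact rather than a different route. All the steps check out with the paper's conventions: with $K^\ast=\{\nu:\iprod{\nu}{\mu}\leq 0,\ \forall \mu\in K\}$, the substitutions $z=0$ and $z=2\Pi_K(h)$ (legitimate exactly because $K$ is a cone) upgrade the variational inequality to $\iprod{h-\Pi_K(h)}{\Pi_K(h)}=0$ and $h-\Pi_K(h)\in K^\ast$, the displayed computation $\iprod{h-q}{w-q}=\iprod{p}{w}\leq 0$ together with uniqueness of projection onto the closed convex set $K^\ast$ identifies $q=\Pi_{K^\ast}(h)$, and the norm identity is then Pythagoras via the established orthogonality. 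The only thing your write-up buys beyond the paper is self-containedness; conversely, nothing is lost, since the argument is exactly the textbook proof of the theorem the paper invokes.
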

\begin{proof}
This is known as Moreau's theorem (cf. \cite[Theorem 31.5]{rockafellar1997convex}).
\end{proof}

\begin{lemma}\label{lem:proj_split}
Let $K$ be a closed convex set, and $\mu,\xi$ be such that $K-\mu \subset K$ and $\mu+\Pi_K(\xi) \in K$. Then $\dis^2(\mu+\xi,K)=\dis^2(\xi,K)$. 
\end{lemma}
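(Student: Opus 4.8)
The plan is to prove $\dis^2(\mu+\xi,K) = \dis^2(\xi,K)$ by establishing both inequalities via the hypotheses $K-\mu\subset K$ and $\mu+\Pi_K(\xi)\in K$. First I would unwind the definitions: $\dis^2(\mu+\xi,K) = \inf_{t\in K}\pnorm{\mu+\xi-t}{}^2$ and $\dis^2(\xi,K)=\inf_{t\in K}\pnorm{\xi-t}{}^2 = \pnorm{\xi-\Pi_K(\xi)}{}^2$.

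For the inequality $\dis^2(\mu+\xi,K)\leq \dis^2(\xi,K)$: I would use the point $t = \mu+\Pi_K(\xi)$, which lies in $K$ by the second hypothesis. Then $\pnorm{\mu+\xi-t}{}^2 = \pnorm{\mu+\xi-\mu-\Pi_K(\xi)}{}^2 = \pnorm{\xi-\Pi_K(\xi)}{}^2 = \dis^2(\xi,K)$, and since $t\in K$ this upper-bounds $\dis^2(\mu+\xi,K)$. For the reverse inequality $\dis^2(\mu+\xi,K)\geq \dis^2(\xi,K)$: given any $t\in K$, the first hypothesis $K-\mu\subset K$ gives $t-\mu\in K$, hence $\pnorm{\mu+\xi-t}{}^2 = \pnorm{\xi-(t-\mu)}{}^2 \geq \dis^2(\xi,K)$. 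Taking the infimum over $t\in K$ yields the claim.

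Since both directions are one-line arguments once the right test points are identified, there is no real obstacle here; the only subtlety is recognizing that the second hypothesis is exactly what is needed to exhibit a feasible point achieving the value $\dis^2(\xi,K)$, while the first hypothesis provides the translation invariance needed for the lower bound. I would write it up as:

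\begin{proof}
Write $\nu\equiv\Pi_K(\xi)$. By hypothesis $\mu+\nu\in K$, so
\begin{align*}
\dis^2(\mu+\xi,K)\leq \pnorm{(\mu+\xi)-(\mu+\nu)}{}^2=\pnorm{\xi-\nu}{}^2=\dis^2(\xi,K).
\end{align*}
Conversely, for any $t\in K$ we have $t-\mu\in K-\mu\subset K$, hence
\begin{align*}
\pnorm{(\mu+\xi)-t}{}^2=\pnorm{\xi-(t-\mu)}{}^2\geq \dis^2(\xi,K).
\end{align*}
Taking the infimum over $t\in K$ gives $\dis^2(\mu+\xi,K)\geq \dis^2(\xi,K)$, which combined with the previous display proves the equality.
\end{proof}
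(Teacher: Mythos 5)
Your proof is correct. It is, however, a slightly different argument from the one in the paper: the paper proves the stronger statement that the projection itself translates, $\Pi_K(\mu+\xi)=\mu+\Pi_K(\xi)$, by verifying the variational characterization $\iprod{\mu+\xi-z}{v-z}\leq 0$ for all $v\in K$ at the candidate point $z=\mu+\Pi_K(\xi)$ (the hypothesis $K-\mu\subset K$ enters there through the substitution $v\mapsto v-\mu$), and the distance identity then follows by taking norms. You instead compare the two infima directly: the hypothesis $\mu+\Pi_K(\xi)\in K$ supplies a feasible point achieving the value $\dis^2(\xi,K)$ for the upper bound, and $K-\mu\subset K$ lets you shift an arbitrary test point for the lower bound. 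Your route is a bit more elementary (it never invokes the obtuse-angle characterization of projections) and fully suffices for the statement as used later in the paper (in the proof of Lemma \ref{lem:lrt_qual}-(2) only the distance equality is needed), while the paper's argument buys the extra information that the minimizer itself is $\mu+\Pi_K(\xi)$. Both proofs use the two hypotheses in exactly the same roles, so the difference is one of packaging rather than substance.
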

\begin{proof}
It suffices to show that $\Pi_K(\mu+\xi)=\mu+\Pi_K(\xi)$. Recall $z= \Pi_K(\mu+\xi) \in K$ if and only if $\iprod{\mu+\xi-z}{v-z}\leq 0$ for all $v \in K$. Now with $z\equiv \mu+\Pi_K(\xi)$, we only need to verify that $\iprod{\mu+\xi-(\mu+\Pi_K(\xi))}{v-(\mu+\Pi_K(\xi))}\leq 0$ for all $v \in K$, or equivalently $\iprod{\xi-\Pi_K(\xi)}{(v-\mu)-\Pi_K(\xi)}\leq 0$ for all $v \in K$. This holds as $v-\mu \subset K$ by the assumption. 
\end{proof}

\begin{lemma}\label{lem:proj_l1_ball}
	For any $\lambda>0$, and $x \in \R^n$,
	\begin{align*}
	\big(\Pi_{K_{\ell_1,\lambda}}(x)\big)_i = \sign(x_i)\big(\abs{x_i}-\mu(x)_+\big)_+,\quad i=1,\ldots,n.
	\end{align*}
	where $\mu(x) \in \R$ is the unique solution to $\sum_{i=1}^n (\abs{x_i}-\mu(x))_+=\lambda$. 
\end{lemma}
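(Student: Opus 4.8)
\textbf{Proof proposal for Lemma \ref{lem:proj_l1_ball}.}

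The plan is to characterize the projection $\Pi_{K_{\ell_1,\lambda}}(x)$ via its defining variational inequality and to exploit the separability of the $\ell_1$-ball constraint. First I would dispose of the trivial case $\pnorm{x}{1}\leq\lambda$: then $x\in K_{\ell_1,\lambda}$ so the projection is $x$ itself, and on the other hand the equation $\sum_i(\abs{x_i}-\mu)_+=\lambda$ has its solution at some $\mu(x)\leq 0$ (since at $\mu=0$ the left side is $\pnorm{x}{1}\leq\lambda$ and the left side is continuous, non-increasing, with limit $+\infty$ as $\mu\downarrow-\infty$), hence $\mu(x)_+=0$ and $\sign(x_i)(\abs{x_i}-0)_+=x_i$, matching the claim. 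So assume $\pnorm{x}{1}>\lambda$; then by the same monotonicity/continuity argument the map $\mu\mapsto\sum_i(\abs{x_i}-\mu)_+$ is continuous, non-increasing, equals $\pnorm{x}{1}>\lambda$ at $\mu=0$ and tends to $0$ as $\mu\to\max_i\abs{x_i}$, so there is a unique $\mu(x)\in(0,\max_i\abs{x_i})$ solving the equation, and $\mu(x)_+=\mu(x)>0$.

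Next I would set $p\equiv(p_i)$ with $p_i\equiv\sign(x_i)(\abs{x_i}-\mu(x))_+$ and verify the two conditions that characterize $\Pi_{K_{\ell_1,\lambda}}(x)$: (i) $p\in K_{\ell_1,\lambda}$, and (ii) $\iprod{x-p}{v-p}\leq0$ for all $v$ with $\pnorm{v}{1}\leq\lambda$. Condition (i) is immediate: $\pnorm{p}{1}=\sum_i(\abs{x_i}-\mu(x))_+=\lambda$ by the choice of $\mu(x)$. For (ii), observe that $x_i-p_i$ equals $\mu(x)\sign(x_i)$ on the coordinates where $\abs{x_i}>\mu(x)$ (the "active" set $A$, on which $p_i\neq0$ and $\sign(p_i)=\sign(x_i)$) and equals $x_i$ on the coordinates where $\abs{x_i}\leq\mu(x)$ (on which $p_i=0$ and $\abs{x_i-p_i}=\abs{x_i}\leq\mu(x)$). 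Hence for any $v$,
\begin{align*}
\iprod{x-p}{v-p}&=\sum_{i\in A}\mu(x)\sign(x_i)(v_i-p_i)+\sum_{i\notin A}x_i v_i\\
&\leq \mu(x)\Big(\sum_{i\in A}\sign(x_i)v_i-\sum_{i\in A}\sign(x_i)p_i\Big)+\mu(x)\sum_{i\notin A}\abs{v_i}\\
&\leq \mu(x)\Big(\pnorm{v}{1}-\pnorm{p}{1}\Big)\leq \mu(x)(\lambda-\lambda)=0,
\end{align*}
where the first inequality uses $\abs{x_i}\leq\mu(x)$ for $i\notin A$, the second uses $\sign(x_i)v_i\leq\abs{v_i}$ on $A$ together with $\sum_{i\in A}\sign(x_i)p_i=\sum_{i\in A}\abs{p_i}=\pnorm{p}{1}$, and the last uses $\pnorm{v}{1}\leq\lambda=\pnorm{p}{1}$. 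Since a closed convex set's projection is the unique point satisfying (i)–(ii), this proves $\Pi_{K_{\ell_1,\lambda}}(x)=p$.

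The only mildly delicate point — and the one I would be most careful about — is the handling of the "boundary" coordinates where $\abs{x_i}=\mu(x)$ and the non-uniqueness of the active set $A$ this could cause; but since $p_i=0=( \abs{x_i}-\mu(x))_+$ there regardless of whether such $i$ is placed in $A$ or not, and the bound $\abs{x_i-p_i}=\abs{x_i}=\mu(x)$ is exactly the borderline case of the inequality used above, the argument goes through with either convention. Everything else is routine: soft-thresholding preserves signs, the thresholded $\ell_1$ norm is continuous and strictly decreasing in the threshold on the relevant range, and the variational inequality computation is the standard KKT verification for projection onto the simplex/$\ell_1$-ball. No probabilistic input is needed; this is a purely deterministic convex-analytic fact.
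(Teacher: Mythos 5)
Your proof is correct, but it takes a different route from the paper's. The paper derives the formula by writing the Lagrangian $\tfrac12\pnorm{x-y}{}^2+\mu(\pnorm{y}{1}-\lambda)$ and working through the KKT conditions (stationarity with the subdifferential of $\pnorm{\cdot}{1}$, dual feasibility, complementary slackness), which \emph{produces} the soft-thresholding form $y_i^\ast=\sign(x_i)(\abs{x_i}-\mu)_+$ and then splits into the cases $\mu=0$ and $\mu>0$. You instead take the soft-thresholded vector $p$ as a candidate and verify the variational-inequality characterization of the projection, $p\in K_{\ell_1,\lambda}$ and $\iprod{x-p}{v-p}\leq 0$ for all feasible $v$, by a direct computation splitting coordinates into the active set $A=\{i:\abs{x_i}>\mu(x)\}$ and its complement; the chain $\iprod{x-p}{v-p}\leq \mu(x)(\pnorm{v}{1}-\pnorm{p}{1})\leq 0$ is exactly right, and your treatment of the case $\pnorm{x}{1}\leq\lambda$ (where the solution of $\sum_i(\abs{x_i}-\mu)_+=\lambda$ satisfies $\mu(x)\leq 0$, so $\mu(x)_+=0$) cleanly explains why the statement carries the positive part $\mu(x)_+$. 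What each approach buys: yours is more elementary and self-contained, needing no subdifferential calculus or KKT machinery, only the obtuse-angle characterization of projections, and it makes the existence/uniqueness of $\mu(x)$ (strict decrease of $\mu\mapsto\sum_i(\abs{x_i}-\mu)_+$ wherever it is positive) explicit; the paper's KKT derivation has the advantage of discovering the formula rather than verifying a guessed candidate. Both are complete proofs of the lemma.
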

\begin{proof}
	This result is a simple consequence of the KKT conditions. We include a proof for completeness. Note that $\Pi_{K_{\ell_1,\lambda}}(x)\equiv y^\ast\equiv \argmin_{\pnorm{y}{1}\leq \lambda}\pnorm{x-y}{}^2/2$, so with the Lagrangian function $
	L(y,\mu) \equiv \frac{1}{2}\pnorm{x-y}{}^2+\mu\big(\pnorm{y}{1}-\lambda\big)$,
	the KKT conditions become (i) (stationarity) $(y^\ast-x)+\mu\cdot \partial\pnorm{\cdot}{1}(y^\ast)=0$, (ii) (primal feasibility) $\pnorm{y^\ast}{1}\leq \lambda$, (iii) (dual feasibility) $\mu\geq 0$, and (iv) (complementary slackness) $\mu(\pnorm{y^\ast}{1}-\lambda)=0$. So from (i) we may solve (v) $y^\ast_i = \sign(x_i)(\abs{x_i}-\mu)_+$. Consider two cases:
	\begin{enumerate}
		\item If $\mu=0$, (v) yields that $y^\ast =x$, so by (ii) $y^\ast \in K_{\ell_1,\lambda}$.
		\item If $\mu>0$, (iv) yields that $\pnorm{y^\ast}{1}=\lambda$, i.e., $\sum_{i=1}^n \abs{\sign(x_i)(\abs{x_i}-\mu)_+}=\lambda$, or equivalently, $\sum_{i=1}^n (\abs{x_i}-\mu)_+=\lambda$. 
	\end{enumerate}
	As the equation $\sum_{i=1}^n (\abs{x_i}-\mu)_+=\lambda$ has a unique solution $\mu\geq 0$ if and only if $\pnorm{x}{1}\geq \lambda$, and has a unique solution $\mu \in \R$ for all $x \in \R^n$, we may also write the first case as $y^\ast_i = \sign(x_i)(\abs{x_i}-\mu_+)_+$, where $\mu \in \R$ is the solution to $\sum_{i=1}^n (\abs{x_i}-\mu)_+=\lambda$. The proof is complete. 
\end{proof}


\bibliographystyle{amsalpha}
\bibliography{mybib}

\end{document}